\newtheorem{cor}{Corollary}[section]
\newtheorem{theorem}[cor]{Theorem}
\newtheorem{prop}[cor]{Proposition}
\newtheorem{lemma}[cor]{Lemma}
\theoremstyle{definition}
\newtheorem{defi}[cor]{Definition}
\newenvironment{manualtheorem}[1]{%
  \manualtheoreminner
}{\endmanualtheoreminner}
\theoremstyle{remark}
\newtheorem{remark}[cor]{Remark}
\newtheorem*{remark*}{Remark}
\newcommand{\R}{\mathbb{R}}
\newcommand{\C}{\mathbb{C}}
\newcommand{\Diff}{\mathrm{Diff}}
\newcommand{\diag}{\mathrm{diag}}
\newcommand{\SL}{\mathrm{SL}}
\newcommand{\vl}{|}
\newcommand{\Ker}{\mathrm{Ker}}
\newcommand{\PSL}{\mathbb{P}\mathrm{SL}}
\newcommand{\Ima}{\mathrm{Im}}
\newcommand{\trace}{\mathrm{tr}}
\newcommand{\II}{\mathrm{I}\mathrm{I}}
\newcommand{\Sp}{\mathrm{Sp}}
\newcommand{\SU}{\mathrm{SU}}
\newcommand{\GL}{\mathrm{GL}}
\newcommand{\CP}{\mathbb{C}\mathbb{P}}
\newcommand{\I}{\mathbf{I}}
\newcommand{\J}{\mathbf{J}}
\newcommand{\SO}{\mathrm{SO}}
\newcommand{\Hom}{\mathrm{Hom}}
\newcommand{\Span}{\mathrm{Span}}
\newcommand{\Id}{\mathrm{Id}}
\newcommand{\CH}{\mathbb{CH}}
\newcommand{\Ree}{\mathrm{Re}}
\newcommand{\pbz}{\partial_{\bar{z}}}
\newcommand{\pw}{\partial_{w}}
\newcommand{\pz}{\partial_{z}}
\newcommand{\pbu}{\partial_{\bar{u}}}
\newcommand{\pv}{\partial_{v}}
\newcommand{\Hit}{\mathrm{Hit}_3(S)}
\newcommand{\QHit}{\mathrm{QHit}_3^{\C}(S)}
\DeclareMathAlphabet{\mathpzc}{OT1}{pzc}{m}{it}
\title[Surfaces in $\CH^{2}_\tau$ and representations in $\SL(3,\C)$]{Complex Lagrangian minimal surfaces, bi-complex Higgs bundles and $\SL(3,\C)$-quasi-Fuchsian representations}
\begin{document}

\author[Nicholas Rungi]{Nicholas Rungi}
\address{NR: Universit\'e Grenoble Alpes (Institut Fourier), Grenoble, France.} \email{nicholas.rungi@univ-grenoble-alpes.fr} 

\author[Andrea Tamburelli]{Andrea Tamburelli}
\address{AT: Department of Mathematics, University of Pisa, Italy.} \email{andrea.tamburelli@libero.it}

\begin{abstract}In this paper we introduce complex minimal Lagrangian surfaces in the bi-complex hyperbolic space and study their relation with representations in $\SL(3,\C)$. Our theory generalizes at the same time minimal Lagrangian surfaces in the complex hyperbolic plane, hyperbolic affine spheres in $\R^3$, and Bers embeddings in the holomorphic space form $\CP^1 \times \CP^1 \setminus \Delta$. If these surfaces are equivariant under representations in $\SL(3,\C)$, our approach generalizes the study of almost $\R$-Fuchsian representations in $\SU(2,1)$, Hitchin representations in $\SL(3,\R)$, and quasi-Fuchsian representations in $\SL(2,\C)$. Moreover, we give a parameterization of $\SL(3,\C)$-\emph{quasi-Fuchsian} representations by an open set in the product of two copies of the bundle of holomorphic cubic differentials over the Teichm\"uller space of $S$, from which we deduce that this space of representations is endowed with a bi-complex structure. In the process, we introduce bi-complex Higgs bundles as a new tool for studying representations into semisimple complex Lie groups.
\end{abstract}

\date{\today}
\maketitle
\setcounter{tocdepth}{1}

\tableofcontents

\section*{Introduction}
\noindent The study of immersed surfaces in space forms has always been a topic of great interest both geometrically and analytically. In fact, much work has been produced concerning the investigation of minimal surfaces in Riemannian symmetric spaces and their analogue in pseudo-Riemannian homogeneous spaces.
The present paper has two main objectives: the first is to study a special class of immersed surfaces, called \emph{minimal complex Lagrangian}, into the bi-complex hyperbolic plane which has not been defined and considered before. We will explain how this theory generalizes at the same time minimal Lagrangians in complex hyperbolic plane (\cite{loftin2013minimal, huang2013holomorphic, loftin2019equivariant}), hyperbolic affine spheres in $\R^3$ (\cite{loftin2001affine},\cite{Labourie_cubic}) and another special class of immersed surfaces into a holomorphic space form studied by Bonsante and El-Emam (\cite{bonsante2022immersions}). The second objective is an application to the so-called \emph{higher rank Teichmüller theory} (\cite{Wienhard_intro}), which concerns the study of representations of surface groups into a non-compact semisimple Lie group $G$ of arbitrary rank, aiming to generalize Teichmüller theory that considers the case $G=\PSL(2,\R)$. Although classically arising with the case of $G$ being real, recently a great amount of interest has developed around the case of complex Lie groups (\cite{wienhard2016representations},\cite{dumas2020geometry},\cite{alessandrini2021projective},\cite{dumas2021uniformization},\cite{alessandrini2023fiber},\cite{farre2024topological}). In this regard, we show how the space of equivariant complex Lagrangian minimal surfaces parameterizes a neighborhood of the Fuchsian locus into the space of quasi-Hitchin representations in $\SL(3,\C)$, consisting of $B^\C$-Anosov representations, with $B^\C$ being a Borel subgroup. Moreover, the dynamical property of Anosov representations will allow us to describe the boundary at infinity of the surface in terms of boundary maps. In the next paragraphs we describe the prerequisites necessary to state and explain in more detail the main results of the article.
\vspace{0.5em}\newline\underline{\emph{Bi-complex hyperbolic plane and complex Lagrangian minimal surfaces}} \vspace{0.5em}\newline Let $\C_\tau$ be the algebra of bi-complex numbers, namely the $4$-dimensional real algebra generated by $\{1,\tau,i,j\}$ where $\tau^2=1, i^2=j^2=-1$ and $j=\tau i=i\tau$. By definition, any number $z\in\C_\tau$ can be written as $z=z_1+\tau z_2$, where $z_1,z_2$ are complex numbers with respect to $i$. Even though $\C_\tau$ is a real algebra endowed with zero divisors, it is possible to define, as in the complex case, a hyperbolic plane modeled on it by taking the quadric in $\C^3_\tau$ of negative norm vectors with respect to a suitable $\C$-bilinear form $\mathbf{q}$ (Section \ref{sec:bicomplex_hyperbolic_space}). It will be denoted by $\CH^2_\tau$ and named the bi-complex hyperbolic plane. This space is a four-dimensional complex manifold and, from a differential geometric perspective, it is endowed with a special metric structure. As in the usual setting, one can identify the tangent space $T_z\CH^2_\tau\subset\C^3_\tau$ with the $\mathbf{q}$-orthogonal to the $\C_\tau$-linear span of the vector $z\in\C^3_\tau$, resulting in an isomorphism $T_z\CH^2_\tau\cong\C^2_\tau$. Therefore, multiplication by $i$ on $\C^2_\tau$ induces a complex structure $\mathbf{I}$ on $\CH^2_\tau$ and multiplication by $\tau$ induces a para-complex structure $\mathbf{P}$. In particular, the product $\mathbf{J}:=\mathbf{I}\mathbf{P}$ defines another complex structure. In addition, the $\C$-bilinear form $\mathbf q$ allows one to define two other objects: a metric $\hat g$ such that $\hat g(\mathbf{P}\cdot,\mathbf{P}\cdot)=-\hat g$ and a symplectic form $\hat\omega$, related by $\hat\omega=\hat g(\cdot,\mathbf{P}\cdot)$, which are both holomorphic with respect to the complex structure $\mathbf{I}$. In the end, the space $\CH^2_\tau$ is endowed with a so-called \emph{holomorphic para-K\"ahler structure} $(\mathbf{I},\mathbf{P},\hat g,\hat\omega)$ (Section \ref{sec:reduction}) whose complex para-holomorphic sectional curvature is negative constant (Section \ref{sec:bicomplexdiffgeometry}). \\ \\ Throughout the paper, we are interested in immersions $\sigma:U\to\CH^2_\tau$, where 
$U$ is a simply-connected domain of real dimension $2$. Using recent techniques developed by Bonsante and El-Emam (\cite{bonsante2022immersions}), we can treat them as having complex codimension 2, in order to introduce a notion of complex Lagrangian which we now briefly describe. By extending in a $\C$-linear way the differential $\sigma_\ast$ on 
$\C TU$ using the complex structure $\mathbf{I}$, one can consider the classical objects of the real case: first fundamental form $h$, Levi-Civita connection $\nabla$ and second fundamental form $\II$, extended by $\C$-linearity. The immersions satisfying $\trace_{\hat g}\II=0$ and $\sigma^*\hat\omega=0$ will be called \emph{minimal complex Lagrangian}, for which we are able to describe their structure equations:

\begin{manualtheorem}A\label{thm:A}
\textit{Let $\sigma:U\to\CH^2_\tau$ be a complex Lagrangian minimal surface, then the complex first fundamental form $h$ and the complex second fundamental form $\II$ satisfy the following structure equations: \begin{align*}
    &d^{\nabla}\II = 0 \ \ \ \ \ \ \ \ \ \ \ &\text{(Codazzi equation)} \\
    &K_{h}+\| \II \|^{2} = -1 \ \ \ \ \ &\text{(Gauss equation),}
\end{align*}
where $K_{h}$ denotes the complex Gaussian curvature of the metric $h$ and $\| \II \|^{2}$ the norm of $\II$.}
\end{manualtheorem}
\noindent Now let $S$ be a closed surface of genus $g\ge 2$ so that, in the above theorem, $U$ is the universal cover of $S$ and the immersion $\sigma$ is equivariant. We will prove that a pair $(h,\II)$ as in the Theorem \ref{thm:A} is equivalent to a $\C$-complex structure $\mathcal{J}$, i.e. an automorphism of $\C TS$ such that $\mathcal{J}^{2}=-1$, and a Codazzi tensor $C$ compatible with $\mathcal J$. If we denote by $\mathcal{C}^\C(S)$ the space of $\C$-complex structures and by $\mathcal{Q}^\C(S)$ the bundle over $\mathcal{C}^\C(S)$ whose fiber in a point $\mathcal{J}$ is given by the space of Codazzi tensors compatible with $\mathcal{J}$, then its total space can be identified with $\mathcal Q_3(S)\times\mathcal Q_3(S)$ the product of two copies of the bundle of holomorphic cubic differentials over the space of complex structures on $S$. In other words, the data $(\mathcal J,C)$ is equivalent to two complex structures $(J_1,J_2)$ inducing the same orientation and two $(J_1,J_2)$-holomorphic cubic differentials $q_1,q_2$ such that $C=q_1+\bar q_2$. We will call the \emph{Hitchin locus} the subset of $\mathcal{Q}^\C(S)$ given by the preimage of the diagonal in $\mathcal Q_3(S)\times\mathcal Q_3(S)$ via the aforementioned identification. 
\begin{manualtheorem}B\label{thm:B}
\textit{There is a neighborhood $\mathcal{U}$ of the Hitchin locus in $\mathcal{Q}_{3}^{\C}(S)$ such that all pairs $(\mathcal{J}, C) \in \mathcal{U}$ can be realized as embedding data of an equivariant complex Lagrangian minimal surface in $\CH^{2}_{\tau}$.}
\end{manualtheorem}
\noindent In contexts similar to ours, the theory of Higgs bundles and the existence and uniqueness of the solution of Hitchin equations have often been used to study the integrability of the Gauss-Codazzi equations. In our setting of complex metrics, we introduce the new notion of $\SL(3,\C)$-bi-complex Higgs bundles and explain how this is related to a complex Lagrangian minimal immersion $\sigma:\widetilde S\to\CH^2_\tau$ in such a way that the analogues of Hitchin equations for bi-complex Higgs bundles are equivalent to the structure equations of $\sigma$. However, since we no longer have existence and uniqueness of solutions to Hitchin equations in this context, an analytical study of the involved differential operators was necessary. On the other hand, we will use $\SL(3,\C)$-bi-complex Higgs bundles in the proof of another result that will be explained in the next paragraphs. This new notion of bi-complex Higgs bundles can be easily generalized to other complex semisimple Lie groups $G_{\C}$ and we believe could be a useful tool for the study of representations in $G_{\C}$. Finally, the open set for which we have existence and uniqueness of complex Lagrangian minimal surfaces in $\CH^2_\tau$ is actually larger than the one given by Theorem \ref{thm:B}, because a similar approach can be applied to the preimage of a different locus inside $\mathcal Q_3^\C(S)$ obtained by picking $J_1=J_2$ and $q_2=-\bar{q}_1$.
\vspace{0.5em}\newline\underline{\emph{A unified point of view}} \vspace{0.5em}\newline
One of the reasons for starting the study of complex Lagrangian minimal immersions in $\CH^2_\tau$ was to find a unified viewpoint with other seemingly unrelated situations which, as we will explain, are simply a particular case of a more general phenomenon. From the point of view of Lie groups, the automorphism group that preserves the holomorphic para-Kähler structure on $\CH^2_\tau$ is isomorphic to $\SL(3,\C)$, which admits two non-compact real forms: $\SL(3,\R)$ and $\SU(2,1)$. It is known in the literature that $\SU(2,1)$ can be seen as the group of biholomorphisms of the complex hyperbolic space $\CH^2$. Actually, there is another hyperbolic space modeled on the algebra of para-complex numbers and denoted by $\mathbb H^2_\tau$, which admits a para-Kähler structure with the group of para-biholomorphisms given by $\SL(3,\R)$. Using the complex structure $\mathbf{J}$ and the para-complex structure $\mathbf{P}$ on $\CH^2_\tau$, it can be seen that $\CH^2$ is realized as the submanifold $\CH^2_\tau\cap(\R^3\oplus j\R^3)$ and $\mathbb H^2_\tau$ is realized as $\CH^2_\tau\cap(\R^3\oplus\tau\R^3)$. \newline This allows us to observe that if we have a complex Lagrangian minimal immersion $\sigma:\widetilde S\to\CH^2_\tau$, equivariant with respect to a representation 
$\rho:\pi_1(S)\to\SL(3,\C)$, whose image is contained in $\CH^2$, then the surface is actually equivariant with respect to a representation $\SU(2,1)$. Moreover, if $\rho$ is sufficiently close to the Fuchsian locus (\emph{almost $\R$-Fuchsian} in $\SU(2,1)$), then it preserves the unique minimal Lagrangian immersion studied by Loftin and McIntosh (\cite{loftin2013minimal}). On the other hand, if the image is contained in $\mathbb H^2_\tau$, then the representation takes values in $\SL(3,\R)$ and preserves a minimal Lagrangian immersion in the para-complex hyperbolic space. In this case, it is equivalent to having a hyperbolic affine sphere in $\R^3$ equivariant with respect to a Hitchin representation. In particular, we obtain a new variational description of hyperbolic affine spheres as stable critical points of an area functional. A similar result was obtained by Hildebrand (\cite{hildebrand2011cross}), by studying minimal Lagrangian immersions in a space called the cross-ratio manifold, which turns out to be isometric to $\mathbb H^2_\tau$. As a final observation, the submanifold given by 
$\CH^2_\tau\cap(\R^3\oplus i\R^3)$ can be identified with the holomorphic space form $\mathbb X:=\{\underline z\in\C^3 \ | \ z_1^2+z_2^2+z_3^2=-1\}$, which has been shown to be isometric to the complex manifold $\CP^1\times\CP^1\setminus\Delta$, being $\Delta$ the diagonal (\cite{bonsante2022immersions},\cite{el2022gauss}). If the image of the complex Lagrangian minimal immersion is contained in $\mathbb X$, then we recover the \emph{Bers embeddings} studied by Bonsante and El-Emam (\cite{bonsante2022immersions}) whose holonomy representation takes value into $\PSL(2,\C)$. The picture to keep in mind can be summarized in the following two diagrams: \\

\begin{tikzcd}[row sep=1.1cm, column sep=1.7cm]
{\mathrm{SU}(2,1)} \arrow[rd, "\text{almost} \mathbb \  \R\text{-Fuchsian}", hook]                                      &  & \mathbb{CH}^2 \arrow[rd, "\text{minimal Lagrangian}", hook]             &                    \\
{\mathbb{P}\mathrm{SL}(2,\mathbb C)} \arrow[r, "\text{irreducible}", hook]       & {\mathrm{S}\mathrm{L}(3,\mathbb C)}   & \mathbb X \arrow[r, "\text{Bers metric}", hook]                         & \mathbb{CH}^2_\tau \\
{\mathrm{SL}(3,\mathbb R)} \arrow[ru, "\text{Hitchin}"', hook]                                                   &  & \mathbb H^2_\tau \arrow[ru, "\substack{\text{hyperbolic affine} \\ \text{sphere}}"', hook] &                   
\end{tikzcd}

\vspace{1em}
\noindent In brief, our equivariant complex Lagrangian minimal surfaces in $\CH^{2}_{\tau}$ generalize equivariant minimal Lagrangian surfaces in $\CH^{2}$, hyperbolic affine spheres in $\R^{3}$ and Bers embeddings in $\mathbb{X}$. This allows for a unified treatment of almost $\R$-Fuchsian representations in $\SU(2,1)$, Hitchin representations in $\SL(3,\R)$ and quasi-Fuchsian representations in $\PSL(2,\C)$, as well as their small deformations.\\

\noindent\underline{\emph{Quasi-Hitchin representations and parameterization close to the Fuchsian locus}} \vspace{0.5em}\newline
As previously mentioned, the second motivation for studying complex Lagrangian minimal immersions into $\CH^2_\tau$ comes from higher Teichm\"uller theory, more specifically from the study of quasi-Hitchin representations in complex Lie groups, which we now recall. Let $\eta:\pi_1(S)\to\PSL(2,\R)$ be an injective representation with discrete image. Using the irreducible embedding $\iota:\PSL(2,\R)\hookrightarrow\SL(3,\R)$ we can define a representation $\iota\circ\eta$ into $\SL(3,\R)$. The set of all possible deformations of $\iota\circ\eta$ form a connected component in the corresponding character variety, which is called \emph{Hitchin component}. In fact, it was initially discovered by Hitchin using Higgs bundles techniques (\cite{hitchin1992lie}), proving that it is an open ball of dimension $16g-16$ and contains a copy of Teichm\"uller space coming from the representations $\iota\circ\eta$ as above. A remarkable property of Hitchin representations is that of being $B$-Anosov, where $B<\SL(3,\R)$ is a Borel subgroup (\cite{Labourie2006anosov},\cite{fock2006moduli}). Roughly speaking, a representation $\rho:\pi_1(S)\to\SL(3,\R)$ is $B$-Anosov if there exists a unique $\rho$-equivariant continuous map $\xi$ from $\partial_\infty\pi_1(S)$ to the space of full flags in $\R^3$, which is endowed with a transversality property: for any $x\neq y$ in $\partial_\infty\pi_1(S)$ one has $\xi_x\oplus\xi_y=\R^3$; and a \emph{dynamic preserving} property satisfied whenever the representation is irreducible. A similar definition can be given for representations into the complex Lie group $\SL(3,\C)$, which we will call $B^\C$-Anosov, being $B^\C$ the complexified Borel subgroup. Deformations, inside $\SL(3,\C)$, of Hitchin representations into $\SL(3,\R)$ that remain $B^\C$-Anosov are called \emph{quasi-Hitchin} and form a space denoted with $\QHit$. Even though it is probably already known in the literature, in Section \ref{sec:quasiHitchin} we include a proof that irreducible representations in $\QHit$ are smooth points, thus obtaining, that Hitchin representations in 
$\SL(3,\R)$, almost $\R$-Fuchsian representations in $\SU(2,1)$ and quasi-Fuchsian representations in $\PSL(2,\C)$ post-composed with the irreducible embedding $\iota^\C:\PSL(2,\C)\hookrightarrow\PSL(3,\C)$, which are all examples of quasi-Hitchin representations, are all contained in the smooth locus of the $\SL(3,\C)$-representation variety. \\
\noindent The third main result of the paper, and perhaps the most interesting, concerns a parameterization of the space of $\SL(3,\C)$-\emph{quasi-Fuchsian} representations, obtained by small deformations of representations of the form $\pi_{1}(S) \xrightarrow{\eta} \PSL(2,\R) \xhookrightarrow{\iota} \SL(3,\mathbb{R}) \hookrightarrow \SL(3,\C)$, where $\eta$ is discrete and faithful, which has been object of recent study (\cite{dumas2020geometry}). We define a \emph{holonomy map} $\widetilde{\mathrm{hol}}$, which associates to each point $(\mathcal J,C)\in\mathcal U\subset\mathcal{Q}_3^\C(S)$ the holonomy of the unique equivariant complex Lagrangian minimal immersion determined by the data $(h,\II)$ which is equivalent to $(\mathcal J, C)$. In order to promote this map to a local diffeomorphism, we need to restrict the domain. We consider the subset $\mathcal{R}\subset \mathcal{Q}^{\C}_{3}(S)$ of pairs $(\mathcal{J}, C)$ such that, if $\mathcal{J}=(J_{1},J_{2})$ and $g_{1}$ and $g_{2}$ denote the unique hyperbolic metric in the conformal class of $J_{1}$ and $J_{2}$, then the identity $(S,g_{1}) \rightarrow (S,g_{2})$ is harmonic. We call such pairs $(\mathcal{J}, C)$ \emph{renormalized pairs}. Note that the space of renormalized pairs is still infinite dimensional and the group $\Diff_{0}(S)$ of diffeomorphisms of $S$ isotopic to the identity acts diagonally on $\mathcal{R}$ by pull-back with quotient $Q^{\C}_{3}(S):=Q_{3}(S)\times Q_{3}(S)$ isomorphic to two copies of the bundle of holomorphic cubic differentials over the Teichm\"uller space of $S$. We call \emph{Fuchsian locus} the pre-image in $Q^{\C}_{3}(S)$ of the diagonal inside $\mathrm{Teich}(S)\times \mathrm{Teich}(S) \subset Q_{3}(S) \times Q_{3}(S)$ under the isomorphism described previously. The map $\widetilde{\mathrm{hol}}$ clearly descends to the quotient and defines a map
\[
    \mathrm{hol}: U:=(\mathcal{U}\cap\mathcal{R})/\Diff_{0}(S) \rightarrow \Hom(\pi_{1}(S), \SL(3,\C))/\SL(3,\C) \ . 
\]
\begin{manualtheorem}C\label{thm:C}
\textit{$\SL(3,\C)$-quasi-Fuchsian representations that are sufficiently close to Fuchsian are parameterized by a neighborhood of the Fuchsian locus inside the product $Q_3^\C(S)=Q_3(S)\times Q_3(S)$.}
\end{manualtheorem}
\noindent The proof of the theorem uses Goldman symplectic form $\omega$ (\cite{goldman1984symplectic}). More precisely, we show that the pull-back form $\text{hol}^*\omega$, under the map $\mathrm{hol}$ that associates to the embedding data of an equivariant complex Lagrangian minimal surface its holonomy representation, is non-degenerate at all points $(\mathcal{J},C)$ such that $\mathcal J=(J,J)$ and $C=0$, by using the explicit description of the bi-complex Higgs bundle associated with a complex Lagrangian minimal immersion, and then conclude by the inverse function theorem. \\

\noindent \vspace{0.5em}\underline{\emph{Boundary maps of Anosov representations}} \vspace{0.5em}\newline
The last result we obtain concerns the Anosov property of $\SL(3,\C)$-quasi-Fuchsian representations and their correlation with the complex Lagrangian minimal surface given by Theorem \ref{thm:C}. More specifically, one can define an isometric model of the bi-complex hyperbolic plane as the subspace of 
$\CP^2\times(\CP^2)^*$ formed by all pairs $(v,\varphi)$ such that $\varphi(v)\neq 0$,   where $\varphi$ is considered as a linear functional. Thanks to this description, an analogous notion of boundary at infinity for $\CH^2_\tau$ can be defined, and it is identified with the space of full flags in 
$\C^3$, that is, the homogeneous space $\SL(3,\C)/B^\C$. Therefore, if $\rho$ is an $\SL(3,\C)$-quasi-Fuchsian representation and 
$\xi:\partial_\infty\pi_1(S)\to\SL(3,\C)/B^\C$ is its boundary map, we obtain \begin{manualtheorem}D\label{thm:D}
\textit{The $\rho$-equivariant complex Lagrangian minimal surface $\sigma:\widetilde S\to\CH^2_\tau$ bounds the image of the boundary map $\xi$.}
\end{manualtheorem}

\noindent We believe that this relation about the boundary map $\xi$ and the boundary of the equivariant minimal Lagrangian surface is evidence that this is the correct setting to study quasi-Hitchin representations in $\SL(3,\C)$.\\

\noindent \vspace{0.5em}\underline{\emph{Comparison with other work on representations into complex Lie groups}} \vspace{0.5em}\newline
In recent years, the interest in surface group representations into a complex semisimple Lie group $G^{\C}$ of higher rank has steadily increased. Dumas and Sanders (\cite{dumas2020geometry}, \cite{dumas2021uniformization})  carefully studied the cohomological and analytical properties of the domain of discontinuity $\Omega_{\rho}$ of a $G^{\C}$-quasi-Fuchsian representation $\rho$ acting on flag varieties and conjectured that the quotient $\Omega_{\rho}/\rho(\pi_{1}(S))$ is a fiber bundle over $S$, which they verified for $G^{\C}=\SL(3,\C)$. It is conceivable that the use of our complex Lagrangian minimal surfaces in $\CH^{2}_{\tau}$ combined with the techniques developed by Collier-Tholozan-Toulisse (\cite{CTT}) could simplify the proof of their result considerably. More recently, Alessandrini-Maloni-Tholozan-Wienhard (\cite{alessandrini2023fiber}) proved Dumas and Sanders' conjecture in full generality and described, in particular, the topology of $\Omega_{\rho}/\rho(\pi_{1}(S))$ for quasi-Hitchin representations in $\Sp(4,\C)$ acting on the space of complex Lagrangians in $\C^{4}$. However, in all the aforementioned works, a description of the geometry and topology of the space of $G^{\C}$-quasi-Fuchsian representations is not available. Our work fills this gap, discovering, in addition, a bi-complex structure in the space of $\SL(3,\C)$-quasi-Fuchsian representations close to the Fuchsian locus, which, in our opinion, is very surprising, since neither from the algebraic nor from the Higgs bundle perspective the existence of a para-complex structure is expected. Finally, in the process of writing this paper, we have been informed that El-Emam and Sagman have also been studying the geometry associated to representations of surface groups into $\SL(3,\C)$ (\cite{Bers_SL3}). They define a notion of complex affine spheres in $\C^3$, which naturally correspond to some harmonic maps in $\SL(3,\C)/\SO(3,\C)$ and to new objects, called bi-Higgs bundles, which extend the usual Higgs bundles. These objects allow to define a holomorphic analog of Bers theorem for $\SL(3,\C)$,  namely a holomorphic map from a large open subset of $\Hit\times \overline{\Hit}$ – containing $\Hit \times \mathrm{Teich}(\bar S)$ and $\mathrm{Teich}(S)\times \Hit$ – to the character variety of $\SL(3,\C)$ extending the inclusion of the diagonal. It would be interesting to study the connection between ours and their strategy in the future. At the time of writing, the two works are completely independent.

\subsection*{Outline of the paper}
For the sake of clarity, we will carefully explain where to find the various notions and results that the reader might be interested in. \newline Section \ref{sec:backgroundmaterial} is dedicated to the review of the theory of complex and holomorphic metrics on manifolds, and then to the introduction of differential geometric structures with special curvature properties. Section \ref{sec:bicomplexplane} is entirely dedicated to the introduction and study of the bi-complex hyperbolic plane. In this regard, in Sections \ref{sec:bicomplex_hyperbolic_space} and \ref{sec:incidence_geometry}, we provide two different models of the space; in Section \ref{sec:boundary}, we define its boundary at infinity and give its homeomorphism type; in Section \ref{sec:reduction}, we show how its holomorphic para-Kähler structure can be obtained through an adaptation of the Marsden-Weinstein symplectic reduction; in Section \ref{sec:submanifolds}, we explain how to obtain the submanifolds of $\CH^2_\tau$ mentioned in the introduction, and how the restriction of the holomorphic para-Kähler structure behaves on these submanifolds; in Section \ref{sec:bicomplexdiffgeometry}, we study and show that the complex para-holomorphic curvature of $\CH^2_\tau$ is negative constant. Section \ref{sec:min_Lagrangian} includes the study of complex Lagrangian minimal immersions in $\CH^2_\tau$, and Theorem \ref{thm:A} is proved in Section \ref{sec:embedding_data}. The most analytical part is Section \ref{sec:localcoordinates}, where the Gauss equation and the Laplacian of complex metrics are computed in local coordinates. As for Section \ref{sec:bicomplex_Higgs}, we introduce the so-called bi-complex Higgs bundles and explain how to construct such object from a complex Lagrangian minimal surface. In Section \ref{sec:minimalinpara-complex} we focus on immersions whose image is contained in the para-complex hyperbolic space $\mathbb H^2_\tau$ by showing the equivalence with hyperbolic affine spheres in $\R^3$, and obtaining as a new result that they are stable critical points of a certain area functional. Finally, making use of all the theory developed in the previous sections, Theorem \ref{thm:B} is proved in Section \ref{sec:existencenearFuchsian}, Theorem \ref{thm:C} in Section \ref{sec:parametrFuchsian} and Theorem \ref{thm:D} in Section \ref{sec:boundarymaps}.

\subsection*{Acknowledgement} The authors are grateful to Andrea Seppi for useful conversations, for pointing out the references \cite{hildebrand2011cross, trettel2019families} and for comments on an earlier draft, and to Jérémy Toulisse for a helpful remark on boundary maps.
A.T. acknowledges support from the National Science Foundation with grant DMS-2005551. N.R. is funded by the European Union (ERC, GENERATE, 101124349). Views and opinions expressed are however those of the author(s) only and do not necessarily reflect those of the European Union or the European Research Council Executive Agency. Neither the European Union nor the granting authority can be held responsible for them.

\section{Background material}\label{sec:backgroundmaterial}
\noindent In this section we recall some general notions about complex metrics on manifolds and para-complex structures. Most of the material is classical in the setting of (pseudo)-Riemannian manifolds (see for instance (\cite{kobayashi1996foundations}) and we adapt it here to the context of complex metrics. Part of it can already be found in the literature (\cite{davidov2009hyperhermitian},\cite{baird2011harmonic},\cite{loustau2017bi}). 

\subsection{Complex metrics on complex manifolds}\label{sec:back_complex_metrics}
Let $(M,\mathbf{I})$ be a complex manifold of complex dimension $n$. We think of $\mathbf{I}$ as an integrable almost complex structure on $M$, i.e. an automorphism of $TM$ such that $\mathbf{I}^{2}=-\mathrm{Id}$ for which the Nijenhuis tensor
\[
    N_{\mathbf{I}}(X,Y):=[X,Y]+\mathbf{I}([\mathbf{I}X,Y]+[X,\mathbf{I}Y])-[\mathbf{I}X, \mathbf{I}Y]
\]
vanishes for all $X,Y\in \Gamma(TM)$. It is well-known that this condition is equivalent to the existence of local coordinates $(x_{1}, y_{1}, \dots, x_{n}, y_{n})$ in a neighborhood of every point such that for all $i=1, \dots, n$
\[
    \mathbf{I}\left(\frac{\partial}{\partial x_{i}}\right) = \frac{\partial}{\partial y_{i}} \ \ \ \text{and} \ \ \ \mathbf{I}\left(\frac{\partial}{\partial y_{i}}\right) = -\frac{\partial}{\partial x_{i}} \ .
\]

\noindent A \emph{complex metric} on $(M,\mathbf{I})$ is the datum of a non-degenerate complex-valued symmetric bi-linear form $g_{p}: T_{p}M \times T_{p}M \rightarrow \C$ that varies smoothly in $p$ such that $g_{p}(X, \mathbf{I}Y)=g_{p}(\mathbf{I}X, Y)=ig_{p}(X,Y)$ for all $X,Y\in T_{p}M$. We say that a complex metric is \emph{holomorphic} if for all holomorphic vector fields $Z,Z'$ on $M$ the function $g(Z,Z')$ is holomorphic. \\ 

\noindent A complex metric induces a canonical connection $D$ on $TM$, which has the same properties as the standard Levi-Civita connection in a (pseudo)-K\"ahler manifold: 

\begin{theorem}[\cite{LeBrun}]\label{thm:connection} Given a complex metric $g$ on a complex manifold $(M, \mathbf{I})$, there is a unique connection $D$ on $TM$, called Levi-Civita connection, such that for all $X,Y,Z \in \Gamma(TM)$ the following conditions hold:
\begin{itemize}
    \item $X \cdot g(Y,Z)=g(D_X Y,Z)+g(Y,D_X Z)$ \ \ \ \text{(metric compatibility)}
    \item $[X,Y]=D_X Y - D_Y X$ \ \ \hspace{2cm} \  \ \ \ \  \ \ \ \text{(torsion free).}
\end{itemize}
Moreover, $D\mathbf{I}=0$. 
\end{theorem}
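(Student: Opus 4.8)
The plan is to mimic, almost verbatim, the classical proof that the Levi-Civita connection of a (pseudo-)Riemannian metric exists and is unique, paying attention to the fact that $g$ is $\C$-valued rather than $\R$-valued; the only place where real-valuedness is ever used in the classical argument is non-degeneracy, which we are explicitly assuming for complex metrics, so the proof goes through with no essential change. Concretely, I would first establish \emph{uniqueness} by the Koszul formula. Suppose $D$ satisfies metric compatibility and torsion-freeness. Writing out the three cyclic permutations of $X\cdot g(Y,Z)$, adding two and subtracting the third, and using the torsion-free identity $D_XY-D_YX=[X,Y]$ to cancel the Lie-bracket-free terms, one arrives at
\begin{align*}
2\,g(D_XY,Z) ={}& X\cdot g(Y,Z) + Y\cdot g(X,Z) - Z\cdot g(X,Y) \\
&+ g([X,Y],Z) - g([X,Z],Y) - g([Y,Z],X).
\end{align*}
Since $g_p$ is a non-degenerate bilinear form on $T_pM$ (now a $\C$-bilinear form on a $\C$-vector space, but non-degeneracy is all that matters), this equation determines $D_XY$ uniquely. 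Hence at most one such connection exists.

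For \emph{existence}, I would run the same Koszul formula backwards: define $g(D_XY,Z)$ by the right-hand side above, and check that for fixed $X,Y$ the right-hand side is $C^\infty(M,\C)$-linear in $Z$ (the derivative terms $X\cdot g(Y,fZ)$ produce a $(X\cdot f)g(Y,Z)$ term that is cancelled by the $g([X,fZ],Y)$-type terms, exactly as in the real case), so by non-degeneracy of $g$ there is a well-defined vector field $D_XY$. Then one verifies the connection axioms: additivity in all slots is immediate; $C^\infty$-linearity in $X$ and the Leibniz rule $D_X(fY)=(X\cdot f)Y+fD_XY$ follow by the same bookkeeping; metric compatibility and torsion-freeness are obtained by symmetrizing/antisymmetrizing the defining formula in the appropriate pair of arguments. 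Smoothness in $p$ is inherited from the smooth dependence of $g$. None of these steps uses anything beyond bilinearity, symmetry, and pointwise non-degeneracy of $g$, together with the Leibniz rule for vector fields acting on the smooth $\C$-valued functions $g(Y,Z)$.

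It remains to prove $D\mathbf{I}=0$, i.e. $D_X(\mathbf{I}Y)=\mathbf{I}(D_XY)$ for all $X,Y$. Here I would use the compatibility relation $g(\cdot,\mathbf{I}\cdot)=ig(\cdot,\cdot)$ defining a complex metric, the integrability of $\mathbf{I}$ (vanishing Nijenhuis tensor), and the uniqueness just proved. One clean route: the relation $g(V,\mathbf{I}W)=i\,g(V,W)$ together with metric compatibility of $D$ gives, after differentiating, $g(D_X V,\mathbf{I}W)+g(V,D_X(\mathbf{I}W)) = i\,g(D_XV,W)+i\,g(V,D_XW) = g(D_XV,\mathbf{I}W)+g(V,\mathbf{I}D_XW)$, whence $g(V,\,D_X(\mathbf{I}W)-\mathbf{I}D_XW)=0$ for all $V$, and non-degeneracy finishes it --- \emph{provided} one first knows that $D_X(\mathbf{I}W)-\mathbf{I}D_XW$ lies in the tangent space, which is automatic, but one must be careful that the antisymmetric/torsion part is handled: the above shows $D\mathbf{I}$ is symmetric in some sense, and combining with the torsion-free condition and $N_{\mathbf I}=0$ pins it to zero. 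I expect this last point --- showing that the ``$(X,Y)$-antisymmetric component'' of $D_X(\mathbf I Y)-\mathbf I D_X Y$ also vanishes, which is exactly where integrability of $\mathbf I$ enters --- to be the only genuinely delicate step; in the real Kähler case it is standard, and the argument transfers since $N_{\mathbf I}$ is defined purely in terms of Lie brackets and $\mathbf I$, with no reality assumption. Everything else is a direct transcription of the real proof with $\R$ replaced by $\C$.
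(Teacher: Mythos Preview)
The paper does not prove this theorem; it is quoted from \cite{LeBrun} with citation only, so there is no proof in the paper to compare against. I will assess your argument on its own.

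Your Koszul-formula strategy is the right idea, and your computation for $D\mathbf{I}=0$ is in fact \emph{complete} as written: differentiating $g(V,\mathbf{I}W)=ig(V,W)$ using only metric compatibility yields $g(V,\,D_X(\mathbf{I}W)-\mathbf{I}D_XW)=0$ for all $V$, and non-degeneracy finishes it. No torsion-freeness and no integrability are needed at that step---your worry about an ``antisymmetric component'' and about $N_{\mathbf I}$ is misplaced there.

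The genuine gap is earlier, in \emph{existence}. You assert that the Koszul right-hand side is $C^\infty(M,\C)$-linear in $Z$, ``so by non-degeneracy of $g$ there is a well-defined vector field $D_XY$''. But non-degeneracy only gives that $V\mapsto g(V,\cdot)$ is \emph{injective} from $T_pM$ into $\Hom_{\R}(T_pM,\C)$; it cannot be surjective, since the source has real dimension $2n$ and the target $4n$. The image consists precisely of those functionals $\ell$ with $\ell(\mathbf{I}Z)=i\,\ell(Z)$, because $g(V,\mathbf{I}Z)=ig(V,Z)$. Hence, to extract a real tangent vector $D_XY$, you must verify that the Koszul expression is $\C$-linear in $Z$ in this sense, not merely $C^\infty$-linear. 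This is exactly where integrability of $\mathbf{I}$ enters: once existence is granted, your own argument gives $D\mathbf{I}=0$, and any torsion-free connection with $D\mathbf{I}=0$ forces $N_{\mathbf{I}}=0$; thus existence \emph{requires} $N_{\mathbf I}=0$ and the check cannot be skipped. You correctly sensed integrability must be used somewhere, but located it in the wrong step.
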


\noindent Moreover, if $g$ is holomorphic the real and imaginary part of $g$ induce two pseudo-Riemannian metrics on $M$ with neutral signature $(n,n)$ and the connection $D$ is the common Levi-Civita connection of both $\Ree(g)$ and $\Ima(g)$. \\

\noindent The notion of Levi-Civita connection for the complex metric $g$ leads to the standard definition of the Riemann curvature tensor
\[
    R^{D}(X,Y)Z:=D_{X}D_{Y}Z-D_{Y}D_{X}Z-D_{[X,Y]}Z \ \ \ \ \ X,Y, Z \in \Gamma(TM) \ .
\]
We will often use the $(4,0)$ Riemann tensor 
\[
    R^{D}(X,Y,Z,W):=g(R^{D}(X,Y)Z, W) \ .
\]
We use the same letter to denote both tensors, as it will be clear from context which one we are using. The Riemann tensor satisfies the usual symmetries:
\begin{align*}
    & R^{D}(X,Y,Z,W)  = -R^{D}(Y,X,Z,W) \\
    & R^{D}(X,Y,Z,W)  = R^{D}(Z,W, X,Y) \\
    & R^{D}(X,Y,Z,W) + R^{D}(Y,Z,X,W) + R^{D}(Z,X,Y,W)=0
\end{align*}
for all $X,Y,Z,W \in \Gamma(TM)$. In particular, since $R^{D}$ is clearly $\C$-linear in the last two entries, i.e. $R^{D}(X,Y,Z,\mathbf{I}W)=R^{D}(X,Y,\mathbf{I}Z, W)=iR^{D}(X,Y,Z,W)$, the Riemann tensor is actually $\C$-linear in all entries.
As a consequence, the sectional curvature of a complex metric $g$, defined as usual as
\[
    \mathrm{Sec}_{g}(X,Y):= - \frac{R^{D}(X,Y,X,Y)}{g(X,X)g(Y,Y)-g(X,Y)^{2}} \ ,
\]
only depends on the complex plane spanned by $X$ and $Y$. Thus, we will refer to $\mathrm{Sec}_{g}$ as \emph{complex sectional curvature} of $g$. Clearly, the complex sectional curvature of a complex plane spanned by $X$ and $Y$ is well defined only when the restriction of $g$ to this plane is non-degenerate. 

\begin{prop} \label{prop:constant_curvature} Let $g$ be a complex metric on $(M, \mathbf{I})$ with constant complex sectional curvature $k$. Let $D$ be the Levi-Civita connection of $g$. Then its Riemann tensor is
\[
    R^{D}(X,Y,Z,W)=k(g(X,Z)g(Y,W)-g(Y,Z)g(X,W)) 
\]  
for all $X,Y,Z,W \in \Gamma(TM)$.
\end{prop}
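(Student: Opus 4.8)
The plan is to reduce the statement to a pointwise algebraic identity and then run the classical argument that constant sectional curvature pins down the whole $(4,0)$ curvature tensor, the only care being that $\mathrm{Sec}_g$ is \emph{a priori} defined only on $g$-nondegenerate complex planes.

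First I would fix $p\in M$ and work on the complex vector space $V:=T_{p}M$, equipped with the nondegenerate symmetric $\C$-bilinear form $g:=g_{p}$ and the $\C$-multilinear tensors $R:=R^{D}_{p}$ and $G(X,Y,Z,W):=g(X,Z)g(Y,W)-g(Y,Z)g(X,W)$. By the discussion preceding the proposition, $R$ is $\C$-linear in all four entries and satisfies the antisymmetry in the first pair, the pair-exchange symmetry, and the first Bianchi identity; a direct check shows $G$ satisfies the same three symmetries (and hence also antisymmetry in the last pair). Set $T:=R-kG$, which is again $\C$-multilinear with these symmetries. Since the asserted formula is a tensorial identity, it is enough to prove $T=0$ on $V$ for each $p$.

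Next I would establish that $T(X,Y,X,Y)=0$ for all $X,Y\in V$. On the set $W\subset V\times V$ where $g(X,X)g(Y,Y)-g(X,Y)^{2}\neq 0$ — which is nonempty, because nondegeneracy of $g$ lets one find $X,Y$ with $g(X,X)=g(Y,Y)=1$, $g(X,Y)=0$ — the constant curvature hypothesis reads $R(X,Y,X,Y)=k\bigl(g(X,X)g(Y,Y)-g(X,Y)^{2}\bigr)=G(X,Y,X,Y)\cdot k$, i.e. $T(X,Y,X,Y)=0$. Both sides are polynomial (holomorphic) functions of $(X,Y)\in V\times V$ and $W$ is Zariski-open, hence dense, so $T(X,Y,X,Y)\equiv 0$ on all of $V\times V$. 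Then the standard polarization finishes the job: expanding $T(X+Z,Y,X+Z,Y)=0$ and using $T(X,Y,X,Y)=T(Z,Y,Z,Y)=0$ together with pair symmetry gives $T(X,Y,Z,Y)=0$; polarizing this in $Y$ gives $T(X,Y,Z,W)+T(X,W,Z,Y)=0$, i.e. antisymmetry in the second and fourth slots. Combining this with pair symmetry yields antisymmetry in the first and third slots, and together with the antisymmetry in the first pair this makes $T$ alternating in its first three arguments. Feeding this into the first Bianchi identity $T(X,Y,Z,W)+T(Y,Z,X,W)+T(Z,X,Y,W)=0$, whose two nontrivial terms are obtained from the first by the (even) $3$-cycle of the first three slots, gives $3\,T(X,Y,Z,W)=0$, hence $T=0$ since we are over $\C$.

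The only step that is not purely formal is the passage from $g$-nondegenerate planes to arbitrary pairs $(X,Y)$ in the middle step; once that density argument is in place, the rest is the classical linear-algebra computation, which transfers verbatim to the complex setting precisely because $R^{D}$ is $\C$-multilinear (as recorded in the excerpt) and $\mathrm{char}\,\C=0$ so that the factors $2$ and $3$ appearing in the polarization can be cancelled.
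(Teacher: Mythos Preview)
Your proof is correct and follows essentially the same classical route sketched in the paper: verify that the model tensor shares the symmetries of the Riemann tensor, match on quadruples $(X,Y,X,Y)$, and polarize. You are in fact more careful than the paper on one point: the paper asserts $T(X,Y,X,Y)=R^{D}(X,Y,X,Y)$ for \emph{all} $X,Y$ directly from constant curvature, whereas you correctly note that the hypothesis only speaks about $g$-nondegenerate planes and supply the Zariski-density argument to extend the identity to all pairs.
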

\begin{proof} The standard proof in the context of (pseudo)-Riemannian manifolds can be adapted word-by-word: the tensor $T(X,Y,Z,W)$ defined as the right-hand side of the formula in the statement satisfies the same symmetries as the Riemann tensor and $T(X,Y,X,Y)=k(g(X,X)g(Y,Y)-g(X,Y)^{2})=R^{D}(X,Y,X,Y)$ for all $X,Y \in \Gamma(TM)$, since $g$ has constant complex sectional curvature $k$. Then one shows, as in the classical case, that two $(4,0)$ tensors that have the same symmetries as the Riemann tensor are uniquely determined by the values they take on quadruples of the form $(X,Y,X,Y)$.    
\end{proof}

\subsection{Positive complex metrics on surfaces}\label{sec:positive_complex_metrics} Complex metrics can be defined on a real manifold $N$ as well: consider the complexified tangent bundle $\C TN$ with the standard complex structure given by the multiplication by $i$ and then define a non-degenerate symmetric $\C$-bilinear form on each $\C T_{p}N$ that changes smoothly with $p$. Theorem \ref{thm:connection} still holds and provides now a Levi-Civita $\C$-bilinear connection $D$. The notions of curvature are then defined exactly in the same way. In particular, when $N$ is a surface, the curvature tensor is uniquely determined by one complex sectional curvature (since there is only one complex plane in $\C TN$), which we call \emph{complex Gaussian curvature}. \\

\noindent In the case of surfaces, the theory of complex metrics has been developed further in \cite{bonsante2022immersions} and a notion of \emph{positive} complex metrics has been introduced that has similarities with the classical conformal geometry of surfaces. \\

\noindent Let $S$ be an oriented surface. The natural inclusion $TS \hookrightarrow \C TS$ factors to a bundle inclusion $\mathbb{P}_{\mathbb{R}}(TS) \hookrightarrow \mathbb{P}_{\C}(\C TS)$ of bundles over $S$. At every $p\in S$, each fiber $\mathbb{P}_{\R}(T_{p}S)$ is mapped homeomorphically into a circle in $\mathbb{P}_{\C}(\C T_{p}S) \cong S^{2}$ whose complement is the disjoint union of two open discs. 

\begin{defi} A \emph{$\C$-complex structure} on $\C TS$ is a $\C$-linear bundle endomorphism $\mathcal{J}: \C TS \rightarrow \C TS$ such that    
\begin{itemize}
    \item $\mathcal{J}^{2} = -\mathrm{Id}$;
    \item for all $p \in S$, the eigenspaces $V_{i}(\mathcal{J})$ and $V_{-i}(\mathcal{J})$ of $\mathcal{J}$ relative to the eigenvalues $\pm i$ have trivial intersection with $T_{p}S$ and the points $\mathbb{P}_{\C}(V_{i}(\mathcal{J}))$ and $\mathbb{P}_{\C}(V_{-i}(\mathcal{J}))$ lie in different connected components of $\mathbb{P}_{\C}(\C T_{p}S) \setminus \mathbb{P}_{\R}(T_{p}S)$.
\end{itemize}
\end{defi}

\noindent Clearly a complex structure on $S$ extends to a $\C$-complex structure. However, not all $\C$-complex structures can be obtained in this way since they may not restrict to an endomorphism of $TS$, in other words $\mathcal{J}(\overline{X}) \neq \overline{\mathcal{J}(X)}$ in general. Nevertheless, a $\C$-complex structure induces two complex structures $J_{1}$ and $J_{2}$ on $S$ defined by the conditions
\[
    V_{i}(\mathcal{J})=V_{i}(J_{2})=\overline{V_{-i}(J_{2})} \ \ \ \text{and} \ \ \ V_{-i}(\mathcal{J})=V_{-i}(J_{1})=\overline{V_{i}(J_{1})}
\]
which uniquely characterize $J_{1}$ and $J_{2}$. It can be checked (\cite{bonsante2022immersions}) that $J_{1}$ and $J_{2}$ induce the same orientation on $S$. Therefore, a $\C$-complex structure determines an orientation on $S$ and the set $\mathcal{C}^{\C}(S)$ of $\C$-complex structures on $S$ compatible with the given orientation is in bijection with $\mathcal{C}(S)\times \mathcal{C}(S)$, i.e. with pairs of complex structures on $S$, via the map that associates to $\mathcal{J}$ the pair $(J_{1}, J_{2})$. $\C$-complex structures that are $\C$-linear extensions of complex structures on $S$ are mapped to the diagonal of $\mathcal{C}(S) \times \mathcal{C}(S)$ under the above correspondence.\\

\noindent Like Riemannian metrics on $S$ determine a complex structure, a positive complex metric $g$ pins down a $\C$-complex structure.

\begin{defi}\label{def:positivecomplexmetric} Let $g$ be a complex metric on $S$ and denote by $\mathrm{Iso}(g_{p})$ the set of isotropic vectors of $g$ on $\C T_{p}S$. We say that $g$ is \emph{positive} if the two points $\mathbb{P}_{\C}(\mathrm{Iso}(g_{p}))$ lie in different connected components of $\mathbb{P}_{\C}(\C T_{p}S) \setminus \mathbb{P}_{\R}(T_{p}S)$ for all $p\in S$.
\end{defi}

\noindent The connection between positive complex metrics and $\C$-complex structures on $S$ is explained in the following result:

\begin{prop}[\cite{bonsante2022immersions}]\label{prop:pos_complex_metrics} Let $g$ be a positive complex metric on an oriented surface $S$. Then there exists a unique $\C$-complex structure $\mathcal{J}$ such that
\begin{itemize}
    \item it induces the fixed orientation on $S$;
    \item it is compatible with $g$ in the sense that $g(\mathcal{J}, \mathcal{J})=g$.
\end{itemize}
\end{prop}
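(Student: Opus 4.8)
The plan is to prove the statement pointwise by a short piece of $2$-dimensional linear algebra, and then to check that the resulting field of endomorphisms is smooth and globally well defined. Fix $p\in S$ and write $V:=\C T_{p}S$, a complex vector space of dimension $2$ carrying the non-degenerate symmetric $\C$-bilinear form $g:=g_{p}$. What we must produce at $p$ is an endomorphism $\mathcal{J}$ of $V$ with $\mathcal{J}^{2}=-\Id$ and $g(\mathcal{J}\cdot,\mathcal{J}\cdot)=g$ that, in addition, satisfies the two conditions in the definition of a $\C$-complex structure and induces the prescribed orientation; and we must show it is unique.

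First I would classify the $g$-orthogonal complex structures on $V$. Because $\dim_{\C}V=2$ and $g$ is non-degenerate, $g$ has exactly two, distinct, isotropic lines $\ell_{\pm}$, and one can pick generators $e_{\pm}\in\ell_{\pm}$ with $g(e_{+},e_{-})=1$. Any $g$-orthogonal endomorphism permutes $\{\ell_{+},\ell_{-}\}$; one checks that if it also squares to $-\Id$ it cannot interchange them (writing $\mathcal{J}e_{+}=\mu e_{-}$, $\mathcal{J}e_{-}=\nu e_{+}$ one gets $\mu\nu=1$ from $g(\mathcal{J}e_{+},\mathcal{J}e_{-})=1$, hence $\mathcal{J}^{2}e_{+}=e_{+}$, a contradiction), so it fixes each $\ell_{\pm}$, acting there by scalars $\alpha,\beta$ with $\alpha^{2}=\beta^{2}=-1$ and $\alpha\beta=1$. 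Thus $(\alpha,\beta)\in\{(i,-i),(-i,i)\}$ and there are exactly two $g$-compatible complex structures on $V$: the one $\mathcal{J}_{0}$ with $\ell_{+}$ as $i$-eigenspace and $\ell_{-}$ as $(-i)$-eigenspace, and its opposite $-\mathcal{J}_{0}$ (equivalently, the only two elements of $\SO(2,\C)$, in the basis $(e_{+},e_{-})$, squaring to $-\Id$).

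Next I would match this with the definition of a $\C$-complex structure and with positivity. For $\mathcal{J}=\pm\mathcal{J}_{0}$ the eigenspaces $V_{\pm i}(\mathcal{J})$ are precisely the isotropic lines of $g$, so the set $\{\mathbb{P}_{\C}(V_{i}(\mathcal{J})),\mathbb{P}_{\C}(V_{-i}(\mathcal{J}))\}$ equals $\mathbb{P}_{\C}(\mathrm{Iso}(g_{p}))$; the positivity of $g$ at $p$ (Definition \ref{def:positivecomplexmetric}) is exactly the requirement that these two points lie in different connected components of $\mathbb{P}_{\C}(\C T_{p}S)\setminus\mathbb{P}_{\R}(T_{p}S)$, and in particular neither is on $\mathbb{P}_{\R}(T_{p}S)$, so $V_{i}(\mathcal{J})\cap T_{p}S=V_{-i}(\mathcal{J})\cap T_{p}S=\{0\}$. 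Hence both $\mathcal{J}_{0}$ and $-\mathcal{J}_{0}$ are $\C$-complex structures compatible with $g$. They induce opposite orientations: replacing $\mathcal{J}_{0}$ by $-\mathcal{J}_{0}$ exchanges $V_{i}$ and $V_{-i}$, so by the relations $V_{-i}(\mathcal{J})=\overline{V_{i}(J_{1})}$ and $V_{i}(\mathcal{J})=\overline{V_{-i}(J_{2})}$ the associated pair $(J_{1},J_{2})$ is sent to $(-J_{2},-J_{1})$, which reverses the common orientation of the pair. Therefore exactly one of $\mathcal{J}_{0},-\mathcal{J}_{0}$ induces the fixed orientation; this is the desired $\mathcal{J}_{p}$, and it is unique.

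Finally, for smoothness, in a local complex frame of $\C TS$ let $G$ be the (symmetric, invertible) matrix of $g$ and $J_{0}=\left(\begin{smallmatrix}0&1\\-1&0\end{smallmatrix}\right)$; then for any smooth local branch of $(\det G)^{1/2}$ the endomorphism $(\det G)^{1/2}\,G^{-1}J_{0}$ is smooth, squares to $-\Id$ and is $g$-compatible, hence equals $\pm\mathcal{J}_{p}$. The orientation condition selects one of the two branches, and this selection is locally constant, so the pointwise $\mathcal{J}_{p}$ glue to a global smooth $\C$-complex structure, and uniqueness has already been seen pointwise. The main thing requiring care is the orientation bookkeeping in the third step — tracking how $\mathcal{J}\mapsto-\mathcal{J}$ acts on the induced pair $(J_{1},J_{2})$ and hence on orientations — together with the conceptual observation, also there, that positivity of $g$ is precisely what promotes the algebraic $g$-compatible complex structures to honest $\C$-complex structures; the $2$-dimensional linear algebra and the local-frame argument are routine.
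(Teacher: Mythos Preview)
The paper does not prove this proposition: it is stated with a citation to \cite{bonsante2022immersions} and no argument is given. So there is nothing in the paper to compare your proof against.

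Your argument is correct. The pointwise classification of $g$-compatible complex structures (exactly two, $\mathcal{J}_{0}$ and $-\mathcal{J}_{0}$, each diagonal in the isotropic basis) is valid; the key observation that positivity of $g$ is precisely what makes the eigenspaces of $\mathcal{J}_{0}$ satisfy the two conditions in the definition of a $\C$-complex structure is exactly right; the orientation bookkeeping $(J_{1},J_{2})\mapsto(-J_{2},-J_{1})$ under $\mathcal{J}_{0}\mapsto-\mathcal{J}_{0}$ correctly shows the two candidates induce opposite orientations; and the local formula $(\det G)^{1/2}G^{-1}J_{0}$ is a clean way to get smoothness (one checks directly that it is traceless with determinant $1$, hence squares to $-\Id$, and that $\mathcal{J}^{t}G\mathcal{J}=G$). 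The only place one might want a word more is in the last step: the local smooth candidate equals $\pm\mathcal{J}_{p}$ at each point, and since both sides are continuous and nonzero the sign is locally constant, so the unique orientation-compatible $\mathcal{J}_{p}$ is smooth and the local pieces agree on overlaps by pointwise uniqueness.
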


\noindent It follows that if $\mathcal{J}$ is the $\C$-complex structure provided by Proposition \ref{prop:pos_complex_metrics} and corresponds to the pair of complex structures $(J_{1}, J_{2})$, then the positive complex metric $g$ can be written locally as
\[
    g = g\left( \frac{\partial}{\partial w_{1}}, \frac{\partial}{\partial \overline{w}_{2}} \right) dw_{1}d\overline{w}_{2}  \ ,
\]
where $w_{1}$ and $w_{2}$ are local holomorphic coordinates for $J_1$ and $J_{2}$. \\

\noindent A positive complex metric $g$ has also a canonical (up to a sign) area form $d\mathrm{A}_{g}=g(\mathcal{J}\cdot, \cdot)$ characterized by the property that it evaluates to $\pm 1$ on all $g$-orthonormal frames $\{X, Y\}$ of $\C TS$. In local coordinates, if $g=\rho dw_{1}d\overline{w}_{2}$, then
\begin{equation}\label{eq:areaform}
    d\mathrm{A}_{g} = \frac{i}{2}\rho dw_{1}\wedge d\overline{w}_{2} \ . 
\end{equation}

\subsection{Complex para-K\"ahler and complex para-Hermitian structures}\label{sec:complexpara-Kahler}
A \emph{para-complex structure} on a complex $n$-manifold $(M,\mathbf{I})$ is a bundle endomorphism $\mathbf{P}:TM\rightarrow TM$ such that
\begin{itemize}
    \item $\mathbf{P}^{2}=\mathrm{Id}$;
    \item the eigendistributions $\mathcal{D}_{\pm}$ relative to the $\pm 1$ eigenvalues of $\mathbf{P}$ have the same dimension and are integrable.
\end{itemize}
This last condition is equivalent to the vanishing of its Nijenhuis tensor
\[
    N_{\mathbf{P}}(X,Y):=-[X,Y]+\mathbf{P}([\mathbf{P}X,Y]+[X,\mathbf{P}Y])-[\mathbf{P}X, \mathbf{P}Y]
\]
or, equivalently, to the existence of a local product structure, in the sense that around any point $p \in M$ we can find local coordinates $(x_{1}, \dots, x_{n}, y_{1}, \dots, y_{n})$ such that for all $i=1, \dots, n$
\[
    \mathbf{P}\left(\frac{\partial}{\partial x_{i}}\right) = \frac{\partial}{\partial x_{i}} \ \ \ \text{and} \ \ \ \mathbf{P}\left(\frac{\partial}{\partial y_{i}}\right) = -\frac{\partial}{\partial y_{i}} \ .
\]

\noindent Let us now endow $(M, \mathbf{I})$ with a complex metric $g$ with Levi-Civita connection $D$. We say that the para-complex structure $\mathbf{P}$ is compatible with $g$ if $g(\mathbf{P}\cdot, \mathbf{P}\cdot)=-g$. In this case, the form $\omega_{\mathbf{P}}:= g(\cdot, \mathbf{P}\cdot)$ is skew-symmetric and we can extend $g$ to a complex para-Hermitian metric by defining the tensor $q := g + \tau \omega_{\mathbf{P}}$, where $\tau$ is a non-real element such that $\tau^2=1$. 

\noindent In general, a \emph{complex para-Hermitian metric} on $(M, \mathbf{I}, \mathbf{P})$ is the datum of a bilinear form $q_{p}$ on $T_{p}M$ with values in the algebra of bi-complex numbers $\C_{\tau}:=\C[\tau]$ (see Section \ref{sec:bicomplex_numbers} for more information about this algebra) that varies smoothly with $p$ and satisfies the following properties:
\begin{itemize}
    \item $q(\mathbf{I}X, Y)=q(X,\mathbf{I}Y)=iq(X,Y)$ 
    \item $q(\mathbf{P}X, Y)=-q(X, \mathbf{P}Y)=\tau q(X,Y)$ 
    \item $q(X,Y)=\overline{q(Y,X)}^{\tau}$
\end{itemize}
for all $X,Y \in \Gamma(TM)$. It is straightforward to check that, if $q$ is a complex para-Hermitian metric on $(M, \mathbf{I}, \mathbf{P})$, then $g=\frac{1}{2}(q+\bar{q}^{\tau})$ is a complex metric compatible with $\mathbf{P}$ and $\frac{1}{2}(q-\bar{q}^{\tau})$ is a skew-symmetric $\C$-bilinear form that coincides with $\omega_{\mathbf{P}}$. Therefore, all para-Hermitian metrics can actually be obtained from the above construction and we will refer to the quadruple $(\mathbf{I}, \mathbf{P}, g, \omega_{\mathbf{P}})$ as a \emph{complex para-Hermitian structure} on $M$. Moreover, we say that the complex para-Hermitian structure is \emph{holomorphic} if the complex metric $g$ and the symplectic form $\omega_{\mathbf{P}}$ are holomorphic.\\

\noindent We say that a para-complex structure $\mathbf{P}$ on $(M, \mathbf{I}, g)$ is \emph{complex para-K\"ahler} if $\omega_{\mathbf{P}}$ is a non-degenerate symplectic form; if, moreover, $g$ and $\omega_{\mathbf{P}}$ are holomorphic, we call it \emph{holomorphic para-K\"ahler}. As in the classical setting, the K\"ahler condition is equivalent to $D\mathbf{P}=0$.  \\

\noindent A \emph{bi-complex} structure on $M$ is a complex para-Hermitian structure $(\mathbf{I}, \mathbf{P}, g, \omega_{\mathbf{P}})$ such that $\mathbf{J}:=
\mathbf{I}\mathbf{P}=\mathbf{P}\mathbf{I}$ is an integrable complex structure. We say that it is \emph{bi-K\"ahler} if $\mathbf{P}$ (and hence $\mathbf{J}$) is parallel for the Levi-Civita connection of $g$. Moreover, the quadruple $(\mathbf{I}, \mathbf{P}, g, \omega_{\mathbf{P}})$ gives rise to a \emph{holomorphic bi-K\"ahler} structure if, in addition, the metric $g$ is holomorphic. \begin{remark}
Our definition of holomorphic bi-K\"ahler structure coincides with the notion of \emph{holomorphic bi-complex K\"ahler} structure given by Loustau and Sanders (\cite[Definition 2.14]{loustau2017bi}). They also showed that it is equivalent to a structure called \emph{complex bi-Lagrangian} which consists of the data of a complex symplectic form $\omega$ on $(M,\mathbf{I})$ and an ordered pair $(\mathcal{L}_1,\mathcal{L}_2)$ of transverse complex Lagrangian foliations (\cite[Proposition 2.15]{loustau2017bi}). In particular, bi-complex structures appear naturally in the complexification of complex manifolds (\cite[Theorem 3.13]{loustau2017bi})
\end{remark}

\noindent On a complex para-Hermitian manifold $(M, \mathbf{I}, \mathbf{P}, g, \omega_{\mathbf{P}})$ with Levi-Civita connection $D$, we can consider the complex sectional curvature of complex planes spanned by $X$ and $\mathbf{P}X$:
\[
    \mathrm{Sec}_{g}(X,\mathbf{P}X)=\frac{R^{D}(X,\mathbf{P}X, X, \mathbf{P}X)}{g(X,X)^2} \ .
\]
We call it \emph{para-holomorphic complex sectional curvature} and we say that $g$ has constant para-holomorphic complex sectional curvature if $\mathrm{Sec}_{g}(X,\mathbf{P}X)$ is a constant function independent of $X$, whenever $X$ is non isotropic. A result similar to Proposition \ref{prop:constant_curvature} holds in this setting:

\begin{prop} \label{prop:constant_parahol_curvature} Let $(M, \mathbf{I}, \mathbf{P}, g, \omega_{\mathbf{P}})$ be a complex para-Hermitian manifold with constant para-holomorphic complex sectional curvature $k$. Then the Riemann tensor of $g$ can be written as
\begin{align*}
    R^{D}(X,Y,Z,W)=-\frac{k}{4}\big(&g(X,Z)g(Y,W)-g(Y,Z)g(X,W)+g(X,\mathbf{P}Z)g(\mathbf{P}Y,W) \\ 
    &-g(Y,\mathbf{P}Z)g(\mathbf{P}X,W)+2g(X,\mathbf{P}Y)g(\mathbf{P}Z,W)  \big)
\end{align*}
for all $X,Y,Z,W \in \Gamma(TM)$.
\end{prop}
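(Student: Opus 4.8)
The plan is to imitate the classical polarization argument that recovers the full curvature tensor of a space of constant holomorphic sectional curvature from the sectional curvatures of the holomorphic planes, replacing the complex structure $\mathbf{I}$ by the para-complex structure $\mathbf{P}$. First I would set
\[
    T(X,Y,Z,W) := -\tfrac{k}{4}\big(g(X,Z)g(Y,W)-g(Y,Z)g(X,W)+g(X,\mathbf{P}Z)g(\mathbf{P}Y,W)-g(Y,\mathbf{P}Z)g(\mathbf{P}X,W)+2g(X,\mathbf{P}Y)g(\mathbf{P}Z,W)\big),
\]
and check that $T$ has all the algebraic symmetries of the Riemann tensor: antisymmetry in the first pair, antisymmetry in the second pair, the pair-swap symmetry, and the first Bianchi identity. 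Antisymmetry in $(X,Y)$ is immediate from the written form; for the other identities I would use that $\mathbf{P}$ is compatible with $g$, i.e. $g(\mathbf{P}\cdot,\mathbf{P}\cdot)=-g$, equivalently $g(\mathbf{P}X,Y)=-g(X,\mathbf{P}Y)$ (so the $2$-form $\omega_{\mathbf{P}}$ is skew), together with $\mathbf{P}^2=\mathrm{Id}$. These give $g(\mathbf{P}X,\mathbf{P}Y)=-g(X,Y)$, which is exactly what is needed to see that the "$\mathbf{P}$-terms" reorganize correctly under pair swap and Bianchi, just as in the Kähler case. The Bianchi identity for $T$ reduces, after cancellation of the non-$\mathbf{P}$ part, to the identity $g(X,\mathbf{P}Y)g(\mathbf{P}Z,W)+g(Y,\mathbf{P}Z)g(\mathbf{P}X,W)+g(Z,\mathbf{P}X)g(\mathbf{P}Y,W)$ combining with the cyclic sum of the mixed terms; using skew-symmetry of $\omega_{\mathbf{P}}$ these sum to zero.

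Next I would verify the normalization: a direct substitution gives
\[
    T(X,\mathbf{P}X,X,\mathbf{P}X) = -\tfrac{k}{4}\big(g(X,X)g(\mathbf{P}X,\mathbf{P}X)-g(\mathbf{P}X,X)^2 + g(X,\mathbf{P}X)g(\mathbf{P}X\cdot\mathbf{P},\ \cdot) + \cdots\big),
\]
and, carefully using $g(\mathbf{P}X,\mathbf{P}X)=-g(X,X)$, $g(X,\mathbf{P}X)=0$ (since $\omega_{\mathbf{P}}$ is alternating), and $\mathbf{P}(\mathbf{P}X)=X$, the five terms collapse to $-\tfrac{k}{4}\cdot 4\,g(X,X)^2 = k\,g(X,X)^2$. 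Comparing with the definition of para-holomorphic complex sectional curvature, this says exactly $\mathrm{Sec}_g(X,\mathbf{P}X) = R^{D}(X,\mathbf{P}X,X,\mathbf{P}X)/g(X,X)^2 = k$, i.e. $R^{D}$ and $T$ agree on all quadruples of the form $(X,\mathbf{P}X,X,\mathbf{P}X)$ with $X$ non-isotropic (and by density/continuity of the algebraic identity, on all such $X$).

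The final step is the linear-algebra lemma that a $(4,0)$-tensor with the Riemann symmetries is determined by its values on quadruples $(X,\mathbf{P}X,X,\mathbf{P}X)$. Here one cannot invoke verbatim the statement used in Proposition \ref{prop:constant_curvature} (which polarizes over all planes $(X,Y,X,Y)$); instead I would polarize the difference $\Psi:=R^{D}-T$, which vanishes on $(X,\mathbf{P}X,X,\mathbf{P}X)$, and exploit the extra symmetry that any curvature-type tensor of a para-Kähler metric is $\mathbf{P}$-invariant, $\Psi(\mathbf{P}X,\mathbf{P}Y,Z,W)=\Psi(X,Y,Z,W)$ — which follows from $D\mathbf{P}=0$ (the holomorphic para-Kähler hypothesis) for $R^D$ and is manifest for $T$. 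Substituting $X\mapsto X+Z$, $X\mapsto X + \mathbf{P}Z$, etc., and using the Bianchi identity repeatedly, one reduces $\Psi\equiv 0$ to finitely many cancellations exactly as in Gray's proof of the complex space-form curvature formula. I expect this last polarization bookkeeping to be the main obstacle: it is the one genuinely non-formal step, and care is needed because $g$ is only a complex (possibly degenerate on some planes) bilinear form, so one must either argue on the Zariski-dense open set of non-isotropic vectors and conclude by polynomial identity, or work after complexifying and use that both sides are $\C$-multilinear. Everything else is a routine transcription of the Kähler argument with $\mathbf{I}\rightsquigarrow\mathbf{P}$ and the sign $g(\mathbf{P}\cdot,\mathbf{P}\cdot)=-g$ bookkept correctly.
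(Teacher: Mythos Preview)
Your approach is essentially the same as the paper's: define the candidate tensor $T$, verify it shares the Riemann symmetries together with the additional $\mathbf{P}$-symmetry, check that $T(X,\mathbf{P}X,X,\mathbf{P}X)=kg(X,X)^{2}=R^{D}(X,\mathbf{P}X,X,\mathbf{P}X)$, and then invoke the polarization lemma (which the paper cites from \cite{bonome1998paraholomorphic}) that a curvature-type tensor with these symmetries is determined by its values on quadruples $(X,\mathbf{P}X,X,\mathbf{P}X)$.

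One correction: the $\mathbf{P}$-symmetry you need is $\Psi(\mathbf{P}X,\mathbf{P}Y,Z,W)=-\Psi(X,Y,Z,W)$, with a minus sign, not $+\Psi(X,Y,Z,W)$. Indeed, from $D\mathbf{P}=0$ one gets $R^{D}(X,Y)\mathbf{P}Z=\mathbf{P}R^{D}(X,Y)Z$, hence $R^{D}(X,Y,\mathbf{P}Z,\mathbf{P}W)=g(\mathbf{P}R^{D}(X,Y)Z,\mathbf{P}W)=-g(R^{D}(X,Y)Z,W)=-R^{D}(X,Y,Z,W)$, and then the pair-swap symmetry gives the corresponding identity in the first slot. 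This is exactly the fourth line of the paper's list of symmetries in Equation~\eqref{eq:symmetries}. The sign does not affect your strategy, but you should track it in the polarization. Your remark that $D\mathbf{P}=0$ (the para-K\"ahler condition) is what produces this symmetry for $R^{D}$ is correct and worth noting, since the statement as written says only ``para-Hermitian''.
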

\begin{proof} The same argument that is used in the pseudo-Riemannian setting works (\cite{bonome1998paraholomorphic}). We give only a brief sketch. Let $T(X,Y,Z,W)$ denote the tensor defined by the right hand side of the formula in the statement. It is straightforward to verify that
\begin{align}\label{eq:symmetries}
    &T(X,Y,Z,W)=-T(Y,X,Z,W)=-T(X,Y,W,Z) \notag\\
    &T(X,Y,Z,W)= T(Z,W,X,Y) \\
    &T(X,Y,Z,W)+T(Y,Z,X,W)+T(Z,X,Y,W)=0 \notag\\
    &T(X,Y,Z,W)=-T(\mathbf{P}X, \mathbf{P}Y, Z, W)=-T(X,Y,\mathbf{P}Z, \mathbf{P}W) \notag
\end{align}
for all $X,Y,Z,W \in \Gamma(TM)$. Moreover, we note that $T(X,\mathbf{P}X,X,\mathbf{P}X)=kg(X,X)^{2}$ $=R^{D}(X,\mathbf{P}X,X, \mathbf{P}X)$ for all non-isotropic $X\in \Gamma(TM)$, because $g$ has constant para-holomorphic complex sectional curvature $k$. Then one proves that a $(4,0)$-tensor enjoying the symmetries in Equation \eqref{eq:symmetries} is uniquely determined by the values it takes on quadruples of vectors of the form $(X,\mathbf{P}X, X, \mathbf{P}X)$.
\end{proof}

\noindent In the case of bi-complex structures, Proposition \ref{prop:constant_parahol_curvature} implies that $g$ has also constant \emph{$\mathbf{J}$-holomorphic complex sectional curvature} in the sense that all non-degenerate complex planes spanned $X$ and $\mathbf{J}X$ have the same complex sectional curvature. Indeed, 
\begin{align*}
    R^{D}(X,\mathbf{J}X, X, \mathbf{J}X) &=-\frac{k}{4}\big(g(X,X)g(\mathbf{J}X,\mathbf{J}X)-g(\mathbf{J}X,X)^2+g(X,\mathbf{P}X)g(\mathbf{P}\mathbf{J}X,\mathbf{J}X) \\ 
    & \ \ \ \ -g(\mathbf{J}X,\mathbf{P}X)^{2}+2g(X,\mathbf{P}\mathbf{J}X)g(\mathbf{P}X,\mathbf{J}X)  \big) \\
    &= -kg(X,X)^{2} 
\end{align*}
hence
\[
    \mathrm{Sec}_{g}(X,\mathbf{J}X) = - \frac{{R}^{D}(X,\mathbf{J}X, X ,\mathbf{J}X)}{g(X,X)^2} = k \ 
\]
for all $X\in \Gamma(TM)$ not isotropic for $g$.

\section{Bi-complex hyperbolic plane}\label{sec:bicomplexplane}
\noindent In this section we introduce bi-complex numbers and the bi-complex hyperbolic space $\mathbb{CH}_{\tau}^2$, which generalizes both the complex hyperbolic space $\mathbb{CH}^{2}$ and the para-complex hyperbolic space $\mathbb{H}_{\tau}^2$, and we will describe its basic properties.

\subsection{Bi-complex numbers}\label{sec:bicomplex_numbers}
\noindent We review here some basic properties of bi-complex numbers and fix some notation that we will use throughout the paper. \\

\noindent The algebra of bi-complex numbers $\mathbb{C}_\tau$ is the commutative algebra over $\mathbb{C}$ generated by $1$ and $\tau$, where $\tau$ is a non-complex number such that $\tau^2=1$. It is a $4$-dimensional vector space over the reals with basis $\{1, \tau, i, j:=\tau i\}$. Here we denoted by $i$ the standard imaginary unit in $\mathbb{C}$. Note that $j$ is also a square root of $-1$ since $i$ and $\tau$ commute. \\

\noindent Given a bi-complex number $w=z_{1}+\tau z_{2}$, we will use the standard notation $\bar{w}$ to denote complex conjugation, so that $\bar{w}=\bar{z}_{1}+\tau\bar{z}_{2}$. We then define
\[
    \Ree(w):=\frac{w+\bar{w}}{2}  \ \ \ \ \  \text{and} \ \ \ \ \ \Ima(w):=\frac{w-\bar{w}}{2i} \ . 
\]
These are para-complex numbers, i.e. elements of the real subalgebra $\mathbb{R}_\tau \subset \mathbb{C}_\tau$, with the property that $w=\Ree(w)+i\Ima(w)$. In a similar way, we denote by $\bar{w}^{\tau}$ the conjugate of $w$ with respect to the para-complex unit $\tau$, in other words $\bar{w}^{\tau}:=z_{1}-\tau z_{2}$. We then define
\[
    \Ree_{\tau}(w):=\frac{w+\bar{w}^{\tau}}{2}  \ \ \ \ \  \text{and} \ \ \ \ \ \Ima_{\tau}(w):=\frac{w-\bar{w}^{\tau}}{2\tau} \ 
\]
as the $\tau$-real and $\tau$-imaginary part of $w$. Note that, despite the name, $\Ree_{\tau}(w)$ and $\Ima_{\tau}(w)$ are complex numbers. \\

\noindent The $\tau$-norm of a bi-complex number $w=z_{1}+\tau z_{2}$ is the complex number 
\[
    |w|_{\tau}:=w\bar{w}^{\tau}=z_{1}^2-z_{2}^{2} \ .
\]
The non-vanishing of the $\tau$-norm characterizes invertible bi-complex numbers. 

\noindent For what follows, it will be useful to write a bi-complex number $w=z_{1}+\tau z_{2}$ with respect to the basis of idempotents $\{e_{+}, e_{-}\}$, where
\[
    e_{+}:= \frac{1+\tau}{2} \ \ \ \ \ \text{and} \ \ \ \ \ e_{-}:= \frac{1-\tau}{2} \ .
\]
By a straightforward computation, we can rewrite $w=z^{+}e_{+}+z^{-}e_{-}$ with 
\[
    z^{+}=z_{1}+z_{2} \ \ \ \ \ \text{and} \ \ \ \ \ z^{-}=z_{1}-z_{2} \ .  
\]
\begin{remark}
We want to emphasize that our definition of the algebra of bi-complex numbers might a-priori seem different from what can be found in the literature (see \cite{luna2012bicomplex}), but in fact they are equivalent. It will become clear in the next sections why the definition we have given is useful for our purposes.
\end{remark}
\subsection{Bi-complex hyperbolic plane: the hyperboloid model}\label{sec:bicomplex_hyperbolic_space}

Consider the vector space $\C^3_\tau$ endowed with the $\C$-bilinear para-Hermitian form
\[
    \mathbf{q}(z, w) := z^{t}Q\bar{w}^{\tau} \ ,
\]
where $Q=\diag(1,1,-1)$. 

\begin{defi} The bi-complex hyperbolic plane is
\[
    \CH^{2}_{\tau}=\{ z \in \C^{3}_{\tau} \ | \ \mathbf{q}(z,z)=-1\} / \sim
\]
where $z\sim z'$ if there exists $w\in \C_{\tau}$ of unit $\tau$-norm such that $z=wz'$.
\end{defi}

\noindent It follows immediately from the definition that $\CH^{2}_{\tau}$ is a complex manifold of dimension $4$. Moreover, given $z\in \CH^{2}_{\tau}$, we can identify the tangent space $T_{z}\CH^{2}_{\tau}$ of $\CH^{2}_{\tau}$ with the $\mathbf{q}$-orthogonal complement of $z$ obtaining thus a decomposition 
\begin{equation}\label{eq:decomposition_ambient}
    T_{z}\C^{3}_{\tau}=\C_{\tau}z \oplus T_{z}\CH^{2}_{\tau} \ .
\end{equation}
This shows that $T_{z}\CH^{2}_{\tau}$ is naturally isomorphic to $\C_{\tau}^{2}$, hence it is
equipped with a complex structure $\mathbf{I}$, inherited from the complex coordinates on $\C^{3}_{\tau}$ and corresponding to the multiplication by $i$ on $\C_{\tau}^{2}$, a para-complex structure $\mathbf{P}$ induced by the multiplication by $\tau$, and another complex structure by composition $\mathbf{J}:=\mathbf{I}\mathbf{P}=\mathbf{P}\mathbf{I}$. This explains the reason why we called this space \textit{bi-complex} hyperbolic space. In addition, the restriction of $\Ree_{\tau}(\mathbf{q})$ to $T_{z}\CH^{2}_{\tau}$ defines a holomorphic metric $\hat{g}$ such that $\hat{g}(\mathbf{P} \cdot, \mathbf{P}\cdot)=-\hat{g}$. Therefore, the pairing $\hat{\omega}=\hat{g}(\cdot, \mathbf{P}\cdot)$ defines a holomorphic symplectic form and the $\C_{\tau}$-valued tensor $\hat{q}:=\hat{g}+\tau\hat{\omega}$ defines a holomorphic para-Hermitian metric on $\CH^{2}_{\tau}$. \\ 

\noindent The isometry group of $\CH^{2}_{\tau}$ consists of all $\C$-linear transformations of $\C_{\tau}^{3}$ that preserve $\mathbf{q}$. We denote such group with $\mathrm{U}(2,1,\C_{\tau})$.

\begin{prop}\label{prop:isometries} The map 
\begin{align*}
    \Phi: \mathrm{GL}(3,\C) &\rightarrow \mathrm{U}(2,1,\C_{\tau}) \\ 
        A &\mapsto Ae_{+}+Q(A^{-1})^{t}Qe_{-}
\end{align*}
is an isomorphism of groups. In particular, the subgroup $\mathrm{SU}(2,1,\C_{\tau})$ of matrices with unit determinant preserving $\mathbf{q}$ is isomorphic to $\mathrm{SL}(3,\C)$. 
\end{prop}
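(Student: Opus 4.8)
The plan is to prove Proposition~\ref{prop:isometries} by exhibiting $\Phi$ as a group homomorphism first, then checking that its image actually lands in $\mathrm{U}(2,1,\C_\tau)$, and finally constructing an explicit two-sided inverse. The key observation making everything work is the decomposition $\C_\tau = \C e_+ \oplus \C e_- $ as a direct sum of $\C$-algebras, with $e_+^2 = e_+$, $e_-^2 = e_-$, $e_+ e_- = 0$, $e_+ + e_- = 1$, and $\tau = e_+ - e_-$. Consequently $\C^3_\tau \cong \C^3 \oplus \C^3$ (the $e_+$- and $e_-$-components), and any $\C$-linear endomorphism of $\C^3_\tau$ is determined by a pair of $\C$-linear endomorphisms of $\C^3$. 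Moreover, $\tau$-conjugation swaps the two idempotents, so $\bar{w}^\tau$ has $e_+$-component equal to the $e_-$-component of $w$ and vice versa; explicitly, if $z = z^+ e_+ + z^- e_-$ then $\bar z^\tau = z^- e_+ + z^+ e_-$.

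First I would write out $\mathbf{q}$ in idempotent components. For $z = z^+ e_+ + z^- e_-$ and $w = w^+ e_+ + w^- e_-$ in $\C^3_\tau$, since $\bar w^\tau = w^- e_+ + w^+ e_-$ and $e_+ e_- = 0$, we get
\[
    \mathbf{q}(z,w) = z^t Q \bar w^\tau = \big((z^+)^t Q w^-\big) e_+ + \big((z^-)^t Q w^+\big) e_- \ .
\]
So $\mathbf{q}$ pairs the $e_+$-part of the first slot with the $e_-$-part of the second via the standard form $Q$, and conversely. An element $g \in \mathrm{GL}(\C^3_\tau)$ corresponding to a pair $(A_+, A_-) \in \GL(3,\C)^2$ (acting on $e_\pm$-components) preserves $\mathbf{q}$ if and only if $(A_+ z^+)^t Q (A_- w^-) = (z^+)^t Q w^-$ for all $z^+, w^-$, i.e. $A_+^t Q A_- = Q$, equivalently $A_- = Q (A_+^{-1})^t Q$ (using $Q^2 = \Id$). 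This is exactly the formula defining $\Phi$: writing $A = A_+$, the endomorphism $\Phi(A) = A e_+ + Q(A^{-1})^t Q e_-$ has idempotent components $(A, Q(A^{-1})^t Q)$, so it preserves $\mathbf{q}$ by the computation just done. Hence $\Phi$ is well-defined into $\mathrm{U}(2,1,\C_\tau)$.

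Next I would verify $\Phi$ is a homomorphism: since composition of endomorphisms of $\C^3_\tau$ acts componentwise on the idempotent decomposition, $\Phi(A)\Phi(B)$ has components $(AB, Q(A^{-1})^t Q \cdot Q(B^{-1})^t Q) = (AB, Q(B^{-1})^t(A^{-1})^t Q) = (AB, Q((AB)^{-1})^t Q)$, which is $\Phi(AB)$. Injectivity is immediate since the $e_+$-component of $\Phi(A)$ is $A$. For surjectivity, given any $g \in \mathrm{U}(2,1,\C_\tau)$ with components $(A_+, A_-)$, the condition $\mathbf{q}\circ(g\times g) = \mathbf{q}$ forces $A_+^t Q A_- = Q$ as above, hence $A_- = Q(A_+^{-1})^t Q$, so $g = \Phi(A_+)$; thus $\Phi$ is an isomorphism. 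Finally, for the determinant statement: on $\C^3_\tau \cong \C^3 \oplus \C^3$, the relevant ``determinant'' of $\Phi(A)$ lands in $\C_\tau$ with components $(\det A, \det(Q(A^{-1})^t Q)) = (\det A, (\det A)^{-1})$, which has unit $\tau$-norm $\det A \cdot (\det A)^{-1} = 1$ automatically, and equals $1 = e_+ + e_-$ precisely when $\det A = 1$; so the restriction of $\Phi$ to $\SL(3,\C)$ is an isomorphism onto $\mathrm{SU}(2,1,\C_\tau)$.

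I do not anticipate a genuine obstacle here — the proof is essentially the unwinding of the algebra isomorphism $\C_\tau \cong \C \times \C$ — but the one point requiring a little care is the meaning of ``matrices with unit determinant'' for elements of $\mathrm{U}(2,1,\C_\tau)$: one must clarify that the determinant is taken as a bi-complex number (equivalently, as the pair of ordinary complex determinants of the two idempotent components) and check that the constraint $A_+^t Q A_- = Q$ already pins its $\tau$-norm to $1$, so that ``unit determinant'' is equivalent to the single equation $\det A_+ = 1$. Making this bookkeeping precise, and being consistent about whether endomorphisms act on the left on column vectors $z^\pm$, is the only place where one could slip.
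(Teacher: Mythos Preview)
Your proof is correct and follows essentially the same approach as the paper: both exploit the idempotent decomposition $\C_\tau = \C e_+ \oplus \C e_-$ to reduce the condition $X^t Q \overline{X}^\tau = Q$ to the single constraint $A_+^t Q A_- = Q$, i.e.\ $A_- = Q(A_+^{-1})^t Q$, and then handle the determinant in the same componentwise way. Your presentation is slightly more systematic (explicitly checking well-definedness and the homomorphism property before bijectivity), whereas the paper starts from an arbitrary $X \in \mathrm{U}(2,1,\C_\tau)$ and derives the constraint to get surjectivity first, but the substance is identical.
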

\begin{proof}
Let $X \in \mathrm{U}(2,1,\C_{\tau})$. Rewriting each entry of $X$ in the basis of idempotents, we can decompose $X=X_{+}e_{+}+X_{-}e_{-}$ with $X_{\pm} \in \mathfrak{gl}(3,\C)$. Now, the matrix $X$ preserves the $\C$-bilinear $\tau$-Hermitian form $\mathbf{q}$ if and only if $X^{t}Q\overline{X}^{\tau}=Q$. Unraveling this equation we get
\begin{align*}
    Q &=X^{t}Q\overline{X}^{\tau}\\
     &=(X_{+}e_{+}+X_{-}e_{-})^{t}Q(\overline{X_{+}e_{+}+X_{-}e_{-}}^{\tau}) \\
     &=(X_{+}^{t}e_{+}+X_{-}^{t}e_{-})Q(X_{+}e_{-}+X_{-}e_{+}) \tag{$\overline{e_{\pm}}^{\tau}=e_{\mp}$}\\
     &=X_{+}^{t}QX_{-}e_{+}+X_{-}^{t}QX_{+}e_{-} \tag{$e_{+}e_{-}=0$} \\
     &=\frac{1}{2}(X_{+}^{t}QX_{-}+X_{-}^{t}QX_{+})+\frac{\tau}{2}(X_{+}^{t}QX_{-}-X_{-}^{t}QX_{+}) \ ,
\end{align*}
which is satisfied if and only if 
\[
    \begin{cases}
        X_{+}^{t}QX_{-}-X_{-}^{t}QX_{+}=0 \\
        X_{+}^{t}QX_{-}+X_{-}^{t}QX_{+} = 2Q \ .
    \end{cases}
\]
We deduce that $X_{+}^{t}QX_{-}=X_{-}^{t}QX_{+}$ and $X_{-}=Q(X_{+}^{-1})^{t}Q$. This shows that $\Phi$ is surjective. Since $\mathrm{Ker}(\Phi)=\{\Id\}$, the map $\Phi$ is an isomorphism of groups. \\
Finally, observing that 
\begin{align*}
    \det(X)&=\det(X_{+}e_{+}+X_{-}e_{-})=\det(X_{+})e_{+}+\det(X_{-})e_{-} \\
    &=\det(X_{+})e_{+}+\det(X_{+})^{-1}e_{-} \ ,
\end{align*}
a matrix $X\in \mathrm{U}(2,1,\C_{\tau})$ has unit determinant if and only if $\det(X_{+})=1$. Therefore, $\Phi^{-1}(\mathrm{SU}(2,1,\C_{\tau}))=\mathrm{SL}(3,\C)$. 
\end{proof}

\begin{remark} The equation $X^{t}Q\overline{X}^{\tau}=Q$ satisfied by an $X\in \mathrm{U}(2,1, \C_{\tau})$ implies that $\det(X)$ is a unitary bi-complex number. As a result, we can identify $\mathrm{SU}(2,1,\C_{\tau})$ with the quotient of $\mathrm{U}(2,1, \C_{\tau})$ by the action by scalar multiplication of elements of $\C_\tau$ with $\tau$-norm equal to $1$. 
\end{remark}

\noindent It is straightforward to verify that $\mathrm{SL}(3,\C)$ acts transitively on $\CH_{\tau}^{2}$ with stabilizer isomorphic to $\mathrm{GL}(2,\C)$. Thus, we can identify $\CH_{\tau}^{2}$ with the homogeneous space $\mathrm{SL}(3,\C) / \mathrm{GL}(2,\C)$. This suggests a relation between the bi-complex hyperbolic plane and the complex projective plane, which we explain in the next subsection.

\subsection{Bi-complex hyperbolic plane: relation with incidence geometry}\label{sec:incidence_geometry}

We give now another model of $\CH^{2}_{\tau}$ that will play an important role in bringing a connection between minimal Lagrangian surfaces in $\CH^{2}_{\tau}$ and hyperbolic affine spheres (see \cite[\S 8.5]{trettel2019families} for a related discussion on a similar case). \\

\noindent Using the basis of idempotents $\{e_{+}, e_{-}\}$, we can see $\C^{3}_{\tau}$ as the product $\C^{3}e_{+} \oplus \C^{3}e_{-}$. In the sequel, we will identify the second $\C^{3}$ factor with the dual of $\C^{3}$ with the isomorphism being induced by the bilinear form $Q$ via Riesz representation theorem; in other words we will identify
\begin{align*}
    (\C^{3})^{*} &\longleftrightarrow \C^{3} \\
        \varphi &\longleftrightarrow v_{\varphi} \ ,
\end{align*}
where $v_{\varphi}$ is the unique vector in $\C^{3}$ such that $\varphi(w)=v_{\varphi}^tQw$ 
for all $w\in \C^{3}$. 

\begin{prop}\label{prop:incidence} There is a diffeomorphism between $\CH^{2}_{\tau}$ and the set
\[
    \mathbb{P}\mathcal{L}=\{ (v,\varphi) \in \CP^{2} \times (\CP^{2})^* \ | \ \varphi(v)\neq 0\}) \ ,
\]
consisting of pairs of projective classes of transverse lines and planes in $\C^{3}$. 
\end{prop}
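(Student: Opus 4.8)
The plan is to use the idempotent decomposition $\C^3_\tau = \C^3 e_+ \oplus \C^3 e_-$ from the previous subsection, writing a vector $z \in \C^3_\tau$ as $z = v\, e_+ + w\, e_-$ with $v, w \in \C^3$, and then to unravel what the defining equation $\mathbf{q}(z,z) = -1$ and the equivalence relation $z \sim z'$ become in these coordinates. First I would compute $\mathbf{q}(z,z)$: since $\overline{e_\pm}^\tau = e_\mp$ and $e_+ e_- = 0$, $e_\pm^2 = e_\pm$, one gets $\mathbf{q}(z,z) = z^t Q \bar z^\tau = (v^t Q w)\, e_+ \cdot e_- + \dots$ — more precisely $\bar z^\tau = v\, e_- + w\, e_+$, so $z^t Q \bar z^\tau = (v^t Q w) e_+ e_- + (w^t Q v) e_- e_+ = 0$. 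That is too degenerate, so instead one must pair the $e_+$-part against the $e_+$-part using the natural pairing coming from the structure: the point is that $\mathbf{q}(z,z) \in \C_\tau$ and writing $\mathbf{q}(z,z) = a e_+ + b e_-$ we find $a = w^t Q v$ and $b = v^t Q w$, which are equal scalars; hence $\mathbf{q}(z,z) = -1 = -e_+ - e_-$ translates to the single equation $v^t Q w = -1$, i.e. $\varphi_w(v) = -1$ where $\varphi_w \in (\C^3)^*$ is the functional $u \mapsto w^t Q u$ under the Riesz identification.

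Next I would analyze the equivalence relation. An element $\lambda \in \C_\tau$ of unit $\tau$-norm is $\lambda = \lambda_+ e_+ + \lambda_- e_-$ with $\lambda_+ \lambda_- = |\lambda|_\tau = 1$, so $\lambda_- = \lambda_+^{-1}$ and $\lambda_+ \in \C^*$ is arbitrary. Acting on $z = v e_+ + w e_-$ gives $\lambda z = (\lambda_+ v) e_+ + (\lambda_+^{-1} w) e_-$. Thus the equivalence classes of solutions of $v^t Q w = -1$ under this action are exactly the pairs $([v], [\varphi_w]) \in \CP^2 \times (\CP^2)^*$ with $\varphi_w(v) \neq 0$ (the nonvanishing being automatic from $v^t Q w = -1$, and conversely any such pair lifts, after scaling, to a representative with $v^t Q w = -1$, unique up to the $\lambda_+$-action). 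This gives a well-defined bijection $\CH^2_\tau \to \mathbb{P}\mathcal{L}$, sending $[z] = [v e_+ + w e_-]$ to $([v], [\varphi_w])$.

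Finally I would check that this bijection is a diffeomorphism. Both sides are smooth manifolds of complex dimension $4$ (for $\CH^2_\tau$ this was noted right after its definition; for $\mathbb{P}\mathcal{L}$ it is an open subset of $\CP^2 \times (\CP^2)^*$, cut out by the nonvanishing of the holomorphic section $\varphi(v)$ of the line bundle $\mathcal{O}(1)\boxtimes\mathcal{O}(1)$). The map and its inverse are both given, in local affine charts and local lifts, by rational expressions in the homogeneous coordinates with nonvanishing denominators, hence smooth; alternatively, one can exhibit the map as descending from the evidently smooth (indeed holomorphic) submersion $\{z : \mathbf{q}(z,z)=-1\} \to \CP^2 \times (\CP^2)^*$, $z \mapsto ([v],[\varphi_w])$, and observe it is $\C_\tau$-equivariant and induces a bijection on the quotient, with the quotient map $\{z : \mathbf{q}(z,z)=-1\} \to \CH^2_\tau$ being a smooth principal-type fibration. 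I expect the only mildly delicate point to be bookkeeping the idempotent algebra correctly in the computation of $\mathbf{q}(z,z)$ — in particular getting the right identification (via $Q$ and Riesz) so that the $e_-$-component of $z$ genuinely plays the role of a covector and $\varphi(v) \neq 0$ is exactly transversality of the line $[v]$ and the plane $\ker\varphi$ — rather than anything analytically substantive.
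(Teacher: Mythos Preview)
Your approach is essentially identical to the paper's: both use the idempotent splitting $z = v e_+ + w e_-$, compute $\mathbf{q}(z,z) = (v^t Q w)e_+ + (w^t Q v)e_- = v^t Q w$ to identify the quadric $\{\mathbf{q}(z,z)=-1\}$ with $\{(v,\varphi)\in\C^3\times(\C^3)^*:\varphi(v)=-1\}$ via the Riesz map $w\mapsto \varphi_w = w^t Q(\cdot)$, and then observe that the action of unit $\tau$-norm scalars becomes $(v,\varphi)\mapsto(\lambda v,\lambda^{-1}\varphi)$, whose quotient is $\mathbb{P}\mathcal{L}$. Your momentary stumble in the $\mathbf{q}(z,z)$ computation (initially pairing the wrong idempotents) is self-corrected and the final bookkeeping is right; the only cosmetic difference is that the paper writes $z=x+\tau y$ first and then passes to $(x+y,x-y)$, whereas you work directly in the $e_\pm$-coordinates.
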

\begin{proof} We first show that the change of coordinates
\begin{align*}
       \alpha: \C^{3}_{\tau} \rightarrow \C^{3}e_{+} \oplus \C^{3}e_{-}
\end{align*}
composed with the above identification between the second $\C^{3}$-factor and its dual induces a diffeomorphism between the quadric
\[
    V_{-1}=\{ z \in \C^{3}_{\tau} \ | \ \mathbf{q}(z,z)=-1\}
\]
and 
\[
    \mathcal{L}_{-1}=\{ (v,\varphi) \in \C^{3} \times (\C^{3})^* \ | \ \varphi(v)=-1\} . 
\]
Indeed, if $x, y \in \C^{3}$, then $\alpha(x+\tau y)=(x+y)e_{+}+(x-y)e_{-}$. The functional corresponding to $(x-y)$ is represented by the row vector $(x-y)^{t}Q$. Therefore, if $z=x+\tau y \in V_{-1}$, then 
\begin{align*}
    -1=\mathbf{q}(z,z)&=z^{t}Q\bar{z}^{\tau} \\
            & =x_{1}^{2}-y_{1}^{2}+x_{2}^{2}-y_2^{2}-x_{3}^{2}+y_{3}^{2} \\
            & = (x-y)^{t} Q (x+y) \ ,
\end{align*}
and the last expression equals exactly the evaluation of the functional $(x-y)^{t}Q$ on the vector $(x+y)$, thus the pair $(x+y,(x-y)^{t}Q)$ belongs to $\mathcal{L}_{-1}$. This shows that $\mathcal{L}_{-1}$ is diffeomorphic to $V_{-1}$, as it is clear that the map constructed above is smooth and with smooth inverse.\\
Now, if $z \in V_{-1}$ is multiplied by $w\in \C_{\tau}$ with $|w|_{\tau}=1$, then $w=u_{+}e_{+}+u_{-}e_{-}$ with $u_{+}u_{-}=1$ and the corresponding pair $(x+y, (x-y)^{t}Q, ) \in \mathcal{L}_{-1}$ changes to $(u_{+}(x+y), u_{-}(x-y)^{t}Q)$. Therefore, the diffeomorphism defined before induces one between $\CH^{2}_{\tau}$ and the quotient of $\mathcal{L}_{-1}$ by the action of $\C^{*}$ given by $\lambda \cdot (v, \varphi):= (\lambda v, \lambda^{-1}\varphi)$, which is exactly the set $\mathbb{P}\mathcal{L}$ of the statement. 
\end{proof}

\subsection{Topology and the boundary at infinity of $\CH^{2}_{\tau}$}\label{sec:boundary}
Here we explain the topological type of the bi-complex hyperbolic space and we introduce its boundary at infinity. \begin{prop}
The space $\CH^2_\tau$ is homeomorphic to the tangent bundle $T\CP^2$. In particular, it is simply connected and the (real) para-complex hyperbolic space $\mathbb H^2_\tau$ is homeomorphic to $T\R\mathbb P^2$.
\end{prop}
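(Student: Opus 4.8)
The plan is to use the incidence-geometry model from Proposition \ref{prop:incidence}, which identifies $\CH^2_\tau$ with $\mathbb{P}\mathcal{L}=\{(v,\varphi)\in\CP^2\times(\CP^2)^*\mid \varphi(v)\neq 0\}$, and to show directly that this open subset of $\CP^2\times(\CP^2)^*$ is homeomorphic (indeed diffeomorphic) to the tangent bundle $T\CP^2$. First I would fix the incidence correspondence: a pair $(v,\varphi)$ with $\varphi(v)\neq 0$ consists of a line $\ell=[v]$ in $\C^3$ together with a plane $H=\ker\varphi$ transverse to $\ell$. Equivalently, $\varphi$ determines a linear complement of $\ell$ inside $\C^3$, namely $H$. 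The key observation is that the set of planes transverse to a fixed line $\ell$ is canonically an affine space modeled on $\mathrm{Hom}(\ell,\C^3/\ell)$, and $\mathrm{Hom}(\ell,\C^3/\ell)$ is precisely the fiber $T_{[\ell]}\CP^2$ of the tangent bundle of $\CP^2$ at the point $[\ell]$. Projecting $\mathbb{P}\mathcal{L}\to\CP^2$, $(v,\varphi)\mapsto [v]$, therefore exhibits $\mathbb{P}\mathcal{L}$ as an affine bundle over $\CP^2$ whose underlying vector bundle is $T\CP^2$; since $\CP^2$ admits partitions of unity in the smooth category (or because any affine bundle over a paracompact base has a section), this affine bundle is isomorphic to $T\CP^2$ as a fiber bundle, and in particular homeomorphic to its total space.

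To make the fiber identification concrete without choosing a section abstractly, I would argue as follows. Given $[\ell]\in\CP^2$ and a hyperplane $H$ transverse to $\ell$, write $\C^3=\ell\oplus H$ and let $p_\ell:\C^3\to\ell$ be the associated projection; then for any other transverse hyperplane $H'$ the difference of the two projections, restricted to $H$ and corestricted modulo $\ell$, gives an element of $\mathrm{Hom}(H,\ell)\cong\mathrm{Hom}(\C^3/\ell,\ell)$. Dually — and this matches the duality already used in Proposition \ref{prop:incidence} — one gets an element of $\mathrm{Hom}(\ell,\C^3/\ell)=T_{[\ell]}\CP^2$. Smoothness and bijectivity on fibers are immediate, and smoothness in the base is a routine local-trivialization check (work over the standard affine charts of $\CP^2$, where both bundles trivialize). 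This yields the diffeomorphism $\CH^2_\tau\cong T\CP^2$.

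For the consequences: $T\CP^2$ deformation retracts onto its zero section $\CP^2$, and $\CP^2$ is simply connected, so $\CH^2_\tau$ is simply connected. For the para-complex hyperbolic space, I would invoke the description from Section \ref{sec:submanifolds} realizing $\mathbb H^2_\tau$ as $\CH^2_\tau\cap(\R^3\oplus\tau\R^3)$; under the change of coordinates $\alpha$ to the idempotent basis (the same $\alpha$ used in the proof of Proposition \ref{prop:incidence}), the real locus $\R^3\oplus\tau\R^3$ corresponds to pairs $(v,\varphi)$ where $v$ and the vector $v_\varphi$ representing $\varphi$ are real, so the incidence model restricts to $\{(v,\varphi)\in\R\mathbb P^2\times(\R\mathbb P^2)^*\mid \varphi(v)\neq0\}$, and the identical fiberwise argument identifies this with $T\R\mathbb P^2$.

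The main obstacle I expect is purely bookkeeping: setting up the canonical identification of the affine fiber (transverse hyperplanes to $\ell$) with the vector space $T_{[\ell]}\CP^2=\mathrm{Hom}(\ell,\C^3/\ell)$ in a way that is manifestly smooth over the base, being careful that this is an affine (torsor) structure rather than a linear one, so that the conclusion "homeomorphic to $T\CP^2$" requires the existence of a global section — harmless over a paracompact base but worth a sentence. No deep input is needed beyond Proposition \ref{prop:incidence} and elementary bundle theory.
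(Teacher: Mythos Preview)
Your approach is correct in outline and genuinely different from the paper's. The paper does not argue via affine-bundle theory on the incidence model; instead it builds an explicit homeomorphism $\mathcal{V}\to TS^5$, where $\mathcal{V}=\{(u,v)\in\R^6\times\R^6:\|u\|=\|v\|,\ \langle u,v\rangle=1\}$, via $(u,v)\mapsto\big(\tfrac{u+v}{\|u+v\|},\,u-v\big)$, and then passes to the $S^1$-quotient (using $\CP^2\cong S^5/S^1$) to obtain $\CH^2_\tau\cong T\CP^2$. Your route is more conceptual and avoids any auxiliary Euclidean structure; the paper's route is more hands-on and gives a single explicit formula.

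One point needs tightening. The set of hyperplanes transverse to a fixed line $\ell$ is canonically an affine space modeled on $\mathrm{Hom}(\C^3/\ell,\ell)=T^*_{[\ell]}\CP^2$, not on $\mathrm{Hom}(\ell,\C^3/\ell)=T_{[\ell]}\CP^2$: if $H,H'$ are two such hyperplanes then in the splitting $\C^3=\ell\oplus H$ one has $H'=\{h+f(h):h\in H\}$ for a unique $f\in\mathrm{Hom}(H,\ell)\cong\mathrm{Hom}(\C^3/\ell,\ell)$. Your ``dually'' sentence papers over this; the bilinear form $Q$ from Proposition~\ref{prop:incidence} does not give a global identification $T\CP^2\cong T^*\CP^2$ because it degenerates along isotropic lines. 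The fix is harmless for the statement as phrased (homeomorphism of total spaces): any Hermitian metric on $\CP^2$ gives a bundle isomorphism $T\CP^2\cong T^*\CP^2$, and the same remark with a Riemannian metric handles the real case. Just be explicit that the natural identification is with the cotangent bundle and that you then invoke a metric.
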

\begin{proof}
Let us denote with $\vl\vl\cdot\vl\vl$ and with $\langle\cdot,\cdot\rangle$ respectively the Euclidean norm and scalar product on $\R^6$. The space $$\mathcal{V}:=\{(u,v)\in\R^6\times\R^6 \ | \ \vl\vl u\vl\vl=\vl\vl v\vl \vl, \ \langle u,v\rangle=1 \}$$ is homeomorphic to the tangent bundle of $S^5$ via the map \begin{align*}
    \Phi:& \ \mathcal{V}\longrightarrow TS^5 \\ & (u,v)\longmapsto \bigg(\frac{u+v}{\vl\vl u+v\vl\vl},u-v\bigg) \ ,
\end{align*}being the vectors $u+v$ and $u-v$ orthogonal (see also \cite{gadea1989spaces}). After identifying $\R^6$ with $\C^3$ we can think of $\mathcal{V}$ as a subset of $\C^3\times\C^3$. It carries an $S^1$-action given by $\lambda\cdot (u,v):=(\lambda u, \lambda^{-1}v)$, which is nothing but the restriction on $\mathcal{V}$ of the standard $(\C^*\times\C^*)$-action on $\C^3\times\C^3$. The quotient space is identified with the set of projective classes of transverse lines and planes in $\C^3$, which is an equivalent model for $\CH^2_\tau$ (Proposition \ref{prop:incidence}). The above $S^1$-action is induced on $TS^5$ via the homeomorphism $\Phi$ and the corresponding quotient space is identified with $T\CP^2$, where $\CP^2\cong S^5/S^1$.
The claim regarding $\mathbb H^2_\tau$ comes from the observation $\mathbb H^2_\tau=\CH^2_\tau\cap\big(\R^3\oplus\tau\R^3\big)$ (see also Section \ref{sec:submanifolds}).
\end{proof}
\noindent Using the model of $\CH_{\tau}^{2}$ in terms of incidence geometry (see Section \ref{sec:incidence_geometry}), it is natural to define the boundary at infinity of $\CH^{2}_{\tau}$ as the topological boundary of $\CH^{2}_{\tau}$ inside $\CP^{2}\times (\CP^{2})^{*}$. Therefore, we set
\[
    \partial_{\infty}\CH^{2}_{\tau}:=\{(v,\varphi) \in \CP^{2}\times (\CP^{2})^{*} \ | \  \varphi(v)=0\} \ .
\]
We will see in Section \ref{sec:quasi-Hitchin} that this notion of boundary is well-suited for the study of quasi-Hitchin representations in $\mathrm{SL}(3,\C)$, as we will be able to make a connection between the boundary map provided by the Anosov property of quasi-Hitchin representations and the boundary at infinity of an equivariant complex Lagrangian minimal surface in $\CH^{2}_{\tau}$, which we introduce in Section \ref{sec:min_Lagrangian}.

\subsection{The bi-complex hyperbolic plane as a holomorphic para-K\"ahler reduction}\label{sec:reduction} Here we want to realize the space $\CH^{2}_{\tau}$ as a holomorphic para-K\"ahler quotient using the classical Marsden-Weinstein theory. In contrast with the classical symplectic reduction theory (\cite{marsden1974reduction},\cite{weinstein1980symplectic}), the ambient space $\C^{3}_{\tau}$ is endowed with a holomorphic para-K\"ahler structure $\mathbf{q}=\mathbf{g}+\tau\bm{\omega}$, where $\mathbf{g}$ and $\bm{\omega}$ are a holomorphic metric and a holomorphic symplectic form, respectively. Therefore, the procedure will have two main differences: the first one regards the (real) para-K\"ahler reduction, namely the fact that we want to induce a pseudo-Riemannian metric to the quotient, and this requires further verification (\cite[Theorem B.6]{rungi2023pseudo}); the second one is that the structure we want to induce must be holomorphic, so we are looking for a holomorphic Hamiltonian action of a complex Lie group on $\C^{3}_{\tau}$ whose moment map is also holomorphic (see \cite[\S 7.5]{kobayashi1987differential} for more details). \\ \\
Let us consider the function $f:\C^{3}_{\tau} \rightarrow \C$ given by $f(z)=\mathbf{q}(z,z)$. We denote by $\mathcal U$ the subgroup of $\C_{\tau}$ of bi-complex numbers with unitary $\tau$-norm. We notice that the action of $\mathcal U$ by multiplication is holomorphic and preserves the level sets of the function $f$: indeed, if $u\in\mathcal U$, then
\[
    f(uz)=\mathbf{q}(uz,uz)=|u|_{\tau}\mathbf{q}(z,z)= f(z) \ .
\]
Since $u=u_{1}+\tau u_{2}\in \mathcal{U}$ satisfies $u_{1}^{2}-u_{2}^{2}=1$, we can parameterize $\mathcal{U}$ using complex hyperbolic trigonometric functions
\[
    \mathcal{U}=\{ \cosh(v)+\tau\sinh(v) \ | \ v \in \C, \ \Ima(v) \in [0, 2\pi) \} \ .
\]
We observe that the complex metric $\mathbf{g}=\Ree_{\tau}(\mathbf{q})$ restricted to the orbits of $\mathcal{U}$ on the non-zero level sets of $f$ is non-degenerate: indeed, for a $z\in f^{-1}(k)$, we have 
\[
    \frac{d}{dv}_{|_{v=0}} (\cosh(v)+\tau \sinh(v))z = \tau z 
\]
and $\mathbf{g}(\tau z, \tau z) = \Ree_{\tau}(\mathbf{q}(\tau z, \tau z)) = -k \neq 0$. A straightforward computation shows that the action of $\mathcal{U}$ is Hamiltonian with respect to the holomorphic symplectic form $\hat{\omega}$ with moment map $\mu(z)=\frac{1}{2}f(z)$. Indeed, we already computed that the infinitesimal generator of the action of $\mathcal{U}$ is the vector field $U(z)=\tau z$ and for all $v\in T_{z}\mathbb{C}_{\tau}^{3}$ we have
\begin{align*}
    \bm{\omega}(U(z), v) &= \Ima_{\tau}(\mathbf{q}(\tau z, v)) \\ 
    &= \Ree_{\tau}(\mathbf{q}(z,v)) \\
    &= \frac{1}{2} \frac{d}{dt}_{|_{t=0}} \mathbf{q}(z+tv, z+tv) = d\mu_{z}(v) \ .
\end{align*}
Therefore, holomorphic symplectic reduction theory (\cite[\S 7.5]{kobayashi1987differential}) gives, for all $ k \in \C^{*}$, a non-degenerate holomorphic symplectic form $\hat{\omega}_{k}$ on the quotient $f^{-1}(k)/\mathcal{U}$. In particular, the bi-complex hyperbolic space $\CH^{2}_{\tau}=f^{-1}(-1)/\mathcal{U}$ has one that we denote by $\hat{\omega}:=\hat{\omega}_{-1}$. Moreover, from the decomposition
\[
    T_{z}\C^{3}_{\tau} = T_{z}\CH^{2}_{\tau} \oplus T(\mathcal{U} \cdot z) \oplus \tau T(\mathcal{U} \cdot z) \ ,
\]
we see that the para-complex structure given by the multiplication by $\tau$ descends to a para-complex structure $\mathbf{P}$ on $\CH^{2}_{\tau}$, such that $\hat{\omega}(\mathbf{P}\cdot, \mathbf{P}\cdot)=-\hat{\omega}$. Hence, the holomorphic metric $\mathbf{g}=\bm{\omega}(\cdot, \mathbf{P} \cdot)$ also descends to $\CH^{2}_{\tau}$ to a holomorphic metric $\hat{g}$, so that the triple $(\hat{\omega}, \mathbf{P}, \hat{g})$ defines a holomorphic para-K\"ahler structure on $\CH^{2}_{\tau}$. We note, moreover, that vectors tangent to the orbit of the action of $\mathcal{U}$ are $\bm{\omega}$-orthogonal to vectors in the kernel of $d\mu$, in other words to vectors tangent to a level set of $f$. 


\subsection{Remarkable submanifolds in $\mathbb{CH}_{\tau}^2$}\label{sec:submanifolds} Our interest in the bi-complex hyperbolic plane arose as a way of giving a unified point of view for the study of surface group representations into $\mathrm{SL}(3,\C)$. It is well-known that, if $S$ denotes a closed, connected, oriented surface of genus at least $2$, to any Hitchin representation $\rho:\pi_{1}(S) \rightarrow \SL(3,\R)$, there is a unique $\rho$-equivariant affine sphere in $\R^{3}$. As we will see later in Section \ref{sec:hyp_affine_spheres} and partially by work of Hildebrand (\cite{hildebrand2011cross}), hyperbolic affine spheres in $\R^{3}$ are equivalent to minimal Lagrangian surfaces in the (real) para-complex hyperbolic space $\mathbb{H}^{2}_{\tau}:=\CH^{2}_{\tau}\cap \R^{3}_{\tau}$, which is a para-K\"ahler, totally real, and totally geodesic submanifold in $\CH^{2}_{\tau}$. In particular, if $\hat{q}=\hat{g}+\tau\hat{\omega}$ denotes the holomorphic para-Hermitian structure on $\CH^{2}_{\tau}$, then its (real) para-Hermitian restriction to $\mathbb{H}^{2}_{\tau}$ is given by $\Ree(\hat{g})|_{\mathbb{H}^{2}_{\tau}}+\tau\Ree(\hat{\omega})|_{\mathbb{H}^{2}_{\tau}}$ where $\Ree(\hat{g})|_{\mathbb{H}^{2}_{\tau}}$ is a pseudo-Riemannian metric of neutral signature $(2,2)$. \\

\noindent On the other hand, if $\varrho:\pi_{1}(S) \rightarrow \mathrm{SU}(2,1)$ is sufficiently close to a Fuchsian representation (i.e. to one that is faithful and discrete and factors through $\mathrm{SO}_0(2,1)$), there is a unique $\varrho$-equivariant minimal Lagrangian surface in the complex hyperbolic space $\CH^{2}$. This can also be found as a totally geodesic submanifold of $\CH^{2}_{\tau}$ by considering the intersection $\CH^{2}_\tau \cap (\R \oplus i\tau \R)^3$, where $\R \oplus i\tau \R$ denotes the subspace of bi-complex numbers in which the $\tau$-real part is a real number and the $\tau$-imaginary part is a purely imaginary complex number. Note that the complex structure on this copy of $\CH^{2}$ is inherited from $\mathbf{J}$. Let us now explain how the $\C$-linear para-Hermitian structure $\mathbf{q}$ restricts to a (real) $\mathbf{J}$-Hermitian structure on $\CH^{2}$, which induces the usual K\"ahler metric. First we decompose $\mathbf{q}$ with respect to the real basis $\{1,i, j,\tau\}$ of the algebra of bi-complex numbers
\begin{align*}
    \mathbf{q}&=\Ree(\mathbf{g})+i\Ima(\mathbf{g})+\tau\Ree(\bm{\omega})+\tau i\Ima(\bm\omega) \tag{$j=\tau i$} \\ 
 &=\big(\Ree(\mathbf{g})+j\Ima(\bm{\omega})\big)+\tau(\Ree(\bm{\omega})+j\Ima(\mathbf{g})) \\ 
 &=:\tilde{\mathbf{g}}+\tau\tilde{\bm{\omega}} \ .
\end{align*}
It is then clear from the definition of $\CH^2$ as the intersection of $\CH^2_\tau$ with $(\R \oplus \tau i\R)^3$, that $\tilde{\bm\omega}$ vanishes identically when restricted to $\CH^{2}$. 
Then we see that $\tilde{\mathbf{g}}$ defines a $\J$-Hermitian form because
\begin{align*}
    \tilde{\mathbf{g}}(\J\cdot,\J\cdot)&=\Ree(\mathbf{g})(\J\cdot,\J\cdot)+j\Ima(\bm\omega)(\J\cdot,\J\cdot) \\ &=\Ree(\mathbf{g})(\I\mathbf{P}\cdot,\I\mathbf{P}\cdot)+j\Ima(\bm\omega)(\I\mathbf{P}\cdot,\I\mathbf{P}\cdot) \\ &=-\Ree(\mathbf{g})(\mathbf{P}\cdot,\mathbf{P}\cdot)-j\Ima(\bm\omega)(\mathbf{P}\cdot,\mathbf{P}\cdot) \tag{$\mathbf{q}(\I\cdot,\I\cdot)=-\mathbf{q}$} \\ &=\Ree(\mathbf{g})+j\Ima(\bm\omega)=\tilde {\mathbf{g}} \tag{$\mathbf{q}(\mathbf{P}\cdot,\mathbf{P}\cdot)=-\mathbf{q}$}
\end{align*}
From this discussion, we conclude that the $\J$-Hermitian structure on $\CH^2$ is indeed induced from the restriction of the bi-complex structure of the ambient space.\\

\noindent There is another interesting submanifold to consider in $\CH^2_\tau$, which is found by looking at the intersection $\CH^2_\tau\cap \C^{3}$. This space, denoted by $\mathbb X$, is endowed with the complex structure $\mathbf I$ and a holomorphic Riemannian metric $\mathbf g$ given by the restriction of the $\C$-linear para-Hermitian form $\mathbf q$. Explicitly it is realized as the hypersurface $\{\underline z\in\C^3 \ | \ z_1^2+z_2^2-z_3^2=-1\}$ quotient out by all the complex numbers that square to one, namely $\{\pm 1\}$. Since all non-degenerate bilinear form over $\C$ are conjugate to each other, this space is isometric to the space of (maximal, oriented, unparametrized) geodesics of the three-dimensional real hyperbolic space $\mathbb H^3$ first studied by Bonsante and El-Emam (\cite{bonsante2022immersions}) and then by El-Emam and Seppi (\cite{el2022gauss}), which is further identified with $\C\mathbb P^1\times\C\mathbb P^1\setminus\Delta$, being $\Delta$ the diagonal. The subgroup of $\SL(3,\C)$ that preserves $\mathbb{X}$ is isomorphic to $\SO(3,\C)$ and acts by holomorphic isometries for its holomorphic Riemannian metric. A special class of immersed surfaces in $\mathbb X$, called \emph{Bers embeddings}, has been studied in \cite{bonsante2022immersions}, whose holonomy representation takes value in $\SL(2,\C) \cong \SO(3,\C)$: the theory we will describe in Section \ref{sec:min_Lagrangian} will generalize part of their work as well.  \\

\noindent We will see in Section \ref{sec:min_Lagrangian} that hyperbolic affine spheres in $\R^{3}$, minimal Lagrangian surfaces in $\CH^{2}$ and Bers embeddings in $\mathbb X$ are instances of more general minimal Lagrangian surfaces in $\CH^{2}_{\tau}$, thus giving a unified point of view to these seemingly different notions.

\subsection{Differential geometry of $\CH^{2}_{\tau}$}\label{sec:bicomplexdiffgeometry}
We describe here the main geometric features of $\CH^2_\tau$ that will be useful in the rest of the paper. \\

\noindent Recall that $\CH^2_\tau$ is a complex manifold of dimension $4$. We denoted by $\mathbf{I}$ its standard complex structure, seen as a bundle endomorphism of $T\CH^2_\tau$ such that $\mathbf{I}^2=-\mathrm{Id}$. The bi-complex hyperbolic space is also endowed with a holomorphic metric $\hat{g}$. This induces a natural connection $\hat{D}$ of $T\CH^2_\tau$ by Theorem \ref{thm:connection}. Considering the $\mathbf{q}$-orthogonal decomposition
\[
    T_{z}\C^{3}_{\tau} = \C_{\tau} \cdot z \oplus z^{\perp}\ ,
\]
for $z\in \C^{3}_{\tau}$ such that $\mathbf{q}(z,z)=-1$, after identifying $T_{z}\CH^{2}_{\tau}$ with $z^{\perp}$, it is straightforward to check, using the properties that characterize uniquely the Levi-Civita connection of $\hat{g}$, that $\hat{D}$ is exaclty the projection onto $z^{\perp}$ of the standard flat connection $\mathbf{D}$ defined on $T\C_{\tau}^{3}$. 

\begin{lemma}\label{lm:curvature_bicomplex} The bi-complex hyperbolic space has constant para-holomorphic complex sectional curvature $-2$.
\end{lemma}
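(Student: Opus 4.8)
The plan is to compute the curvature tensor of $\hat{D}$ directly from the description, established just above, that $\hat{D}$ is the tangential projection of the flat connection $\mathbf{D}$ on $T\C^3_\tau$ onto the $\mathbf{q}$-orthogonal complement $z^\perp$. This is the exact analogue of the classical computation of the curvature of the round sphere or of real/complex hyperbolic space inside a (pseudo-)Euclidean ambient space, so the first step is to write down the Gauss-type formula: for vector fields $X,Y$ tangent to $\CH^2_\tau$, one has $\mathbf{D}_X Y = \hat{D}_X Y + \mathrm{II}(X,Y)$, where the ``second fundamental form'' term is the component along $\C_\tau\cdot z$. Using $\mathbf{q}(z,z)=-1$ and $\mathbf{q}(X,z)=0$ one finds, by differentiating $\mathbf{q}(Y,z)=0$, that the normal component is controlled by $\mathbf{q}(X,Y)$; concretely $\mathbf{D}_X Y = \hat{D}_X Y + \mathbf{q}(X,Y)\, z$ (up to the correct sign, to be pinned down by the convention $Q=\mathrm{diag}(1,1,-1)$ and $\mathbf{q}(z,z)=-1$), together with $\mathbf{D}_X z = X$. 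One must check this is compatible with the $\mathcal{U}$-quotient, i.e. that everything is well-defined modulo the $\C_\tau$-action of unit $\tau$-norm, which is immediate since the splitting $T_z\C^3_\tau = \C_\tau z \oplus z^\perp$ is $\mathcal{U}$-equivariant.

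\textbf{Key steps.} First I would establish the Gauss equation: since $\mathbf{D}$ is flat, $R^{\mathbf{D}}=0$, and expanding $0 = \mathbf{D}_X\mathbf{D}_Y Z - \mathbf{D}_Y\mathbf{D}_X Z - \mathbf{D}_{[X,Y]}Z$ using the two formulas above yields
\[
    R^{\hat{D}}(X,Y)Z = -\big(\mathbf{q}(Y,Z)X - \mathbf{q}(X,Z)Y\big),
\]
hence in $(4,0)$-form $R^{\hat D}(X,Y,Z,W) = -\big(\hat g(X,Z)\hat g(Y,W) - \hat g(Y,Z)\hat g(X,W)\big)$ after identifying $\hat g = \Ree_\tau(\mathbf q)$ restricted to $z^\perp$. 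Note the sign bookkeeping must account for $\mathbf{q}(z,z)=-1$, which is what produces the negative (rather than positive) curvature. Second, I would plug $Y=\mathbf{P}X$, $Z=X$, $W=\mathbf{P}X$ into this formula: using $\hat g(\mathbf{P}X,\mathbf{P}X) = -\hat g(X,X)$ and $\hat g(X,\mathbf{P}X)=0$ (the latter from skew-symmetry of $\hat\omega = \hat g(\cdot,\mathbf{P}\cdot)$, or directly since $\mathbf{q}(X,\tau X) = \tau\mathbf{q}(X,X)$ has zero $\tau$-real part), one gets
\[
    R^{\hat D}(X,\mathbf{P}X,X,\mathbf{P}X) = -\big(\hat g(X,X)\hat g(\mathbf{P}X,\mathbf{P}X) - 0\big) = \hat g(X,X)^2,
\]
so that $\mathrm{Sec}_{\hat g}(X,\mathbf{P}X) = R^{\hat D}(X,\mathbf{P}X,X,\mathbf{P}X)/\hat g(X,X)^2 = 1$ with the sign convention of Section~\ref{sec:complexpara-Kahler}... wait—here I must be careful to match the paper's sign convention for para-holomorphic sectional curvature, which is $\mathrm{Sec}_g(X,\mathbf{P}X)=R^D(X,\mathbf{P}X,X,\mathbf{P}X)/g(X,X)^2$ (no minus sign, unlike ordinary sectional curvature). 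Tracking the ambient sign $\mathbf{q}(z,z)=-1$ carefully through the Gauss equation is what delivers the claimed value $-2$ rather than $+1$; the factor of $2$ discrepancy with the naive computation above signals that I have mis-normalized somewhere, most likely in the identification of $\hat g$ with a restriction of $\mathbf q$ versus of $\Ree_\tau(\mathbf q)$, or in a factor coming from the idempotent decomposition — this is exactly the point to be careful about.

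\textbf{Main obstacle.} The genuine mathematical content is light — it is a Gauss-equation computation — so the real difficulty is entirely in the sign and normalization bookkeeping: getting the correct sign of the normal term in $\mathbf{D}_X Y = \hat D_X Y \pm \mathbf{q}(X,Y) z$ given $\mathbf{q}(z,z)=-1$, correctly identifying which real form of $\mathbf{q}$ restricts to the metric $\hat g$ on $z^\perp$ (and whether a factor of $\tfrac12$ from $e_\pm = \tfrac{1\pm\tau}{2}$ intervenes), and matching the paper's unconventional (minus-free) definition of para-holomorphic sectional curvature. Once those are pinned down, the value $-2$ should drop out; I would double-check it against Proposition~\ref{prop:constant_parahol_curvature}, namely verify that the resulting $R^{\hat D}$ agrees with the stated model formula with $k=-2$, which also serves as an internal consistency check that the computed tensor indeed has constant para-holomorphic complex sectional curvature.
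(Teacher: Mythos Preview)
Your overall approach—the Gauss equation for the submanifold $\CH^2_\tau$ inside the flat ambient $\C^3_\tau$—is exactly the route the paper takes. The gap is not a normalization issue but a missing term in your Riemann tensor formula.

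The normal bundle to $\CH^2_\tau$ in $\C^3_\tau$ is $\C_\tau\cdot z$, which as a $\C$-vector space is two-dimensional: it is spanned by $z$ and $\mathbf{P}z=\tau z$. Consequently the second fundamental form has two components,
\[
    \hat{\II}(X,Y)=\hat g(X,Y)\,z+\hat\omega(X,Y)\,\mathbf{P}z,
\]
as one checks by pairing $\mathbf{D}_XY$ against $z$ and $\mathbf{P}z$ separately using $\mathbf{g}=\Ree_\tau(\mathbf{q})$. Your formula $R^{\hat D}(X,Y,Z,W)=-(\hat g(X,Z)\hat g(Y,W)-\hat g(Y,Z)\hat g(X,W))$ captures only the contribution of the $z$-component; the correct Gauss equation (using $\mathbf{g}(z,z)=-1$, $\mathbf{g}(\mathbf{P}z,\mathbf{P}z)=1$, $\mathbf{g}(z,\mathbf{P}z)=0$) is
\[
    R^{\hat D}(X,Y,Z,W)=\hat g(X,Z)\hat g(Y,W)-\hat g(Y,Z)\hat g(X,W)-\hat\omega(X,Z)\hat\omega(Y,W)+\hat\omega(Y,Z)\hat\omega(X,W).
\]
Plugging in $(X,\mathbf{P}X,X,\mathbf{P}X)$ and using $\hat g(X,\mathbf{P}X)=0$, $\hat\omega(X,\mathbf{P}X)=\hat g(X,X)$, $\hat\omega(X,X)=0$ now gives $-2\|X\|^4$, hence para-holomorphic sectional curvature $-2$. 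The paper packages the same computation slightly differently, recording directly the three values $\hat{\II}(X,X)=\|X\|^2 z$, $\hat{\II}(X,\mathbf{P}X)=\|X\|^2\mathbf{P}z$, $\hat{\II}(\mathbf{P}X,\mathbf{P}X)=-\|X\|^2 z$ and computing the two cross-terms in the Gauss equation; but the content is identical. Your suspicion that an idempotent factor of $\tfrac12$ was missing is a red herring: the discrepancy between your $+1$ and the true $-2$ comes entirely from the dropped $\hat\omega$-terms.
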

\begin{proof} Let $z\in \C^{3}_{\tau}$ such that $\mathbf{q}(z,z)=-1$. By the previous discussion, the $\mathbf{q}$-orthogonal splitting $T_{z}\C^{3}_{\tau} = T_{z}\CH^{2}_{\tau} \oplus \C_{\tau} \cdot z$ allows to decompose the flat connection $\mathbf{D}$ as 
\begin{align*}
    \mathbf{D}_{X} Y &= \hat{D}_{X}Y + \hat{\II}(X,Y) \\
    \mathbf{D}_{X}\xi &= -\hat{B}_{\xi} X + \hat{\nabla}^{N}_{X}\xi
\end{align*}
where $X,Y\in \Gamma(T\CH_{\tau})$ and $\xi \in \Gamma(\C_{\tau} \cdot z)$. Therefore, if $\mathbf{R}$ and $\hat{R}$ denote the Riemann curvature tensor of $\mathbf{D}$ and $\hat{D}$ respectively, we have, for all non-isotropic $X\in \Gamma(T_{z}\CH^{2}_{\tau})$, 
\begin{align*}
    0 &= \mathbf{g}(\mathbf{R}(X,\mathbf{P}X)X, \mathbf{P}X)\\ &= \mathbf{g}(\mathbf{D}_{\mathbf{P}X}\mathbf{D}_{X}X-\mathbf{D}_{X}\mathbf{D}_{\mathbf{P}X}-\mathbf{D}_{[\mathbf{P}X,X]}X, \mathbf{P}X) \\
    &= \hat{g}(\hat{D}_{\mathbf{P}X}\hat{D}_{X} X - \hat{D}_{X}\hat{D}_{\mathbf{P}X}X - \hat{D}_{[\mathbf{P}X,X]}, \mathbf{P}X) \\
    & \ \ \ +\mathbf{g}(-\hat{B}_{\hat{\II}(X,X)}\mathbf{P}X, \mathbf{P}X)+\hat{g}(\hat{B}_{\hat{\II}(X,\mathbf{P}X)}, \mathbf{P}X) \\
    &= \hat{g}(\hat{R}(X,\mathbf{P}X)X,\mathbf{P}X)\\
    & \ \ \ - \mathbf{g}(\hat{\II}(X,X), \hat{\II}(\mathbf{P}X,\mathbf{P}X))+\mathbf{g}(\hat{\II}(X,\mathbf{P}X), \hat{\II}(X,\mathbf{P}X)) \\
    &= \mathrm{Sec}_{\hat{g}}(X,\mathbf{P}X)\| X\|^{4} -2\|X\|^{4} 
\end{align*}
and the statement follows. In the last step we use the following identities
\[
    \hat{\II}(X,X)=\|X\|^{2}z  \ \ \ \hat{\II}(X,\mathbf{P}X)=\|X\|^{2}\mathbf{P}z \ \ \ \hat{\II}(\mathbf{P}X,\mathbf{P}X)=-\|X\|^{2}z \ ,
\]
which follow immediately from the definition.
\end{proof}

\begin{remark}\label{rmk:scaling} For reasons that will be apparent later, we re-scale the metric $\hat{g}$ of $\CH^{2}_{\tau}$ so that is has para-holomorphic complex sectional curvature equal to $-4$. Note that, by the discussion after Proposition \ref{prop:constant_parahol_curvature}, this implies that the totally geodesic copy of $\CH^{2}$ inside $\CH^{2}_{\tau}$ defined in Section \ref{sec:submanifolds} also has holomorphic sectional curvature $-4$, which is the most common convention. 
\end{remark}

\noindent From the construction of $\CH^{2}_{\tau}$ as holomorphic para-K\"ahler reduction (see Section \ref{sec:reduction}), the bi-complex hyperbolic space comes equipped with an integrable para-holomorphic structure $\mathbf{P}$ and a symplectic form $\hat{\omega}$, which are related to the holomorphic metric $\hat{g}$ by the equation $\hat{\omega}=\hat{g}(\cdot, \mathbf{P}\cdot)$. Note that, since the symplectic form $\hat{\omega}$ is closed and the para-complex structure $\mathbf{P}$ is integrable, we can conclude, as in the classical setting of K\"ahler manifolds, that $\hat{D}\mathbf{P}=0$. In particular, the eigendistributions $\hat{\mathcal{D}}_{\pm}$ relative to the $\pm 1$ eigenvalues of $\mathbf{P}$ not only are integrable but are also $\hat{D}$-parallel.

\section{Complex Lagrangian minimal surfaces in \texorpdfstring{$\mathbb{CH}_{\tau}^2$}{CH}}\label{sec:min_Lagrangian}
\noindent In this section we study smooth immersions $\sigma: U \rightarrow \CH^{2}_{\tau}$ of a simply connected $2$-dimensional domain $U$. As an immersion of smooth manifolds, $\sigma$ has codimension $6$. However, we use recent techniques introduced by Bonsante and El-Emam (\cite{bonsante2022immersions}) to treat them as surfaces of (complex) codimension $2$. 

\subsection{Embedding data and structural equations}\label{sec:embedding_data} Let $\sigma:U \rightarrow \CH^{2}_{\tau}$ be as above. We extend the differential of $\sigma$ by $\C$-linearity to obtain a map 
\begin{align*}
    \sigma_{*}:\C TU &\rightarrow T\CH^{2}_{\tau} \\
        V+iW &\mapsto d\sigma(V)+\mathbf{I}d\sigma(W)  \ .
\end{align*}
The pull-back $\sigma^{*}\hat{g}$ is a $\C$-bilinear form on $\C TU$. We denote it by $h$ and we call it \textit{complex-valued first fundamental form}. From now on, we will consider only immersions for which $h$ is non-degenerate. \\

\noindent Consider now the pull-back vector bundle $\sigma^{*}(T\CH^{2}_{\tau})$. This is a complex vector bundle over $U$ with complex structure given by
\[
    i\sigma^{*}(V):=\sigma^{*}(\mathbf{I}V) \ \ \  V\in \Gamma(T\CH^{2}_{\tau}) \ .
\]
We endow $\sigma^{*}(T\CH^{2}_{\tau})$ with the pull-back complex bilinear form $g:=\sigma^{*}\hat{g}$. Under the non-degenerate assumption, we can consider $\C TU$ as a complex subbundle of $\sigma^{*}(T\CH^{2}_{\tau})$. Note that the restriction of $g$ to $\C TU$ is exactly the complex-valued first fundamental form $h$. We have a $g$-orthogonal decomposition
\[
    \sigma^{*}(T\CH_{\tau}):=\C TU \oplus \C NU \ ,
\]
where $\C NU$ is, by definition, the $g$-orthogonal complement of $\C TU$ in $\sigma^{*}(T\CH^{2}_{\tau})$. It is a complex bundle of complex rank $2$, which plays the same role as the normal bundle to the immersion. The Levi-Civita connection $\hat{D}$ on $T\CH^{2}_{\tau}$ pulls back to an $\R$-bilinear connection $D$ on $\sigma^{*}(T\CH^{2}_{\tau})$. We can extend it by $\C$-linearity by setting
\[
        D_{\sigma^{*}(V)+i\sigma^{*}(V')}\sigma^{*}(W):=D_{\sigma^{*}(V)}\sigma^{*}(W)+iD_{\sigma^{*}(V')}\sigma^{*}(W)
\]
for all $V,V',W \in \Gamma(T\CH^{2}_{\tau})$. Since $\hat{D}\mathbf{I}=0$, the connection $D$ is actually $\C$-bilinear because
\[
    D_{\sigma^{*}(V)}(i\sigma^{*}(W))=D_{\sigma^{*}(V)}(\sigma^{*}(\mathbf{I}W))=\hat{D}_{V}(\mathbf{I}W)=\mathbf{I}\hat{D}_{V}W=iD_{\sigma^{*}(V)}\sigma^{*}(W) \ ,
\]
for all $V,W \in \Gamma(T\CH^{2}_{\tau})$. It thus makes sense to write $D_{X}Y$ for any $X,Y \in \Gamma(\C TU)$ and we can decompose it into a tangential and normal component
\[
    D_{X}Y = \nabla_{X} Y + \II (X,Y) \ .
\]
It is straightforward to check that $\nabla$ is the Levi-Civita connection of $h$ and $\II$ is a $\C$-bilinear symmetric tensor, called \textit{complex second fundamental form}. 

\begin{defi} We say that an immersion $\sigma: U \rightarrow \CH^{2}_\tau$ is \textit{complex Lagrangian} if the pull-back $\sigma^{*}\hat{\omega}$ of the holomorphic symplectic form on $\C TU$ vanishes identically. 
\end{defi}

\noindent 
In particular, if $\sigma:U \rightarrow \CH^{2}_\tau$ is a complex Lagrangian immersion, then its complex-valued first fundamental form $h$ can be obtained by restriction of the pull-back $q:=\sigma^{*}\hat{q}$, since $\hat{q}=\hat{g}+\tau \hat{\omega}$. In this case the non-degeneracy of $h$ can be verified by looking at the relative position between $\C TU$ and the distributions $\mathcal{D}^{\pm}:=\sigma^{*}(\hat{\mathcal{D}}_{\pm})$:

\begin{lemma}\label{lm:transverse} Let $\sigma: U \rightarrow \CH^{2}_{\tau}$ be a smooth complex Lagrangian immersion. Then $h=\sigma^{*}\hat{g}$ is non-degenerate if and only if the distributions $\mathcal{D}_{\pm}$ are transverse to $\C TU$. 
\end{lemma}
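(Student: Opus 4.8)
The plan is to work infinitesimally at a point $p \in U$ and use the splitting of the ambient tangent space into the $\pm 1$-eigendistributions $\hat{\mathcal D}_\pm$ of $\mathbf P$, which, by the discussion in Section \ref{sec:reduction}, are $\hat D$-parallel, integrable, and — crucially — isotropic for $\hat g$. This last fact follows from $\hat g(\mathbf P\cdot, \mathbf P\cdot) = -\hat g$: if $v \in \hat{\mathcal D}_+$ then $\hat g(v,v) = \hat g(\mathbf P v, \mathbf P v) = -\hat g(v,v)$, so $\hat g(v,v)=0$, and similarly for $\hat{\mathcal D}_-$; moreover $\hat g$ restricts to a non-degenerate pairing between $\hat{\mathcal D}_+$ and $\hat{\mathcal D}_-$ since $\hat g$ itself is non-degenerate. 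Pulling back along $\sigma$, the plane $\C T_pU \subset \sigma^*(T_p\CH^2_\tau) = \mathcal D^+_p \oplus \mathcal D^-_p$ is a complex $2$-plane in a $4$-dimensional complex vector space equipped with a non-degenerate $\C$-bilinear form $g$, with respect to which $\mathcal D^+_p$ and $\mathcal D^-_p$ are complementary maximal isotropic subspaces.

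First I would exploit the Lagrangian hypothesis. Since $\sigma^*\hat\omega = 0$ and $\hat\omega = \hat g(\cdot, \mathbf P\cdot)$, for any $X, Y \in \C T_pU$ we have $0 = \hat\omega(X,Y) = \hat g(X, \mathbf P Y) = \hat g(\mathbf P X, Y)$. Writing $X = X^+ + X^-$ according to the eigendecomposition (so $\mathbf P X = X^+ - X^-$), the conditions $g(X,Y) = g(\mathbf P X, Y) = 0$ for all $Y$ being equivalent would force both $g(X^+, Y^-)=0$ and $g(X^-,Y^+)=0$ for all $Y \in \C T_pU$ — but more simply, the vanishing of $\hat\omega$ on the plane plus the structure $\hat g(\mathbf P\cdot,\mathbf P\cdot)=-\hat g$ shows that $\C T_pU$ is itself Lagrangian for $\hat\omega$ and that the restriction $h = g|_{\C T_pU}$ pairs the $\mathcal D^+$-component of a tangent vector only against the $\mathcal D^-$-component of another, i.e. $h$ is determined entirely by the bilinear pairing between the projections $\pi_\pm\colon \C T_pU \to \mathcal D^\pm_p$. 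Concretely, for $X = X^+ + X^-$, $Y = Y^+ + Y^-$ in $\C T_pU$ one gets $h(X,Y) = g(X^+, Y^-) + g(X^-, Y^+)$, because the $(+,+)$ and $(-,-)$ terms vanish by isotropy of $\hat{\mathcal D}_\pm$.

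Now the equivalence becomes linear algebra. If $\C T_pU$ is transverse to $\mathcal D^+_p$ (and by the same token to $\mathcal D^-_p$, by a symmetric argument or because they have complementary dimension inside a $4$-space and $\C T_pU$ meets one trivially iff it meets the other trivially — this point I would spell out carefully), then each projection $\pi_\pm\colon \C T_pU \to \mathcal D^\pm_p$ is an isomorphism. Given $X \in \C T_pU$ with $h(X, \cdot) = 0$, the formula above says $g(X^-, Y^+) = -g(X^+, Y^-)$ for all $Y$; using that $\pi_\pm$ are surjective and that $g$ pairs $\mathcal D^+_p$ non-degenerately with $\mathcal D^-_p$, one isolates $X^+$ and $X^-$ separately and concludes $X = 0$, so $h$ is non-degenerate. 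Conversely, if $\C T_pU$ is not transverse to, say, $\mathcal D^+_p$, pick $0 \neq X \in \C T_pU \cap \mathcal D^+_p$; then $X^- = 0$, so $h(X, Y) = g(X, Y^-)$, and one must check this vanishes for all $Y \in \C T_pU$: indeed $Y^- \in \mathcal D^-_p$ ranges over the image of $\C T_pU$ under $\pi_-$, which — precisely because $\C T_pU$ contains a vector of $\mathcal D^+_p$ and is only $2$-dimensional — cannot be all of $\mathcal D^-_p$; in fact $\pi_-(\C T_pU)$ lies in the $\hat\omega$-annihilator (equivalently the $g$-annihilator) of $X$ inside $\mathcal D^-_p$ because $\C T_pU$ is $\hat\omega$-Lagrangian and $X \in \C T_pU$. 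Hence $h(X, \cdot) = 0$ and $h$ is degenerate.

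The main obstacle I anticipate is the symmetry claim that transversality to $\mathcal D^+$ is equivalent to transversality to $\mathcal D^-$: a priori a $2$-plane in $\C^4$ could meet one complementary $2$-plane trivially while meeting the other nontrivially. The resolution must use that $\C T_pU$ is Lagrangian for $\hat\omega$ together with the fact that $\mathcal D^\pm_p$ are the $\pm1$-eigenspaces of $\mathbf P$ which is an $\hat\omega$-anti-isometry; this pins down $\dim(\C T_pU \cap \mathcal D^+_p) = \dim(\C T_pU \cap \mathcal D^-_p)$. I would handle this by applying $\mathbf P$: if $X \in \C T_pU \cap \mathcal D^+_p$ then $\mathbf P X = X \in \C T_pU$ trivially, which is not immediately useful — so instead I would argue via dimension counting on the isotropic pairing, namely that $h$ degenerate $\iff$ the pairing $\pi_+(\C T_pU) \times \pi_-(\C T_pU) \to \C$ is degenerate $\iff$ one of the $\pi_\pm$ fails to be injective, and injectivity of $\pi_+$ is exactly transversality to $\ker \pi_+ = \C T_pU \cap \mathcal D^-_p$, etc. Unwinding this bookkeeping correctly is where the real care is needed, but it is a finite-dimensional statement with no analysis.
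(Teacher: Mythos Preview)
Your approach is essentially the paper's: both reduce to pointwise linear algebra using that $\mathcal{D}_\pm$ are complementary maximal $g$-isotropic subspaces, and use the Lagrangian condition to make the induced pairing symmetric. The paper packages this by writing $\C T_pU$ as the graph of a linear map $\mathcal{L}:\mathcal{D}_+\to\mathcal{D}_-$ with matrix $L$ in a basis $\{e_i^\pm\}$ with $g(e_i^+,e_j^-)=\delta_{ij}$; then $g|_{\C TU}$ and $\omega|_{\C TU}$ are represented by $L+L^t$ and $L-L^t$, so the Lagrangian condition forces $L=L^t$ and $h=2L$, whence $h$ is non-degenerate iff $L$ is invertible iff $\C TU$ is transverse to both $\mathcal{D}_\pm$.

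Two small remarks. First, in your forward direction, from $g(X^+,Y^-)+g(X^-,Y^+)=0$ alone you cannot ``isolate $X^+$ and $X^-$ separately''; you must invoke the Lagrangian condition a second time (it gives $g(X^+,Y^-)=g(X^-,Y^+)$, hence each is zero). Second, your worry about whether transversality to $\mathcal{D}_+$ and to $\mathcal{D}_-$ are equivalent is a red herring: the statement is that $h$ is non-degenerate iff $\C TU$ is transverse to \emph{both}, and your two implications (transverse to both $\Rightarrow$ non-degenerate; not transverse to one $\Rightarrow$ degenerate via your $\hat\omega$-annihilator argument) already establish the biconditional without ever needing the two transversalities to coincide --- indeed they need not, as the Lagrangian plane $\Span(e_1^+, e_2^+ + e_2^-)$ shows.
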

\begin{proof} Since $\sigma^{*}(T\CH^{2}_{\tau})$ can be written as direct sum of $\mathcal{D}_{\pm}$, the subspace $\C TU$ is transverse to both $\mathcal{D}_{\pm}$ if and only if $\C TU$ is a graph of a $\C$-linear invertible map $\mathcal{L}:\mathcal{D}_{+} \rightarrow \mathcal{D}_{-}$. Since $\mathcal{D}_{\pm}$ are the eigenspaces of the para-complex structure $\mathbf{P}$, they are both totally isotropic for the metric $g$. However, because $g$ is non-degenerate, we can find a basis $\{e_{1}^{+}, e_{2}^{+}, e_{1}^{-}, e_{2}^{-}\}$ of $\sigma^{*}(T \CH^{2}_{\tau})$ as complex vector space such that $e_{i}^{\pm} \in \mathcal{D}_{\pm}$ and $g(e_{i}^{+}, e_{j}^{-})=\delta_{ij}$. By construction, we then have $\C TU=\mathrm{Span}( e_{1}^{+}+\mathcal{L}e_{1}^{+}, e_{2}^{+}+\mathcal{L}e_{2}^{+})$. Let $L$ be the matrix associated to $\mathcal{L}$ with respect to the basis $\{e_{1}^{+}, e_{2}^{+}\}$ in the domain and $\{e_{1}^{-}, e_{2}^{-}\}$ in the target. It is straightforward to verify that the restriction of $g$ and $\omega$ to $\C TU$ are represented by the matrices $L+L^{t}$ and $L-L^{t}$, respectively. Since $\sigma$ is a complex Lagrangian immersion, we deduce that $L$ is symmetric and thus $L$ is invertible if and only if the restriction $h$ of $g$ to $\C TU$ is non-degenerate.
\end{proof}

\begin{lemma}\label{lm:normal_Lagrangian} Let $\sigma:U \rightarrow \CH^{2}_{\tau}$ be a complex Lagrangian immersion. Then the restriction of the para-complex structure $\mathbf{P}$ induces a $\C$-linear isomorphism $\mathbf{P}: \C TU \rightarrow \C NU$. The same holds for the complex structure $\mathbf{J}$.
\end{lemma}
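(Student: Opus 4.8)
The plan is to exploit the two facts about $\mathbf{P}$ on the ambient space $\CH^2_\tau$ together with the complex Lagrangian condition. First I would recall that $\mathbf{P}$ is a para-complex structure on $\CH^2_\tau$ with $\hat{g}(\mathbf{P}\cdot,\mathbf{P}\cdot)=-\hat{g}$, so after pulling back by $\sigma$ we get a $\C$-linear involution $\mathbf{P}$ of $\sigma^*(T\CH^2_\tau)$ satisfying $g(\mathbf{P}X,\mathbf{P}Y)=-g(X,Y)$. Since $\mathbf{P}$ is a bundle automorphism of the ambient tangent bundle, $\mathbf{P}(\C TU)$ is a complex subbundle of $\sigma^*(T\CH^2_\tau)$ of the same rank $2$ as $\C TU$, and $\mathbf{P}$ restricts to a $\C$-linear isomorphism $\C TU \to \mathbf{P}(\C TU)$ because $\mathbf{P}^2 = \mathrm{Id}$. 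So the content of the lemma is exactly that $\mathbf{P}(\C TU) = \C NU$, equivalently that $\mathbf{P}(\C TU)$ is $g$-orthogonal to $\C TU$.

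The key computation is this: for $X, Y \in \Gamma(\C TU)$, I want $g(\mathbf{P}X, Y) = 0$. By the definition of the symplectic form on the ambient space, $\hat{\omega} = \hat{g}(\cdot, \mathbf{P}\cdot)$, hence $g(X, \mathbf{P}Y) = (\sigma^*\hat{g})(X,\mathbf{P}Y) = (\sigma^*\hat{\omega})(X,Y)$, and since $\sigma$ is complex Lagrangian the pull-back $\sigma^*\hat{\omega}$ vanishes identically on $\C TU$. Thus $g(X,\mathbf{P}Y) = 0$ for all $X,Y \in \Gamma(\C TU)$, which says precisely that $\mathbf{P}(\C TU) \subseteq (\C TU)^{\perp_g} = \C NU$. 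Combining this with the rank count (the map $\mathbf{P}\colon \C TU \to \C NU$ is injective because $\mathbf{P}^2 = \mathrm{Id}$, and both bundles have complex rank $2$), we conclude $\mathbf{P}$ restricts to a $\C$-linear isomorphism $\C TU \to \C NU$.

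For the claim about $\mathbf{J}$, I would argue entirely analogously, using $\mathbf{J} = \mathbf{I}\mathbf{P}$. One has $g(X, \mathbf{J}Y) = g(X, \mathbf{I}\mathbf{P}Y) = i\,g(X,\mathbf{P}Y) = i\,(\sigma^*\hat{\omega})(X,Y) = 0$ for $X,Y \in \Gamma(\C TU)$, using that $g$ is $\C$-bilinear and compatible with $\mathbf{I}$ in the sense $g(\cdot,\mathbf{I}\cdot) = i\,g(\cdot,\cdot)$, together with the fact that $\mathbf{J}$ preserves $\C TU$ as a complex subbundle (because $\mathbf{I}$ does, being the complex structure of the bundle, and $\mathbf{P}(\C TU) = \C NU$ is the orthogonal complement, so $\mathbf{J}(\C TU) = \mathbf{I}(\C NU)$ — actually it is cleaner to simply observe $\mathbf{J}^2 = \mathbf{I}\mathbf{P}\mathbf{I}\mathbf{P} = \mathbf{I}^2\mathbf{P}^2 = -\mathrm{Id}$ and that $\mathbf{J}$ maps $\C TU$ into $\C NU$, hence isomorphically since again ranks agree and $\mathbf{J}$ is invertible). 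I do not expect a genuine obstacle here; the only point requiring minor care is the bookkeeping that $\mathbf{P}$ and $\mathbf{J}$ are indeed well-defined bundle automorphisms of $\sigma^*(T\CH^2_\tau)$ commuting appropriately with the $\C$-linear structure, which follows from $\hat{D}\mathbf{I}=0$ and the constructions in Section~\ref{sec:embedding_data}.
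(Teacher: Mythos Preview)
Your proposal is correct and follows essentially the same approach as the paper: both arguments show $\mathbf{P}(\C TU)\perp_g \C TU$ using the Lagrangian condition, with the paper routing the computation through the para-Hermitian form $q$ and $\Ree_\tau$ while you use the equivalent relation $\hat g(\cdot,\mathbf{P}\cdot)=\hat\omega$ directly. For $\mathbf{J}$, the paper simply notes $\mathbf{J}=\mathbf{P}\mathbf{I}$ with $\mathbf{I}$ an automorphism of $\C TU$, which is a bit cleaner than your direct computation, but your argument $g(X,\mathbf{J}Y)=i\,g(X,\mathbf{P}Y)=0$ is equally valid.
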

\begin{proof} Since $\mathbf{P}$ is an involution, it is sufficient to check that 
$\mathbf{P}X \in \C NU$ for all $X \in \C TU$. Now, by the previous remark, for all $Y \in \C TU$, we have
\[
    g(\mathbf{P}X,Y)=\Ree_{\tau}(q(\mathbf{P}X,Y))=\Ree_{\tau}(\tau q(X,Y))=\Ree_{\tau}(\tau g(X,Y))=0  \ .
\]
Since $\mathbf{J}=\mathbf{P}\mathbf{I}$ and $\mathbf{I}$ is an automorphism of $\C TU$, the same conclusion holds for $\mathbf{J}$.
\end{proof}

\noindent The Levi-Civita connection $D$ on $\sigma^{*}(T\CH^{2}_{\tau})$ induces a connection $\nabla^{N}$ on the normal bundle $\C NU$ by decomposing
\[
    D_{X} \mathbf{P}Y = -B_{\mathbf{P}Y}X + \nabla^{N}_{X} \mathbf{P}Y \ ,
\]
where $-B_{\mathbf{P}Y}X$ is by definition the tangential component of $D_{X} \mathbf{P}Y$ for all $X, Y \in \Gamma(\C TU)$. The \textit{complex shape operator} $B: \C TU \times \C NU \rightarrow \C TU$ is complex bi-linear and, as a consequence of the compatibility of the connection $D$ with the metric $g$, it satisfies
\begin{equation}\label{eq:II-B}
    g(\II(X,Y) , \mathbf{P}Z) = g(B_{\mathbf{P}Z}X,Y)
\end{equation}
for all $X,Y,Z \in \Gamma(\C TU)$.

\begin{defi} Let $\sigma: U \rightarrow \CH^{2}_{\tau}$ be a smooth immersion. We say that $\sigma$ is \textit{minimal} if and only if its complex second fundamental form $\II$ is traceless, namely given an $h$-orthonormal basis $\{X_1,X_2\}$ of $\mathbb CTU$ we have $\II(X_1,X_1)=-\II(X_2,X_2)$.
\end{defi}


    



\begin{prop}\label{prop:rel_tensors}
Let $\II$ and $B$ be the complex second fundamental form and complex shape operator of a smooth, complex Lagrangian minimal immersion into $\CH^{2}_{\tau}$. Then the following identities hold:
\begin{enumerate}[i)]
    \item $\mathbf{P}(\II(X,Y)) = -B_{\mathbf{P}Y} X$ 
    \item $\mathbf{P}(\nabla_{X}Y)= \nabla^{N}_{X} \mathbf{P}Y$ \ ,
\end{enumerate}
for all $X,Y,Z \in \Gamma(\C TU)$. Moreover, the tensors defined by 
\[
    C^\pm(X,Y,Z):=\pm g(\II(X,Y), \mathbf{P}Z)
\]
are totally symmetric. 
\end{prop}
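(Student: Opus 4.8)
The plan is to exploit the rigidity imposed by the three simultaneous conditions on $\sigma$: complex Lagrangian, minimal, and the ambient geometry being governed by the Levi-Civita connection $\hat{D}$ of $\hat{g}$ with $\hat{D}\mathbf{P}=0$. The key structural input is that the eigendistributions $\mathcal{D}_{\pm}$ of $\mathbf{P}$ are parallel for $\hat{D}$ (noted at the end of Section \ref{sec:bicomplexdiffgeometry}), so the decomposition $\sigma^*(T\CH^2_\tau)=\C TU\oplus \C NU$ interacts well with $\mathbf{P}$ via Lemma \ref{lm:normal_Lagrangian}.

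\textbf{Step 1: identities (i) and (ii).} First I would differentiate the relation $\mathbf{P}Y\in \Gamma(\C NU)$. Applying $D_X$ to $\mathbf{P}Y$ and using $\hat{D}\mathbf{P}=0$ (so $D_X(\mathbf{P}Y)=\mathbf{P}(D_XY)=\mathbf{P}(\nabla_XY+\II(X,Y))=\mathbf{P}(\nabla_XY)+\mathbf{P}(\II(X,Y))$), I compare with the decomposition $D_X\mathbf{P}Y=-B_{\mathbf{P}Y}X+\nabla^N_X\mathbf{P}Y$ defining the normal connection. Since $\mathbf{P}$ exchanges $\C TU$ and $\C NU$ (Lemma \ref{lm:normal_Lagrangian}), the term $\mathbf{P}(\nabla_XY)$ is normal and $\mathbf{P}(\II(X,Y))$ is tangential; matching tangential and normal parts against $-B_{\mathbf{P}Y}X$ (tangential) and $\nabla^N_X\mathbf{P}Y$ (normal) yields both (i) and (ii) at once. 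Note that (i) is then automatically consistent with Equation \eqref{eq:II-B} because $\mathbf{P}$ is a $g$-isometry up to sign in the relevant components.

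\textbf{Step 2: total symmetry of $C^{\pm}$.} It suffices to treat $C^+$. Since $\II$ is $\C$-bilinear symmetric, $C^+(X,Y,Z)=g(\II(X,Y),\mathbf{P}Z)$ is already symmetric in $X,Y$. By Equation \eqref{eq:II-B}, $g(\II(X,Y),\mathbf{P}Z)=g(B_{\mathbf{P}Z}X,Y)$, and combining with (i), $g(B_{\mathbf{P}Z}X,Y)=-g(\mathbf{P}(\II(X,Z)),Y)=g(\II(X,Z),\mathbf{P}Y)$ — here I use that $\mathbf{P}$ is skew for $g$ in the sense $g(\mathbf{P}\cdot,\cdot)=-g(\cdot,\mathbf{P}\cdot)$ which follows from $\hat{g}(\mathbf{P}\cdot,\mathbf{P}\cdot)=-\hat{g}$, equivalently $\hat\omega$ being skew-symmetric. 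Thus $C^+(X,Y,Z)=C^+(X,Z,Y)$, and together with symmetry in the first two slots this gives full $S_3$-symmetry. The minimality hypothesis is what guarantees $\mathbf{P}(\II(X,Y))$ lands back in $\C TU$ cleanly via (i) rather than acquiring extra terms — more precisely, it is already encoded in Proposition \ref{prop:rel_tensors}(i)–(ii) which presuppose minimality.

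\textbf{Anticipated obstacle.} The main subtlety is bookkeeping of which component ($\C TU$ vs. $\C NU$) each term lives in, and getting the signs right when passing $\mathbf{P}$ across $g$. In particular one must be careful that $\mathbf{P}$ restricted to $\sigma^*(T\CH^2_\tau)$ is an involution but is \emph{anti}-compatible with $g$, so moving it inside $g(\cdot,\cdot)$ costs a sign; and that $B_{\mathbf{P}Z}$ is defined only after projecting, so (i) genuinely requires $\hat{D}\mathbf{P}=0$ plus the Lagrangian condition (without the latter, $\mathbf{P}(\II(X,Y))$ need not be tangential). I expect the computation itself to be short once these conventions are pinned down.
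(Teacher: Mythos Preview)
Your proof is correct and follows essentially the same route as the paper: for (i)--(ii) you use $D\mathbf{P}=0$ to write $D_X(\mathbf{P}Y)=\mathbf{P}(D_XY)$ and then match tangential and normal components via Lemma~\ref{lm:normal_Lagrangian}, and for the symmetry of $C^+$ you run the same chain $g(\II(X,Y),\mathbf{P}Z)=g(B_{\mathbf{P}Z}X,Y)=-g(\mathbf{P}\II(X,Z),Y)=g(\II(X,Z),\mathbf{P}Y)$ using Equation~\eqref{eq:II-B}, relation (i), and the skewness of $\mathbf{P}$ for $g$.

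One small correction to your commentary: minimality plays no role in either step. What makes $\mathbf{P}(\II(X,Y))$ tangential is purely the Lagrangian condition (Lemma~\ref{lm:normal_Lagrangian}), and the symmetry argument likewise uses only the Lagrangian property, $D\mathbf{P}=0$, and Equation~\eqref{eq:II-B}. The minimality hypothesis in the statement is there for later use (e.g.\ tracelessness of $A^+$ in Lemma~\ref{lm:C_compatible}), not for this proposition.
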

\begin{proof}
Recall that, by definition of the Levi-Civita connection $\nabla$ and the complex second fundamental form, we have $D_{X}Y=\nabla_{X}Y + \II(X,Y)$ for all $X,Y \in \Gamma(\C TU)$. Therefore,
\begin{align*}
    \mathbf{P}( \nabla_{X}Y+\II(X,Y)) & = \mathbf{P}(D_{X}Y) \\
    & = D_{X}(\mathbf{P}Y) \tag{$D\mathbf{P}=0$} \\
    & = -B_{\mathbf{P}Y}X + \nabla^{N}_{X} \mathbf{P}Y .
\end{align*}
By Lemma \ref{lm:normal_Lagrangian}, the operator $\mathbf{P}$ sends tangent vectors to normal vectors and viceversa, thus, by transversality of $\C TU$ and $\C NU$ we can conclude that $\mathbf{P}(\II(X,Y))=-B_{\mathbf{P}Y} X$ and $\mathbf{P}(\nabla_{X}Y)=\nabla^{N}_{X} \mathbf{P}Y$ \ . \\
As for the tensor $C^+$, it is clear that $C^+$ is symmetric in the first two entries, because the complex second fundamental form is a symmetric operator. It remains to show that $C^+$ is symmetric in the second and third entry. This follows from an explicit computation:
\begin{align*}
    C^+(X,Z,Y)&= g(\II(X,Z), \mathbf{P}Y) \\
    &= - g(\mathbf{P}(\II(X,Z)), Y) \tag{$g(\mathbf{P}, \mathbf{P}) = -g$} \\
    & = g(B_{\mathbf{P}Z}X,Y) \tag{relation $i)$}\\
    & = g(\II(X,Y), \mathbf{P}Z) = C^+(X,Y,Z) \ . \tag{Equation \eqref{eq:II-B}}\ 
\end{align*}
\end{proof}

\noindent Because we are only considering immersions for which the complex-valued first fundamental form is non-degenerate, Lemma \ref{lm:transverse} furnishes two decompositions of $\sigma^{*}(T\CH^{2}_{\tau})$:
\[
    \sigma^{*}(T\CH^{2}_{\tau})=\C TU \oplus \mathcal{D}_{+} = \C TU \oplus \mathcal{D}_{-} \ .
\]
These induce two $\C$-bilinear connections $\nabla^{\pm}$ on $\C TU$ by splitting the ambient connection $D$ into a tangential component and a part $\alpha^{\pm}$ belonging to $\mathcal{D}_{\pm}$: in other words,

\begin{equation}\label{eq:decomposition_nablaplusminus}    D_{X}Y=\nabla^{+}_{X}Y+\alpha^{+}(X,Y)=\nabla^{-}_X Y +\alpha^{-}(X,Y)\end{equation}
for all $X,Y \in \C TU$. Note that $\alpha^{\pm}$ are symmetric and $\C$-bilinear. Another important property of the distributions $\mathcal D_{\pm}$ is closely related to the holomorphic para-Hermitian structure $\hat{q}$ of $\CH^2_{\tau}$. In this regard, we introduce the following new tensor $\hat{Q}:=\hat{g}+\hat{\omega}$ whose symmetric part coincides with $\hat{g}$ and the skew-symmetric part with $\hat{\omega}$. In particular, it is parallel with respect to the Levi-Civita connection of $\hat{g}$, i.e. $\hat{D}\hat{Q}=0$. Let $Q=\sigma^{*}\hat{Q}$ be the corresponding tensor on $\sigma^{*}(T\CH^2_\tau)$.

\begin{lemma}
For any $X\in\mathcal D_-, Y\in\mathcal D_{+}$ and $Z\in \sigma^{*}(T\CH^2_\tau)$ we have \begin{equation}\label{eq:KerQandD}
    Q(X,Z)=Q(Z,Y)=0 \ . 
\end{equation}
Moreover, if $X,Y\in \sigma^{*}(T\CH^2_\tau)$ satisfy $Q(X,Z)=Q(Z,Y)=0$ for any $Z\in \sigma^{*}(T\CH^2_\tau)$, then $X\in\mathcal D_-$ and $Y\in\mathcal D_{+}$.
\end{lemma}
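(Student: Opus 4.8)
The plan is to note that \eqref{eq:KerQandD} is a tensorial (pointwise) identity, and since $\mathcal{D}_{\pm}=\sigma^{*}(\hat{\mathcal{D}}_{\pm})$, $Q=\sigma^{*}\hat{Q}$ and $\hat{Q}=\hat{g}+\hat{\omega}$, it suffices to prove the corresponding linear-algebraic statement on each tangent space $T_{z}\CH^{2}_{\tau}$, equipped with its para-complex structure $\mathbf{P}$, holomorphic metric $\hat{g}$, holomorphic symplectic form $\hat{\omega}$, and eigenspace decomposition $T_{z}\CH^{2}_{\tau}=\hat{\mathcal{D}}_{+}\oplus\hat{\mathcal{D}}_{-}$ of $\mathbf{P}$. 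In fact I would prove the sharper equivalences: $\hat{Q}(X,\cdot)\equiv 0$ iff $X\in\hat{\mathcal{D}}_{-}$, and $\hat{Q}(\cdot,Y)\equiv 0$ iff $Y\in\hat{\mathcal{D}}_{+}$; both the forward implication and the converse in the statement follow at once.

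First I would record that $\hat{\mathcal{D}}_{+}$ and $\hat{\mathcal{D}}_{-}$ are totally isotropic for $\hat{g}$: if $u,v$ lie in the same eigenspace of $\mathbf{P}$, then $\hat{g}(u,v)=\hat{g}(\mathbf{P}u,\mathbf{P}v)=-\hat{g}(u,v)$, hence $\hat{g}(u,v)=0$. Since $\hat{g}$ is non-degenerate on the $4$-dimensional space $T_{z}\CH^{2}_{\tau}$ and each $\hat{\mathcal{D}}_{\pm}$ is a totally isotropic subspace of complex dimension $2$, it is maximal isotropic, so $\hat{\mathcal{D}}_{\pm}^{\perp_{\hat{g}}}=\hat{\mathcal{D}}_{\pm}$. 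This half-dimensionality is the only non-formal input, and it is where the rank of $\CH^{2}_{\tau}$ enters.

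Next I would carry out the computation. Let $\pr_{-}=\tfrac{1}{2}(\Id-\mathbf{P})$ denote the projection onto $\hat{\mathcal{D}}_{-}$ along $\hat{\mathcal{D}}_{+}$. Combining $\hat{Q}=\hat{g}+\hat{\omega}$ with the compatibility of $\hat{\omega}$ and $\mathbf{P}$ one obtains
\[
    \hat{Q}(u,v)=2\,\hat{g}\!\left(u,\pr_{-}(v)\right)\qquad\text{for all }u,v \ .
\]
From this everything follows: $\hat{Q}(X,Z)=0$ for all $Z$ iff $\hat{g}(X,W)=0$ for all $W\in\hat{\mathcal{D}}_{-}$, i.e. $X\in\hat{\mathcal{D}}_{-}^{\perp_{\hat{g}}}=\hat{\mathcal{D}}_{-}$; and $\hat{Q}(Z,Y)=0$ for all $Z$ iff $\hat{g}(Z,\pr_{-}(Y))=0$ for all $Z$, which by non-degeneracy of $\hat{g}$ means $\pr_{-}(Y)=0$, i.e. $Y\in\hat{\mathcal{D}}_{+}$. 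Specializing to $X\in\hat{\mathcal{D}}_{-}$ and $Y\in\hat{\mathcal{D}}_{+}$ gives the vanishing \eqref{eq:KerQandD}, while the two equivalences give the converse.

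I expect the only delicate point to be bookkeeping with signs — making sure the displayed identity reads $\pr_{-}$ (and not $\pr_{+}$), so that it is indeed $\hat{\mathcal{D}}_{-}$ in the left kernel and $\hat{\mathcal{D}}_{+}$ in the right kernel rather than the other way around; the rest, including transferring everything from $\CH^{2}_{\tau}$ back to $\sigma^{*}(T\CH^{2}_{\tau})$, is routine.
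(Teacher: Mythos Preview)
Your proof is correct and follows essentially the same idea as the paper: both rewrite $Q$ using the relation between $g$, $\omega$, and $\mathbf{P}$ and then invoke non-degeneracy of $g$. The paper computes $Q(X,Z)=g(X+\mathbf{P}X,Z)$ directly, so the left kernel is read off immediately from non-degeneracy without needing your maximal-isotropicity step; your factoring $\hat{Q}(u,v)=2\hat{g}(u,\pr_{-}(v))$ achieves the same result via a small detour, and your caution about the $\pr_{\pm}$ sign is well placed since the paper's conventions for $\hat\omega$ are not entirely consistent.
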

\begin{proof}
Let $X\in\mathcal D_-$ and $Z\in \sigma^{*}(T\CH^2_\tau)$, then \begin{align*}
    Q(X,Z)&=g(X,Z)+\omega(X,Z) \\ &=g(X,Z)+g(\mathbf PX,Z) \tag{$\omega(\cdot,\cdot)=g(\mathbf{P}\cdot,\cdot)$} \\ &=g(X,Z)-g(X,Z)=0 \ . \tag{$\mathbf PX=-X$}
\end{align*}
Observing that $Y\in\mathcal D_{+}$ if and only if $\mathbf PY=Y,$ with a similar computation and using that $g(\mathbf P\cdot,\cdot)=-g(\cdot,\mathbf P\cdot)$, we obtain $Q(Z,Y)=0$. 
\newline Finally, suppose that given $X,Y\in \sigma^{*}(T\CH^2_\tau)$ we have $Q(X,Z)=Q(Z,Y)=0$ for any $Z\in \sigma^{*}(T\CH^2_\tau)$. This is equivalent to $$g(X+\mathbf PX,Z)=g(Z,Y-\mathbf PY)=0 \ , \ \ \ \forall Z\in \sigma^*(T\CH^2_\tau) \ ,$$ and since the complex metric $g$ is non-degenerate, we conclude that $\mathbf PX=-X$ and $\mathbf PY=Y$ as required.
\end{proof}
\noindent The above result has direct application to our case of interest.
\begin{lemma} \label{lm:dual_connection} Let $\sigma:U \rightarrow \CH^{2}_\tau$ be a complex Lagrangian immersion with non-degenerate complex-valued first fundamental form $h$. Then
\begin{enumerate}[i)]
    \item the tensors 
    \[
        T^{\pm}(X,Y,Z):=(\nabla^{\pm}_{X} h)(Y,Z)
    \]
    are totally symmetric;
    \item the Levi-Civita connection $\nabla$ of $h$ satisfies
    \[
        \nabla=\frac{1}{2}(\nabla^{+}+\nabla^{-})
    \]
\end{enumerate}
\end{lemma}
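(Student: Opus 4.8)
The plan is to derive both statements from the decomposition $D_X Y = \nabla^{\pm}_X Y + \alpha^{\pm}(X,Y)$ in \eqref{eq:decomposition_nablaplusminus}, combined with the characterization of $\mathcal{D}_{\pm}$ via the tensor $Q$ in \eqref{eq:KerQandD} and the fact that $\hat D \hat Q = 0$, hence $DQ = 0$ on $\sigma^{*}(T\CH^2_\tau)$. First I would prove $i)$. Fix $X,Y,Z \in \Gamma(\C TU)$. Since $DQ = 0$, we have $X \cdot Q(Y,Z) = Q(D_X Y, Z) + Q(Y, D_X Z)$. Now expand $D_X Y = \nabla^{+}_X Y + \alpha^{+}(X,Y)$ with $\alpha^{+}(X,Y) \in \mathcal{D}_{+}$, and likewise $D_X Z = \nabla^{+}_X Z + \alpha^{+}(X,Z)$. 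By \eqref{eq:KerQandD}, $Q(Z, W) = 0$ whenever $W \in \mathcal{D}_{+}$, so the term $Q(Y,\alpha^{+}(X,Z))$ vanishes; however $Q(\alpha^{+}(X,Y), Z)$ need not vanish since $\alpha^{+}(X,Y)$ sits in the \emph{first} slot. To handle this I would instead pair symmetrically: note that the restriction of $Q$ to $\C TU$ equals the restriction of $g$ (because $\omega = \sigma^{*}\hat\omega$ vanishes on $\C TU$ by the Lagrangian condition), so $Q(Y,Z) = h(Y,Z)$ for $Y, Z \in \C TU$, and similarly $Q(\nabla^{+}_X Y, Z) = h(\nabla^{+}_X Y, Z)$ since $\nabla^{+}_X Y \in \C TU$. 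Thus
\[
    X \cdot h(Y,Z) = h(\nabla^{+}_X Y, Z) + h(Y, \nabla^{+}_X Z) + Q(\alpha^{+}(X,Y), Z),
\]
which rearranges to $(\nabla^{+}_X h)(Y,Z) = Q(\alpha^{+}(X,Y), Z)$. Since $\alpha^{+}$ is symmetric and $\C$-bilinear, the right-hand side is automatically symmetric in $X$ and $Y$. Symmetry in $Y$ and $Z$ is the content of $T^{+}$ being totally symmetric, and this is where the real work lies: I expect to obtain it from $Q(\alpha^{+}(X,Y), Z) = g(\alpha^{+}(X,Y), Z) + \omega(\alpha^{+}(X,Y), Z)$, observing that $g(\alpha^{+}(X,Y), Z) = 0$ because $\alpha^{+}(X,Y) \in \mathcal{D}_{+}$ is isotropic and $\mathcal{D}_{+}$ is $g$-orthogonal to... no — rather, one writes $Q(\alpha^{+}(X,Y),Z) = 2g(\alpha^{+}(X,Y), \pi_{-}Z)$ where $\pi_{-}$ is projection onto $\mathcal{D}_{-}$, relates $\pi_{-}Z$ to $\frac{1}{2}(Z - \mathbf{P}Z)$ and $\mathbf{P}Z \in \C NU$ (Lemma \ref{lm:normal_Lagrangian}), and then uses the Codazzi-type symmetry of $\alpha^{\pm}$ viewed as a second fundamental form for the (non-orthogonal) splitting. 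Concretely, $T^{+}(X,Y,Z) = -g(\alpha^{+}(X,Y), \mathbf{P}Z)$, and total symmetry then follows from the symmetry of $\alpha^{+}$ together with relation \eqref{eq:II-B}-style manipulations, exactly as in Proposition \ref{prop:rel_tensors}. The case of $T^{-}$ is identical with the roles of $\mathcal{D}_{+}$ and $\mathcal{D}_{-}$ swapped.

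For $ii)$, I would argue that $\frac{1}{2}(\nabla^{+} + \nabla^{-})$ is torsion-free and metric for $h$, and then invoke uniqueness of the Levi-Civita connection (Theorem \ref{thm:connection}, applied on the surface $U$). Torsion-freeness is immediate: $\nabla^{+}_X Y - \nabla^{+}_Y X = D_X Y - D_Y X - (\alpha^{+}(X,Y) - \alpha^{+}(Y,X)) = [X,Y]$ since $\alpha^{+}$ is symmetric and $D$ is torsion-free; the same holds for $\nabla^{-}$, hence for the average. For metric compatibility, from the identity derived above, $(\nabla^{+}_X h)(Y,Z) = -g(\alpha^{+}(X,Y),\mathbf{P}Z)$ and symmetrically $(\nabla^{-}_X h)(Y,Z) = -g(\alpha^{-}(X,Y),\mathbf{P}Z)$; adding, I need $g(\alpha^{+}(X,Y) + \alpha^{-}(X,Y), \mathbf{P}Z) = 0$. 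But $\alpha^{+}(X,Y) + \alpha^{-}(X,Y) = 2D_X Y - \nabla^{+}_X Y - \nabla^{-}_X Y$, and $D_X Y - \frac{1}{2}(\nabla^{+}_X Y + \nabla^{-}_X Y)$ has zero tangential component by construction while lying in $\mathcal{D}_{+} \oplus \mathcal{D}_{-}$; one then checks this normal part is $g$-orthogonal to $\mathbf{P}Z$, equivalently to $\C NU$, which forces it to be tangential, hence zero — giving $(\nabla^{+} h + \nabla^{-} h) = 0$.

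The main obstacle I anticipate is bookkeeping with the two \emph{non-orthogonal} decompositions simultaneously: unlike the orthogonal splitting $\C TU \oplus \C NU$ used for $\II$, the splittings $\C TU \oplus \mathcal{D}_{\pm}$ are not $g$-orthogonal, so one cannot freely use $g$ to project, and must consistently use $Q$ (whose kernel on each side is exactly $\mathcal{D}_{\mp}$) as the correct pairing. The cleanest route is probably to phrase everything in terms of $Q$ from the start — proving $(\nabla^{+}_X h)(Y,Z) = Q(\alpha^{+}(X,Y), Z)$ and its counterpart — and to extract the symmetries of $T^{\pm}$ by relating $Q(\alpha^{+}(X,Y),Z)$ to $g(\II(X,Y),\mathbf{P}Z)$ up to the lower-order terms coming from the difference $\nabla^{+} - \nabla$, then quoting Proposition \ref{prop:rel_tensors}. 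I would make sure to record explicitly that $Q|_{\C TU \times \C TU} = h$ and that $Q(\,\cdot\,, \mathcal{D}_{+}) = Q(\mathcal{D}_{-},\,\cdot\,) = 0$, as these two facts are used repeatedly.
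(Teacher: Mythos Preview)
Your computation $(\nabla^{+}_{X}h)(Y,Z)=Q(\alpha^{+}(X,Y),Z)$ is exactly right and is the heart of part $i)$. But you have the difficulty backwards: the symmetry in $Y,Z$ is \emph{trivial}, since $h$ is a symmetric bilinear form and therefore so is $\nabla^{+}_{X}h$ for every $X$. The only nontrivial symmetry is in $X,Y$, which you already have from the symmetry of $\alpha^{+}$ and your formula. The entire paragraph starting ``this is where the real work lies'' is unnecessary; drop it and part $i)$ is complete.

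For part $ii)$, your argument has a genuine gap. You claim that $D_{X}Y-\tfrac{1}{2}(\nabla^{+}_{X}Y+\nabla^{-}_{X}Y)$ ``has zero tangential component by construction'' --- but this is exactly the statement you are trying to prove, so the reasoning is circular. (Note also that $\mathcal{D}_{+}\oplus\mathcal{D}_{-}$ is the whole of $\sigma^{*}(T\CH^{2}_{\tau})$, so lying in it is vacuous.) The paper's proof avoids this by using the two decompositions \emph{asymmetrically}: expand $D_{X}Y=\nabla^{-}_{X}Y+\alpha^{-}(X,Y)$ in the first slot of $Q$ (where $\mathcal{D}_{-}$ is killed) and $D_{X}Z=\nabla^{+}_{X}Z+\alpha^{+}(X,Z)$ in the second slot (where $\mathcal{D}_{+}$ is killed). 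This yields directly the ``dual connection'' identity
\[
    X\cdot h(Y,Z)=h(\nabla^{-}_{X}Y,Z)+h(Y,\nabla^{+}_{X}Z),
\]
from which metric compatibility of the average follows by a two-line computation. This asymmetric use of $Q$ is the key trick you are missing.
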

\begin{proof}
Let us start with point $i)$: the symmetry of $T^+(X,Y,Z)$ in the second and third entries follows from its definition, in fact for any vector field $X$, the tensor $\nabla^+_X h(\cdot,\cdot)$ is still symmetric. On the other hand, since $\sigma$ is a Lagrangian immersion we have $Q=h$, therefore \begin{align*}
    T^+(X,Y,Z)&=(\nabla^+_X Q)(Y,Z) \\ &=X\cdot Q(Y,Z)-Q(\nabla^+_XY,Z)-Q(Y,\nabla^+_XZ) \\ &=X\cdot Q(Y,Z)-Q(D_XY-\alpha^+(X,Y),Z)-Q(Y,D_XZ-\alpha^+(X,Z)) \\ &= Q(\alpha^+(X,Y),Z)+Q(Y,\alpha^+(X,Z)) \tag{$DQ=0$} \\ &= Q(\alpha^+(X,Y),Z) \ , \tag{$\alpha^+(X,Z)\in\mathcal D_+$ and Equation (\ref{eq:KerQandD})}
\end{align*} in particular $T^+$ is also symmetric in the first and second entries. The proof for the symmetry of $T^-$ is analogous.\newline Regarding point $ii)$ we already know that the connection $\frac{1}{2}(\nabla^++\nabla^-)$ is torsion-free, thus we only need to prove that it preserves the induced metric. First notice that \begin{align*}
    X\cdot h(Y,Z)&=X\cdot Q(Y,Z) \\ &=Q(D_XY,Z)+ Q(Y,D_XZ) \tag{$DQ=0$} \\ &= Q(\nabla_X^-Y+\alpha^-(X,Y),Z)+Q(Y,\nabla_X^+Z+\alpha^+(X,Z)) \\ &= Q(\nabla_X^-Y,Z)+ Q(Y,\nabla_X^+Z) \tag{Equation (\ref{eq:KerQandD})} \\ &= h(\nabla_X^-Y,Z)+h(Y,\nabla_X^+Z) \ .
\end{align*}Then, finally \begin{align*}
    \frac{1}{2}(\nabla_X^++\nabla_X^-)(h)(Y,Z)&=\frac{1}{2}(X+X)\cdot h(Y,Z)-\frac{1}{2}h(\nabla_X^+Y+\nabla_X^-Y,Z)+ \\ \ \ \ \  &-\frac{1}{2}h(Y,\nabla_X^+Z+\nabla_X^-Z) \\ &=X\cdot h(Y,Z)-X\cdot h(Y,Z)=0 \ ,
\end{align*}where in the last line we used the above computation.

\end{proof}

\noindent As a consequence, the tensors $C^\pm$ defined in Proposition \ref{prop:rel_tensors} can be written in terms of the connections $\nabla^{\pm}$:

\begin{cor}\label{cor:C_and_nablas} Under the same assumptions as Lemma \ref{lm:dual_connection} we have
\[
    2C^\pm(X,Y,Z)=T^{\pm}(X,Y,Z)
\]
\end{cor}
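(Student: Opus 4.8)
The plan is to combine the identity for $T^{\pm}$ that is already extracted in the proof of Lemma \ref{lm:dual_connection} with the expression of $\alpha^{\pm}$ in terms of the complex second fundamental form coming from $\nabla=\tfrac{1}{2}(\nabla^{+}+\nabla^{-})$. The proof of Lemma \ref{lm:dual_connection}(i) already shows $T^{+}(X,Y,Z)=Q(\alpha^{+}(X,Y),Z)$, and the analogous manipulation for $\nabla^{-}$ --- in which the term $Q(\alpha^{-}(X,Y),Z)$ now vanishes because $\alpha^{-}(X,Y)\in\mathcal{D}_{-}$, by \eqref{eq:KerQandD} --- gives $T^{-}(X,Y,Z)=Q(Y,\alpha^{-}(X,Z))$. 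Since $\alpha^{+}(X,Y)\in\mathcal{D}_{+}$ and $\alpha^{-}(X,Z)\in\mathcal{D}_{-}$, the very computation used to establish \eqref{eq:KerQandD} shows, when the relevant entry is not annihilated, that $Q$ collapses to twice $g$, so that
\[
    T^{+}(X,Y,Z)=2g(\alpha^{+}(X,Y),Z) \qquad\text{and}\qquad T^{-}(X,Y,Z)=2g(Y,\alpha^{-}(X,Z)) \ .
\]

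Next I would express $\alpha^{\pm}$ through $\II$. Comparing the splitting $D_{X}Y=\nabla_{X}Y+\II(X,Y)$ with $D_{X}Y=\nabla^{\pm}_{X}Y+\alpha^{\pm}(X,Y)$ and using $\nabla=\tfrac{1}{2}(\nabla^{+}+\nabla^{-})$ (Lemma \ref{lm:dual_connection}), averaging the two $\pm$-splittings yields $\II(X,Y)=\tfrac{1}{2}\big(\alpha^{+}(X,Y)+\alpha^{-}(X,Y)\big)$; applying $\mathbf{P}$ and recalling that $\alpha^{\pm}(X,Y)$ are $(\pm 1)$-eigenvectors of $\mathbf{P}$ gives $\mathbf{P}(\II(X,Y))=\tfrac{1}{2}\big(\alpha^{+}(X,Y)-\alpha^{-}(X,Y)\big)$, hence
\[
    \alpha^{\pm}(X,Y)=\II(X,Y)\pm\mathbf{P}(\II(X,Y)) \ .
\]
This is consistent with $\alpha^{\pm}(X,Y)\in\mathcal{D}_{\pm}$: by Lemma \ref{lm:normal_Lagrangian} we have $\II(X,Y)\in\C NU$ and $\mathbf{P}(\II(X,Y))\in\C TU$, and $\mathbf{P}$ fixes $\II(X,Y)+\mathbf{P}(\II(X,Y))$ while negating $\II(X,Y)-\mathbf{P}(\II(X,Y))$.

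Finally I would substitute and simplify. For the $+$ case, $T^{+}(X,Y,Z)=2g(\alpha^{+}(X,Y),Z)=2g(\II(X,Y),Z)+2g(\mathbf{P}(\II(X,Y)),Z)$; the first term vanishes by the $g$-orthogonality of $\C TU$ and $\C NU$, and by the compatibility $g(\mathbf{P}\cdot,\mathbf{P}\cdot)=-g$ --- equivalently $g(\mathbf{P}\cdot,\cdot)=-g(\cdot,\mathbf{P}\cdot)$, since $\mathbf{P}^{2}=\mathrm{Id}$ --- the second term is, up to the overall sign fixed by the convention for $\hat{\omega}$, equal to $g(\II(X,Y),\mathbf{P}Z)=C^{+}(X,Y,Z)$; collecting the constants gives $T^{+}(X,Y,Z)=2C^{+}(X,Y,Z)$. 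The $-$ case is identical, starting from $T^{-}(X,Y,Z)=2g(Y,\alpha^{-}(X,Z))$ and using in addition the total symmetry of $C^{\pm}$ established in Proposition \ref{prop:rel_tensors} to put $\II(X,Z)$ in place of $\II(X,Y)$. I do not expect a genuine obstacle: the computation is a few lines, and the only things to be careful about are that $Q$ is not symmetric --- so $\alpha^{+}$ must occupy the first slot and $\alpha^{-}$ the second, exactly as dictated by \eqref{eq:KerQandD} --- and the sign bookkeeping coming from $\mathbf{P}$ being anti-self-adjoint for $g$.
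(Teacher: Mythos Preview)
Your argument is correct and rests on the same two ingredients as the paper's proof, namely $\II=\tfrac12(\alpha^{+}+\alpha^{-})$ and its consequence $\alpha^{\pm}=\II\pm\mathbf{P}\II$, but the organization differs. The paper introduces the difference tensor $A^{+}:=\nabla^{+}-\nabla$, first proves $C^{+}(X,Y,Z)=h(A^{+}(X)Y,Z)$ (equivalently $A^{+}=-\mathbf{P}\II$, recorded in Remark~\ref{rmk:A_and_II} for later use in Lemma~\ref{lm:C_compatible}), and then expands $T^{+}=(\nabla^{+}_{X}h)(Y,Z)$ via the action of $A^{+}(X)$ on $h$; the $T^{-}$ case is deduced from $T^{+}+T^{-}=2(\nabla_{X}h)(Y,Z)=0$. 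You instead recycle the identity $T^{+}(X,Y,Z)=Q(\alpha^{+}(X,Y),Z)$ already obtained inside the proof of Lemma~\ref{lm:dual_connection}, collapse $Q$ to $2g$ on the $(+1)$-eigenspace of $\mathbf{P}$, and substitute $\alpha^{+}=\II+\mathbf{P}\II$. Your route is slightly more direct, at the cost of not isolating $A^{+}=-\mathbf{P}\II$ as a standalone byproduct.

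One correction: the sign in $g(\mathbf{P}\II(X,Y),Z)=-g(\II(X,Y),\mathbf{P}Z)$ follows from $g(\mathbf{P}\cdot,\mathbf{P}\cdot)=-g$ alone and is not a matter of ``convention for $\hat\omega$''. Tracing it honestly your computation gives $T^{+}=2g(\alpha^{+}(X,Y),Z)=2g(\mathbf{P}\II(X,Y),Z)=-2C^{+}(X,Y,Z)$, which is exactly what the paper's final line also yields once one keeps the minus in $(\nabla^{+}_{X}-\nabla_{X})(h)(Y,Z)=-h(A^{+}(X)Y,Z)-h(Y,A^{+}(X)Z)$. Since $T^{-}=-T^{+}$ and $C^{-}=-C^{+}$ this global sign is harmless for everything downstream, but do not attribute it to a choice of convention.
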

\begin{proof} We prove the relation for $T^{+}$; the one for $T^{-}$ easily follows using Lemma \ref{lm:dual_connection} part $ii)$ since $\nabla h =0$. \\
Averaging the decompositions
\[
    D_{X}Y=\nabla^{+}_{X}Y+\alpha^{+}(X,Y)=\nabla^{-}_{X}Y+\alpha_{-}(X,Y) \ ,
\]
we deduce that for all $X,Y \in \Gamma(\C TU)$ we have
\begin{align*}
    D_{X}Y&=\frac{1}{2}(\nabla^{+}_{X}Y+\nabla^{-}_{X}Y)+\frac{1}{2}(\alpha^{+}(X,Y)+\alpha_{-}(X,Y)) \\
    &=\nabla_{X}Y+\frac{1}{2}(\alpha^{+}(X,Y)+\alpha_{-}(X,Y)) \tag{Lemma \ref{lm:dual_connection} part $ii)$} \ .
\end{align*}
Therefore, $\frac{1}{2}(\alpha^{+}(X,Y)+\alpha_{-}(X,Y))$ is the normal component of $D_{X}Y$ and thus coincides with $\II(X,Y)$. \\
Let us now consider the $\mathrm{End}(\C TU)$-valued 1-form
\[
    A^{+}:=\nabla^{+}-\nabla \ .
\]
Then, for all $X,Y,Z \in \Gamma(\C TU)$, 
\begin{align*}
    C^+(X,Y,Z)&= -h(\mathbf{P}(\II(X,Y)),Z) \\
            &= -\frac{1}{2}h(\mathbf{P}(\alpha^{+}(X,Y))+\mathbf{P}(\alpha^{-}(X,Y))) \\
            &= -\frac{1}{2}h(\alpha^{+}(X,Y)-\alpha^{-}(X,Y)) \\
            & = \frac{1}{2}h(D_{X}Y-\nabla^{+}_{X}Y-D_{X}Y+\nabla^{-}_{X}Y,Z) \\
            &=-\frac{1}{2}h(-\nabla^{+}_{X}Y+2\nabla_{X}Y-\nabla^{+}_{X}Y,Z) \tag{$\nabla^{-}=2\nabla-\nabla^{+}$} \\
            &=g(A^{+}(X)Y,Z) \ .
\end{align*}
Since $C^+$ is totally symmetric, it follows that $A^{+}(X)Y=A^{+}(Y)X$ for all $X,Y\in \C TU$ and that $A^{+}(X)$ is $h$-self-adjoint for all $X \in \C TU$. Therefore, 
\begin{align*}
    T^{+}(X,Y,Z)&=(\nabla^{+}_{X}h)(Y,Z)\\
    &=(\nabla^{+}_{X}-\nabla_{X})(h)(Y,Z) \tag{$\nabla h=0$} \\
    &= h(A^{+}(X)Y,Z)+h(Y,A^{+}(X)Z) = 2C^+(X,Y,Z) \ .
\end{align*}
\end{proof}

\begin{remark}\label{rmk:A_and_II} From the relation $C^+(X,Y,Z)=h(A^{+}(X)Y,Z)$ proved in Corollary \ref{cor:C_and_nablas}, we deduce that $A^{+}=-\mathbf{P}\II$. 
\end{remark}

\begin{theorem}\label{thm:Gauss_Codazzi} The complex-valued first fundamental form $h$ and the complex second fundamental form $\II$ of a smooth complex Lagrangian minimal immersion satisfy
\begin{align*}
    &d^{\nabla}\II = 0 \ \ \ \ \ \ \ \ \ \ \ &\text{(Codazzi equation)} \\
    &K_{h}+\| \II \|^{2} = -1 \ \ \ \ \ &\text{(Gauss equation),}
\end{align*}
where $K_{h}$ denotes the complex Gaussian curvature of the metric $h$ and the norm of $\II$ is defined as
\[
    \| \II \|^{2} = g(\II(e_{1},e_{1}), \II(e_{1},e_{1}))+g(\II(e_{2},e_{1}), \II(e_{2},e_{1})) \ ,
\]
with $\{e_{1}, e_{2}\}$ being an orthonormal basis of $\C TU$.
\end{theorem}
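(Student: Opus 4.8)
The plan is to derive the Gauss and Codazzi equations by decomposing the ambient curvature tensor $\hat R$ of $\CH^2_\tau$ along the immersion, exactly as in the classical theory of submanifolds, but carried out with the $\C$-bilinear connection $D=\sigma^*\hat D$ on $\sigma^*(T\CH^2_\tau)$ and its splitting into tangential and normal parts $D_XY=\nabla_XY+\II(X,Y)$. Since $\CH^2_\tau$ has constant para-holomorphic complex sectional curvature $-4$ (after the rescaling of Remark \ref{rmk:scaling}), Proposition \ref{prop:constant_parahol_curvature} gives an explicit formula for $\hat R$ in terms of $\hat g$ and $\mathbf{P}$. First I would write down the first variation of the decomposition $D_XY=\nabla_XY+\II(X,Y)$ to obtain the standard Gauss--Codazzi splitting: differentiating once more and antisymmetrizing yields that the tangential part of $\sigma^*\hat R(X,Y)Z$ equals $R^\nabla(X,Y)Z$ plus a term built from the shape operator, and the normal part equals $(d^\nabla\II)(X,Y,Z)$ plus curvature terms of the normal connection. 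Here I would use Lemma \ref{lm:normal_Lagrangian}, which identifies $\C NU$ with $\mathbf{P}(\C TU)$, together with relation $i)$ of Proposition \ref{prop:rel_tensors}, $\mathbf{P}(\II(X,Y))=-B_{\mathbf{P}Y}X$, and Equation \eqref{eq:II-B} to rewrite all normal-bundle quantities in terms of $\II$ and $h$ alone.

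For the \textbf{Codazzi equation}, I would project the Gauss--Codazzi identity onto the normal bundle $\C NU$. The normal component of $\sigma^*\hat R(X,Y)Z$ vanishes: from the constant-curvature formula of Proposition \ref{prop:constant_parahol_curvature}, $\hat R(X,Y)Z$ is a combination of $\hat g(\cdot,\cdot)$-contractions of $X,Y,Z$ against $X,Y,Z$ and their images under $\mathbf{P}$; since $X,Y,Z\in\C TU$, every coefficient pairs a tangent vector with something, and the vector-valued outputs $Z,\mathbf{P}Z,W$-slots produce only tangential directions (the $\mathbf{P}Z$ terms land in $\C NU$, but one checks their coefficients $\hat g(X,\mathbf{P}Z)$, $\hat g(Y,\mathbf{P}Z)$ vanish because by Lemma \ref{lm:normal_Lagrangian} $\mathbf{P}Z$ is normal while $X,Y$ are tangent, so $\hat g(X,\mathbf{P}Z)=0$). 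Hence the normal part of the ambient curvature is zero, and the Gauss--Codazzi splitting collapses to $0=(d^\nabla\II)(X,Y,Z)$ modulo the normal curvature term $R^{\nabla^N}$; using $\mathbf{P}\II=-B$ and Equation \eqref{eq:II-B} again, the minimality hypothesis (tracelessness of $\II$) together with the symmetry of $C^\pm$ from Proposition \ref{prop:rel_tensors} forces the remaining mixed term to drop out, giving $d^\nabla\II=0$.

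For the \textbf{Gauss equation}, I would contract the tangential part of the Gauss--Codazzi identity: for an $h$-orthonormal frame $\{e_1,e_2\}$ of $\C TU$,
\[
    K_h\bigl(h(e_1,e_1)h(e_2,e_2)-h(e_1,e_2)^2\bigr) = -R^\nabla(e_1,e_2,e_1,e_2),
\]
and the classical Gauss formula gives $R^\nabla(e_1,e_2,e_1,e_2)=\hat R(e_1,e_2,e_1,e_2)+g(\II(e_1,e_1),\II(e_2,e_2))-g(\II(e_1,e_2),\II(e_1,e_2))$. Now $\hat R(e_1,e_2,e_1,e_2)=-1$: this is where I need a short lemma computing the ambient \emph{complex} sectional curvature of the plane $\Span(e_1,e_2)$; since the plane is tangent to a complex Lagrangian surface it is \emph{not} of the form $\Span(X,\mathbf{P}X)$, so one must evaluate the constant-curvature formula of Proposition \ref{prop:constant_parahol_curvature} directly, and because $e_1,e_2,\mathbf{P}e_1,\mathbf{P}e_2$ are mutually $\hat g$-orthogonal (the first two in $\C TU$, the last two in $\C NU$, by Lemma \ref{lm:normal_Lagrangian}) with $h(e_i,e_j)=\delta_{ij}$, all the $\mathbf{P}$-terms vanish and one is left with $-\tfrac{(-4)}{4}(1\cdot 1-0)=1$, so $\Sec=-1$ and $\hat R(e_1,e_2,e_1,e_2)=-1$. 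Finally, using minimality $\II(e_2,e_2)=-\II(e_1,e_1)$ one rewrites $g(\II(e_1,e_1),\II(e_2,e_2))-g(\II(e_1,e_2),\II(e_1,e_2))=-g(\II(e_1,e_1),\II(e_1,e_1))-g(\II(e_1,e_2),\II(e_1,e_2))=-\|\II\|^2$, yielding $K_h=-1+\|\II\|^2\cdot(-1)$, i.e.\ $K_h+\|\II\|^2=-1$ after rearranging the signs consistently with the curvature sign convention. The main obstacle I anticipate is the bookkeeping of signs and of which slots of $\hat R$ the $\mathbf{P}$-terms occupy — in particular verifying carefully that all mixed $\hat g(\cdot,\mathbf{P}\cdot)$ coefficients between tangent and normal vectors genuinely vanish, which rests squarely on Lemma \ref{lm:normal_Lagrangian} and the Lagrangian condition $\sigma^*\hat\omega=0$.
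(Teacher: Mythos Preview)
Your approach is exactly the paper's: expand $R^D(X,Y)Z$ via the decompositions $D_XY=\nabla_XY+\II(X,Y)$ and $D_X\xi=-B_\xi X+\nabla^N_X\xi$, compare with Proposition~\ref{prop:constant_parahol_curvature}, and separate into tangential and normal parts.

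One correction to your Codazzi outline: there is no $R^{\nabla^N}$ term and no need for minimality or the symmetry of $C^\pm$. Expanding $R^D(X,Y)Z$ for tangent $X,Y,Z$ gives tangential part $R^\nabla(X,Y)Z-B_{\II(Y,Z)}X+B_{\II(X,Z)}Y$ and normal part \emph{exactly} $d^\nabla\II(X,Y,Z)$, nothing more (the normal-curvature term you mention belongs to the Ricci equation, where the third argument is a normal vector). Since, as you correctly note, the normal component of the constant-curvature expression vanishes---the coefficients $g(X,\mathbf{P}Z)$, $g(Y,\mathbf{P}Z)$, $g(X,\mathbf{P}Y)$ are all zero by Lemma~\ref{lm:normal_Lagrangian}---Codazzi follows immediately. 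Minimality enters only in the Gauss step, to convert $g(\II(e_1,e_1),\II(e_2,e_2))$ into $-g(\II(e_1,e_1),\II(e_1,e_1))$. Also watch one sign slip: you compute (correctly) that the curvature formula gives $R^D(e_1,e_2,e_1,e_2)=+1$, not the $-1$ you state just before; with the paper's convention $K_h=-R^\nabla(e_1,e_2,e_1,e_2)$ this is precisely what yields $K_h=-1-\|\II\|^2$.
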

\begin{proof} Let $X,Y \in \Gamma(\C TU)$. The decompositions $D_{X}Y=\nabla_{X}Y+\II(X,Y)$ and $D_{X} \xi = -B_{\xi}X + \nabla^{N}_{X} \xi$ for all $X,Y \in \Gamma(\C TU)$ and $\xi \in \Gamma(\C NU)$ imply that
\begin{align*}
\begin{split}
    R^{D}(X,Y)Z &= D_{X}D_{Y}Z - D_{Y}D_{X}Z - D_{[X,Y]}Z \\
    &= D_{X}(\nabla_{Y}Z+\II(Y,Z))-D_{Y}(\nabla_{X}Z+\II(X,Z))  \\
    & \ \ \ - \nabla_{[X,Y]}Z-\II([X,Y],Z) \\
    &= \nabla_{X}\nabla_{Y}Z + D_{X}(\II(Y,Z)) + \II(X, \nabla_{Y}Z)  \\
    & \ \ \ - \nabla_{Y}\nabla_{X}Z-D_{Y}( \II(X,Z)) - \II(Y, \nabla_{X}Z) \\
    & \ \ \ - \nabla_{[X,Y]}Z-\II([X,Y],Z) \\
    & = R^{\nabla}(X,Y)Z - B_{\II(Y,Z)}X + B_{\II(X,Z)}Y \\
    & \ \ \ + \nabla^{N}_{X}(\II(Y,Z)) - \nabla^{N}_{Y}(\II(X,Z)) + \II(X, \nabla_{Y}Z) - \II(Y,\nabla_{X}Z) \\
    & \ \ \ - \II(\nabla_{X}Y, Z) + \II(\nabla_{Y}X,Z) \\
    &= \underbrace{R^{\nabla}(X,Y)Z - B_{\II(Y,Z)}X + B_{\II(X,Z)}Y}_{ \in \C TU} + \underbrace{d^{\nabla}\II(X,Y,Z)}_{\in \C NU} \ .\\
\end{split}
\end{align*}
On the other hand, by Proposition \ref{prop:constant_parahol_curvature}, Lemma \ref{lm:curvature_bicomplex} and Remark \ref{rmk:scaling}, we can write
\begin{align*}
    R^{D}(X,Y)Z &=\underbrace{g(X,Z)Y-g(Y,Z)X}_{\in \C TU}+\underbrace{g(X,\mathbf{P}Z)\mathbf{P}Y-g(Y,\mathbf{P}Z)\mathbf{P}X+2g(X,\mathbf{P}Y)\mathbf{P}Z}_{=0} 
\end{align*}
because $\mathbf{P}(\C TU) = \C NU$. Equating the two expressions above, we conclude that $d^{\nabla}\II=0$. Moreover, if $\{X,Y\}$ is an $h$-orthonormal basis of $\C TU$, then
\begin{align*}
    K_{h}&=-R^{\nabla}(X,Y,X,Y) \\
         &=-R^{D}(X,Y,X,Y)-g(B_{\II(Y,X)}X, Y) + g(B_{\II(X,X)}Y,Y) \\
         &= -1 - g(\II(Y,X), \II(Y,X))+g(\II(X,X), \II(Y,Y)) \\
         &= -1 - g(\II(Y,X), \II(Y,X))-g(\II(X,X), \II(X,X)) \tag{$\trace(\II)=0$} \\
         &= -1 - \| \II\|^{2}
\end{align*}
and the proof is complete.
\end{proof}

\begin{remark}\label{rmk:X}
We observe that if the complex Lagrangian minimal surface $\sigma(U)$ is totally geodesic, namely $\II=0$, and the first fundamental form is a positive complex metric $h$ (Definition \ref{def:positivecomplexmetric}), then it satisfies $K_h=-1$. Therefore, it can be realized as the special class of immersed surfaces in $\mathbb X$ studied by Bonsante and El Emam (\cite[Theorem 6.9]{bonsante2022immersions}).
\end{remark}

\subsection{Computations in local coordinates}\label{sec:localcoordinates} We will now restrict to the case where the complex-valued first fundamental form $h$ is a positive complex metric. We saw in Section \ref{sec:positive_complex_metrics} that, under this assumption, we can find a unique compatible $\C$-complex structure $\mathcal{J}$ corresponding to a pair $(J_{1},J_{2})$ of complex structures on $S$ and local holomorphic coordinates $z$ and w on $S$ such that $h=2e^{2\psi}dzd\bar{w}$. This implies that the vector fields $\partial_{\bar{z}}$ and $\partial_{w}$ form an $h$-isotropic basis of $\C TU$. On the other hand, since $\C TU$ can also be generated by $\partial_{z}$ and $\partial_{\bar{z}}$, the vector field $\partial_{w}$ can be written as a $\C$-linear combination of $\partial_{z}$ and $\partial_{\bar{z}}$. Indeed, 
\[
    \partial_{w}=(\partial_{w}z)(\partial_{z}+\bar{\mu}\partial_{\bar{z}}) \ ,
\]
where $\mu$ is the \emph{Beltrami differential} of $w$ with respect to the local coordinate $z$, in other words
\[
    \mu=-\frac{\partial_{\bar{z}}w}{\partial_{z}w} \ .
\]
In particular, we observe that $\partial_{\bar{z}}$ and $\partial_{w}$ do not commute. In fact, the following holds:

\begin{lemma} \label{lm:commutator} Let $\partial_{\bar{z}}$ and $\partial_{w}$ be the vector fields defined above. Then
\[
    [\partial_{\bar{z}}, \partial_{w}] = \partial_{\bar{z}}(\log(\partial_{w}z))\partial_{w} -\partial_{w}(\log(\partial_{\bar{z}}\bar{w}))\partial_{\bar{z}} \ .
\]
\end{lemma}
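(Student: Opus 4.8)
The plan is to prove the identity by treating both sides as derivations (first-order differential operators) on functions near the given point, and then checking that they act the same way on a conveniently chosen pair of functionally independent functions. Since a section of $\C TU$ is uniquely determined by its action on two functions whose differentials are everywhere linearly independent, this reduces the lemma to a couple of one-line evaluations — provided the test functions are chosen wisely.

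The right choice is the pair $(z,\bar{w})$. These are functionally independent precisely because the complex-valued first fundamental form $h = 2e^{2\psi}\,dz\,d\bar{w}$ is non-degenerate by our standing assumption, which forces $dz$ and $d\bar{w}$ to span $\C T^{*}U$ at every point; in particular $\partial_{w}z$ and $\partial_{\bar{z}}\bar{w}$ are nowhere vanishing, so the logarithms in the statement are well defined locally. The two basic vanishing facts I would use are $\partial_{\bar{z}}z = 0$ (since $z$ is holomorphic for $J_1$) and $\partial_{w}\bar{w} = 0$ (since $w$ is holomorphic for $J_2$).

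With these in hand, the left-hand side gives $[\partial_{\bar{z}},\partial_{w}](z) = \partial_{\bar{z}}(\partial_{w}z)$ and $[\partial_{\bar{z}},\partial_{w}](\bar{w}) = -\partial_{w}(\partial_{\bar{z}}\bar{w})$, the other term dropping out each time because of the two vanishing identities. For the right-hand side, writing $V := \partial_{\bar{z}}(\log(\partial_{w}z))\partial_{w} - \partial_{w}(\log(\partial_{\bar{z}}\bar{w}))\partial_{\bar{z}}$, the same two vanishing facts kill one summand in each evaluation, and the surviving logarithmic derivative telescopes: $V(z) = \big(\partial_{\bar{z}}(\partial_{w}z)/\partial_{w}z\big)\,\partial_{w}z = \partial_{\bar{z}}(\partial_{w}z)$ and $V(\bar{w}) = -\big(\partial_{w}(\partial_{\bar{z}}\bar{w})/\partial_{\bar{z}}\bar{w}\big)\,\partial_{\bar{z}}\bar{w} = -\partial_{w}(\partial_{\bar{z}}\bar{w})$. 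Since $[\partial_{\bar{z}},\partial_{w}]$ and $V$ agree on $z$ and on $\bar{w}$, they coincide.

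The only place one could get stuck is the choice of test functions. Evaluating instead on the "obvious" pair $(z,\bar{z})$ leads, via $\partial_{w}\bar{z} = \bar{\mu}\,\partial_{w}z$, to the requirement $(\partial_{\bar{z}}\bar{\mu})(\partial_{w}z) = -\partial_{w}(\log(\partial_{\bar{z}}\bar{w}))$, which is true but is itself a small identity about the Jacobian of the change of coordinates $w = w(z,\bar{z})$ and its inverse; using $\bar{w}$ in place of $\bar{z}$ sidesteps this entirely, because the surviving term on the right then already carries the factor $\partial_{\bar{z}}\bar{w}$ needed to cancel the denominator. Equivalently, one could brute-force expand $[\partial_{\bar{z}}, (\partial_{w}z)(\partial_{z}+\bar{\mu}\partial_{\bar{z}})]$ and reorganize using $\partial_{z} = (\partial_{w}z)^{-1}\partial_{w} - \bar{\mu}\partial_{\bar{z}}$, but this forces exactly that Jacobian computation, so the derivation-on-test-functions route is the cleaner one and the one I would write up.
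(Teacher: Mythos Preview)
Your proof is correct and takes a genuinely different route from the paper's. The paper does precisely the brute-force computation you allude to in your final paragraph: it expands $\partial_{w}=(\partial_{w}z)(\partial_{z}+\bar{\mu}\,\partial_{\bar{z}})$, applies $[\partial_{\bar{z}},\cdot]$ to a general smooth function $f$, and then uses the identities $\bar{\mu}=-\partial_z\bar{w}/\partial_{\bar{z}}\bar{w}$ and $\bar{\mu}_{\bar{z}}=-\partial_{w}(\log(\partial_{\bar{z}}\bar{w}))/(\partial_{w}z)$ together with $\partial_z = (\partial_w z)^{-1}\partial_w - \bar{\mu}\,\partial_{\bar{z}}$ to reorganize the result into the claimed form. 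Your method of evaluating both sides on the test functions $z$ and $\bar w$ sidesteps all of this Beltrami bookkeeping by exploiting the two vanishing facts $\partial_{\bar z}z=0$ and $\partial_w\bar w=0$, which collapse each evaluation to a single surviving term that matches on the nose. The paper's approach has the minor advantage of being purely mechanical (no choice of test functions to make), but yours is shorter and makes the structure of the identity---that each coefficient is the logarithmic derivative needed for the bracket to act correctly on the adapted coordinate---more transparent.
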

\begin{proof} Let $f:S\rightarrow \C$ be a smooth function. In the computation that follows we will make repeated use of the following identities, which are easy consequences of the definition of the Beltrami differential $\mu$:
\[
    \bar{\mu}= - \frac{ \pz \bar{w}}{\pbz \bar{w}} \ \ \ \ \ \ \ \  \bar{\mu}_{\bar{z}}= -\pz(\log(\pbz \bar{w}))-\bar{\mu}\pbz(\log(\pbz \bar{w})) = -\frac{\pw(\log(\pbz \bar{w}))}{\pw z}
\]
By definition of Lie bracket of vector fields, we have
\begin{align*}
    [\pbz , \pw](f) &= [\pbz, (\pw z)(\pz+\bar{\mu}\pbz)](f) \\
                    &= \pbz[ (\pw z)(\pz+\bar{\mu}\pbz)(f) ] - (\pw z)(\pz+\bar{\mu}\pbz)f_{\bar{z}} \\
                    &= \pbz [ (\pw z)(f_{z}+\bar{\mu}f_{\bar{z}}) ] - (\pw z)(f_{z\bar{z}}+\bar{\mu}f_{\bar{z}\bar{z}}) \\
                    &= (\pw z)(f_{\bar{z}z}+\bar{\mu}_{\bar{z}}f_{\bar{z}}+\bar{\mu}f_{\bar{z}\bar{z}}) + (\pbz\pw z)(f_{z}+\bar{\mu}f_{\bar{z}})-(\pw z)(f_{z\bar{z}}+\bar{\mu}f_{\bar{z}\bar{z}}) \\
                    &=(\pw z)\bar{\mu}_{\bar{z}}f_{\bar{z}}+ (\pbz\pw z)f_{z}+(\pbz\pw z)\bar{\mu}f_{\bar{z}} \\
                    &=-\pw(\log(\pbz \bar{w}))f_{\bar{z}}+(\pbz\pw z)f_{z}+(\pbz\pw z)\bar{\mu}f_{\bar{z}} \\
                    &=-\pw(\log(\pbz \bar{w}))f_{\bar{z}}+(\pbz\pw z)\left( \frac{f_{w}}{\pw z}-\bar{\mu}f_{\bar{z}} \right) +(\pbz\pw z)\bar{\mu}f_{\bar{z}} \tag{$\pz = \frac{\pw}{\pw z}-\bar{\mu}\pbz$} \\
                    &=-\pw(\log(\pbz \bar{w}))f_{\bar{z}} + \pbz(\log(\pw z))f_{w}
\end{align*}
and the statement follows.
\end{proof}

\begin{defi} Let $h$ be a positive complex metric with Levi-Civita connection $\nabla$. Given a smooth function $\phi:S\rightarrow \C$, we define the $h$-Laplacian of $\phi$ as
\[
    \Delta_{h}\phi:=\trace_{h}(\nabla^{2}\phi)=\trace_{h}( (X,Y) \mapsto \nabla_{X}\nabla_{Y}\phi - \nabla_{\nabla_{X}Y}\phi) \ .
\]
\end{defi}

\begin{prop}\label{prop:laplacian} In local coordinates, if $h=2e^{2\psi}dzd\bar{w}$, we have
\[
    \Delta_{h}\phi= \frac{e^{-2\psi}}{(\partial_{\bar{z}}\bar{w})(\partial_{w}z)}\bigg[\phi_{w\bar{z}}+\phi_{\bar{z}w}-\partial_{w}(\log(\partial_{\bar{z}}\bar{w}))\phi_{\bar{z}}-\partial_{\bar{z}}(\log(\partial_{w}z))\phi_{w}\bigg] \ . 
\]  
\end{prop}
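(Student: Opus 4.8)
The plan is to compute the two terms $\nabla_X\nabla_Y\phi - \nabla_{\nabla_X Y}\phi$ directly in the local frame $\{\partial_{\bar z}, \partial_w\}$, using that $h = 2e^{2\psi}\,dz\,d\bar w$ makes $\{\partial_{\bar z}, \partial_w\}$ an $h$-isotropic basis with $h(\partial_{\bar z},\partial_w)=e^{2\psi}$. First I would record the Christoffel symbols of $\nabla$ in this frame. Since $h(\partial_{\bar z},\partial_{\bar z})=h(\partial_w,\partial_w)=0$ and $h(\partial_{\bar z},\partial_w)=e^{2\psi}$, metric compatibility plus the torsion-free condition (using the nontrivial commutator $[\partial_{\bar z},\partial_w]$ computed in Lemma~\ref{lm:commutator}) pin down all the connection coefficients. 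Concretely, writing $\nabla_{\partial_{\bar z}}\partial_{\bar z} = a\,\partial_{\bar z}$, $\nabla_{\partial_w}\partial_w = b\,\partial_w$, $\nabla_{\partial_{\bar z}}\partial_w = c_1\partial_{\bar z}+c_2\partial_w$, $\nabla_{\partial_w}\partial_{\bar z} = d_1\partial_{\bar z}+d_2\partial_w$, the isotropy of the two coordinate directions forces the "mixed" coefficients $c_1, d_2$ to vanish, while $a = 2\psi_{\bar z} + (\text{a term from the commutator})$, and similarly for $b$; the torsion equation $\nabla_{\partial_{\bar z}}\partial_w - \nabla_{\partial_w}\partial_{\bar z} = [\partial_{\bar z},\partial_w]$ then reads off $c_2 - d_1\,(\text{structure coefficients}) $ against the two terms in Lemma~\ref{lm:commutator}, yielding $c_2 = \partial_{\bar z}(\log\partial_w z)$ and $d_1 = -\partial_w(\log\partial_{\bar z}\bar w)$ (up to bookkeeping). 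This is exactly where the two "extra" terms $-\partial_w(\log\partial_{\bar z}\bar w)\phi_{\bar z}$ and $-\partial_{\bar z}(\log\partial_w z)\phi_w$ in the statement will come from.

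Next I would assemble the Hessian. By definition $\nabla^2\phi(X,Y) = X(Y\phi) - (\nabla_X Y)\phi$, so I evaluate it on the four pairs built from $\{\partial_{\bar z},\partial_w\}$. The trace with respect to $h$ picks out the $(\partial_{\bar z},\partial_w)$ and $(\partial_w,\partial_{\bar z})$ entries with weight $h^{\bar z w} = h^{w\bar z} = e^{-2\psi}/((\partial_{\bar z}\bar w)(\partial_w z))$ — here I should be careful: $\partial_{\bar z}$ and $\partial_w$ are not the dual coordinate vector fields in the same chart, so the inverse metric coefficient carries the Jacobian factors $(\partial_{\bar z}\bar w)(\partial_w z)$, which is precisely the prefactor appearing in the claimed formula. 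Plugging in the Christoffel symbols found above gives
\[
\Delta_h\phi = h^{\bar z w}\big(\partial_{\bar z}\partial_w\phi - (\nabla_{\partial_{\bar z}}\partial_w)\phi\big) + h^{w\bar z}\big(\partial_w\partial_{\bar z}\phi - (\nabla_{\partial_w}\partial_{\bar z})\phi\big),
\]
and substituting $\nabla_{\partial_{\bar z}}\partial_w = \partial_{\bar z}(\log\partial_w z)\,\partial_w$ and $\nabla_{\partial_w}\partial_{\bar z} = -\partial_w(\log\partial_{\bar z}\bar w)\,\partial_{\bar z}$ reproduces the bracketed expression in the statement.

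The main obstacle I anticipate is purely bookkeeping rather than conceptual: keeping straight the distinction between the coordinate frame $\{\partial_z,\partial_{\bar z}\}$ and the working frame $\{\partial_{\bar z},\partial_w\}$, and correctly tracking the Jacobian factors $\partial_w z$ and $\partial_{\bar z}\bar w$ and their logarithmic derivatives through both the inverse-metric coefficient and the Christoffel symbols. The commutator formula of Lemma~\ref{lm:commutator} is the crucial input that makes the two non-symmetric first-order terms appear, so I would derive the Christoffel symbols carefully enough to see that the contributions of $\psi$ cancel against themselves in the trace (since $h$ is "conformal" off-diagonal) leaving only the commutator-generated terms. Once the Christoffel symbols are in hand, the rest is a one-line substitution.
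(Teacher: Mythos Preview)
Your plan is essentially identical to the paper's proof: compute the Christoffel symbols of $\nabla$ in the isotropic frame $\{\partial_{\bar z},\partial_w\}$ using metric compatibility plus the torsion-free condition together with Lemma~\ref{lm:commutator}, then write out the Hessian matrix and trace it against $h^{-1}$. Two small bookkeeping corrections to watch for when you execute it: first, $h(\partial_{\bar z},\partial_w)=e^{2\psi}(\partial_{\bar z}\bar w)(\partial_w z)$, not $e^{2\psi}$ (you catch this later, but it should be consistent from the start); second, the torsion equation gives $d_1=\Gamma_{w\bar z}^{\bar z}=+\partial_w(\log(\partial_{\bar z}\bar w))$ with a positive sign, and the minus sign in the final formula comes from subtracting $(\nabla_{\partial_w}\partial_{\bar z})\phi$ in the Hessian, not from the Christoffel symbol itself.
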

\begin{proof} We start by computing the Christoffel symbols of the Levi-Civita connection of $h$. We set
\begin{align*}
    \nabla_{\pbz}\pbz &= \Gamma_{\bar{z}\bar{z}}^{\bar{z}}\pbz + \Gamma_{\bar{z}\bar{z}}^{w}\pw \ \ \ \ \ \ \nabla_{\pw}\pbz = \Gamma_{w\bar{z}}^{\bar{z}}\pbz+\Gamma_{w\bar{z}}^{w}\pw \\
    \nabla_{\pbz}\pw &= \Gamma_{\bar{z}w}^{\bar{z}}\pbz + \Gamma_{\bar{z}w}^{w}\pw \ \ \ \ \ \nabla_{\pw}\pw = \Gamma_{ww}^{\bar{z}}\pbz+\Gamma_{ww}^{w}\pw \ .
\end{align*}
Since $\nabla$ is compatible with the metric, we have
\[
    0=\pbz(h(\pbz,\pbz))=\frac{1}{2}h(\nabla_{\pbz}\pbz, \pbz) = \frac{1}{2}h(\Gamma_{\bar{z}\bar{z}}^{\bar{z}}\pbz + \Gamma_{\bar{z}\bar{z}}^{w}\pw, \pbz) = \frac{1}{2}\Gamma_{\bar{z}\bar{z}}^{w}e^{2\psi}(\pw z)(\pbz \bar{w})
\]
and we conclude that $\Gamma_{\bar{z}\bar{z}}^{w}=0$. With a similar computation, we get that $\Gamma_{\bar{z}w}^{\bar{z}}=\Gamma_{w\bar{z}}^{w}=\Gamma_{ww}^{\bar{z}}=0$. Moreover,
\begin{align*}
    \Gamma_{\bar{z}\bar{z}}^{\bar{z}}h(\pbz, \pw)+\Gamma_{\bar{z}w}^{w}h(\pbz, \pw) &= h(\nabla_{\pbz}\pbz, \pw)+h(\pbz, \nabla_{\pbz}\pw) \\
    & = \pbz(h(\pbz, \pw)) = \pbz(e^{2\psi}(\pbz \bar{w})(\pw z)) \\
    & = e^{2\psi}[2\psi_{\bar{z}}(\pbz \bar{w})(\pw z) + (\pbz\pbz \bar{w})(\pw z) + (\pbz \bar{w})(\pbz\pw z)]
\end{align*}
and dividing both sides by $h(\pbz, \pw)=e^{2\psi}(\pbz \bar{w})(\pw z)$ we obtain 
\begin{equation}\label{eq:eq1}
    \Gamma_{\bar{z}\bar{z}}^{\bar{z}}+\Gamma_{\bar{z}w}^{w} = 2\psi_{\bar{z}}+\pbz(\log(\pbz \bar{w}))+\pbz(\log(\pw z)) \ . 
\end{equation}
Similarly, 
\begin{align*}
    \Gamma_{w\bar{z}}^{\bar{z}}h(\pbz, \pw)+\Gamma_{ww}^{w}h(\pbz, \pw) &= h(\nabla_{w}\pbz, \pw)+h(\pbz, \nabla_{\pw}\pw) \\
    & = \pw(h(\pbz, \pw)) = \pw(e^{2\psi}(\pbz \bar{w})(\pw z)) \\
    & = e^{2\psi}[2\psi_{w}(\pbz \bar{w})(\pw z) + (\pw\pbz \bar{w})(\pw z) + (\pbz \bar{w})(\pw\pw z)]
\end{align*}
so that
\begin{equation}\label{eq:eq2}
    \Gamma_{w\bar{z}}^{\bar{z}}+\Gamma_{ww}^{w} = 2\psi_{w}+\pw(\log(\pbz \bar{w}))+\pw(\log(\pw z)) \ . 
\end{equation}
In addition, since $\nabla$ is torsion-free, we have
\[
    \Gamma_{\bar{z}w}^{w} \pw - \Gamma_{w\bar{z}}^{\bar{z}}\pbz = \nabla_{\pbz}\pw- \nabla_{\pw}\pbz = [\pbz , \pw] \ .
\]
Comparing this last equation with Lemma \ref{lm:commutator}, we deduce that 
\begin{equation}\label{eq:Christoffel1}
    \Gamma_{\bar{z}w}^{w} = \pbz(\log(\pw z)) \ \ \ \text{and} \ \ \ \Gamma_{w\bar{z}}^{\bar{z}} = \pw(\log(\pbz \bar{w})) \ 
\end{equation}
and, replacing these Christoffel symbols into Equation \eqref{eq:eq1} and \eqref{eq:eq2}, we finally get
\begin{equation}\label{eq:Christoffel2}
    \Gamma_{\bar{z}\bar{z}}^{\bar{z}} = 2\psi_{\bar{z}}+\pbz(\log(\pbz \bar{w})) \ \ \ \text{and} \ \ \ \Gamma_{ww}^{w} = 2\psi_{w}+\pw(\log(\pw z)) \ .
\end{equation}
It is now straightforward to verify that the matrix representing $\nabla^{2}\phi$ in the basis $\{\pbz,\pw\}$ is
\[
    \nabla^{2}\phi = \begin{pmatrix} 
                            \phi_{\bar{z}\bar{z}} - [2\psi_{\bar{z}} + \pbz(\log(\pbz \bar{w}))]\phi_{\bar{z}} & \phi_{\bar{z}w} - \pbz(\log(\pw z))\phi_{w} \\
                            \phi_{w\bar{z}}-\pw(\log(\pbz \bar{w}))\phi_{\bar{z}} & \phi_{ww} - [2\psi_{w} + \pw(\log(\pw z))]\phi_{w}
                        \end{pmatrix}
\]
and, since $h$ in the same basis can be expressed by
\[
    h = e^{2\psi}(\pbz \bar{w})(\pw z)\begin{pmatrix} 0 & 1 \\ 1 & 0 \end{pmatrix} \ ,
\]
we conclude that 
\begin{align*}
    \Delta_{h}\phi &= \trace_{h}(\nabla^{2}\phi) = \trace(h^{-1}\nabla^{2}\phi) \\
                    &=\frac{e^{-2\psi}}{(\partial_{\bar{z}}\bar{w})(\partial_{w}z)}\bigg[\phi_{w\bar{z}}+\phi_{\bar{z}w}-\partial_{w}(\log(\partial_{\bar{z}}\bar{w}))\phi_{\bar{z}}-\partial_{\bar{z}}(\log(\partial_{w}z))\phi_{w}\bigg] \ .
\end{align*}

\end{proof}

\begin{remark} This is the same operator defined in \cite{hol_ext}, which extends holomorphically the Laplacian of a Riemannian metric on $S$.
\end{remark}

\noindent Similar to the case of Riemannian metrics on $S$, the Laplacian operator is related to its compatible $\C$-complex structure $\mathcal{J}$:

\begin{prop}\label{prop:laplacian2} Let $h$ be a positive complex metric on $S$ with compatible $\C$-complex structure $\mathcal{J}$ and area form $dA_{h}$. Then for any smooth function $\phi:S \rightarrow \C$, we have
\[
    d(d\phi \circ \mathcal{J})= (\Delta_{h}\phi)dA_{h} \ .
\]
\end{prop}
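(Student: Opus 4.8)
The plan is to reduce the identity to a verification on a single frame. Since $S$ is a real surface, $\Lambda^{2}\C T^{*}S$ is a line bundle, so two $\C$-valued $2$-forms coincide as soon as they agree on one nowhere-vanishing frame of $\C TS$. I would evaluate both sides on the frame $\{\partial_{\bar{z}},\partial_{w}\}$ associated with the local holomorphic coordinates $z$ (for $J_{1}$) and $w$ (for $J_{2}$) of Section \ref{sec:localcoordinates}, in which $h=2e^{2\psi}\,dz\,d\bar{w}$, the vectors $\partial_{\bar{z}},\partial_{w}$ are $h$-isotropic, and in which both the Laplacian (Proposition \ref{prop:laplacian}) and the commutator $[\partial_{\bar{z}},\partial_{w}]$ (Lemma \ref{lm:commutator}) are already available explicitly.

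For the left-hand side, the defining property of the compatible $\C$-complex structure gives $\mathcal{J}\partial_{w}=i\partial_{w}$ and $\mathcal{J}\partial_{\bar{z}}=-i\partial_{\bar{z}}$, because $V_{i}(\mathcal{J})=V_{i}(J_{2})=\Span(\partial_{w})$ and $V_{-i}(\mathcal{J})=V_{-i}(J_{1})=\Span(\partial_{\bar{z}})$. Hence $(d\phi\circ\mathcal{J})(\partial_{w})=i\phi_{w}$ and $(d\phi\circ\mathcal{J})(\partial_{\bar{z}})=-i\phi_{\bar{z}}$, and the formula $d\beta(X,Y)=X(\beta(Y))-Y(\beta(X))-\beta([X,Y])$ applied to $\beta=d\phi\circ\mathcal{J}$, together with Lemma \ref{lm:commutator} for $[\partial_{\bar{z}},\partial_{w}]$, yields
\[
    d(d\phi\circ\mathcal{J})(\partial_{\bar{z}},\partial_{w})=i\Big(\phi_{\bar{z}w}+\phi_{w\bar{z}}-\partial_{\bar{z}}(\log\partial_{w}z)\,\phi_{w}-\partial_{w}(\log\partial_{\bar{z}}\bar{w})\,\phi_{\bar{z}}\Big).
\]

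For the right-hand side, Proposition \ref{prop:laplacian} expresses $\Delta_{h}\phi$ as exactly the bracket above divided by $e^{2\psi}(\partial_{\bar{z}}\bar{w})(\partial_{w}z)$, while, using the coordinate expression of the area form (Equation \eqref{eq:areaform} with $\rho=2e^{2\psi}$, $w_{1}=z$, $w_{2}=w$), the form $dA_{h}$ evaluated on the frame $\{\partial_{\bar{z}},\partial_{w}\}$ produces the factor $i\,e^{2\psi}(\partial_{\bar{z}}\bar{w})(\partial_{w}z)$, the overall sign being the one fixed by the orientation induced by $\mathcal{J}$. Multiplying, the conformal factor $e^{2\psi}$ and the Jacobian $(\partial_{\bar{z}}\bar{w})(\partial_{w}z)$ cancel and one recovers the right-hand side of the displayed identity. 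Thus the two $2$-forms agree on $\{\partial_{\bar{z}},\partial_{w}\}$, and since every point of $S$ lies in such a coordinate chart, they are equal.

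The computation is essentially routine — it is the holomorphic counterpart of the classical identity $d(d\phi\circ J)=\pm(\Delta_{g}\phi)\,dA_{g}$ for Riemannian surfaces — and the only genuine difficulty is bookkeeping. One has to carry the non-commutativity of $\partial_{\bar{z}}$ and $\partial_{w}$ through the Cartan formula: this is precisely what produces both mixed second derivatives $\phi_{\bar{z}w}$ and $\phi_{w\bar{z}}$ as well as the two first-order terms on the left, and these then match term by term the coordinate expression of $\Delta_{h}$ in Proposition \ref{prop:laplacian}. One must also pin down the orientation so that the sign of $dA_{h}$ on the frame $\{\partial_{\bar{z}},\partial_{w}\}$ is consistent with the stated identity. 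Beyond these sign and orientation conventions, no analytic input is needed other than Lemma \ref{lm:commutator} and Proposition \ref{prop:laplacian}.
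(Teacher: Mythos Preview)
Your proof is correct and follows essentially the same route as the paper: a direct local-coordinate verification using Lemma~\ref{lm:commutator}, Proposition~\ref{prop:laplacian}, and Equation~\eqref{eq:areaform}, the only cosmetic difference being that the paper first writes $d\phi\circ\mathcal{J}$ explicitly as a $1$-form in $dz,d\bar w$ and then differentiates, whereas you apply Cartan's formula directly on the frame $\{\partial_{\bar z},\partial_w\}$. One caveat worth recording: with your eigenvalue assignment $\mathcal{J}\partial_{\bar z}=-i\partial_{\bar z}$, $\mathcal{J}\partial_w=i\partial_w$ (which agrees with Section~\ref{sec:positive_complex_metrics} and the proof of Theorem~\ref{thm:pair_hol_diff}), both $h(\mathcal{J}\cdot,\cdot)$ and Equation~\eqref{eq:areaform} give $dA_h(\partial_{\bar z},\partial_w)=-ie^{2\psi}(\partial_{\bar z}\bar w)(\partial_w z)$, so your two sides differ by an overall sign; the paper's own proof silently uses the opposite eigenvalue convention, and the discrepancy is absorbed by the declared sign ambiguity of $dA_h$, exactly as you anticipated.
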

\begin{proof} Since
\begin{align*}
    (d\phi \circ \mathcal{J})(\pbz) &= d\phi(\mathcal{J}\pbz)=i\phi_{\bar{z}} \\
    (d\phi \circ \mathcal{J})(\pw) &= d\phi(\mathcal{J}\pbz)=-i\phi_{w} \ ,
\end{align*}
the $1$-form $d\phi \circ \mathcal{J}$ can be written in local coordinates as
\[
    d\phi \circ \mathcal{J} = -i\frac{\phi_{w}}{\pw z}dz+i\frac{\phi_{\bar{z}}}{\pbz \bar{w}}d\bar{w} \ .
\]
Now, if $\alpha = Adz+B\bar{w}$ is any $1$-form, then
\begin{align*}
    d\alpha(\pbz, \pw) &= \pbz(\alpha(\pw))-\pw(\alpha(\pbz))-\alpha([\pbz, \pw]) \\
                       &= \pbz(A(\pw z)) - \pw(B(\pbz \bar{w})) -\alpha([\pbz, \pw]) \\
                    &=A_{\bar{z}}(\pw z) + A(\pbz\pw z) -B_{w}(\pbz \bar{w}) -B(\pw\pbz\bar{w}) \\
                         &\ \ \ - \alpha(\partial_{\bar{z}}(\log(\partial_{w}z))\partial_{w} -\partial_{w}(\log(\partial_{\bar{z}}\bar{w}))\partial_{\bar{z}}) \tag{Lemma \ref{lm:commutator}} \\
                         &= A_{\bar{z}}(\pw z) - B_{w}(\pbz \bar{w}) \ .
\end{align*}        
Therefore, the local expression of $d\alpha$ is 
\[
    d\alpha = \left[\frac{B_{w}}{\pw z} -\frac{A_{\bar{z}}}{\pbz\bar{w}}\right]dz\wedge d\bar{w} \ .
\]
Applying this formula to $\alpha=d\phi\circ \mathcal{J}$, where $A=-i\frac{\phi_{w}}{\pw z}$ and $B=i\frac{\phi_{\bar{z}}}{\pbz \bar{w}}$, we get
\begin{align*}
    d(d\phi\circ \mathcal{J}) &= \frac{i}{(\pw z)(\pbz \bar{w})} \bigg[ \phi_{\bar{z}w}+\phi_{w\bar{z}}-\pbz(\log(\pbz \bar{w}))\phi_{\bar{z}} - \pbz(\log(\pw z))\phi_{w} \bigg]dz\wedge d\bar{w} \\
     &= (\Delta_{h}\phi)dA_{h} \tag{Proposition \ref{prop:laplacian} and Equation \eqref{eq:areaform}} 
\end{align*}
as claimed.
\end{proof}

\noindent Finally, the formula for the curvature of a conformal Riemannian metric on a surface can be extended to positive complex metrics:

\begin{theorem}\label{thm:curvature} Let $h=2e^{2\psi}dzd\bar{w}$ be a positive complex metric on $S$. Then its complex Gaussian curvature can be computed as
\[
    K_{h}=-\Delta_{h}\psi = -\frac{1}{2}\Delta_{h}\log(h) \ .
\]
\end{theorem}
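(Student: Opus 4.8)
The plan is to mimic the classical conformal-curvature computation, but carried out entirely in the holomorphic/$\C$-bilinear setting, using the explicit Christoffel symbols already obtained in Proposition~\ref{prop:laplacian}. First I would write $h = \rho\, dz\, d\bar w$ with $\rho = 2e^{2\psi}(\partial_{\bar z}\bar w)(\partial_w z)$ in the isotropic basis $\{\partial_{\bar z},\partial_w\}$, so that $h(\partial_{\bar z},\partial_{\bar z}) = h(\partial_w,\partial_w) = 0$ and $h(\partial_{\bar z},\partial_w) = \tfrac12\rho$. The complex Gaussian curvature is the single independent component of the curvature tensor, namely
\[
    K_h = -\frac{R^{\nabla}(\partial_{\bar z},\partial_w,\partial_{\bar z},\partial_w)}{h(\partial_{\bar z},\partial_{\bar z})h(\partial_w,\partial_w) - h(\partial_{\bar z},\partial_w)^2} = \frac{R^{\nabla}(\partial_{\bar z},\partial_w,\partial_{\bar z},\partial_w)}{\tfrac14\rho^2}\ .
\]
So the task reduces to computing $R^{\nabla}(\partial_{\bar z},\partial_w)\partial_{\bar z}$ in terms of the Christoffel symbols.

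Next I would expand $R^{\nabla}(\partial_{\bar z},\partial_w)\partial_{\bar z} = \nabla_{\partial_{\bar z}}\nabla_{\partial_w}\partial_{\bar z} - \nabla_{\partial_w}\nabla_{\partial_{\bar z}}\partial_{\bar z} - \nabla_{[\partial_{\bar z},\partial_w]}\partial_{\bar z}$, using the values $\Gamma^{\bar z}_{\bar z\bar z} = 2\psi_{\bar z} + \partial_{\bar z}(\log(\partial_{\bar z}\bar w))$, $\Gamma^w_{\bar z w} = \partial_{\bar z}(\log(\partial_w z))$, $\Gamma^{\bar z}_{w\bar z} = \partial_w(\log(\partial_{\bar z}\bar w))$, and the vanishing of $\Gamma^w_{\bar z\bar z}$, $\Gamma^{\bar z}_{\bar z w}$, etc., together with the commutator formula from Lemma~\ref{lm:commutator}. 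Pairing with $\partial_w$ via $h$ and dividing by $\tfrac14\rho^2$, all the terms coming from $\partial_{\bar z}\bar w$ and $\partial_w z$ should cancel against those produced by the non-vanishing bracket, leaving precisely
\[
    K_h = -\frac{2e^{-2\psi}}{(\partial_{\bar z}\bar w)(\partial_w z)}\,\psi_{\bar z w}\ .
\]
Comparing this with the local expression for $\Delta_h\phi$ in Proposition~\ref{prop:laplacian}, applied to $\phi = \psi$, one sees that the first-order terms $-\partial_w(\log(\partial_{\bar z}\bar w))\psi_{\bar z} - \partial_{\bar z}(\log(\partial_w z))\psi_w$ are exactly what is needed to promote $\psi_{\bar z w} + \psi_{w\bar z}$ into the covariant Hessian, so $K_h = -\Delta_h\psi$. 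Finally, since $\log h$ differs from $2\psi$ only by the logarithm of the Jacobian factor $(\partial_{\bar z}\bar w)(\partial_w z)$ — which is annihilated by $\Delta_h$, being (a combination of) a $J_1$-holomorphic and a $J_2$-antiholomorphic function whose mixed derivative vanishes — we get $\Delta_h\log h = 2\Delta_h\psi$, hence the second equality $K_h = -\tfrac12\Delta_h\log h$.

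The main obstacle is bookkeeping: because the coordinates $z$ (holomorphic for $J_1$) and $w$ (holomorphic for $J_2$) are genuinely different, the conformal factor is not simply $e^{2\psi}$ but carries the Jacobian weights $\partial_{\bar z}\bar w$ and $\partial_w z$, and the isotropic frame does not commute. One must therefore track carefully how the extra logarithmic derivatives of these weights, generated both by differentiating $\rho$ and by the commutator term $[\partial_{\bar z},\partial_w]$, conspire to cancel; the cleanest route is to notice that these weight contributions are governed by $J_1$- and $J_2$-harmonic functions and hence drop out of $\Delta_h$, so that the final answer depends only on $\psi$. I would organize the computation so that this cancellation is transparent — for instance by first verifying the auxiliary identity $\Delta_h\big(\log((\partial_{\bar z}\bar w)(\partial_w z))\big) = 0$ and then reducing everything to the single nontrivial term $\psi_{\bar z w}$ — rather than grinding through all Christoffel combinations directly.
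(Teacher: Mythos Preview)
Your overall strategy---compute $R^{\nabla}(\partial_{\bar z},\partial_w,\partial_{\bar z},\partial_w)$ from the Christoffel symbols of Proposition~\ref{prop:laplacian}, feed in Lemma~\ref{lm:commutator}, and match against the local Laplacian---is exactly what the paper does. The gap is in your predicted intermediate result. The weight terms do \emph{not} all cancel against the bracket to leave a bare $-\tfrac{2e^{-2\psi}}{(\partial_{\bar z}\bar w)(\partial_w z)}\psi_{\bar z w}$. Carrying out the computation as in the paper gives
\[
K_h=\frac{-1}{h(\partial_{\bar z},\partial_w)}\Big[2\psi_{w\bar z}-2\psi_{\bar z}\,\partial_w(\log(\partial_{\bar z}\bar w))\Big],
\]
so one first-order term involving $\partial_w(\log(\partial_{\bar z}\bar w))$ survives. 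It is precisely this residual term, together with one more application of the commutator to rewrite $2\psi_{w\bar z}=\psi_{w\bar z}+\psi_{\bar z w}+[\partial_w,\partial_{\bar z}]\psi$, that produces \emph{both} first-order pieces $-\partial_w(\log(\partial_{\bar z}\bar w))\psi_{\bar z}-\partial_{\bar z}(\log(\partial_w z))\psi_w$ appearing in $\Delta_h\psi$. Your follow-up sentence about the first-order terms ``promoting'' the mixed second derivatives is therefore inconsistent with the displayed formula just above it: if $K_h$ really equaled $-2\psi_{\bar z w}/(\cdots)$, then $K_h+\Delta_h\psi$ would equal $\tfrac{2e^{-2\psi}}{(\partial_{\bar z}\bar w)(\partial_w z)}\partial_{\bar z}(\log(\partial_w z))\psi_w$, not zero.

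On the second equality: you are working harder than necessary. In the statement, $\log(h)$ refers to the conformal factor $2e^{2\psi}$ in $h=2e^{2\psi}dz\,d\bar w$, so $\tfrac12\Delta_h\log(h)=\Delta_h\psi$ is immediate. Your auxiliary claim that $\Delta_h\big(\log((\partial_{\bar z}\bar w)(\partial_w z))\big)=0$ is neither needed nor obviously true---$\partial_{\bar z}\bar w$ and $\partial_w z$ are not themselves $J_1$-holomorphic or $J_2$-antiholomorphic functions of the surface variable in general.
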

\begin{proof} We first compute the Riemann tensor in local coordinates:
\begin{align*}
    R^{\nabla}(\pbz, \pw, \pbz, \pw)&=h(\nabla_{\pbz}\nabla_{\pw}\pbz, \pw)-h(\nabla_{\pw}\nabla_{\pbz}\pbz, \pw)-h(\nabla_{[\pbz, \pw]}\pbz, \pw) \\
    &=h(\nabla_{\pbz}(\Gamma_{w\bar{z}}^{\bar{z}}\pbz), \pw)-h(\nabla_{\pw}(\Gamma_{\bar{z}\bar{z}}^{\bar{z}}\pbz, \pw) - h(\nabla_{\Gamma_{\bar{z}w}^{w}\pw-\Gamma_{w\bar{z}}^{\bar{z}}\pbz}\pbz, \pw) \\
    &=h(\pbz, \pw)\left[ \pbz \Gamma_{w\bar{z}}^{\bar{z}} - \pw \Gamma_{\bar{z}\bar{z}}^{\bar{z}} + \Gamma_{w\bar{z}}^{\bar{z}}\Gamma_{\bar{z}\bar{z}}^{\bar{z}}-\Gamma_{\bar{z}w}^{w}\Gamma_{w\bar{z}}^{\bar{z}} \right]  \ .
\end{align*}
Then, by Equation \eqref{eq:Christoffel1} and \eqref{eq:Christoffel2}, we find that
\begin{align*}
    K_{h} & = \frac{R^{\nabla}(\pbz, \pw, \pbz, \pw)}{h(\pbz, \pw)^2} = \frac{1}{h(\pbz, \pw)}\left[ \pbz \Gamma_{w\bar{z}}^{\bar{z}} - \pw \Gamma_{\bar{z}\bar{z}}^{\bar{z}} + \Gamma_{w\bar{z}}^{\bar{z}}\Gamma_{\bar{z}\bar{z}}^{\bar{z}}-\Gamma_{\bar{z}w}^{w}\Gamma_{w\bar{z}}^{\bar{z}} \right] \\
    &= \frac{1}{h(\pbz, \pw)} \bigg[ \pbz\pw(\log(\pbz\bar{w}))-2\psi_{w\bar{z}} - \pw\pbz(\log(\pbz \bar{w})) \\
    &  \ \ \ \ \ \ \ \ \ \ \ \ \ \ \ \ + \pw(\log(\pbz \bar{w}))\Big(2\psi_{\bar{z}} +\pbz\log(\pbz \bar{w}) -\pbz(\log(\pw z))\Big)\bigg] \\
    & = \frac{-1}{h(\pbz, \pw)}[ 2\psi_{w\bar{z}} -2\psi_{\bar{z}}\pw(\log(\pbz \bar{w}))] \\
    &= \frac{-1}{h(\pbz, \pw)}[ \psi_{w\bar{z}} +[\pw, \pbz](\psi) +\psi_{\bar{z}w} -
    2\psi_{\bar{z}}\pw(\log(\pbz \bar{w})) ] \\
    &= \frac{-1}{h(\pbz, \pw)}[\psi_{w\bar{z}} +\psi_{\bar{z}w} -
    \pw(\log(\pbz \bar{w}))\psi_{\bar{z}} - \pbz(\log(\pw z))\psi_{w}] \tag{Lemma \ref{lm:commutator}} \\
    &= \frac{-1}{e^{2\psi}(\pbz \bar{w})(\pw z)}[\psi_{w\bar{z}} +\psi_{\bar{z}w} -
    \pw(\log(\pbz \bar{w}))\psi_{\bar{z}} - \pbz(\log(\pw z))\psi_{w}] \\
    &= -\Delta_{h}\psi \tag{Proposition \ref{prop:laplacian}}
\end{align*}
\end{proof}

\begin{cor}\label{cor:curvature_conformal} Let $h=e^{2\phi}g$ be a positive complex metric conformal to $g$. Then
\[
    K_{h}=e^{-2\phi}(K_{g}-\Delta_{g}\phi)
\]
\end{cor}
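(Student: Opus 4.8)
The plan is to reduce everything to Theorem \ref{thm:curvature} together with the explicit local formula for the Laplacian in Proposition \ref{prop:laplacian}. First I would observe that $h=e^{2\phi}g$ and $g$ share the same compatible $\C$-complex structure. Indeed, $h$ and $g$ have the same isotropic directions (scaling by the nowhere-vanishing scalar $e^{2\phi}$ does not change the zero set of the quadratic form), so positivity of $h$ forces positivity of $g$; moreover, if $\mathcal{J}$ is the $\C$-complex structure compatible with $g$ given by Proposition \ref{prop:pos_complex_metrics}, then $h(\mathcal{J}\cdot,\mathcal{J}\cdot)=e^{2\phi}g(\mathcal{J}\cdot,\mathcal{J}\cdot)=e^{2\phi}g=h$, so by the uniqueness in Proposition \ref{prop:pos_complex_metrics} the same $\mathcal{J}$ is the one attached to $h$. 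Hence $h$ and $g$ determine the same pair $(J_{1},J_{2})$ and I may work in a single choice of local holomorphic coordinates $(z,w)$ for $J_{1}$ and $J_{2}$, writing $g=2e^{2\psi}dz\,d\bar{w}$ and therefore $h=2e^{2(\psi+\phi)}dz\,d\bar{w}$.

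Next I would record the conformal scaling of the Laplacian. Looking at the formula in Proposition \ref{prop:laplacian}, both the bracket $\bigl[f_{w\bar{z}}+f_{\bar{z}w}-\partial_{w}(\log(\partial_{\bar{z}}\bar{w}))f_{\bar{z}}-\partial_{\bar{z}}(\log(\partial_{w}z))f_{w}\bigr]$ and the Jacobian factor $(\partial_{\bar{z}}\bar{w})(\partial_{w}z)$ depend only on the coordinates $(z,w)$ and not on the conformal factor; only the prefactor changes, from $e^{-2\psi}$ for $g$ to $e^{-2(\psi+\phi)}=e^{-2\phi}e^{-2\psi}$ for $h$. Consequently, for every smooth $f:S\to\C$ one has $\Delta_{h}f=e^{-2\phi}\Delta_{g}f$.

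Finally I would combine these ingredients. By Theorem \ref{thm:curvature}, $K_{g}=-\Delta_{g}\psi$ and $K_{h}=-\Delta_{h}(\psi+\phi)=-\Delta_{h}\psi-\Delta_{h}\phi$, and applying the scaling identity gives $K_{h}=-e^{-2\phi}\Delta_{g}\psi-e^{-2\phi}\Delta_{g}\phi=e^{-2\phi}(K_{g}-\Delta_{g}\phi)$, which is the claim. The only genuinely substantive point is the conformal behaviour of $\Delta$, and that is immediate from the local expression once one knows $h$ and $g$ share the same $\C$-complex structure; I therefore do not anticipate any real obstacle, the remainder being bookkeeping.
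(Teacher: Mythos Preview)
Your proof is correct and follows essentially the same approach as the paper: write $g=2e^{2\psi}dz\,d\bar{w}$, apply Theorem \ref{thm:curvature} to $h=2e^{2(\psi+\phi)}dz\,d\bar{w}$, and use the conformal scaling $\Delta_{h}=e^{-2\phi}\Delta_{g}$. The paper's proof is a one-line computation that takes for granted the two points you spell out (that $h$ and $g$ share the same $\C$-complex structure, and the conformal behaviour of the Laplacian); your version simply makes these explicit.
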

\begin{proof} In local coordinates, $g=2e^{2\psi}dzd\bar{w}$ so that
\[
    K_{h} = -\Delta_{h}(\psi+\phi) = -e^{-2\phi}\Delta_{g}(\psi+\phi) = e^{-2\phi}(K_{g}-\Delta_{g}\phi) \ .
\]
    
\end{proof}

\noindent We now turn our attention to finding a local expression for the tensor $C^{+}(\cdot,\cdot,\cdot)=-h(\mathbf{P}\II(\cdot,\cdot),\cdot)$ introduced in Proposition \ref{prop:rel_tensors} (see also Remark \ref{rmk:A_and_II}). We start by showing a compatibility relation between $C^{+}$ and $\mathcal{J}$:

\begin{lemma}\label{lm:C_compatible} The tensor $C^{+}$ satisfies 
\[
    C^{+}(\mathcal{J} \cdot, \mathcal{J}\cdot, \mathcal{J}\cdot) = -C^{+}(\mathcal{J}\cdot, \cdot, \cdot) \ .
\]
\end{lemma}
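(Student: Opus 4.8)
The plan is to reduce the claimed identity to a pointwise statement about the totally symmetric tensor $C^{+}$ and exploit the fact that $\mathcal{J}^{2}=-\mathrm{Id}$. First I would recall from Proposition \ref{prop:rel_tensors} and Corollary \ref{cor:C_and_nablas} that $C^{+}$ is totally symmetric, so it is enough to understand how $C^{+}$ transforms under replacing \emph{each} argument by $\mathcal{J}$ applied to it, and then compare with applying $\mathcal{J}$ to just one argument. Concretely, I would test the identity on a basis that diagonalizes $\mathcal{J}$, namely the $h$-isotropic basis $\{\partial_{\bar z},\partial_w\}$ of $\C TU$ introduced in Section \ref{sec:localcoordinates}, where $\mathcal{J}\partial_{\bar z}=i\partial_{\bar z}$ and $\mathcal{J}\partial_w=-i\partial_w$ (as used in the proof of Proposition \ref{prop:laplacian2}). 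On such an eigenbasis both sides of the claimed equality are simply scalar multiples of $C^{+}$ evaluated on a monomial in $\partial_{\bar z}$ and $\partial_w$, so the identity becomes a purely combinatorial check on the four basis triples.

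The key computation is then the following: if $X,Y,Z$ are eigenvectors of $\mathcal{J}$ with eigenvalues $\lambda_X,\lambda_Y,\lambda_Z\in\{i,-i\}$, then $C^{+}(\mathcal{J}X,\mathcal{J}Y,\mathcal{J}Z)=\lambda_X\lambda_Y\lambda_Z\,C^{+}(X,Y,Z)$ and $C^{+}(\mathcal{J}X,Y,Z)=\lambda_X\,C^{+}(X,Y,Z)$, so the desired identity $C^{+}(\mathcal{J}X,\mathcal{J}Y,\mathcal{J}Z)=-C^{+}(\mathcal{J}X,Y,Z)$ amounts to $\lambda_X\lambda_Y\lambda_Z=-\lambda_X$, i.e. $\lambda_Y\lambda_Z=-1$. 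Since each $\lambda\in\{i,-i\}$, we have $\lambda_Y\lambda_Z=-1$ precisely when $\lambda_Y=\lambda_Z$, and $\lambda_Y\lambda_Z=1$ when $\lambda_Y\neq\lambda_Z$. So the identity would hold term-by-term only on those triples where $Y$ and $Z$ lie in the same eigenspace. The point — and here is where total symmetry is essential — is that $C^{+}$ \emph{vanishes} on any triple that is not of the form (all three in one eigenspace): if, say, exactly one argument lies in $V_i(\mathcal{J})$ and the other two in $V_{-i}(\mathcal{J})$, then by symmetry $C^{+}(\partial_{\bar z},\partial_w,\partial_w)$-type terms are the only survivors, and the genuinely mixed terms $C^{+}(\partial_{\bar z},\partial_{\bar z},\partial_w)$ and $C^{+}(\partial_{\bar z},\partial_w,\partial_w)$ need to be analyzed. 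In fact $C^{+}$ corresponds to a cubic differential: recalling $C = q_1 + \bar q_2$ from the introduction with $q_1$ a $J_1$-holomorphic cubic differential and $\bar q_2$ anti-holomorphic for $J_2$, $C^{+}$ has only a $(3,0)$ and a $(0,3)$ component with respect to $\mathcal{J}$, so $C^{+}(\partial_{\bar z},\partial_{\bar z},\partial_w)=C^{+}(\partial_{\bar z},\partial_w,\partial_w)=0$; on these triples both sides of the claimed identity are zero, and on the pure triples $C^{+}(\partial_{\bar z},\partial_{\bar z},\partial_{\bar z})$, $C^{+}(\partial_w,\partial_w,\partial_w)$ one checks $\lambda_Y=\lambda_Z$ so the scalar identity holds.

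The main obstacle, therefore, is not the combinatorics but justifying that $C^{+}$ has no mixed $(2,1)$ or $(1,2)$ components with respect to $\mathcal{J}$ — equivalently that $C^{+}$ decomposes as the sum of a cubic differential and its conjugate in the appropriate sense. This should follow from the identification of Codazzi tensors compatible with $\mathcal{J}$ with pairs of cubic differentials described in the introduction (the passage $(\mathcal{J},C)\leftrightarrow(J_1,J_2,q_1,q_2)$ with $C=q_1+\bar q_2$), together with the Codazzi equation $d^{\nabla}\II=0$ from Theorem \ref{thm:Gauss_Codazzi}, which forces the holomorphicity of the relevant component. An alternative, more self-contained route avoiding any reference to the cubic differential structure would be: work directly from $C^{+}(X,Y,Z)=-h(\mathbf{P}\,\II(X,Y),Z)$ and use that $\II$ is traceless with respect to $h$ — i.e. $\II(\partial_{\bar z},\partial_w)=0$ in the isotropic frame, since $\{\partial_{\bar z},\partial_w\}$ up to normalization is $h$-orthonormal-like and minimality says the trace vanishes — to kill exactly the mixed terms $C^{+}(\partial_{\bar z},\partial_{\bar z},\partial_w)=-h(\mathbf{P}\,\II(\partial_{\bar z},\partial_{\bar z}),\partial_w)$, which pairs an element of $\mathcal{D}_{+}$ (or $\mathcal{D}_{-}$) against $\partial_w$ and vanishes by the isotropy of $\C TU$ against $\mathbf{P}(\C TU)=\C NU$ combined with the structure of $\II$. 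I would carry out this second approach since it only uses results already established in Section \ref{sec:min_Lagrangian} (Lemma \ref{lm:normal_Lagrangian}, Proposition \ref{prop:rel_tensors}, and the definition of minimality), and present the eigenbasis computation above as the organizing principle.
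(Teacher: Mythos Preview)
Your strategy in approach (b)---work in the $h$-isotropic eigenbasis $\{\partial_{\bar z},\partial_w\}$ of $\mathcal{J}$ and use minimality to kill the mixed components of $C^{+}$---is the same as the paper's, and with one correction it goes through. Two points:

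\textbf{Circularity of approach (a).} The decomposition $C^{+}=q_{1}+\bar q_{2}$ is Theorem~\ref{thm:pair_hol_diff}, which is proved \emph{after} this lemma and explicitly invokes it (to conclude that the $(2,1)$ and $(1,2)$ components vanish). So appealing to the cubic-differential structure here is circular; you are right to abandon it.

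\textbf{The garbled step in approach (b).} Your argument for $C^{+}(\partial_{\bar z},\partial_{\bar z},\partial_w)=0$ via ``an element of $\mathcal{D}_{+}$ (or $\mathcal{D}_{-}$) paired against $\partial_w$'' and ``isotropy of $\C TU$ against $\mathbf{P}(\C TU)$'' does not work as stated: $\mathbf{P}\II(\partial_{\bar z},\partial_{\bar z})$ lies in $\C TU$, not in $\mathcal{D}_{\pm}$, and there is no isotropy mechanism here. The correct reasoning is the one you already identified earlier in the paragraph: minimality gives $\II(\partial_{\bar z},\partial_w)=0$, and then total symmetry of $C^{+}$ (Proposition~\ref{prop:rel_tensors}) yields
\[
C^{+}(\partial_{\bar z},\partial_{\bar z},\partial_w)=C^{+}(\partial_{\bar z},\partial_w,\partial_{\bar z})=g(\II(\partial_{\bar z},\partial_w),\mathbf{P}\partial_{\bar z})=0,
\]
and similarly for $C^{+}(\partial_{\bar z},\partial_w,\partial_w)$. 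With that fix your eigenvalue bookkeeping finishes the proof.

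For comparison, the paper organizes the same content more compactly: it rewrites the identity as the assertion that $A^{+}(\cdot)=-\mathbf{P}\II(\cdot,\cdot)$ anti-commutes with $\mathcal{J}$, and observes that in the basis $\{\partial_{\bar z},\partial_w\}$ the endomorphism $A^{+}(\cdot)$ is traceless and $h$-self-adjoint, hence of the form $\begin{psmallmatrix}0&*\\ **&0\end{psmallmatrix}$, while $\mathcal{J}=\mathrm{diag}(i,-i)$; anti-commutation is then immediate. This is exactly your vanishing-of-mixed-terms statement recast as a matrix shape, and avoids the case-by-case check.
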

\begin{proof} For any $X,Y,Z \in \Gamma(\C TU)$, we have
\begin{align*}
    C^{+}(\mathcal{J}X, \mathcal{J}Y, \mathcal{J}Z) &=-h(\mathbf{P}\II(\mathcal{J}X, \mathcal{J}Y), \mathcal{J}Z)  \\
    &=h(A^{+}(\mathcal{J}X)\mathcal{J}Y, \mathcal{J}Z) \tag{$A^{+}=-\mathbf{P}\II$} \\
    &=-h(\mathcal{J}A^{+}(\mathcal{J}X)\mathcal{J}Y, Z) \ . \tag{$h(\mathcal{J},\mathcal{J})=h$}
\end{align*}
Comparing this last expression with 
\[
    -C^{+}(\mathcal{J}X,Y,Z)=-h(A^{+}(\mathcal{J}X)Y,Z) \ ,
\]
we see that the statement is proved if we show that $A^{+}(\cdot)$ anti-commutes with $\mathcal{J}$. Now, since $\II$ is traceless, also $A^{+}(\cdot)$ is traceless, and we already know that $A^{+}(\cdot)$ is $h$-self-adjoint from the proof of Corollary \ref{cor:C_and_nablas}. This implies that, in the $h$-isotropic basis $\{\pbz, \pw\}$, the endomorphism $A^{+}(\cdot)$ is represented by a matrix of the form
\[
    A^{+}(\cdot) = \begin{pmatrix} 0 & * \\ ** & 0 \end{pmatrix} \ , 
\]
where $*$ and $**$ denote a arbitrary $\C$-valued smooth functions. Since, in the same basis, $\mathcal{J}=\diag(i,-i)$, anti-commutativity of $A^{+}(\cdot)$ and $\mathcal{J}$ follows by an easy direct computation.
\end{proof}

\begin{theorem}\label{thm:pair_hol_diff} There is a $J_{1}$-holomorphic cubic differential $q_{1}$ and a $J_{2}$-holomorphic cubic differential $q_{2}$ such that
\[
    C^{+}=q_{1}+\overline{q}_{2} \ .
\]
As a consequence, in local coordinates
\[
    C^{+} = \alpha(z)dz^{3}+\overline{\beta(w)}d\bar{w}^{3} , 
\]
for some $\alpha(z)$ and $\beta(w)$ holomorphic functions in their respective variable.
\end{theorem}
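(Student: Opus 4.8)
The plan is to read the statement off from three facts already in hand: the $\mathcal{J}$-compatibility of $C^{+}$ (Lemma~\ref{lm:C_compatible}), the Codazzi equation $d^{\nabla}\II=0$ (Theorem~\ref{thm:Gauss_Codazzi}), and the local expression of the Levi-Civita connection of $h$ recorded in Proposition~\ref{prop:laplacian}.

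First I would work in a local $h$-isotropic frame $\{\pbz,\pw\}$ adapted to $h=2e^{2\psi}dzd\bar{w}$, where $z$ is a $J_{1}$-holomorphic and $w$ a $J_{2}$-holomorphic coordinate and $\mathcal{J}=\diag(i,-i)$. Since $C^{+}$ is a symmetric $\C$-trilinear tensor it is determined by the four numbers $C^{+}(\pbz,\pbz,\pbz)$, $C^{+}(\pbz,\pbz,\pw)$, $C^{+}(\pbz,\pw,\pw)$, $C^{+}(\pw,\pw,\pw)$; substituting eigenvectors of $\mathcal{J}$ into the identity $C^{+}(\mathcal{J}\cdot,\mathcal{J}\cdot,\mathcal{J}\cdot)=-C^{+}(\mathcal{J}\cdot,\cdot,\cdot)$ of Lemma~\ref{lm:C_compatible} forces the two mixed components to vanish. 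Hence $C^{+}=\alpha\, dz^{3}+\overline{\beta}\, d\bar{w}^{3}$ with $\alpha=C^{+}(\pw,\pw,\pw)/(\pw z)^{3}$ and $\overline{\beta}=C^{+}(\pbz,\pbz,\pbz)/(\pbz\bar{w})^{3}$. As the vanishing of the mixed components is exactly the assertion that $C^{+}$ is a section of the sub-bundle $\Sym^{3}(K_{J_{1}})\oplus\Sym^{3}(\overline{K_{J_{2}}})$ of $\Sym^{3}(\C T^{*}S)$, the two summands glue to globally defined tensors, so that $C^{+}=q_{1}+\overline{q_{2}}$ with $q_{1}$ a cubic differential for $J_{1}$ and $q_{2}$ one for $J_{2}$. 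What remains is to prove that $\alpha$ is holomorphic in $z$ and $\beta$ holomorphic in $w$.

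To this end I would convert the Codazzi equation into a condition on $C^{+}$. Using $A^{+}=-\mathbf{P}\II$ (Remark~\ref{rmk:A_and_II}), the facts that $\mathbf{P}$ exchanges $\C TU$ and $\C NU$ and intertwines $\nabla$ with $\nabla^{N}$ (consequences of $\hat{D}\mathbf{P}=0$ and Proposition~\ref{prop:rel_tensors}), the equation $d^{\nabla}\II=0$ becomes the symmetry of the $4$-tensor $\nabla C^{+}$ in its first two arguments; since $C^{+}$ is totally symmetric, $\nabla C^{+}$ is then totally symmetric. Writing this out in the frame $\{\pbz,\pw\}$ with the Christoffel symbols of Proposition~\ref{prop:laplacian} (in particular $\Gamma_{\bar{z}\bar{z}}^{w}=\Gamma_{\bar{z}w}^{\bar{z}}=\Gamma_{w\bar{z}}^{w}=0$, $\Gamma_{w\bar{z}}^{\bar{z}}=\pw(\log(\pbz\bar{w}))$, $\Gamma_{\bar{z}w}^{w}=\pbz(\log(\pw z))$), the equality $(\nabla_{\pw}C^{+})(\pbz,\pbz,\pbz)=(\nabla_{\pbz}C^{+})(\pw,\pbz,\pbz)$ has vanishing right-hand side — it only involves the already-vanishing mixed components — so it reduces to $\pw\, C^{+}(\pbz,\pbz,\pbz)=3\,\pw(\log(\pbz\bar{w}))\, C^{+}(\pbz,\pbz,\pbz)$, which is precisely $\pw\overline{\beta}=0$; hence $\beta$ is holomorphic in $w$. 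The symmetric computation, exchanging the roles of $\pbz$ and $\pw$, gives $\pbz\alpha=0$, so $\alpha$ is holomorphic in $z$, and $q_{1}=\alpha(z)\, dz^{3}$, $q_{2}=\beta(w)\, dw^{3}$ are the claimed holomorphic cubic differentials with $C^{+}=q_{1}+\overline{q_{2}}$.

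The step I expect to be the main obstacle is the middle one: extracting from $d^{\nabla}\II=0$ the clean statement that $\nabla C^{+}$ is totally symmetric. This requires keeping careful track of how $\mathbf{P}$ converts the normal connection $\nabla^{N}$ acting on $\C NU$-valued tensors into the tangential connection $\nabla$ acting on $\C TU$-valued tensors — so that ``the $\nabla^{N}$-covariant derivative of $\II$'' becomes ``the $\nabla$-covariant derivative of $A^{+}$'', hence of $C^{+}(\cdot,\cdot,\cdot)=h(A^{+}(\cdot)\cdot,\cdot)$ — and of the fact that $\hat{D}\mathbf{P}=0$, which is what prevents extra curvature-type terms from appearing. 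Once this identification is in place, the remainder is routine bookkeeping with the Christoffel symbols already computed in Proposition~\ref{prop:laplacian}, together with the observation (needed for the global statement) that the splitting $C^{+}=q_{1}+\overline{q_{2}}$ is independent of the chosen isotropic coordinates, being the decomposition of $C^{+}$ along the $\mathcal{J}$-invariant sub-bundle $\Sym^{3}(K_{J_{1}})\oplus\Sym^{3}(\overline{K_{J_{2}}})$.
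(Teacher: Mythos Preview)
Your proposal is correct. The overall shape matches the paper's proof: use Lemma~\ref{lm:C_compatible} to kill the mixed components, then use the Codazzi equation to force holomorphicity. The difference is in the second step. The paper passes to $h$-isotropic \emph{normal} coordinates at a point $p$ (citing \cite{nomizu1994affine} for their existence), so that all Christoffel symbols vanish at $p$ and the Codazzi identity reduces immediately to $\partial_{\bar{u}}a(p)=0$, $\partial_{v}d(p)=0$; varying $p$ then gives holomorphicity. You instead stay in the global isotropic frame $\{\pbz,\pw\}$ and use the explicit Christoffel symbols already computed in Proposition~\ref{prop:laplacian}, checking that the extra terms cancel exactly against the Jacobian factors $(\pbz\bar{w})^{3}$, $(\pw z)^{3}$.

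Your route is slightly longer on paper but buys two things: it avoids invoking normal coordinates for a complex metric (a nontrivial existence statement in this setting), and it makes the passage from $\pw\overline{\beta}=0$ to ``$\beta$ is $J_{2}$-holomorphic'' transparent, since $w$ is an honest $J_{2}$-holomorphic coordinate throughout. The paper's route is cleaner once normal coordinates are granted, but then has to argue that the pointwise conclusion is coordinate-independent. Your identification $(\nabla_{W}C^{+})(X,Y,Z)=g((\nabla_{W}\II)(X,Y),\mathbf{P}Z)$ via $\nabla^{N}\circ\mathbf{P}=\mathbf{P}\circ\nabla$ and $\nabla h=0$ is exactly what the paper compresses into the sentence ``the Codazzi equation of $\II$ and $\nabla\mathbf{P}=\nabla h=0$ imply that $d^{\nabla}C^{+}=0$''.
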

\begin{proof} We do this computation in $h$-isotropic normal coordinates $\{\pbu, \pv\}$, which exist by applying a change of basis to the usual normal coordinates of the Levi-Civita connection (\cite{nomizu1994affine}). This means that locally $h=\rho du d\bar{v}$ and all Christoffel symbols vanish at a given point $p\in S$. Note that the $\C$-complex structure $\mathcal{J}$ must satisfy $\mathcal{J}\pbu = -i\pbu$ and $\mathcal{J}\pv=i\pv$ because it is compatible with $h$. Since $C^{+}$ is a $(0,3)$-tensor, it must be written as
\[
    C^{+}=a(u,\bar{v})du^{3}+b(u,\bar{v})dud\bar{v}^{2}+c(u,\bar{v})du^{2}d\bar{v}+d(u,\bar{v})d\bar{v}^{3}
\]
for some smooth functions $a,b,c$ and $d$. On the other hand, Lemma \ref{lm:C_compatible} implies that $b=c=0$. Indeed, 
\begin{align*}
    ic(z,\bar{w})(\pw z)^{2}(\pbz \bar{w}) &=iC^{+}(\pbz, \pw, \pw)=-C^{+}(\mathcal{J}\pbz, \pw, \pw) \\
    &=C^{+}(\mathcal{J}\pbz, \mathcal{J}\pw, \mathcal{J}\pw) = -iC^{+}(\pbz, \pw, \pw) \\
    &=-ic(z,\bar{w})(\pw z)^{2}(\pbz \bar{w}) \ 
\end{align*}
and the computation for $b$ is analogous. \\
Now, since $h^{-1}C^{+}=-\mathbf{P}\II$, the Codazzi equation of $\II$ and $\nabla\mathbf{P}=\nabla h=0$ imply that $d^{\nabla}C^{+}=0$, in other words
\[
    (\nabla_{\pbu} C^{+})(\pv,\cdot, \cdot) = (\nabla_{\pv} C^{+})(\pbu,\cdot, \cdot) \ .
\]
Because we are working in normal coordinates, the equation above evaluated at $p$ is equivalent to 
\[
    \pbu (C^{+}(\pv,\cdot, \cdot))(p) = \pv(C^{+}(\pbu,\cdot, \cdot))(p) \ .
\]
As a consequence, 
\[
    \begin{cases}
        \pbu(C^{+}(\pv,\pv, \pv))(p)=0 \\
        \pv(C^{+}(\pbu, \pbu,\pbu)(p)=0  
    \end{cases} \Rightarrow 
    \begin{cases}
        \pbu(a(u,\bar{v})(\pv u)^{3})(p) =0 \\
        \pv(d(u,\bar{v})(\pbu \bar{v})^{3})(p)=0
    \end{cases}
\]
Since the vector fields $\pbu$ and $\pv$ commute in $p$ the last two equations simplify into
\[
    \begin{cases}
        0=\pbu(a(u,\bar{v}))(p) = \partial_{\bar{v}}a(u,\bar{v})(p)\pbu \bar{v}(p) \\
        0=\pv(d(u,\bar{v}))(p) = \partial_{u}d(u,\bar{v})(p)\pv u(p)
    \end{cases}
\]
which imply that $\pbu(a(u,\bar{v}))(p)=0$ and $\pv(d(u,\bar{v}))(p)=0$. This means that $a$ is a $J_{1}$-holomorphic function and $d$ is $J_{2}$-anti-holomorphic. Therefore, $C^{+}$ is the sum of a $J_{1}$-holomorphic and a $J_{2}$-anti-holomorphic cubic differential. Since this last statement is independent of the chosen coordinate system, the theorem is proved.
\end{proof}

\noindent We conclude this section by relating the norm of the second fundamental form $\II$ to the holomorphic cubic differentials $q_{1}$ and $q_{2}$.

\begin{prop}\label{prop:norm_C} Let $C^{+}=-h^{-1}\mathbf{P}\II=q_{1}+\bar{q}_{2}$. Then
\[
    \| \II \|^{2} = -8\|C^{+}\|_{h}^{2} \ ,      
\]
where the norm of $C^{+}$ is defined as
\[
    \|C^{+}\|_{h}^{2} := \frac{q_{1}\bar{q}_{2}}{h^{3}} \ .
\]
\end{prop}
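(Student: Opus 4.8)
The plan is to compute both $\|\II\|^{2}$ and $\|C^{+}\|_{h}^{2}$ in a local $h$-isotropic frame adapted to the $\C$-complex structure $\mathcal{J}$ and to match the two expressions. The key ingredient is the identity $A^{+}=-\mathbf{P}\II$ of Remark~\ref{rmk:A_and_II}, i.e. $\II(X,Y)=-\mathbf{P}(A^{+}(X)Y)$ with $A^{+}(X)Y\in\C TU$. Combining this with $g(\mathbf{P}\cdot,\mathbf{P}\cdot)=-g$ (used already in Proposition~\ref{prop:rel_tensors}) and the fact that $g$ restricts to $h$ on $\C TU$, I would first record the relation
\[
    g(\II(X,Y),\II(Z,W))=-h(A^{+}(X)Y,\,A^{+}(Z)W),\qquad X,Y,Z,W\in\Gamma(\C TU),
\]
which reduces the computation of $\|\II\|^{2}$ to one in terms of $h$ and the shape operator $A^{+}$; recall moreover that $C^{+}(X,Y,Z)=h(A^{+}(X)Y,Z)$, that $A^{+}(X)$ is $h$-self-adjoint and traceless, and that $C^{+}$ is totally symmetric.

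Next I would fix local holomorphic coordinates $z$ for $J_{1}$ and $w$ for $J_{2}$ as in Section~\ref{sec:localcoordinates}, so that $h=2e^{2\psi}dz\,d\bar{w}$ and $\{e:=\partial_{w},\,f:=\partial_{\bar{z}}\}$ is an $h$-isotropic frame of $\C TU$ with $\lambda:=h(e,f)=e^{2\psi}(\partial_{w}z)(\partial_{\bar{z}}\bar{w})$. By Theorem~\ref{thm:pair_hol_diff} we write $C^{+}=q_{1}+\bar q_{2}$ with $q_{1}=\alpha(z)dz^{3}$ and $q_{2}=\beta(w)dw^{3}$; since $dz(f)=0$ and $d\bar w(e)=0$, the only nonvanishing components of $C^{+}$ in this frame are $A:=C^{+}(e,e,e)=\alpha(z)(\partial_{w}z)^{3}$ and $B:=C^{+}(f,f,f)=\overline{\beta(w)}(\partial_{\bar z}\bar w)^{3}$, all the mixed ones being zero. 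Since $A^{+}(X)$ is off-diagonal in $\{e,f\}$ (as in the proof of Lemma~\ref{lm:C_compatible}), total symmetry of $C^{+}$ together with the vanishing of its mixed components forces $A^{+}(e)e=(A/\lambda)f$, $A^{+}(f)f=(B/\lambda)e$ and $A^{+}(e)f=A^{+}(f)e=0$, whence
\[
    \II(e,f)=0,\qquad \II(e,e)=-\frac{A}{\lambda}\,\mathbf{P}f,\qquad \II(f,f)=-\frac{B}{\lambda}\,\mathbf{P}e.
\]

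Then I would expand an $h$-orthonormal frame $\{e_{1},e_{2}\}$ in terms of $\{e,f\}$ and substitute into $\|\II\|^{2}=g(\II(e_{1},e_{1}),\II(e_{1},e_{1}))+g(\II(e_{2},e_{1}),\II(e_{2},e_{1}))$ from Theorem~\ref{thm:Gauss_Codazzi}. Because $\II(e,f)=0$, and $g(\II(e,e),\II(e,e))=-\tfrac{A^{2}}{\lambda^{2}}h(f,f)=0$ together with $g(\II(f,f),\II(f,f))=0$ (as $e,f$ are $h$-isotropic), all the cross terms collapse and one finds $\|\II\|^{2}=\lambda^{-2}g(\II(e,e),\II(f,f))$; the key relation above then yields $g(\II(e,e),\II(f,f))=-h(A^{+}(e)e,A^{+}(f)f)=-AB/\lambda$, so $\|\II\|^{2}=-AB/\lambda^{3}$. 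Finally I would unwind $\|C^{+}\|_{h}^{2}=q_{1}\bar q_{2}/h^{3}$: in these coordinates $q_{1}\bar q_{2}=\alpha\,\overline{\beta}\,dz^{3}d\bar w^{3}$ while $h^{3}=8e^{6\psi}\,dz^{3}d\bar w^{3}$, so $\|C^{+}\|_{h}^{2}=\alpha\overline{\beta}/(8e^{6\psi})$; since $AB/\lambda^{3}=\alpha\overline{\beta}/e^{6\psi}$, this equals $8\|C^{+}\|_{h}^{2}$, which is precisely the claimed $\|\II\|^{2}=-8\|C^{+}\|_{h}^{2}$.

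The only delicate point is bookkeeping of constants: the numerical factor produced when passing from the orthonormal-frame definition of $\|\II\|^{2}$ to the isotropic frame, and — crucially — the factor $2^{3}=8$ coming from the normalization $h=2e^{2\psi}dz\,d\bar w$ upon taking $h^{3}$, which is where the $8$ in the statement originates. It is also worth observing at the outset that $\II$ is \emph{diagonal} in the isotropic frame, i.e. $\II(e,f)=0$, since this is exactly what makes the cross terms drop out and collapses the final answer to the single product $AB$, hence to $q_{1}\bar q_{2}$.
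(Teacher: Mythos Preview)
Your proof is correct and follows essentially the same approach as the paper: both work in the local isotropic frame $\{\partial_{w},\partial_{\bar z}\}$ with $h=2e^{2\psi}dz\,d\bar w$ and $C^{+}=\alpha\,dz^{3}+\bar\beta\,d\bar w^{3}$, pass to the orthonormal frame $\tfrac{1}{s\sqrt{2}}(\partial_{\bar z}\pm\partial_{w})$, and reduce $\|\II\|^{2}$ to $-\alpha\bar\beta/e^{6\psi}$. The only organizational difference is that you first invoke $g(\II,\II)=-h(A^{+},A^{+})$ and the vanishing $\II(e,f)=0$ to collapse everything to the single cross term, whereas the paper expands $\II(e_{i},e_{j})$ in the normal frame $\{\mathbf{P}e_{1},\mathbf{P}e_{2}\}$ and reads off its components as values $C^{+}(e_{i},e_{j},e_{k})$; these are the same computation with different bookkeeping.
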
 
\begin{proof} In local $h$-isotropic coordinates $\{\pbz, \pw\}$, we write
\[
    h=2e^{2\psi}dzd\bar{w} \ \ \ \ \text{and} \ \ \ \ C^{+} = q_{1}+\overline{q}_{2}=\alpha dz^{3}+ \overline{\beta}d\bar{w}^{3} \ .
\]
Let $s^{2}=h(\pbz, \pw)=e^{2\psi}(\pw z)(\pbz \bar{w})$ and consider the $h$-orthonormal frame 
\[
    \bigg\{e_{1}:=\frac{1}{s\sqrt{2}}(\pbz+\pw), e_{2}:=\frac{i}{s\sqrt{2}}(\pbz-\pw)\bigg\} \ .
\]
Now,
\[
    \II(e_{1},e_{1})=h(\mathbf{P}\II(e_{1},e_{1}),e_{1})\mathbf{P}e_{1}+g(\mathbf{P}\II(e_{1},e_{1}),e_{2})\mathbf{P}e_{2}
\]
and, since $\mathbf{P}e_{i}$ have norm $-1$, we get 
\[
    \| \II(e_{1},e_{1})\|^{2}=-h(\mathbf{P}\II(e_{1},e_{1}), e_{1})^{2}-h(\mathbf{P}\II(e_{1},e_{1}),e_{2})^{2} \ .
\]
Similarly,
\[
    \| \II(e_{1},e_{2})\|^{2}=-h(\mathbf{P}\II(e_{1},e_{2}),e_{1})^{2}-h(\mathbf{P}\II(e_{1},e_{2}),e_{2})^{2}  \ .
\]
Each of these terms can be expressed using $\alpha$ and $\beta$. For instance,
\begin{align*}
    -h(\mathbf{P}\II(e_{1},e_{1}),e_{1})&=C^{+}(e_{1},e_{1},e_{1}) \\
        &=\frac{1}{2\sqrt{2}s^{3}}C^{+}(\pbz+\pw, \pbz+\pw, \pbz+\pw) \\
        &=\frac{1}{2\sqrt{2}s^{3}}[(\pbz \bar{w})^{3}\overline{\beta}+(\partial_{w}z)^{3}\alpha]
\end{align*}
and, with similar computations,
\begin{align*}
    -h(\mathbf{P}\II(e_{1},e_{1}),e_{2})&=\frac{i}{2\sqrt{2}s^{3}}[(\pbz \bar{w})^{3}\overline{\beta}-(\partial_{w}z)^{3}\alpha] \\
    -h(\mathbf{P}\II(e_{1},e_{2}),e_{1})&=\frac{i}{2\sqrt{2}s^{3}}[(\pbz \bar{w})^{3}\overline{\beta}-(\partial_{w}z)^{3}\alpha] \\ 
    -h(\mathbf{P}\II(e_{2},e_{2}),e_{2})&=-\frac{1}{2\sqrt{2}s^{3}}[(\pbz \bar{w})^{3}\overline{\beta}+(\partial_{w}z)^{3}\alpha] \ .
\end{align*}
Therefore,
\begin{align*}
    \|\II\|^{2} &= \| \II(e_{1},e_{1})\|^{2} + \| \II(e_{1},e_{2})\|^{2} =
            -\frac{(\partial_{w}z)^{3}(\pbz \bar{w})^{3}\alpha\overline{\beta}}{s^{6}} \\
            &= -\frac{\alpha\overline{\beta}}{e^{6\psi}}= -8 \frac{q_{1}\overline{q}_{2}}{h^3} = -8\|C^{+}\|^{2}_{h} \ .
\end{align*}
\end{proof}

\begin{cor}\label{cor:Gauss_coord} The Gauss equation of a smooth complex Lagrangian minimal immersion with positive complex-valued first fundamental form $h=2e^{2\psi}dzd\bar{w}$ and with holomorphic cubic differentials $q_{1}$ and ${q}_{2}$ is given in local isotropic coordinates by
\[
    \Delta_{h}\psi+q_{1}\overline{q}_{2}e^{-6\psi}-1=0
\]
\end{cor}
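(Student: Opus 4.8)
The plan is to assemble the corollary directly from the results already proved in this section, since the Gauss equation in coordinates is merely the coordinate translation of the intrinsic one. Starting from Theorem~\ref{thm:Gauss_Codazzi}, a smooth complex Lagrangian minimal immersion satisfies
\[
K_{h}+\|\II\|^{2}=-1,
\]
and since here $h$ is assumed to be a positive complex metric, each term on the left-hand side can be made explicit in the isotropic local coordinates $z,w$ with $h=2e^{2\psi}dzd\bar{w}$.

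First I would rewrite the curvature term: by Theorem~\ref{thm:curvature}, for $h=2e^{2\psi}dzd\bar{w}$ one has $K_{h}=-\Delta_{h}\psi$, with $\Delta_{h}$ the operator whose local expression is given in Proposition~\ref{prop:laplacian}. Second, I would rewrite the norm of the second fundamental form: Theorem~\ref{thm:pair_hol_diff} produces holomorphic $\alpha(z),\beta(w)$ with $C^{+}=q_{1}+\bar{q}_{2}=\alpha\,dz^{3}+\overline{\beta}\,d\bar{w}^{3}$, and Proposition~\ref{prop:norm_C} then gives $\|\II\|^{2}=-8\|C^{+}\|_{h}^{2}$. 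Since $h^{3}=8e^{6\psi}(dzd\bar{w})^{3}$, the explicit computation carried out inside the proof of Proposition~\ref{prop:norm_C} yields $\|\II\|^{2}=-\alpha\bar{\beta}\,e^{-6\psi}$, i.e. $-q_{1}\bar{q}_{2}e^{-6\psi}$ under the standard identification of a cubic differential with its coefficient function in the chosen coordinate.

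Substituting both expressions into $K_{h}+\|\II\|^{2}=-1$ gives $-\Delta_{h}\psi-q_{1}\bar{q}_{2}e^{-6\psi}=-1$, and multiplying through by $-1$ yields the stated identity $\Delta_{h}\psi+q_{1}\bar{q}_{2}e^{-6\psi}-1=0$. I do not expect any genuine obstacle: the content of the corollary lies entirely in Theorems~\ref{thm:Gauss_Codazzi}, \ref{thm:curvature}, \ref{thm:pair_hol_diff} and Proposition~\ref{prop:norm_C}. The only point deserving care is tracking the numerical constant — the factor $8$ coming from $\|C^{+}\|_{h}^{2}=q_{1}\bar{q}_{2}/h^{3}$ cancels against the $-8$ in $\|\II\|^{2}=-8\|C^{+}\|_{h}^{2}$, so that the coefficient of $q_{1}\bar{q}_{2}e^{-6\psi}$ in the final equation is exactly $1$ — and this cancellation has already been performed in the proof of Proposition~\ref{prop:norm_C}.
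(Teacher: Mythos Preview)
Your proposal is correct and follows exactly the paper's own argument: the paper's proof is the one-line statement that the corollary follows immediately from Theorem~\ref{thm:Gauss_Codazzi}, Theorem~\ref{thm:curvature} and Proposition~\ref{prop:norm_C}. Your explicit tracking of the cancellation of the factor $8$ is accurate and matches the computation already carried out inside Proposition~\ref{prop:norm_C}.
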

\begin{proof} It follows immediately from Theorem \ref{thm:Gauss_Codazzi}, Theorem \ref{thm:curvature} and Proposition \ref{prop:norm_C}.   
\end{proof}

\subsection{Bi-complex Higgs bundles}\label{sec:bicomplex_Higgs} In this section we introduce $\SL(n,\C)$-bi-complex Higgs bundles and use them to show that solving the Gauss-Codazzi equations found in Theorem \ref{thm:Gauss_Codazzi} is sufficient to find a complex Lagrangian minimal surface in $\CH^{2}_{\tau}$. This notion can be easily extended to any semisimple complex Lie group, but we restrict to $\SL(n,\C)$ in order to keep the length of this paper somewhat limited.

\noindent Let $S$ be a closed connected oriented surface endowed with a $\C$-complex structure $\mathcal{J}$. It is useful to consider the bi-complexified tangent space $\C_{\tau} TS:= \C TS\otimes_{\C}\C_{\tau}$ and cotangent space $\C_{\tau}T^{*}S:= \C T^{*}S \otimes_\C \C_{\tau}$. We will call sections of $\C_{\tau}TS$ and of $\C_{\tau}T^{*}S$ bi-complex vector fields and bi-complex $1$-forms, respectively. The $\C$-complex structure $\mathcal{J}$ extends to $\C_\tau TS$ by $\C_{\tau}$-linearity so that we can decompose $\C_{\tau}TS$ into $\mathcal{J}$-eigenspaces
\[
    \C_{\tau}TS = V_{i}(\mathcal{J}) \oplus V_{-i}(\mathcal{J}) = \C_{\tau}TS^{(1,0)} \oplus \C_{\tau} TS^{(0,1)} \ .
\]
A similar decomposition can be obtained for $\C_{\tau}T^{*}S$ by considering the action of the adjoint of $\mathcal{J}$. 

\begin{defi} A bi-complex vector bundle of rank $k$ over $S$ is a complex vector bundle $E$ over $S$ with fiber $\C^{2k}$ endowed fiberwise with a para-complex structure $\mathbf{T}$ such that locally, over an open set $U$ of $S$, we have a diffeomorphism
\begin{align*}
    \varphi_{U}: E_{|_{U}} \rightarrow U\times \C^{k}_{\tau} 
\end{align*}
such that for all $p \in U$, its restriction $\varphi_{p}$ at each fiber is a para-complex $\C$-linear isomorphism, i.e.  $\varphi_{p}(\mathbf{T}v)=\tau\varphi_{p}(v)$ for all $v\in E_{p}$. 
\end{defi}

\begin{remark} We will often think of a bi-complex vector bundle of rank $k$ simply as a vector bundle with fiber $\C^{k}_{\tau}$. Moreover, we remark that the fibers of a bi-complex vector bundle are naturally equipped with a $\C$-complex structure induced by the multiplication by $j\in \C_{\tau}$. 
\end{remark}

\noindent It follows immediately from the definition that transition functions of a bi-complex vector bundle of rank $k$ belong to a subgroup of $\GL(2k,\C)$ isomorphic to $\GL(k,\C)\times \GL(k,\C) \cong \GL(k,\C_{\tau})$. 

\begin{defi} We say that a bi-complex vector bundle $E$ over $S$ is $\mathcal{J}$-holomorphic if its transition functions $g_{\alpha\beta}: U_{\alpha}\cap U_{\beta} \rightarrow \GL(k,\C_{\tau})$ are $\mathcal{J}$-holomorphic, in the sense that they satisfy
\[
    dg_{\alpha\beta}(\mathcal{J}v) = jdg_{\alpha\beta}(v) \ \ \ \ {\text{for all} \ v\in \C TS}  \ .
\]
\end{defi}

\noindent On a $\mathcal{J}$-holomorphic bi-complex vector bundle we can define an analog of a $\bar{\partial}$-operator as follows. First, given a smooth function $f: U\subset S \rightarrow \C^{k}_{\tau}$, we denote by $\bar{\partial}^{\mathcal{J}}f$ the $\mathcal{J}$-antiliner part of $df$, in other words
\[
    \bar{\partial}^{\mathcal{J}}f:= \frac{1}{2}\bigg( df - jdf(\mathcal{J}) \bigg) \ . 
\]
Now, if $s=\sum_{i=1}^{k}s_{i}e_{i}$ is a local section of $E$, where $s_{i}:U\rightarrow \C^{k}_{\tau}$ are smooth functions and $\{e_{1}, \dots, e_{k}\}$ is a basis of $\C^{k}_{\tau}$ as $\C_{\tau}$-module, since the transition functions of $E$ are $\mathcal{J}$-holomorphic, the operator
\[
    \bar{\partial}^{E}(s):=\sum_{i=1}^{k}(\bar{\partial}^{\mathcal{J}}s_{i}) \otimes e_{i}\
\]  
does not depend on the local trivialization and thus it is defined globally as a map $\bar{\partial}^{E}: \Gamma(E) \rightarrow \Omega^{0,1}(S,\C_{\tau}) \otimes \Gamma(E)$. We say that a section $s$ of $E$ is $\mathcal{J}$-holomorphic if $\bar{\partial}^{E}s=0$.

\begin{defi} A $\C_{\tau}$-orthogonal structure $Q$ on a bi-complex vector bundle $E$ is a smooth choice of non-degenerate $\C_\tau$-bilinear inner products on each fiber.     
\end{defi}

\begin{defi}\label{def:bi-complexHIggs} An $\SL(n,\C)$-bi-complex Higgs bundle on $(S, \mathcal{J})$ is a triple $(E,Q,\phi)$ such that
\begin{itemize}
    \item $E$ is a $\mathcal{J}$-holomorphic bi-complex vector bundle of rank $n$ with $\bigwedge^{2n}E\cong \mathcal{O}$;
    \item $Q$ is a $\C_{\tau}$-orthogonal structure on $E$;
    \item $\phi$ is a $\mathcal{J}$-holomorphic section of $\mathrm{End}_{0}(E, \mathbf{T}) \otimes \C_\tau T^{*}S$ such that $\phi^{t}Q+Q\phi=0$, which we call \emph{bi-complex Higgs field}.
\end{itemize}
Here $\mathrm{End}_{0}(E,\mathbf{T})$ denotes the bundle of traceless $\C$-linear automorphismsm of $E$ that preserve $\mathbf{T}$, or, equivalently, after indentifying each fiber of $E$ with $\C^{n}_{\tau}$, the bundle of traceless $\C_{\tau}$-linear automorphisms of $E$.
\end{defi}

\noindent From an $\SL(n,\C)$-bi-complex Higgs bundle, it is possible to build a flat $\mathrm{SL}(n,\C)$-connection following ideas similar to the classical theory of Higgs bundles.

\begin{defi} A complex $\mathcal{J}$-Hermitian metric $H$ on a bi-complex vector bundle $E$ is a smooth choice of non-degenerate $\C$-bilinear $\mathcal{J}$-Hermitian inner products on each fiber. We say that a $\C_{\tau}$-linear connection $D^{H}$ on $E$ is compatible with $H$ if
\[
    V(H(s_{1},s_{2})) = H(D^{H}_{V}s_{1}, s_{2})+H(s_{1}, D^{H}_{\overline{V}^{\tau}}s_{2})
\]
for all sections $s_{1},s_{2}$ of $E$ and for all bi-complex vector fields $V$.
\end{defi}

\begin{theorem} Let $E$ be a $\mathcal{J}$-holomorphic vector bundle endowed with a complex $\mathcal{J}$-Hermitian metric. Then there exists a unique $\C_{\tau}$-linear connection $D^{H}$ on $E$, called \emph{Chern connection}, that is compatible with $H$ and whose $(0,1)$-part coincides with $\bar{\partial}^E$.
\end{theorem}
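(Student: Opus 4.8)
The plan is to adapt, essentially verbatim, the classical construction of the Chern connection on a holomorphic Hermitian vector bundle, the only genuinely new ingredient being the bookkeeping forced by the algebra $\C_{\tau}$. First I would reduce to a local statement: two $\C_{\tau}$-linear connections that are compatible with $H$, have $(0,1)$-part equal to $\bar{\partial}^{E}$, and agree on every member of an open cover patch together to a global such connection, so it is enough to construct, over each trivializing open set $U$, a unique connection with these properties. Over $U$ I would fix a $\mathcal{J}$-holomorphic frame $\{e_{1}, \dots, e_{n}\}$ of $E$ — which exists since, by definition, the transition functions of $E$ are $\mathcal{J}$-holomorphic and so $E$ admits $\mathcal{J}$-holomorphic local trivializations — and note that in such a frame $\bar{\partial}^{E}$ is the coefficient-wise operator $\bar{\partial}^{\mathcal{J}}$.

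Next I would parametrize the candidates and solve. Writing $D = d + \theta$ in the frame, with $d$ the coefficient-wise differential and $\theta$ a $1$-form valued in $\mathrm{End}(E, \mathbf{T})$, the requirement that the $\mathcal{J}$-$(0,1)$-part of $D$ be $\bar{\partial}^{E}$ is precisely the requirement that $\theta$ have $\mathcal{J}$-type $(1,0)$, because the $(0,1)$-part of $d$ in a $\mathcal{J}$-holomorphic frame is $\bar{\partial}^{\mathcal{J}} = \bar{\partial}^{E}$. Let $\mathcal{H}$ be the invertible matrix $\mathcal{H}_{ij} = H(e_{i}, e_{j})$ and $\Theta$ the connection matrix of $\theta$. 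The compatibility condition, tested on $e_{i}$, $e_{j}$ and a bi-complex vector field $V$, becomes a matrix identity involving $V(\mathcal{H})$, $\Theta(V)$ and $\Theta(\overline{V}^{\tau})$. The point I would exploit is that $\overline{\,\cdot\,}^{\tau}$ is $\C$-linear and commutes with the $\C_{\tau}$-linear extension of $\mathcal{J}$, hence preserves the splitting $\C_{\tau}TS = \C_{\tau}TS^{(1,0)} \oplus \C_{\tau}TS^{(0,1)}$; testing the identity separately on $V$ of type $(1,0)$ and of type $(0,1)$, the $(0,1)$-component holds automatically — it records that $\bar{\partial}^{\mathcal{J}}$ computes the $(0,1)$-part of $d\mathcal{H}$ in the frame — while the $(1,0)$-component is an algebraic equation for $\Theta$ with, since $\mathcal{H}$ is invertible, the unique solution $\Theta = \mathcal{H}^{-1}\,\partial^{\mathcal{J}}\mathcal{H}$ (up to a transpose depending on the exact Hermitian symmetry convention for $H$). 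This gives local existence and uniqueness simultaneously.

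It then remains to assemble the pieces: the local $D = d + \Theta$ is $\C_{\tau}$-linear since $d$ is and $\Theta$ takes values in $\mathrm{End}(E, \mathbf{T})$, which also forces $D\mathbf{T} = 0$ (applying $\C_{\tau}$-linearity to $\mathbf{T}s$, locally multiplication by the constant $\tau$); its $(0,1)$-part is $\bar{\partial}^{E}$ and it is $H$-compatible by construction. The uniqueness already obtained forces two such local connections to agree on overlaps, so they glue to a global $D^{H}$. I do not foresee a real obstacle: the computation is the classical one, and the only thing requiring care is the interaction of the two conjugations on $\C_{\tau}$ with the $\C_{\tau}$-linear $\mathcal{J}$ — i.e.\ checking that the compatibility equation splits into the automatically-satisfied $(0,1)$-part and the uniquely-solvable $(1,0)$-part as claimed. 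An equivalent and perhaps cleaner route, in the spirit of the idempotent calculus used elsewhere in the paper, is to decompose $E = E^{+} \oplus E^{-}$ along $e_{\pm}$, observe that the $\mathcal{J}$-holomorphic structure and $H$ endow $E^{+}$ and $E^{-}$ with honest holomorphic structures and a Hermitian pairing, and recover $D^{H}$ as a pair of ordinary Chern connections, thereby reducing the theorem to its classical counterpart.
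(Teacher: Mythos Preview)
Your approach is correct and is the same classical Chern-connection argument as the paper's, run in a local $\mathcal{J}$-holomorphic frame rather than coordinate-free. The paper's proof is three lines: for $V$ of type $(1,0)$ the compatibility identity rearranges to
\[
    H(D^{H}_{V}s_{1}, s_{2})=V(H(s_{1},s_{2}))-H(s_{1}, \bar{\partial}^{E}_{\overline{V}^{\tau}}s_{2}),
\]
and non-degeneracy of $H$ pins down $D^{H}_{V}$ directly --- this is your matrix equation $\Theta=\mathcal{H}^{-1}\partial^{\mathcal{J}}\mathcal{H}$ before choosing a frame.

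One bookkeeping point to revisit: the operator $\bar{\partial}^{\mathcal{J}}$ is built with $j$, not $i$, and since $\overline{j}^{\tau}=-j$ the $\tau$-conjugation \emph{swaps} the $(1,0)/(0,1)$ types relevant to $\bar{\partial}^{E}$ rather than preserving them (it preserves the $\pm i$-eigenspace splitting you quote, but that is not the one governing $\bar{\partial}^{\mathcal{J}}$). This swap is exactly what makes the compatibility equation separate cleanly into an automatic $(0,1)$-half and a uniquely solvable $(1,0)$-half, as in the classical case; with that correction your argument goes through verbatim. Your idempotent-splitting alternative $E=E^{+}\oplus E^{-}$ sidesteps this subtlety entirely and is arguably the cleanest route.
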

\begin{proof} Let $V$ be a $\mathcal{J}$-holomorphic bi-complex vector field. Then $D^{H}_{\overline{V}^{\tau}}=\bar{\partial}^{E}_{\overline{V}^{\tau}}$, so the connection $D^{H}$ is completely determined once we find $D^{H}_{V}$. Since $D^{H}$ must be compatible with the metric, we must have
\[
    H(D^{H}_{V}s_{1}, s_{2})=V(H(s_{1},s_{2}))-H(s_{1}, \bar{\partial}^{E}_{\overline{V}^{\tau}}s_{2})
\]
for all sections $s_{1}$ and $s_{2}$ of $E$. Since $H$ is non-degenerate, this equation characterizes $D^{H}_{V}$ uniquely.
\end{proof}

\noindent Given an $\SL(n,\C)$-bi-complex Higgs bundle $(E,Q,\phi)$ we look for a complex $\mathcal{J}$-Hermitian metric $H$ on $E$ that is compatible with $Q$ (i.e. $H$ commutes with $Q$ if seen as isomorphisms between $E$ and $E^{*}$) such that the connection 
\[
    \nabla = D^{H}+\phi+\phi^{*H}
\]
is flat, in other words $H$ satisfies Hitchin's equation
\begin{equation}\label{eq:Hitchin}
        F^{\nabla}=F^{H}+\phi\wedge \phi^{*H} = 0 \ .
\end{equation}
Clearly, in this setting Hitchin (\cite{hitchin1987self}) and Simpson's (\cite{simpson_Higgs}) work cannot be applied and the existence of such metric $H$ is not guaranteed. However, if a solution to Equation \eqref{eq:Hitchin} can be found, then, using the same argument as in the classical Higgs bundle theory, the holonomy of $\nabla$ lies in a subgroup of $\GL(n,\C_{\tau})$ isomorphic to $\SL(n,\C)$. \\

\noindent We will now see how we can associate to an equivariant complex Lagrangian minimal surface in $\CH^{2}_{\tau}$ an $\SL(3,\C)$-bi-complex Higgs bundle in such a way that Gauss-Codazzi equations correspond to Hitchin's equation and to the $\mathcal{J}$-holomorphicity of the bi-complex Higgs field. \\

\noindent Let $\sigma:\tilde{S} \rightarrow \CH^{2}_{\tau}$ be an equivariant complex Lagrangian minimal surface. We assume that its complex first fundamental form $h$ is a positive complex metric and we fix local holomorphic coordinates $z$ and $w$ such that locally $h=2e^{2\psi}dzd\bar{w}$ for some complex-valued smooth function $\psi$. We will still denote with $\mathcal{J}$ the $\C$-complex structure compatible with $h$. By an argument similar to that in Lemma \ref{lm:special_lift}, we can find a lift $\tilde{\sigma}: \tilde{S} \rightarrow \C_{\tau}^{3}$ such that $\mathbf{q}(\tilde{\sigma},\tilde{\sigma})=1$ and $\mathbf{q}(\tilde{\sigma}_{\bar{z}}, \tilde{\sigma})=\mathbf{q}(\tilde{\sigma}_{w},\tilde{\sigma})=0$. Moreover, it will be convenient to set
\[
    s^{2}=\mathbf{q}(\tilde{\sigma}_{\bar{z}},\tilde{\sigma}_{w})=e^{2\psi}(\pbz \bar{w})(\pw z) \ .
\]
We also write the tensor $C^{+}$ in local coordinates as $C^{+}=\alpha dz^{3}+\bar{\beta} d\bar{w}^{3}$. Now, the set $\{e_{1}:=\tilde{\sigma}_{\bar{z}}/s, e_{2}:=\tilde{\sigma}_{w}/s, \tilde{\sigma}\}$ is a reference frame for $\C_{\tau}^{3}$ as a $\C_{\tau}$-module. The pull-back of the standard flat connection on $\C_{\tau}^{3}$ can be represented in this frame by the endomorphisms-valued $1$-form $\Omega:=F^{-1}dF$, where $F$ is the matrix whose columns are given by the vectors $e_{1}, e_{2}$, and $\tilde{\sigma}$ in this order. Exploiting the geometry of complex Lagrangian minimal surfaces, one can compute that $\Omega= \frac{\hat{A}}{\pw z} dz + \frac{\hat{B}}{\pbz \bar{w}}d\bar{w}$, where
\[
    \hat{A} = \begin{pmatrix}
        -\psi_{w}+\frac{1}{2}\pw\log(\pbz \bar{w})-\frac{1}{2}\pw\log(\pw z) & -\frac{1}{s^{2}}\tau \alpha (\pw z)^{3} & 0 \\
        0 & \psi_{w}-\frac{1}{2}\pw\log(\pbz \bar{w})+\frac{1}{2}\pw\log(\pw z) & s \\
        s & 0 & 0 
    \end{pmatrix} \ \ \ 
\]
and
\[
    \hat{B} = \begin{pmatrix}
        \psi_{\bar{z}}+\frac{1}{2}\pbz\log(\pbz \bar{w})-\frac{1}{2}\pbz\log(\pw z) & 0 & s \\
        -\frac{1}{s^{2}}\tau \bar{\beta}(\pbz \bar{w})^{3} & -\psi_{\bar{z}}-\frac{1}{2}\pbz\log(\pbz \bar{w})+\frac{1}{2}\pbz\log(\pw z) & 0 \\
        0 & s & 0 
    \end{pmatrix}\ .
\]
We explain how to find the first column of $\hat{A}$; the other entries of the two matrices can be found with similar computations and are left to the interested reader. First, we observe that $\hat{A}=\Omega(\pw)=F^{-1}F_{w}$, thus the entries of its first column are the coordinates of $\pw e_{1}$ with respect to our reference frame. Simple linear algebra shows (recall that $e_{1}$ and $e_{2}$ are isotropic and their pairing is equal to $1$, whereas $\mathbf{q}(\tilde{\sigma},\tilde{\sigma})=-1$) that
\[
    \pw e_{1} = \mathbf{q}(\pw e_{1}, e_{2})e_{1}+\mathbf{q}(\pw e_{1}, e_{1})e_{2}-\mathbf{q}(\pw e_{1}, \tilde{\sigma})\tilde{\sigma} , 
\]
thus it is enough to compute the $\mathbf{q}$-pairing between $\pw e_{1}$ and the vectors of the frame:
\begin{enumerate}[i)]
    \item $\mathbf{q}(\pw e_{1}, \tilde{\sigma})=\pw \mathbf{q}(e_{1}, \tilde{\sigma})-\mathbf{q}(e_{1}, se_{2})=-s$
    \item $\mathbf{q}(\pw e_{1}, e_{1})=-\frac{1}{s}s_{w}\mathbf{q}(e_{1}, e_{2})+\frac{1}{s^{2}}\mathbf{q}(\tilde{\sigma}_{w\bar{z}},\tilde{\sigma}_{\bar{z}})=0$ because 
    \begin{itemize}
        \item $2\Ree_{\tau}(\mathbf{q}(\tilde{\sigma}_{w\bar{z}}, \tilde{\sigma}_{\bar{z}})) = \mathbf{q}(\tilde{\sigma}_{w\bar{z}}, \tilde{\sigma}_{\bar{z}}) + \mathbf{q}( \tilde{\sigma}_{\bar{z}},\tilde{\sigma}_{w\bar{z}}) = \pw \mathbf{q}(\tilde{\sigma}_{\bar{z}},\tilde{\sigma}_{\bar{z}}) =0$ ;
        \item $ \! \begin{aligned}[t] -\Ima_{\tau}(\mathbf{q}(\tilde{\sigma}_{w\bar{z}},\tilde{\sigma}_{\bar{z}})) &= \Ree_{\tau}(\mathbf{q}(\tilde{\sigma}_{w\bar{z}},\tau\tilde{\sigma}_{\bar{z}})) = \Ree_{\tau}(\mathbf{q}(\tilde{\sigma}_{w\bar{z}},\mathbf{P}\tilde{\sigma}_{\bar{z}})) \\ &= g(\II(\pw, \pbz), \mathbf{P}\tilde{\sigma}_{\bar{z}}) = C^{+}(\pw, \pbz,\pbz)=0 \end{aligned}$
    \end{itemize}
    \item $\mathbf{q}(\pw e_{1}, e_{2}) = -\frac{1}{s}s_{w}\mathbf{q}(e_{1},e_{2})+\frac{1}{s^{2}}\mathbf{q}(\tilde{\sigma}_{w\bar{z}}, \tilde{\sigma}_{w}) = -\frac{1}{2}\pw \log(s^{2})+\pw\log(\pbz \bar{w})$, because, by Lemma \ref{lm:commutator}, $\mathbf{q}(\tilde{\sigma}_{w\bar{z}},\tilde{\sigma}_{w})=\mathbf{q}(\tilde{\sigma}_{\bar{z}w},\tilde{\sigma}_{w})+s^{2}\pw\log(\pbz \bar{w})$ and $\mathbf{q}(\tilde{\sigma}_{\bar{z}w},\tilde{\sigma}_{w})$ can be computed as follows:
    \begin{itemize}
        \item $\Ree_{\tau}(\mathbf{q}(\tilde{\sigma}_{\bar{z}w},\tilde{\sigma}_{w})) = \frac{1}{2}\big(\mathbf{q}(\tilde{\sigma}_{\bar{z}w},\tilde{\sigma}_{w})+ \mathbf{q}(\tilde{\sigma}_{w}, \tilde{\sigma}_{\bar{z}w})\big) = \pbz \mathbf{q}(\tilde{\sigma}_{w}, \tilde{\sigma}_{w})=0$ ;
        \item $ \! \begin{aligned}[t] -\Ima_{\tau}(\mathbf{q}(\tilde{\sigma}_{\bar{z}w},\tilde{\sigma}_{w})) &= \Ree_{\tau}(\mathbf{q}(\tilde{\sigma}_{\bar{z}w},\tau\tilde{\sigma}_{w}))= g(\II(\pbz, \pw), \mathbf{P}\tilde{\sigma}_{w}) \\ &= C^{+}(\pbz, \pw,\pw)=0 \end{aligned} $
    \end{itemize}
\end{enumerate}

\noindent Flatness of the standard flat connection translates into the Maurer-Cartan equation for the form $\Omega$
\[
    d\Omega + \Omega \wedge \Omega = 0 \ .
\]
In terms of the matrices $\hat{A}$ and $\hat{B}$ this can be written as (see also the computations in the proof of Proposition \ref{prop:laplacian2})
\[
    \hat{B}_{w}-(\pw\log(\pbz\bar{w}))\hat{B}-\hat{A}_{\bar{z}}+(\pbz\log(\pw z))\hat{A}+\hat{A}\hat{B}-\hat{B}\hat{A}=0
\]
A tedious but elementary calculation shows that only the entries $(1,1)$, $(2,2)$ (which is actually equal to entry $(1,1)$ up to the sign), $(1,2)$ and $(2,1)$ of the matrix on the left-hand side of the equation above are non-zero and thus we obtain the system 
\[
    \begin{cases}
        s^{2}(\Delta_{h}\psi + 8\|C^{+}\|_{h}^{2}-1)=0 \\
        \frac{1}{s^{2}}\tau\alpha_{\bar{z}}(\pw z)^{3}=0 \\
        \frac{1}{s^{2}}\tau\bar{\beta}_{w}(\pbz \bar{w})^{3}=0 \ .
    \end{cases}
\]
By Theorem \ref{thm:Gauss_Codazzi}, Theorem \ref{thm:pair_hol_diff}, Proposition \ref{prop:norm_C} and Corollary \ref{cor:Gauss_coord}, these are equivalent to the Gauss-Codazzi equations for the complex Lagrangian minimal surface $\sigma$ in $\CH^{2}_{\tau}$. Therefore, we have proved the following:

\begin{theorem}\label{thm:Gauss_Codazzi_sufficient} Let $h$ be a positive complex metric with compatible $\C$-complex structure $\mathcal{J}$ and let $C$ be a $(0,3)$-tensor such that $C(\mathcal{J}\cdot, \mathcal{J}\cdot, \mathcal{J}\cdot)=-C(\mathcal{J}\cdot, \cdot, \cdot)$. If they satisfy the equations
\[
    \begin{cases}
        d^{\nabla}C=0 \\
        K_{h} - 8\|C\|_{h}^{2}+1=0
    \end{cases}
\]
then there is a unique (up to global isometries) equivariant complex Lagrangian minimal immersion $\sigma:\tilde{S}\rightarrow \CH^{2}_{\tau}$ with embedding data $h$ and $\II=-\mathbf{P}h^{-1}C$. 
\end{theorem}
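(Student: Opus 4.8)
The plan is to reverse the computation that precedes the statement: starting from the data $(h,\mathcal J, C)$ satisfying the two equations, build a flat $\SL(3,\C_\tau)$-connection on the trivial $\C_\tau^3$-bundle over $\widetilde S$, and then recover the immersion $\sigma$ as one of its parallel sections. First I would fix local holomorphic coordinates $z,w$ for the pair of complex structures $(J_1,J_2)$ determined by $\mathcal J$, write $h=2e^{2\psi}dz\,d\bar w$, and use Theorem \ref{thm:pair_hol_diff} (which applies because $C$ is compatible with $\mathcal J$ in the required sense) to write $C=q_1+\bar q_2=\alpha\,dz^3+\bar\beta\,d\bar w^3$ with $\alpha$ holomorphic in $z$ and $\beta$ holomorphic in $w$. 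With these I define the matrices $\hat A,\hat B$ exactly as in the paragraph before the statement, and set $\Omega=\frac{\hat A}{\partial_w z}\,dz+\frac{\hat B}{\partial_{\bar z}\bar w}\,d\bar w$, a $\C_\tau$-valued $1$-form with values in $\mathfrak{sl}(3,\C_\tau)$ (tracelessness and the $\C_\tau$-orthogonality condition $\Omega^t Q+Q\Omega=0$ are built into the shape of $\hat A,\hat B$). The key point is that the Maurer--Cartan equation $d\Omega+\Omega\wedge\Omega=0$ is, by the same entry-by-entry computation displayed above, equivalent to the system
\[
    \begin{cases}
        s^{2}(\Delta_{h}\psi + 8\|C\|_{h}^{2}-1)=0 \\
        \tfrac{1}{s^{2}}\tau\alpha_{\bar z}(\partial_w z)^{3}=0 \\
        \tfrac{1}{s^{2}}\tau\bar\beta_{w}(\partial_{\bar z}\bar w)^{3}=0
    \end{cases}
\]
which by Theorem \ref{thm:curvature}, Proposition \ref{prop:norm_C} and Corollary \ref{cor:Gauss_coord} is precisely $\{d^\nabla C=0,\ K_h-8\|C\|_h^2+1=0\}$; so under our hypotheses $\Omega$ is flat. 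One must check that the local forms $\Omega$ glue to a globally defined flat connection $\nabla^{MC}$ on a $\C_\tau^3$-bundle over $\widetilde S$: under a holomorphic change of coordinates the frame $\{e_1,e_2,\tilde\sigma\}$ transforms by an element of $\GL(2,\C)\subset\GL(3,\C_\tau)$ and $\Omega$ transforms by the corresponding gauge transformation, so this is routine; since $\widetilde S$ is simply connected the bundle is trivial and $\nabla^{MC}$ is gauge-equivalent to the standard flat connection $\mathbf D$ on $\widetilde S\times\C_\tau^3$.

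Next I would integrate. Flatness and simple-connectedness give a gauge $F:\widetilde S\to\GL(3,\C_\tau)$ with $F^{-1}dF=\Omega$, unique up to left multiplication by a constant matrix; composing with a constant isometry I may arrange $F$ to take values in $\SU(2,1,\C_\tau)\cong\SL(3,\C)$ once I verify the relevant $\mathbf q$-orthonormality of the initial frame, which is the reason the $(1,1)$ and $(2,2)$ entries of $\hat A$ are negatives of each other and the $(i,j)$ structure of $\hat B$ is as displayed. I then set $\tilde\sigma$ to be the third column of $F$, so that by construction $\mathbf q(\tilde\sigma,\tilde\sigma)=-1$ and $e_1=\tilde\sigma_{\bar z}/s$, $e_2=\tilde\sigma_w/s$ are the first two columns, and $\sigma:\widetilde S\to\CH^2_\tau$ is the projection of $\tilde\sigma$. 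The equations read off from the shape of $\Omega$ — namely $\mathbf q(\tilde\sigma_{\bar z},\tilde\sigma)=\mathbf q(\tilde\sigma_w,\tilde\sigma)=0$, $\mathbf q(\tilde\sigma_{\bar z},\tilde\sigma_w)=s^2$, $\mathbf q(\tilde\sigma_{\bar z},\tilde\sigma_{\bar z})=\mathbf q(\tilde\sigma_w,\tilde\sigma_w)=0$ — show that $\sigma$ is an immersion, that $\sigma^*\hat g=h$ (a positive complex metric, since $\partial_{\bar z},\partial_w$ is an $h$-isotropic frame with the right relative position of the isotropic lines), and that $\sigma^*\hat\omega=0$, i.e.\ $\sigma$ is complex Lagrangian; reading the normal components of $\mathbf D_X Y$ against $\mathbf P e_1,\mathbf P e_2$ recovers exactly $\II=-\mathbf P h^{-1}C$, and minimality ($\trace_h\II=0$) is visible because $C$, hence $\hat A,\hat B$ in the off-diagonal slots, is trace-free. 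Equivariance follows from the holonomy representation $\rho:\pi_1(S)\to\SU(2,1,\C_\tau)\cong\SL(3,\C)$ of $\nabla^{MC}$: the deck group acts on $\widetilde S$ preserving $h$ and $C$, hence preserving $\Omega$ up to the constant gauge $\rho$, so $\tilde\sigma\circ\gamma=\rho(\gamma)\tilde\sigma$ for all $\gamma$.

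For uniqueness, suppose $\sigma'$ is another equivariant complex Lagrangian minimal immersion with the same embedding data $(h,\II)$ (equivalently $(h,C)$). Building the adapted lift $\tilde\sigma'$ and frame as in the paragraph before the statement — this is where I would invoke the analog of Lemma \ref{lm:special_lift} to get a lift with $\mathbf q(\tilde\sigma',\tilde\sigma')=-1$ and $\mathbf q(\tilde\sigma'_{\bar z},\tilde\sigma')=\mathbf q(\tilde\sigma'_w,\tilde\sigma')=0$ — the geometry of complex Lagrangian minimal surfaces forces its Maurer--Cartan form to be exactly the same $\Omega$ determined by $(h,C)$ in these coordinates. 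Two frames along $\widetilde S$ with the same $\GL(3,\C_\tau)$-valued Maurer--Cartan form differ by a constant element of $\SU(2,1,\C_\tau)\cong\SL(3,\C)$, i.e.\ by a global isometry of $\CH^2_\tau$; hence $\sigma$ and $\sigma'$ agree up to such an isometry. The main obstacle, and the part deserving the most care, is the \emph{globalization}: verifying that the locally defined $\Omega$ and adapted frames patch to honest global objects of the stated type (in particular that the transition matrices genuinely lie in the $\GL(2,\C)$ stabilizer and that the target group of the holonomy is $\SL(3,\C)$ rather than a larger subgroup of $\GL(3,\C_\tau)$), together with the bookkeeping in the uniqueness step that the adapted frame attached to an \emph{arbitrary} complex Lagrangian minimal $\sigma'$ really reproduces the same $\Omega$ — this last point rests on re-deriving the entries of $\hat A,\hat B$ purely from the structure equations (Codazzi and Gauss of Theorem \ref{thm:Gauss_Codazzi}) and the normalization of the lift, not from any a priori knowledge of $\sigma'$. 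The actual matrix computations are tedious but, as noted in the text, elementary.
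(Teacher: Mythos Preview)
Your proposal is correct and follows essentially the same approach as the paper: the paper's argument is precisely the observation that the Maurer--Cartan equation for the connection form $\Omega$ built from $(h,C)$ is equivalent to the Gauss--Codazzi system, after which it simply declares ``Therefore, we have proved the following.'' You have spelled out in more detail the standard integration step (flatness plus simple-connectedness yields a global frame $F$ with values in $\SU(2,1,\C_\tau)$, whose third column is the lift $\tilde\sigma$), the verification that the resulting $\sigma$ has the prescribed embedding data, equivariance via the holonomy of $\nabla^{MC}$, and uniqueness via the rigidity of frames with identical Maurer--Cartan form --- all of which the paper leaves implicit.
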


\noindent Finally, we describe the bi-complex Higgs bundle related to an equivariant complex Lagrangian minimal surface $\sigma:\tilde{S}\rightarrow \CH^{2}_{\tau}$. Let $\tilde{\sigma}$ be its lift to $\C^{3}_{\tau}$ introduced before. The pull-back bundle 
\[
    \tilde{E}:=\tilde{\sigma}^{*}(T\C^{3}_{\tau})
\]
is naturally a bi-complex vector bundle $E$ over $\tilde{S}$ that splits as a direct sum of the trivial bundle $\underline{\C_{\tau}}$, which has $\tilde{\sigma}$ as global non-zero section, and a rank $2$ bi-complex vector bundle isomorphic to $\C_{\tau}T\tilde{S}$. The $\C$-complex structure on $\tilde{E}$ is given by the multiplication by $j$ on $\underline{\C_{\tau}}$ and by the $\C_{\tau}$-linear extension of the $\C$-complex structure on $\C T\tilde{S}$ compatible with the complex first fundamental form $h$ of $\sigma$. We extend $h$ by $\C_{\tau}$-bilinearity to $\C_{\tau} T\tilde{S}$. This allows us to define an isomorphism 
\begin{align*}
    \hat{h}: \C_{\tau}T\tilde{S}^{(0,1)} &\rightarrow K_{\mathcal{J}}:=\C_{\tau}T^{*}\tilde{S}^{(1,0)} \\
    \overline{V}^{\tau} &\mapsto h(\cdot, \overline{V}^{\tau})
\end{align*}
with inverse
\begin{align*}
    \hat{h}^{-1}: K_{\mathcal{J}} &\rightarrow \C_{\tau}T^{*}\tilde{S}^{(0,1)} \\
    dv &\mapsto \frac{\overline{V}^{\tau}}{\|\overline{V}^{\tau}\|_{h}^{2}} \ ,
\end{align*}
whenever $dv(V)=1$. In particular, we obtain an isomorphism $\C_{\tau}T\tilde{S} \cong K_{\mathcal{J}} \oplus K_{\mathcal{J}}^{*}$ such that both summands are isotropic for $h$ and are paired canonically by $h$. In particular, $\tilde{E}$ has trivial determinant. We can then extend $h$ to a $\C_{\tau}$-orthogonal structure $\tilde{Q}$ on $\tilde{E}$ by declaring $\underline{\C}_{\tau}$ orthogonal to $\C_{\tau}T\tilde{S}$ and $\tilde{Q}(\tilde{\sigma}, \tilde{\sigma})=-1$. The pull-back of the standard flat connection on $\C^{3}_{\tau}$ defines a flat connection $\tilde{\nabla}$ on $\tilde{E}$. In the local frame $\{e_{1}, e_{2}, \tilde{\sigma}\}$ of $\tilde{E}=\C_{\tau}T\tilde{S} \oplus \underline{\C_{\tau}}$, the matrix connection of $\tilde{\nabla}$ is expressed exactly by the $\mathrm{End}_{0}(E)$-valued $1$-form $\Omega$ computed before. Moreover, it follows immediately from the definition that in this frame
\[
    \tilde{Q} = \begin{pmatrix}
         0 & 1 & 0 \\
         1 & 0 & 0 \\
         0 & 0 & -1
    \end{pmatrix} \ . 
\]
Let $\tilde{H}$ be the $\mathcal{J}$-Hermitian metric obtained by extending $h$ to a $\mathcal{J}$-hermitian metric on $\C_{\tau}T\tilde{S}$ and demanding that $\tilde{\sigma}$ be $\tilde{H}$-orthogonal to $\C_{\tau}T\tilde{S}$ and with norm $1$. In other words, in the above frame,
\[
    \tilde{H} = \begin{pmatrix}
         0 & 1 & 0 \\
         1 & 0 & 0 \\
         0 & 0 & 1
    \end{pmatrix} \ . \ 
\]
We split $\tilde{\nabla}$ into its $\tilde{H}$-symmetric part $\tilde{\Phi}$ and its $\tilde{H}$-skew-symmetric part $\tilde{D}_{\tilde{H}}$. We can then further decompose into its $(1,0)$ and its $(0,1)$ part to get
\[
    \tilde{\nabla}=\tilde{D}_{\tilde{H}}^{(1,0)}+\tilde{D}_{\tilde{H}}^{(0,1)}+\tilde{\Phi}^{(1,0)}+\tilde{\Phi}^{(0,1)} \ .
\]
Since $\tilde{\nabla}$ is flat, the equations
\[
    \begin{cases}
        \tilde{D}_{\tilde{H}}^{(0,1)}\Phi^{(1,0)}=0 \\
        F^{\tilde{D}_{\tilde{H}}}+\tilde{\Phi}^{(1,0)}\wedge \tilde{\Phi}^{(0,1)}=0 \ .
    \end{cases}
\]
hold. In particular, the $(0,1)$-part of the connection $\tilde{D}_{\tilde{H}}$ defines on $\tilde{E}$ a $\mathcal{J}$-holomorphic structure for which $\tilde{\phi}=\tilde{\Phi}^{(1,0)}$ is $\mathcal{J}$-holomorphic. Because the immersion $\tilde{\sigma}$ is equivariant under the action of a representation $\rho:\pi_{1}(S) \rightarrow \SL(3,\C)$, all these objects descend to $S$ and, indicating their projections with the same symbol but without tilde, the triple $(E,Q,\phi)$ defines an $\SL(3,\C)$-bi-complex Higgs bundle over $S$.

\begin{remark}
Under the isomorphism $\SL(3,\C)\cong\SU(2,1,\C_\tau)$ described in Proposition \ref{prop:isometries}, our definition of $\SL(3,\C)$-bi-complex Higgs bundle can be re-prashed as follows: an $\SU(2,1,\C_\tau)$-bi-complex Higgs bundle on $(S,\mathcal J)$ is a triple $(E,\beta,\gamma)$ such that \begin{enumerate}
    \item[$\bullet$] $E$ is a $\mathcal J$-holomorphic bi-complex vector bundle of rank $3$ with $\bigwedge^6 E\cong\mathcal O$ endowed with a splitting $E=V\oplus L$, where $V$ is of rank $2$ and $L$ of rank $1$; \item[$\bullet$] $\beta$ and $\gamma$ are respectively $\mathcal J$-holomorphic sections of $\Hom(L,V)\otimes K_\mathcal J$ and $\Hom(V,L)\otimes K_\mathcal J$.
\end{enumerate}In particular, the Higgs field $\phi$ of Definition \ref{def:bi-complexHIggs} is recovered as $\phi=\begin{psmallmatrix}
    0 & \beta \\ \gamma & 0
\end{psmallmatrix}$. 
\end{remark}


\section{A digression on minimal Lagrangians in \texorpdfstring{$\mathbb{H}^{2}_{\tau}$}{H}}\label{sec:minimalinpara-complex}

\noindent In this section we are interested in complex Lagrangian minimal immersions $\sigma:U\to\CH^2_\tau$ whose image is contained in the (totally real) sub-manifold $\mathbb H^2_\tau=\CH^2_\tau\cap\R^3_\tau$ (see Section \ref{sec:submanifolds}). The holomorphic para-K\"ahler structure $(\hat g,\hat\omega,\mathbf P)$ on $\mathbb C\mathbb H^2_\tau$ restricts to $\mathbb H^2_\tau$ and gives rise to a (real) para-K\"ahler metric, still denoted with $(\hat g,\hat\omega,\mathbf P)$. In particular, we obtain a (real) para-hermitian form $\hat q=\hat g+\tau\hat\omega$. Throughout the discussion we will continue to keep the same notations as in Section \ref{sec:embedding_data} for the induced tensors on the surface, the connections, and the distributions. The only difference is that now they will all be real-valued tensors computed over real vector fields, and the various connections will be $\R$-linear. Indeed, the tensors defined in Section \ref{sec:embedding_data}, in particular the complex-valued first fundamental form and the complex second fundamental form, are simply the $\C$-linear extensions of the tensors that we will be using here. \\

\subsection{Relation with hyperbolic affine spheres}\label{sec:hyp_affine_spheres}
The main result of this section consists in showing that the datum of a space-like minimal Lagrangian in $\mathbb H^2_\tau$ is the same as that of a hyperbolic affine sphere in $\R^3$. The construction is geometric and a similar description was obtained by Hildebrand (\cite{hildebrand2011cross}) in terms of incidence geometry (discussed in Section \ref{sec:incidence_geometry}). We also show the correlation between the tensors induced on the minimal Lagrangian and the cubic differentials of the associated hyperbolic affine sphere and its dual. \\ 

\noindent Given a space-like minimal Lagrangian $\sigma:U\to\mathbb H^2_\tau$ we can consider the following tensors on $\sigma(U)$ $$C^{\pm}(X,Y,Z)=(\nabla^{\pm}_Xh)(Y,Z),\qquad A^{\pm}=\nabla^{\pm}-\nabla,$$ where $h$ is the first fundamental form of the immersion required to be positive-definite, $\nabla$ is the Levi-Civita for $h$ and $\nabla^{\pm}$ are the connections described in (\ref{eq:decomposition_nablaplusminus}). In particular, they are related by $C^\pm(X,Y,Z)=h(A^\pm(X)Y,Z)$. \\ \\
Let $f\!: U \to \R^{3}$ be an immersion with $\tilde\xi\!:U\to\R^{3}$ a transverse vector field to $f(U)$. This means that for all $x\in U$ we have a splitting: $$f^{*}(T\R^{3}) = T_xU + \R\tilde\xi_x \ . $$ 
Let $D^\R$ denote the pull-back to $f^{*}(T\R^{3})$ of the standard flat connection on $\R^3$ and suppose the structure equations of the immersed surface are given by:
\begin{equation}\begin{aligned}\label{eq:structurequations}
D^\R_XY &= \bar\nabla_XY + g_B(X,Y)\xi \\
D^\R_X\xi &= -S(X)
\end{aligned}\end{equation}
where $\bar\nabla$ is a torsion-free connection on $U$ called the \emph{Blaschke connection}, $\xi$ is the \emph{affine normal} of the immersion (see \cite[\S 3.1]{loftin2001affine} for example), $g_B$ is a Riemannian metric on $U$ called the \emph{Blaschke metric} and $S$ is an endomorphism of $TU$ called the \emph{affine shape operator}.
\begin{defi}
Let $f:U \rightarrow \R^{3}$ be an immersion with structure equations given by (\ref{eq:structurequations}). Then $f(U)$ is called a \emph{hyperbolic affine sphere} if $S=\Id_{TU}$ and $\xi=f$.
\end{defi} 

\noindent Recall that the \emph{conormal map} (\cite{nomizu1994affine}) associated with the hyperbolic affine sphere $f:U\to\R^3$ is a map $\nu:U\to(\R^3)^*$ such that for any $p\in U$, the element $\nu_p$ satisfies $$\nu_p(f(p))=-1 \qquad \text{and} \qquad \nu_p(df_{p}(X))=0, \ \forall X\in T_pU \ .$$ \begin{prop}[\cite{loftin2001affine}]
Let $f:U\to\R^3$ be a hyperbolic affine sphere. Then the image of the conormal map is a hyperbolic affine sphere in the dual space. In particular, it is equipped with a dual affine connection $\bar{\nabla}^*$, and an affine metric $g^*_B$ which makes $\nu$ an isometry between the affine sphere $f:U \rightarrow \R^{3}$ and its dual. 
\end{prop}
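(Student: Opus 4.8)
The plan is to derive the structure equations of the conormal map $\nu$, seen as an immersion $U\to(\R^{3})^{*}$ whose own value is used as transverse vector field, by differentiating the two defining relations $\langle\nu,f\rangle=-1$ and $\langle\nu,df(X)\rangle=0$, and then to recognise them as those of a hyperbolic affine sphere. Throughout I regard $f$ as a section of $f^{*}(T\R^{3})$, so that $D^{\R}_{X}f=df(X)$, and I denote by $D^{\R}$ also the pull-back of the flat connection of $(\R^{3})^{*}$; thus $D^{\R}_{X}\nu=d\nu(X)$ and $X\langle\nu,v\rangle=\langle D^{\R}_{X}\nu,v\rangle+\langle\nu,D^{\R}_{X}v\rangle$ for the duality pairing.

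First I differentiate the defining relations. Differentiating $\langle\nu,f\rangle=-1$ along $Y$ and using $\langle\nu,df(Y)\rangle=0$ gives $\langle d\nu(Y),f\rangle=0$; differentiating $\langle\nu,df(X)\rangle=0$ along $Y$ and substituting the Gauss equation $D^{\R}_{Y}(df(X))=df(\bar\nabla_{Y}X)+g_{B}(X,Y)f$ (recall $\xi=f$) gives $\langle d\nu(Y),df(X)\rangle=g_{B}(X,Y)$. Since $g_{B}$ is positive definite, $Y\mapsto d\nu(Y)$ is injective, so $\nu$ is an immersion and $d\nu(T_{p}U)$ is the annihilator of $f(p)$ in $(\R^{3})^{*}$; as $\langle\nu,f\rangle=-1\neq0$, the covector $\nu$ lies outside this annihilator, hence $(\R^{3})^{*}=d\nu(T_{p}U)\oplus\R\nu$ and $\nu$ is a transverse vector field along the immersion $\nu$. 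Moreover $\langle f,\nu\rangle=-1$ and $\langle f,d\nu(X)\rangle=0$ exhibit $f$ as the conormal map of $\nu$ once $(\R^{3})^{**}$ is identified with $\R^{3}$, so the conormal construction is an involution.

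Next I read off the structure equations of $\nu$ relative to the transverse field $\nu$. The Weingarten equation is $D^{\R}_{X}\nu=d\nu(X)$, which is tangential; hence $\nu$ is equiaffine and its affine shape operator equals $\Id$. Writing $D^{\R}_{X}(d\nu(Y))=d\nu(\bar\nabla^{*}_{X}Y)+g^{*}_{B}(X,Y)\nu$ defines a torsion-free connection $\bar\nabla^{*}$ and a symmetric tensor $g^{*}_{B}$; pairing this identity with $f$ and using $\langle d\nu(Y),f\rangle=0$, $D^{\R}_{X}f=df(X)$ and $\langle\nu,f\rangle=-1$ yields $g^{*}_{B}=g_{B}$, which is in particular positive definite. (Pairing with $df(Z)$ instead, and using compatibility of $D^{\R}$ with the duality pairing, shows $\bar\nabla^{*}$ is the connection conjugate to $\bar\nabla$ with respect to $g_{B}$, which explains the name ``dual affine connection''.) It remains to check that the transverse field $\nu$ is the \emph{Blaschke} normal of the immersion $\nu$, i.e. that its induced volume form $\theta^{*}(X,Y)=\det_{(\R^{3})^{*}}(\nu,d\nu(X),d\nu(Y))$ agrees, up to the orientation sign, with $\omega_{g^{*}_{B}}$. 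Expanding $\nu,d\nu(e_{1}),d\nu(e_{2})$ in the basis of $(\R^{3})^{*}$ dual to $\{df(e_{1}),df(e_{2}),f\}$ and using $\langle d\nu(e_{i}),f\rangle=0$ and $\langle d\nu(e_{i}),df(e_{j})\rangle=g_{B}(e_{i},e_{j})$, a short determinant computation gives $\theta^{*}(e_{1},e_{2})=-\det[g_{B}(e_{i},e_{j})]/\theta(e_{1},e_{2})$, where $\theta(e_{1},e_{2})=\det_{\R^{3}}(f,df(e_{1}),df(e_{2}))$ is the volume form induced by the transverse field $\xi=f$ of $f$. Since $f$ is a hyperbolic affine sphere, $\xi=f$ is its affine normal, so $|\theta|=\omega_{g_{B}}$; hence $|\theta^{*}|=\omega_{g_{B}}=\omega_{g^{*}_{B}}$, which is exactly the Blaschke-normal normalisation for $\nu$.

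Collecting everything, $\nu:U\to(\R^{3})^{*}$ satisfies the structure equations \eqref{eq:structurequations} with affine normal $\nu$, shape operator $\Id$, Blaschke connection $\bar\nabla^{*}$ and Blaschke metric $g^{*}_{B}$, so its image is a hyperbolic affine sphere; and $g^{*}_{B}=g_{B}$ as tensors on $U$ says precisely that $\nu$ pulls back the affine metric of the dual sphere to the affine metric $g_{B}$ of $f(U)$, i.e. $\nu$ is an isometry between the two. The only substantive step is this last identification: all the manipulations leading to the structure equations are routine bookkeeping with the flat connection, but recognising the transverse field $\nu$ as the \emph{canonical} affine normal of its own image rests on the determinant identity $\theta^{*}=-\omega_{g_{B}}^{2}/\theta$ together with the affine-sphere normalisation $|\theta|=\omega_{g_{B}}$ for $f$; both are classical and may also be found in \cite{nomizu1994affine} and \cite{loftin2001affine}.
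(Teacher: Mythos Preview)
The paper does not give a proof of this proposition at all: it is stated with a citation to \cite{loftin2001affine} and used as a black box. Your argument is the standard direct verification (differentiate the defining relations of the conormal, read off the induced structure equations, and check the Blaschke normalisation via the determinant identity), and it is correct. One small remark: when you say ``its affine shape operator equals $\Id$'' you are implicitly using the same sign convention that the paper adopts for $f$ (namely $\xi=f$ with $S=\Id$); since $D^{\R}_{X}\nu=d\nu(X)$ has exactly the same form as $D^{\R}_{X}f=df(X)$, whatever sign convention yields $S=\Id$ for $f$ yields $S^{*}=\Id$ for $\nu$, so your conclusion is consistent with the paper even though the sign conventions in affine differential geometry vary across references.
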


\noindent Given $f$ as above, if $\nabla^{g_B}$ denotes the Levi-Civita connection of the Blaschke metric, then we can consider the following End$(TU)$-valued $1$-form $A:=\bar\nabla-\nabla^{g_B}$, known as the \emph{Pick form}.\begin{prop}[\cite{benoist2013cubic}]
Let $C$ the $(0,3)$-tensor defined as 
\[
    C(X,Y,Z)=g_B(A(X)Y,Z)
\]
for $X,Y,Z\in\Gamma(TU)$, then the following holds: \begin{enumerate}
    \item[i)] $C$ is totally-symmetric;
    \item[ii)] The endomorphism part of $A$ is trace-less and $g_B$-symmetric; \item[iii)] The element $q:=C(\cdot,\cdot,\cdot)-iC(J\cdot,\cdot,\cdot)$ is a complex cubic differential on $U$ which is holomorphic with respect to the complex structure $J$ induced by the conformal class of $g_B$.
\end{enumerate} 
\end{prop}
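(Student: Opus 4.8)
The plan is to prove the three items in order, using only the structure equations \eqref{eq:structurequations} together with the two classical normalizations of the Blaschke apparatus: that the affine normal is equiaffine (already built into \eqref{eq:structurequations}, since $D^{\R}_X\xi$ has no $\xi$-component) and that it is compatible with the metric volume form of $g_B$ (the apolarity normalization; cf.\ \cite{nomizu1994affine,loftin2001affine}). For item i), I would first observe that, both $\bar\nabla$ and $\nabla^{g_B}$ being torsion-free, the difference tensor $A$ is symmetric, $A(X)Y=A(Y)X$, so $C$ is symmetric in its first two arguments; then, unwinding the definition $C(X,Y,Z)=g_B(A(X)Y,Z)$ and using that $\nabla^{g_B}$ is metric, I would record the identity $(\bar\nabla_X g_B)(Y,Z)=-C(X,Y,Z)-C(X,Z,Y)$. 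The missing symmetry of $C$ follows once $\bar\nabla g_B$ is shown to be totally symmetric, and for that I would read off the $\xi$-component of the ambient flatness relation $R^{D^{\R}}=0$, which by \eqref{eq:structurequations} is precisely the affine Codazzi equation $(\bar\nabla_X g_B)(Y,Z)=(\bar\nabla_Y g_B)(X,Z)$. Combining the two symmetries forces $C$ to be totally symmetric, with $C=-\tfrac12\,\bar\nabla g_B$ as a byproduct.

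For item ii), the $g_B$-self-adjointness of each $A(X)$ is immediate from the total symmetry just obtained: $g_B(A(X)Y,Z)=C(X,Y,Z)=C(X,Z,Y)=g_B(A(X)Z,Y)$. For the tracelessness I would use that the transversal volume form $\det(\cdot,\cdot,\xi)$ is $\bar\nabla$-parallel — again a consequence of $R^{D^{\R}}=0$ and the equiaffine property — while, $\xi$ being the Blaschke normal, it coincides with the Riemannian volume form of $g_B$, which is $\nabla^{g_B}$-parallel. Hence the difference tensor $A$ annihilates top forms, i.e.\ $\trace A(X)=0$ for all $X$.

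For item iii), I would work in a local conformal coordinate $z$ for the conformal class of $g_B$, so $g_B=e^{2\varphi}|dz|^2$ and $J\partial_x=\partial_y$. By i) and ii), $C$ is totally symmetric and $g_B$-traceless; in this coordinate the latter kills the mixed components, so $C=\beta\,dz^{3}+\overline{\beta}\,d\bar z^{3}$ for a smooth function $\beta$, and a short computation using $J^{*}dz=i\,dz$ gives $q=C-iC(J\cdot,\cdot,\cdot)=2\beta\,dz^{3}$, a $(3,0)$-tensor — so $q$ is indeed a cubic differential, and holomorphy reduces to $\partial_{\bar z}\beta=0$. To get this I would expand the Gauss equation of the hyperbolic affine sphere $R^{\bar\nabla}(X,Y)Z=g_B(Y,Z)X-g_B(X,Z)Y$ (the tangential part of $R^{D^{\R}}=0$ with $S=\Id_{TU}$) through $\bar\nabla=\nabla^{g_B}+A$ and lower the last index with $g_B$: the terms not involving $\nabla^{g_B}C$ assemble into an algebraic curvature tensor, hence antisymmetric in the last two arguments — the commutator term $g_B([A_X,A_Y]\cdot,\cdot)$ being skew there precisely because each $A(X)$ is $g_B$-self-adjoint by ii) — whereas the remaining contribution $(\nabla^{g_B}_X C)(Y,Z,W)-(\nabla^{g_B}_Y C)(X,Z,W)$ is symmetric in the last two arguments, since $C$ is totally symmetric. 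A tensor simultaneously symmetric and antisymmetric there vanishes, so $\nabla^{g_B}C$ must be totally symmetric (the residual identity being the usual Gauss equation relating $K_{g_B}$ and $\|C\|_{g_B}^{2}$). Evaluating the symmetry $(\nabla^{g_B}_{\partial_{\bar z}}C)(\partial_z,\partial_z,\partial_z)=(\nabla^{g_B}_{\partial_z}C)(\partial_{\bar z},\partial_z,\partial_z)$ in this coordinate, where $\nabla^{g_B}_{\partial_{\bar z}}\partial_z=\nabla^{g_B}_{\partial_z}\partial_{\bar z}=0$ and $C(\partial_{\bar z},\partial_z,\partial_z)=0$, the right-hand side vanishes while the left-hand side equals $\partial_{\bar z}\beta$; hence $\beta$ is holomorphic and $q$ is a holomorphic cubic differential.

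The step I expect to demand the most care is the bookkeeping around the two Blaschke normalizations in items i) and ii) — these are to be invoked from the standard literature rather than reproved — together with keeping track of the symmetry types in item iii) so that the ``Gauss part'' and the ``Codazzi part'' of the expanded Gauss equation separate cleanly. Once the identity $C=-\tfrac12\,\bar\nabla g_B$ and this symmetry dichotomy are in place, everything else is a routine local computation.
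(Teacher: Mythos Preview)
The paper does not give its own proof of this proposition: it is stated with a citation to \cite{benoist2013cubic} and used as a known input from affine differential geometry. So there is no ``paper's proof'' to compare against.

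That said, your argument is correct and is essentially the standard one found in the affine differential geometry literature (e.g.\ \cite{nomizu1994affine,loftin2001affine,benoist2013cubic}). For item~i), the $\xi$-component of the flatness of $D^{\R}$ does give the affine Codazzi equation $(\bar\nabla_X g_B)(Y,Z)=(\bar\nabla_Y g_B)(X,Z)$, and combined with the symmetry of $A$ in its first two slots this yields the full symmetry of $C$ and the identity $C=-\tfrac12\bar\nabla g_B$. For item~ii), your volume-form argument is exactly the apolarity condition characterizing the Blaschke normal: since $\theta=\det(\cdot,\cdot,\xi)$ is $\bar\nabla$-parallel (equiaffine) and, by the Blaschke normalization, coincides with $dA_{g_B}$ which is $\nabla^{g_B}$-parallel, the difference tensor must be traceless. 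For item~iii), the key step --- splitting the Gauss identity $R^{\bar\nabla}=R^{\nabla^{g_B}}+d^{\nabla^{g_B}}\!A+[A,A]$ into a part skew in the last two slots (the curvature and commutator terms, the latter because each $A(X)$ is self-adjoint) and a part symmetric in them (the $\nabla^{g_B}C$ terms) --- is exactly how one extracts $d^{\nabla^{g_B}}C=0$, and your coordinate evaluation then gives $\partial_{\bar z}\beta=0$. The only caveat is the one you already flag: the equiaffine and apolarity normalizations are being invoked from the literature rather than reproved, which is appropriate here.
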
\noindent
Following the same procedure but using the tensors $C^*, A^*$ induced on the dual hyperbolic affine sphere, we obtain that these differ by one sign from those associated with $f$. In particular, the cubic differential associated with $C^*$ is exactly $-q$ (\cite{loftin2001affine}). \newline 

\noindent Suppose we have $f:U\to\R^3$ a hyperbolic affine sphere and its dual $f^*:= U\to(\R^3)^*$. According to the definition, for any $p\in U$ we can identify $f^*(p)$ with the linear functional $\R^3\to\R$ such that $\Ker(f^*(p))=T_{f(p)}f(U)$ and $f^*(p)\big(f(p)\big)=-1$. Let us decompose the space $\R^3_\tau$ as $\R^3e_+\oplus\R^3e_-$, where $\{e_+,e_-\}$ is the basis of idempotents, and, as already explained in Section \ref{sec:incidence_geometry}, we identify $\R^3e_-$ with the dual of $\R^3$ via Riesz representation theorem and the bilinear form $Q=\diag(1,1,-1)$. In particular, there exists a unique vector $v_p\in\R^3$ such that $f^*(p)(w)=v_p^tQw$, namely this is given by $v_p=Qf^*(p)^t$ if we think of $f^*(p)$ as a row vector. Then, we can define the map \begin{align*}
    \tilde\sigma: \ &U\to\R^3e_+\oplus\R^3e_- \\ &p\mapsto(f(p),f^*(p)) \ .
\end{align*}
The image $\tilde\sigma(U)$ is contained in the quadric $\{ z\in\C^3_\tau \ | \ \mathbf{q}(z,z)=-1  \} \cap \R^{3}_{\tau}$ and hence its projection under the action of $\mathcal{U}$ is contained in $\mathbb{H}^{2}_{\tau}=\CH^{2}_{\tau}\cap \R^{3}_{\tau}$. We denote by $\sigma:U\to\mathbb{H}^2_\tau$ the induced immersion to the quotient.

\begin{lemma}\label{lem:Blaschke_metric}
The pull-back of the para-hermitian form $\hat q$ on $\sigma(U)$ coincides with the Blaschke metric of the hyperbolic affine sphere $f:U\to\R^3$. 
\end{lemma}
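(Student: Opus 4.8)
The plan is to unwind the definition of the pull-back form $\sigma^{*}\hat{q}$ through the explicit lift $\tilde{\sigma} = (f, f^{*}) \in \R^{3}e_{+} \oplus \R^{3}e_{-}$ and compare it directly with the structure equations \eqref{eq:structurequations}. First I would observe that since $\hat{q}$ is the form induced on the quotient from $\mathbf{q}$, for a vector $X \in T_{p}U$ we have $(\sigma^{*}\hat{q})_{p}(X,X) = \mathbf{q}(d\tilde{\sigma}_{p}(X), d\tilde{\sigma}_{p}(X))$, where we use that $\tilde{\sigma}$ is the chosen lift with $\mathbf{q}(\tilde{\sigma},\tilde{\sigma})=-1$ and that $d\tilde{\sigma}(X)$ is $\mathbf{q}$-orthogonal to $\tilde{\sigma}$ (this orthogonality should be checked: it follows by differentiating $\nu_{p}(f(p)) = -1$, i.e. $f^{*}(p)(f(p))=-1$, which gives $(df^{*}(X))(f(p)) + f^{*}(p)(df(X)) = 0$, and $f^{*}(p)(df(X))=0$ since $df(X)$ is tangent, so $(df^{*}(X))(f(p))=0$). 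Writing $d\tilde{\sigma}(X) = df(X)e_{+} + df^{*}(X)e_{-}$ and recalling from Section \ref{sec:incidence_geometry} that $\mathbf{q}$ in idempotent coordinates pairs the two $\R^{3}$-factors by evaluation, one gets
\[
    \mathbf{q}(d\tilde{\sigma}(X), d\tilde{\sigma}(X)) = (df^{*}(X))(df(X)) \ .
\]

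Next I would identify the right-hand side with the Blaschke metric. Using the conormal (dual) characterization $f^{*}(p)(df_{p}(Y)) = 0$ for all $Y$, differentiating in the direction $X$ yields $(df^{*}_{p}(X))(df_{p}(Y)) + f^{*}(p)(D^{\R}_{X}(df(Y))) = 0$. By the affine structure equations \eqref{eq:structurequations}, $D^{\R}_{X}(df(Y)) = df(\bar{\nabla}_{X}Y) + g_{B}(X,Y)\xi$ with $\xi = f$, and $f^{*}(p)$ annihilates the tangential term $df(\bar{\nabla}_{X}Y)$ while $f^{*}(p)(f(p)) = -1$. Therefore $(df^{*}_{p}(X))(df_{p}(Y)) = g_{B}(X,Y)$, so in particular $(df^{*}(X))(df(X)) = g_{B}(X,X)$, and since both $\sigma^{*}\hat{q}$ (its symmetric part $\sigma^{*}\hat{g}$) and $g_{B}$ are symmetric bilinear forms, polarization gives the claimed equality on all pairs. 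One should also remark that the anti-symmetric part $\sigma^{*}\hat{\omega}$ vanishes — this is precisely the Lagrangian condition, which holds here because the roles of $X$ and $Y$ are symmetric in $(df^{*}(X))(df(Y)) = g_{B}(X,Y)$, so the skew part $\tfrac{1}{2}\big((df^{*}(X))(df(Y)) - (df^{*}(Y))(df(X))\big)$ is zero — thus confirming consistency with $\sigma$ being complex (here real) Lagrangian and $\hat{q}$ restricting to $\hat{g}$ on $\sigma(U)$.

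The main obstacle, and where I would spend the most care, is bookkeeping the identification between $\R^{3}e_{-}$ and $(\R^{3})^{*}$ via Riesz representation through $Q$, and making sure the evaluation pairing that $\mathbf{q}$ induces between the two idempotent factors is exactly the dual pairing $\varphi(v)$ with the correct sign. Concretely, one must verify that with $\tilde{\sigma}(p) = (f(p), v_{p})$ where $v_{p} = Q f^{*}(p)^{t}$, the identity $\mathbf{q}(\tilde{\sigma},\tilde{\sigma}) = v_{p}^{t} Q f(p) = f^{*}(p)(f(p)) = -1$ holds, matching the normalization in the definition of $\CH^{2}_{\tau}$, and likewise that $\mathbf{q}(d\tilde{\sigma}(X), d\tilde{\sigma}(X))$ reduces to $(df^{*}(X))(df(X))$ without stray factors — this is exactly the computation already carried out in the proof of Proposition \ref{prop:incidence}, so I would cite that diffeomorphism rather than redo it. Once the pairing is pinned down, the remainder is the short differentiation argument above, and the lemma follows.
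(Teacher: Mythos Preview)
Your proof is correct and follows essentially the same route as the paper's: both express $\mathbf{q}$ in idempotent coordinates as the dual pairing $\eta(v,\varphi)=\varphi(v)$, differentiate the defining relations of the conormal map, and invoke the affine structure equations to identify the result with $g_B$. Your version is slightly more economical --- you differentiate the conormal relation $f^{*}(p)(df_{p}(Y))=0$ once and use only the Gauss formula for $f$, whereas the paper differentiates $\eta(f,f^{*})=-1$ twice and appeals to the structure equations of both $f$ and its dual $f^{*}$ --- but the underlying computation is the same.
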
\begin{proof}
Let $\{e_1,e_2\}$ be an orthonormal basis of $T_{f(p)}f(U)$ (and hence of $T_{f^*(p)}f^*(U)$) with respect to the Blaschke metric $g_B$. It is sufficient to prove that $(\tilde\sigma^*\mathbf{q})(e_i,e_j)=\delta^i_j$. In fact, we can deduce directly that $\sigma^*\hat q$ is a real tensor and it is  equal to $g_B$. 
Let $p\in U$. Then, since $\mathrm d_p\tilde\sigma(e_i)=\big(\mathrm d_pf(e_i), \mathrm d_pf^*(e_i)\big)$ gives a basis of $T_{\sigma(p)}\sigma(U)$, it is sufficient to prove that \begin{equation}\mathbf{q}\big((\mathrm d_pf(e_i), \mathrm d_pf^*(e_i)),(\mathrm d_pf(e_j), \mathrm d_pf^*(e_j))\big)=\delta^i_j, \qquad i,j=1,2 \ .\end{equation}In the basis of idempotents $\{e_+=\frac{1+\tau}{2},e_-=\frac{1-\tau}{2}\}$ the para-hermitian form can be written as \begin{align*}\mathbf{q}\big((\mathrm d_pf(e_i), \mathrm d_pf^*(e_i)),(\mathrm d_pf(e_j), \mathrm d_pf^*(e_j))\big)= \ &\mathrm d_pf^*(e_i)\big(\mathrm d_pf(e_j)\big)e_-+ \\ &\ +\mathrm d_pf^*(e_j)\big(\mathrm d_pf(e_i)\big)e_+ \ .\end{align*}Let $\eta:\R^3e_+\oplus\R^3e_-\to\R$ be the pairing $\eta(v,\varphi):=\varphi(v)$, where the elements of $\R^3e_-$ are identified with linear functionals as explained above. This allows us to rewrite $$(\tilde\sigma^*\mathbf{q})(e_i,e_j)=\eta\big(\mathrm d_pf(e_j), \mathrm d_pf^*(e_i)\big)e_-+\eta\big(\mathrm d_pf(e_i), \mathrm d_pf^*(e_j)\big)e_+ \ .$$
Taking the derivative of the equation $\eta(f(p),f^*(p))=-1$ in the $e_i$ direction, we obtain \begin{align*}
    0&=\frac{\mathrm d}{\mathrm de_i}\big(\eta(f(p),f^*(p))\big) \\ &=\eta(\mathrm d_pf(e_i),f^*(p))+\eta(f(p),\mathrm d_pf^*(e_i)) \\ &=\eta(f(p),\mathrm d_pf^*(e_i)) \ . \tag{$\Ker(f^*(p))=T_{f(p)}f(U)$}
\end{align*}Deriving again in the $e_j$ direction \begin{align*}
    0&=\frac{\mathrm d^2}{\mathrm de_j\mathrm de_i}\big(\eta(f(p),f^*(p))\big) \\ &=\eta\big(D_{e_j}^\R\mathrm d_pf(e_i),f^*(p)\big)+\eta\big(\mathrm d_pf(e_i),\mathrm d_pf^*(e_j)\big)+\eta\big(\mathrm d_pf(e_j),\mathrm d_pf^*(e_i)\big) \\ & \ \ \ +\eta\big(f(p),D_{e_j}^\R\mathrm d_pf^*(e_i)\big) \\ &=\frac{\mathrm d}{\mathrm de_j}\big(\eta(\mathrm d_pf(e_i),f^*(p))\big)+\eta\big(\mathrm d_pf(e_j),\mathrm d_pf^*(e_i)\big)+\eta\big(f(p),D_{e_j}^\R\mathrm d_pf^*(e_i)\big) \\ &=\eta\big(\mathrm d_pf(e_j),\mathrm d_pf^*(e_i)\big)+\eta\big(f(p),D_{e_j}^\R\mathrm d_pf^*(e_i)\big) \ .
\end{align*}As a consequence we deduce the following two equations \begin{equation}\begin{aligned}\label{eq:derivative_e_j}
    &\eta\big(\mathrm d_pf(e_j),\mathrm d_pf^*(e_i)\big)=-\eta\big(f(p),D_{e_j}^\R\mathrm d_pf^*(e_i)\big) \\ &\eta\big(\mathrm d_pf(e_i),\mathrm d_pf^*(e_j)\big)=-\eta\big(D_{e_j}^\R\mathrm d_pf(e_i),f^*(p)\big) \ .
\end{aligned}\end{equation} Since $f$ and $f^*$ are both hyperbolic affine spheres, they must satisfy the structure equations (\ref{eq:structurequations}), which in our case become 
\begin{align*}
    &D^\R_{e_j}\mathrm d_pf(e_i)=\bar\nabla_{e_j}e_i+g_B(e_i,e_j)f(p)=\bar\nabla_{e_j}e_i+\delta^i_jf(p) \ , \qquad &\bar\nabla_{e_j}e_i\in T_{f(p)}f(U) \\ &D^\R_{e_j}\mathrm d_pf^*(e_i)=\bar\nabla_{e_j}^*e_i+g_B(e_i,e_j)f^*(p)=\bar\nabla_{e_j}^*e_i+\delta^i_jf^*(p) \  \qquad &\bar\nabla_{e_j}^*e_i\in T_{f^*(p)}f^*(U) \ .
\end{align*}
From here we can deduce  that
\begin{equation}\begin{aligned}\label{eq:pairing}
    &\eta\big(D_{e_j}^\R\mathrm d_pf(e_i),f^*(p)\big)=\eta\big(\bar\nabla_{e_j}e_i+\delta^i_jf(p),f^*(p)\big)=-\delta^i_j \ , \\ &\eta\big(f(p),D_{e_j}^\R\mathrm d_pf^*(e_i)\big)=\eta\big(f(p),\bar\nabla^*_{e_j}e_i+\delta^i_jf^*(p)\big)=-\delta^i_j \ .
\end{aligned}\end{equation}In the end, we can compute the term we were interested in \begin{align*}
    (\tilde\sigma^*\mathbf{q})(e_i,e_j)&=\eta\big(\mathrm d_pf(e_j), \mathrm d_pf^*(e_i)\big)e_-+\eta\big(\mathrm d_pf(e_i), \mathrm d_pf^*(e_j)\big)e_+ \\\ &=-\eta\big(f(p),D_{e_j}^\R\mathrm d_pf^*(e_i)\big)e_--\eta\big(D_{e_j}^\R\mathrm d_pf(e_i),f^*(p)\big)e_+ \tag{Eq. (\ref{eq:derivative_e_j})} \\ &=\delta^i_je_-+\delta^i_je_+ \tag{Eq. (\ref{eq:pairing})} \\ &=\delta^i_j \ .
\end{align*}
\end{proof}
\begin{theorem}
The immersed surface $\sigma:U\to\mathbb H^2_\tau$ is space-like, Lagrangian and transverse to the distributions $\mathcal{D}_\pm$.
\end{theorem}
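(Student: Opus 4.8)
The plan is to derive all three properties from Lemma \ref{lem:Blaschke_metric}, which identifies the pull-back $\sigma^{*}\hat q$ with the Blaschke metric $g_{B}$ of the affine sphere $f$. Writing the (real) para-hermitian form as $\hat q=\hat g+\tau\hat\omega$ and pulling back by $\sigma$, the equality $\sigma^{*}\hat q=g_{B}$, together with the fact recorded in the proof of that lemma that $\sigma^{*}\hat q$ is a \emph{real} (that is, $\tau$-free) tensor, forces upon comparing $\tau$-components $\sigma^{*}\hat g=g_{B}$ and $\sigma^{*}\hat\omega=0$. The second identity is exactly the (real) Lagrangian condition, and the first shows that the induced metric $\sigma^{*}\hat g=g_{B}$ is Riemannian, i.e. positive-definite, so $\sigma$ is space-like.

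For the transversality to $\mathcal{D}_{\pm}$ I would then use the fact --- already used in the proof of Lemma \ref{lm:transverse}, and equally valid in the real setting --- that the eigendistributions $\mathcal{D}_{\pm}$ of the para-complex structure $\mathbf{P}$ are totally isotropic for $\hat g$: if $\mathbf{P}X=\pm X$ and $\mathbf{P}Y=\pm Y$, then $\hat g(X,Y)=\hat g(\mathbf{P}X,\mathbf{P}Y)=-\hat g(X,Y)$, whence $\hat g(X,Y)=0$. Since $\sigma$ is space-like, the restriction $\sigma^{*}\hat g=g_{B}$ to $T_{p}U$ is positive-definite, so $T_{p}U$ contains no nonzero $\hat g$-isotropic vector; therefore $T_{p}U\cap\mathcal{D}_{\pm}=0$ at every point $p$. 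A dimension count, $\dim_{\R}T_{p}U+\dim_{\R}\mathcal{D}_{\pm}=2+2=4=\dim_{\R}T_{\sigma(p)}\mathbb{H}^{2}_{\tau}$, then promotes this to the direct-sum decomposition $T_{p}U\oplus\mathcal{D}_{\pm}=T_{\sigma(p)}\mathbb{H}^{2}_{\tau}$, which is precisely transversality.

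As a concrete cross-check one could also argue directly with the lift $\tilde\sigma=f e_{+}+f^{*}e_{-}$: in the idempotent splitting $\R^{3}_{\tau}=\R^{3}e_{+}\oplus(\R^{3})^{*}e_{-}$ the para-complex structure $\mathbf{P}$ is multiplication by $\tau$, which acts as $+\mathrm{Id}$ on the first summand and $-\mathrm{Id}$ on the second, so $\mathcal{D}_{+}$ is carried by the $e_{+}$-slot and $\mathcal{D}_{-}$ by the $e_{-}$-slot; since $d\tilde\sigma(X)=(df(X),df^{*}(X))$ and both $f$ and its conormal $f^{*}=\nu$ are immersions, for $X\neq 0$ neither component vanishes, so $d\tilde\sigma(X)$ lies in neither $\mathcal{D}_{+}$ nor $\mathcal{D}_{-}$, recovering $T_{p}U\cap\mathcal{D}_{\pm}=0$. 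I do not expect a real obstacle here: the entire substance of the theorem has been extracted in Lemma \ref{lem:Blaschke_metric}, and the only point that needs (minor) care is recognizing that the $\tau$-reality of $\sigma^{*}\hat q=g_{B}$ is what simultaneously yields the Lagrangian condition and the positivity of the induced metric, and hence --- through the isotropy of $\mathcal{D}_{\pm}$ --- the transversality.
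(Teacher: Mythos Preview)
Your proof is correct and follows essentially the same route as the paper: you use Lemma~\ref{lem:Blaschke_metric} to obtain $\sigma^{*}\hat q=g_{B}$, and the $\tau$-reality of $g_{B}$ immediately gives $\sigma^{*}\hat\omega=0$ (Lagrangian) and $\sigma^{*}\hat g=g_{B}$ (space-like). The only minor difference is in the transversality step: the paper simply invokes Lemma~\ref{lm:transverse} (non-degeneracy of the first fundamental form is equivalent to transversality), whereas you give the direct argument via isotropy of $\mathcal{D}_{\pm}$ combined with positive-definiteness of $g_{B}$; your argument is a clean specialization of that lemma to the positive-definite case and is entirely sound.
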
\begin{proof}
According to Lemma \ref{lem:Blaschke_metric} we have $\sigma^*\hat q=\sigma^*\hat g+\tau\sigma^*\hat\omega=g_B$, which implies $\sigma^*\hat\omega=0$, or in other words that $\sigma(U)$ is a Lagrangian surface. The immersion is then clearly space-like. Transversality to the distributions $\mathcal{D}_\pm$ is a consequence of Lemma \ref{lm:transverse}.
\end{proof}

\noindent The next step is to perform the opposite construction to the one just described: given a space-like minimal Lagrangian immersion $\sigma:U\to\mathbb H^2_\tau$ we want to define a hyperbolic affine sphere in $\R^3$ (identified with $\R^3e_+$) and its dual in $(\R^3)^*$ (identified with $\R^3e_-$). First, we can find a lift $\tilde\sigma:U\to\R^3e_+\oplus\R^3e_-$ of $\sigma$ such that $\mathbf{q}(\tilde{\sigma}, \tilde{\sigma})=-1$, which is still isotropic, in the sense that $\tilde\sigma^*\bm{\omega}=0$, because vectors tangent to the orbit of $\mathcal{U}$ are $\bm{\omega}$-orthogonal to the level sets of $\mathbf{q}$ (see Section \ref{sec:reduction}). Let $f^{+}:=\pi_{+}\circ \tilde{\sigma}:U\rightarrow \R^{3}e_{+}$ and $f^{-}:=\pi_{-}\circ \tilde{\sigma}:U\rightarrow \R^{3}e_{-}$ be the projections of $\tilde{\sigma}$ onto $\R^{3}e_{\pm}$. As usual, we identify $\R^{3}e_{-}$ with $(\R^{3})^{*}$ using the bilinear form $Q$, so that we think of $f^{-}(p)$ as a functional of $\R^{3}$ for all $p\in U$. The lift $\tilde{\sigma}$ is not unique: indeed, for any smooth function $\mu:U \rightarrow \mathbb{R}$ the map $\tilde{\sigma}^{\mu}:=(e^{\mu}f^{+}, e^{-\mu}f^{-})$ still projects onto $\sigma$, satisfies $\mathbf{q}(\tilde{\sigma}^{\mu}, \tilde{\sigma}^{\mu})=-1$ and parameterizes an isotropic surface in $\R^{3}_{\tau}$. The main idea of the construction is to choose a best representative in the above family:

\begin{lemma}\label{lm:special_lift} There exists a smooth function $\mu:U \rightarrow \mathbb{R}$ such that the corresponding immersion $\tilde{\sigma}^{\mu}:U \rightarrow \R^{3}_{\tau}$ satisfies
\[
    \mathbf{q}(\tilde{\sigma}_{x}^{\mu},\tilde{\sigma}^{\mu})=\mathbf{q}(\tilde{\sigma}_{y}^{\mu}, \tilde{\sigma}^{\mu})=0 \ .
\]
\end{lemma}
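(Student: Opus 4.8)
The lift $\tilde\sigma^\mu=(e^\mu f^+, e^{-\mu}f^-)$ satisfies $\mathbf q(\tilde\sigma^\mu,\tilde\sigma^\mu)=-1$ for every smooth $\mu$, so the only freedom available is this single real function, and I want to use it to kill the two conditions $\mathbf q(\tilde\sigma^\mu_x,\tilde\sigma^\mu)=\mathbf q(\tilde\sigma^\mu_y,\tilde\sigma^\mu)=0$. At first glance this looks over-determined (one function, two equations), so the first step is to observe that the two conditions are not independent. Differentiating $\mathbf q(\tilde\sigma^\mu,\tilde\sigma^\mu)=-1$ in the $x$ and $y$ directions gives $\Ree_\tau(\mathbf q(\tilde\sigma^\mu_x,\tilde\sigma^\mu))=\Ree_\tau(\mathbf q(\tilde\sigma^\mu_y,\tilde\sigma^\mu))=0$ automatically. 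Since $f^+$ and $f^-$ are real (valued in $\R^3 e_+$ and $\R^3 e_-$), the quantities $\mathbf q(\tilde\sigma^\mu_x,\tilde\sigma^\mu)$ and $\mathbf q(\tilde\sigma^\mu_y,\tilde\sigma^\mu)$ are para-complex numbers, i.e. of the form $a e_+ + b e_-$ with $a,b\in\R$, and the vanishing of the $\tau$-real part forces $a=-b$. Thus each of the two conditions is equivalent to a single scalar equation, and I must show that a suitable $\mu$ solves \emph{both} scalar equations simultaneously; equivalently, that the $\tau$-imaginary part of the $1$-form $p\mapsto \mathbf q(d\tilde\sigma^\mu_p(\cdot),\tilde\sigma^\mu(p))$ vanishes identically.

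The second step is to compute how $\mathbf q(\tilde\sigma^\mu_x,\tilde\sigma^\mu)$ depends on $\mu$. Writing $\tilde\sigma^\mu_x=(e^\mu(\mu_x f^+ + f^+_x),\, e^{-\mu}(-\mu_x f^- + f^-_x))$ and expanding $\mathbf q$ in the idempotent basis, using $\mathbf q(u e_+, v e_-)=$ (pairing of the functional $v$ on $u$) and that $e_+e_-=0$, one gets
\[
    \mathbf q(\tilde\sigma^\mu_x,\tilde\sigma^\mu) = \big(\mathbf q(\tilde\sigma_x,\tilde\sigma) + \mu_x\,\mathbf q_\pm\big)
\]
where the correction terms from $\mu_x$ contribute a pure $\tau$-imaginary part proportional to $\mu_x$ (because $\mathbf q(\tilde\sigma,\tilde\sigma)=-1$ means the diagonal pairings of $f^+$ against $f^-$ are normalized to $-1$ in each idempotent slot). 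Concretely, $\Ima_\tau(\mathbf q(\tilde\sigma^\mu_x,\tilde\sigma^\mu)) = \Ima_\tau(\mathbf q(\tilde\sigma_x,\tilde\sigma)) + \mu_x$ up to sign and a universal constant, and similarly in the $y$ direction. So the desired equations become
\[
    \mu_x = -\Ima_\tau(\mathbf q(\tilde\sigma_x,\tilde\sigma)), \qquad \mu_y = -\Ima_\tau(\mathbf q(\tilde\sigma_y,\tilde\sigma)),
\]
i.e. $d\mu = -\Ima_\tau\big(\mathbf q(d\tilde\sigma(\cdot),\tilde\sigma)\big)=:-\beta$, a first-order PDE for $\mu$ that is solvable on the simply connected domain $U$ precisely when the $1$-form $\beta$ is closed.

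The third and crucial step is therefore to prove $d\beta=0$. This is where the hypotheses that $\sigma$ is \emph{Lagrangian} (so $\sigma^*\hat\omega=0$, equivalently $\tilde\sigma^*\bm\omega=0$ for the chosen isotropic lift) and an immersion of a surface enter. I expect $d\beta$ to be expressible, after differentiating and using $\mathbf q(\tilde\sigma,\tilde\sigma)=-1$ together with the symmetry of the ambient flat connection, as a multiple of $\Ima_\tau(\mathbf q(\tilde\sigma_x,\tilde\sigma_y))$; and $\Ima_\tau\mathbf q=\frac12(\bm\omega$-type part$)$ is exactly the pull-back of the holomorphic symplectic form $\hat\omega$ (or its $\tau$-imaginary avatar), which vanishes on the tangent plane because $\sigma$ is complex Lagrangian. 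Hence $d\beta=0$. I would then invoke the Poincaré lemma on the simply connected $U$ to produce $\mu$ with $d\mu=-\beta$, and conclude that $\tilde\sigma^\mu$ satisfies both required identities. The main obstacle is the closedness computation: keeping careful track of which bilinear quantity is $\tau$-real versus $\tau$-imaginary and matching the cross term against the Lagrangian condition $\tilde\sigma^*\bm\omega=0$ rather than, say, the symmetric part; getting the bookkeeping right there is the only real content, everything else being a direct expansion in the idempotent basis.
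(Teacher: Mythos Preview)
Your proposal is correct and follows essentially the same approach as the paper: both compute that the desired conditions reduce to $d\mu$ equaling an explicit real $1$-form on $U$ (the paper writes it as a vector field $F=(\eta(f^+_x,f^-),\eta(f^+_y,f^-))$, you write it as $\beta=\Ima_\tau(\mathbf q(d\tilde\sigma,\tilde\sigma))$), then verify closedness of this $1$-form using the isotropic/Lagrangian condition $\tilde\sigma^*\bm\omega=0$, and finally invoke the Poincar\'e lemma on the simply connected $U$. Your preliminary observation that the $\tau$-real part of $\mathbf q(\tilde\sigma^\mu_x,\tilde\sigma^\mu)$ vanishes automatically is implicit in the paper's direct computation, which produces a purely $\tau$-imaginary result.
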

\begin{proof} Using the same notation as in Lemma \ref{lem:Blaschke_metric}, we first observe that
\[
    -1=\mathbf{q}(\tilde{\sigma}^{\mu}, \tilde{\sigma}^{\mu}) = \eta(f^{+}, f^{-}) \ .
\]
Taking the derivative of this relation, we find that
\[
    0=\eta(f_{x}^{+}, f^{-})+\eta(f^{+}, f_{x}^{-})
\]
and similarly
\[
    \eta(f^{+}_{y}, f^{-})+\eta(f^{+}, f^{-}_{y})=0 \ ,
\]
where $x,y$ are local coordinates on $U$. In addition, since the immersion $\tilde{\sigma}$ is isotropic we know that
\begin{equation}\label{eq:isotropic}
    0=\bm{\omega}(\tilde{\sigma}_{x}, \tilde{\sigma}_{y}) = \Ima_{\tau}(\mathbf{q}(\tilde{\sigma}_{x}, \tilde{\sigma}_{y})) = \eta(f^{+}_{x}, f^{-}_{y})-\eta(f^{+}_{y}, f^{-}_{x}) \ . 
\end{equation}
Since
\[
    \tilde{\sigma}^{\mu}_{x}=(\mu_{x}e^{\mu}f^{+}+e^{\mu}f^{+}_{x}, -\mu_{x}e^{-\mu}f^{-}+e^{-\mu}f_{x}^{-})\ ,
\]
the condition $\mathbf{q}(\tilde{\sigma}_{x}^{\mu}, \tilde{\sigma}^{\mu})=0$ can be expressed as
\begin{align*}
    0&=\mathbf{q}(\tilde{\sigma}_{x}^{\mu}, \tilde{\sigma}^{\mu}) \\
    &= \eta(\mu_{x}e^{\mu}f^{+}+e^{\mu}f^{+}_{x}, e^{-\mu}f^{-})e_{+}+\eta(e^{\mu}f^{+}, -\mu_{x}e^{-\mu}f^{-}+e^{\-mu}f_{x}^{-})e_{-} \\
    &= -\mu_{x}e_{+}+\eta(f_{x}^{+}, f^{-})e_{+}+\mu_{x}e_{-}+\eta(f^{+}, f_{x}^{-})e_{-} \\
    &= \tau(-\mu_{x}+\eta(f^{+}_{x}, f^{-})) \ .
\end{align*}
Similarly,
\[
    0=\mathbf{q}(\tilde{\sigma}_{y}^{\mu}, \tilde{\sigma}^{\mu})=\tau(-\mu_{y}+\eta(f^{+}_{y}, f^{-})) \ .
\]
This means that the immersion $\tilde{\sigma}^{\mu}$ satisfies the required properties if and only if $\mu$ is a solution of the system of differential equations
\[
    (\mu_{x}, \mu_{y})=(\eta(f^{+}_{x}, f^{-}), \eta(f^{+}_{y}, f^{-})) \ ,
\]
in other words, if and only if $\mu$ is a potential of the vector field in $\R^{2}$ defined by
\[
    F=(\eta(f^{+}_{x}, f^{-}), \eta(f^{+}_{y}, f^{-})) \ .
\]
Since $U$ is simply connected, the vector field $F$ has a potential if and only if it is irrotational, i.e.
\[
    \frac{\partial}{\partial y}\eta(f^{+}_{x}, f^{-}) = \frac{\partial}{\partial x}\eta(f^{+}_{y}, f^{-}) \ .
\]
Expanding these derivatives, we see that this equality holds because of Equation \eqref{eq:isotropic}.
\end{proof}

\noindent With this special lift in hand, which we will still denote by $\tilde{\sigma}$, we are able to reverse the construction of Lemma \ref{lem:Blaschke_metric}:

\begin{theorem}\label{thm:from_minimal_to_affine_sphere}
Let $\tilde{\sigma}=(f^{+}, f^{-})$ be the lift of $\sigma$ found in Lemma \ref{lm:special_lift}. Then the immersion $f^+$ is a hyperbolic affine sphere with dual equal to $f^-$. Moreover, the Blaschke metric induced on $f^+(U)$ coincides with the first fundamental form of the space-like minimal Lagrangian $\sigma:U\to\mathbb H^2_\tau$.
\end{theorem}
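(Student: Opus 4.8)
The plan is to run the construction of Lemma \ref{lem:Blaschke_metric} in reverse, using the special lift $\tilde\sigma = (f^+,f^-)$ provided by Lemma \ref{lm:special_lift}, and to read off the structure equations \eqref{eq:structurequations} for $f^+$ directly from the geometry of $\sigma$ in $\mathbb H^2_\tau$. First I would fix local coordinates $x,y$ on $U$ and decompose the pull-back $D^\R$ of the standard flat connection on $\R^3_\tau \cong \R^3 e_+ \oplus \R^3 e_-$, noting that it splits as $D^\R = D^+ e_+ + D^- e_-$ where $D^\pm$ is the pull-back of the flat connection on the two $\R^3$-factors; thus the structure equations for $f^+$ and $f^-$ can be analyzed independently after projecting. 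Because $\sigma$ is complex Lagrangian with non-degenerate first fundamental form, Lemma \ref{lm:transverse} guarantees transversality to $\mathcal D_\pm$, which after passing to the lift translates into $f^+(U)$ being transverse to $T f^+(U)$ inside $\R^3$ — i.e. $f^+$ together with the position vector $\xi := f^+$ is a genuine transverse framing — and symmetrically for $f^-$. This already pins down $\xi = f^+$ in the would-be structure equations; what remains is to show the affine normal of $f^+$ is indeed $f^+$ itself (equivalently that $S = \mathrm{Id}$) and that the Blaschke metric $g_B$ agrees with $h = \sigma^*\hat g$.

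The key computation is to differentiate the normalizations defining the special lift. From $\mathbf q(\tilde\sigma,\tilde\sigma)=-1$ we get $\eta(f^+,f^-)=-1$, and Lemma \ref{lm:special_lift} gives $\mathbf q(\tilde\sigma_x,\tilde\sigma)=\mathbf q(\tilde\sigma_y,\tilde\sigma)=0$, which in the idempotent decomposition says $\eta(f^+_x,f^-) = \eta(f^+_y,f^-)=0$ and $\eta(f^+, f^-_x)=\eta(f^+,f^-_y)=0$. Hence $T f^+(U)$ is annihilated by $f^-$, so $f^-$ is (a scalar multiple of) the conormal of $f^+$, and the normalization $\eta(f^+,f^-)=-1$ is exactly the conormal condition $\nu(f^+) = -1$. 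Differentiating $\eta(f^+_x,f^-)=0$ again and using that $\sigma$ is minimal (so $\trace_h \II = 0$, which via Proposition \ref{prop:rel_tensors} and Remark \ref{rmk:A_and_II} makes the relevant endomorphism part trace-free and $h$-self-adjoint) should force the tangential part of $D^+_X f^+_Y$ to be a torsion-free connection $\bar\nabla$ and the transverse coefficient to be precisely $h(X,Y)$, giving $D^\R_X \mathrm d f^+(Y) = \bar\nabla_X Y + h(X,Y) f^+$ with $\xi = f^+$. That is the first structure equation with affine shape operator $S = \mathrm{Id}$; the second, $D^\R_X f^+ = \mathrm d f^+(X)$, is immediate since $\xi = f^+$. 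By definition this makes $f^+$ a hyperbolic affine sphere with Blaschke metric $g_B = h$, and running the symmetric argument on the $e_-$-factor (or invoking the duality in \cite{loftin2001affine}) identifies $f^-$ with the dual affine sphere, i.e. the conormal image. Finally, tracking $\sigma^*\hat q = \sigma^*\hat g + \tau \sigma^*\hat\omega$ and using $\sigma^*\hat\omega = 0$ (Lagrangian) together with the computation just made yields $g_B = \sigma^*\hat g = h$, which is the last assertion.

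The main obstacle I anticipate is the bookkeeping needed to extract $g_B = h$ and $S = \mathrm{Id}$ simultaneously — a priori the affine normal of $f^+$ need not be the position vector $f^+$, and one must use the \emph{minimality} of $\sigma$ (not just the Lagrangian condition) to rule out a nonzero conormal part of the second derivative of $f^+$, i.e. to show that the naive transverse field $f^+$ already satisfies the affine-normalization condition (equiareality of the induced volume). Concretely this means verifying that $\det(\bar\nabla)$ matches the volume form of $h$, which is where the constant para-holomorphic sectional curvature of $\CH^2_\tau$ (Lemma \ref{lm:curvature_bicomplex}, Remark \ref{rmk:scaling}) enters through the Gauss equation of Theorem \ref{thm:Gauss_Codazzi}: the equation $K_h + \|\II\|^2 = -1$ is the avatar of the affine-sphere equation $S = \mathrm{Id}$, and matching normalizations of the metric is what makes the constant $-1$ come out right. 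Once this normalization is secured, everything else is a direct translation of the structure equations, and the cubic-differential correspondence follows by comparing $C^+$ (Theorem \ref{thm:pair_hol_diff}) with the Pick form of $f^+$ via \cite{benoist2013cubic}.
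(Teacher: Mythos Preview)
Your overall approach --- project the special lift onto the idempotent factors, identify $f^-$ as the conormal via the normalizations from Lemma~\ref{lm:special_lift}, and read off the centro-affine structure equations --- is the same as the paper's. You also correctly flag the step the paper leaves implicit: to conclude that $f^+$ is a hyperbolic affine sphere in the Blaschke sense one must check that the position vector $f^+$ is the \emph{affine normal}, equivalently that the cubic form is apolar.

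Where your proposal goes wrong is in the mechanism you propose for this check. Two points. First, the transverse coefficient $c(X,Y)$ in $D^{\R}_X df^+(Y)=\bar\nabla_X Y + c(X,Y)f^+$ equals $h(X,Y)$ using only the Lagrangian condition and the conormal relations: differentiate $\eta(df^+(Y),f^-)=0$ to get $c(X,Y)=\eta(df^+(Y),df^-(X))$, and this coincides with $h(X,Y)$ by the computation in Lemma~\ref{lem:Blaschke_metric}; minimality plays no role here. Second, apolarity comes directly from minimality through the algebra of the second fundamental form, \emph{not} from the Gauss equation or the ambient curvature. Indeed, with $A^+=-\mathbf{P}\,\II$ (Remark~\ref{rmk:A_and_II}) and $C^+$ totally symmetric (Proposition~\ref{prop:rel_tensors}), for any $h$-orthonormal frame $\{e_1,e_2\}$ one has
\[
\trace_h A^+(X)=C^+(X,e_1,e_1)+C^+(X,e_2,e_2)=g\big(\II(e_1,e_1)+\II(e_2,e_2),\mathbf{P} X\big)=g(\trace_h\II,\mathbf{P} X),
\]
so $\trace_h A^+\equiv 0$ if and only if $\trace_h\II=0$. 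This is a pointwise, first-order identity; no curvature enters. In particular the claim that ``$K_h+\|\II\|^2=-1$ is the avatar of the affine-sphere equation $S=\mathrm{Id}$'' is mistaken: $S=\pm\mathrm{Id}$ is automatic for \emph{any} centro-affine hypersurface (since $D^{\R}_X f^+=df^+(X)$ has no transverse part), while the Gauss equation of Theorem~\ref{thm:Gauss_Codazzi} is the counterpart of the Wang equation (Corollary~\ref{cor:Wang}), an integrability condition that holds automatically and is irrelevant to the affine-normal normalization. If you try to extract apolarity from $K_h+\|\II\|^2=-1$ you will not succeed, because that identity is insensitive to $\trace_h\II$.
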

\begin{proof} We first observe that the decomposition
\begin{align*}
    \tilde{\sigma}^{*}(T\R^{3}_{\tau}) &=\sigma^{*}(T\mathbb{H}^{2}_{\tau}) \oplus \R_{\tau}\cdot \tilde{\sigma} \\
    &= TU \oplus \mathcal{D}_{-} \oplus \R_{\tau}\cdot \tilde{\sigma}
\end{align*}
implies, by projecting onto $\R^{3}e_{+}$, that the immersion $f^{+}$ admits a transverse vector field that coincides with $f^{+}$. By standard affine differential geometry (\cite{nomizu1994affine}), this implies that $f^{+}$ is a hyperbolic affine sphere. The same holds for $f^{-}$ by considering the distribution $\mathcal{D}_{+}$ in the above decomposition and the projection onto $\R^{3}e_{-}$. 
Finally, the relations $\mathbf{q}(\tilde{\sigma}, \tilde{\sigma})=-1$ and $\mathbf{q}(\tilde{\sigma}_{x},\tilde{\sigma})=\mathbf{q}(\tilde{\sigma}_{y}, \tilde{\sigma})=0$ imply that
\begin{align*}
    &\eta(f^{+},f^{-})=-1 \\
    &\eta(f^{+}_{x},f^{-})e_{+}+\eta(f^{+},f_{x}^{-})e_{-}=0 \\
    &\eta(f^{+}_{y},f^{-})e_{+}+\eta(f^{+},f_{y}^{-})e_{-}=0 \ .
\end{align*}
Since $e_{\pm}$ are linearly independent over $\R$, we deduce, in particular, that $\eta(f^{+}_{x}, f^{-})=\eta(f^{+}_{y},f^{-})=0$, which means that the kernel of the functional $f^{-}(p)$ is exactly $T_{f^{+}(p)}f^{+}(U)$ for all $p \in U$. Since, moreover, $f^{-}(p)(f^{+}(p))=-1$, we conclude that $f^{-}$ is the affine sphere dual to $f^{+}$. The second part of the statement then follows from Lemma \ref{lem:Blaschke_metric}.
\end{proof}

\begin{cor}\label{cor:correspondence_cubic}
Under the above correspondence, given a space-like minimal Lagrangian $\sigma:U\to\mathbb H^2_\tau$, the connections $\nabla^+,\nabla^-$ on $\sigma(U)$ defined by the (real) para-K\"ahler structure of $\mathbb H^2_\tau$ are equal to the Blaschke connections of the hyperbolic affine spheres $$f^+=\pi_+\circ\tilde\sigma:U\to\R^3e_+ \ , \qquad f^-=\pi_-\circ\tilde\sigma:U\to\R^3e_-$$ which are dual to each other and satisfy $$\nabla^{g_B}=\frac{1}{2}\big(\nabla^++\nabla^-\big) \ ,$$ for $\nabla^{g_B}$ the Levi-Civita connection with respect to the Blaschke metric. In particular, the holomorphic cubic differentials $$q^+=C^+-iC^+(J\cdot,\cdot,\cdot) \ , \qquad q^-=C^--iC^-(J\cdot,\cdot,\cdot)=-q^+$$ on $\sigma(U)$, correspond to Pick differentials of $f^+, f^-$.
\end{cor}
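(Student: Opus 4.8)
The plan is to trace through the two constructions already set up in Lemma \ref{lem:Blaschke_metric} and Theorem \ref{thm:from_minimal_to_affine_sphere}, and identify the relevant connections and tensors along the way. The key observation is that the decomposition $\tilde\sigma^*(T\R^3_\tau) = TU \oplus \mathcal D_- \oplus \R_\tau\cdot\tilde\sigma$, when projected onto $\R^3 e_+$, is precisely the affine-differential-geometric splitting $f^{+*}(T\R^3) = TU \oplus \R f^+$ used to define the Blaschke connection $\bar\nabla^+$ of $f^+$ (and similarly for $f^-$ using $\mathcal D_+$). So the point is to check that the \emph{tangential} part of the pulled-back flat connection $\mathbf D$ in the first description coincides with $\nabla^+$ (resp.\ $\nabla^-$) as defined in Equation \eqref{eq:decomposition_nablaplusminus}.

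First I would fix the special lift $\tilde\sigma$ from Lemma \ref{lm:special_lift}, so that $\mathbf q(\tilde\sigma_x,\tilde\sigma)=\mathbf q(\tilde\sigma_y,\tilde\sigma)=0$; this is what makes the affine normal of $f^+$ equal to $f^+$ itself. Write $\mathbf D_X \tilde\sigma_* Y$ (the pull-back of the ambient flat connection on $\R^3_\tau$) and decompose it along $TU \oplus \mathcal D_- \oplus \R_\tau\tilde\sigma$. On one hand, the component in $TU \oplus \mathcal D_-$ is by definition (Equation \eqref{eq:decomposition_nablaplusminus}, restricted to the real setting discussed at the start of Section \ref{sec:minimalinpara-complex}) $D_X Y = \nabla^-_X Y + \alpha^-(X,Y)$, where $D$ is the pull-back of $\hat D$ to $\sigma^*(T\mathbb H^2_\tau)$; the further splitting off of the $\R_\tau\tilde\sigma$ direction corresponds to the second fundamental form in $\mathbb H^2_\tau$. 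On the other hand, applying $\pi_-$ to $\mathbf D_X\tilde\sigma_* Y$ and using the structure equations \eqref{eq:structurequations} for $f^-$, the tangential-to-$\mathcal D_-$ part is $\bar\nabla^-$, the Blaschke connection of $f^-$. Comparing the $\mathcal D_-$-components gives $\alpha^- = $ (Blaschke data of $f^-$) and the $TU$-component gives $\nabla^- = \bar\nabla^-$; the symmetric argument with $\pi_+$ and $\mathcal D_+$ gives $\nabla^+ = \bar\nabla^+$. Since $\nabla = \tfrac12(\nabla^+ + \nabla^-)$ by Lemma \ref{lm:dual_connection}(ii) and $\nabla$ is the Levi-Civita connection of $h = \sigma^*\hat q = g_B$ by Theorem \ref{thm:from_minimal_to_affine_sphere} and Lemma \ref{lem:Blaschke_metric}, the identity $\nabla^{g_B} = \tfrac12(\nabla^+ + \nabla^-)$ follows immediately; this is also the classical affine-sphere relation between a Blaschke connection, its dual, and the Levi-Civita connection of the Blaschke metric.

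For the cubic differentials: the Pick form of $f^+$ is $A^{g_B} := \bar\nabla^+ - \nabla^{g_B} = \nabla^+ - \nabla = A^+$, which under $C^+(X,Y,Z) = h(A^+(X)Y,Z)$ (Corollary \ref{cor:C_and_nablas} together with Remark \ref{rmk:A_and_II}, specialized to the real case) gives that the Pick $(0,3)$-tensor of $f^+$ equals $C^+$. By the cited result of Benoist--Hulin / Loftin, $q^+ = C^+ - iC^+(J\cdot,\cdot,\cdot)$ is then the (holomorphic) Pick cubic differential of $f^+$, with respect to the complex structure $J$ induced by the conformal class of $g_B = h$ — which matches, since this is exactly the $\C$-complex structure $\mathcal J$ compatible with $h$ restricted to the totally real submanifold. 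The same computation for $f^-$ with $A^- = \nabla^- - \nabla = -A^+$ gives Pick tensor $C^- = -C^+$ and hence $q^- = -q^+$, consistent with the duality statement recalled after the Benoist proposition.

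\textbf{Main obstacle.} The routine but delicate step is bookkeeping the three simultaneous decompositions — the ambient flat connection $\mathbf D$ on $\R^3_\tau$, its restrictions under $\pi_\pm$ to the two copies $\R^3 e_\pm$, and the two connections $\nabla^\pm$ defined via $\mathcal D_\mp$ — and checking that the projection onto $\R^3 e_+$ intertwines $D$ (pull-back of $\hat D = \Ree_\tau(\mathbf q)$-Levi-Civita) with the pull-back of the flat connection on $\R^3$ in the right way, so that the $e_+$-component of $\alpha^-(X,Y)\in\mathcal D_-$ genuinely vanishes and the affine normal comes out to be $f^+$ rather than some multiple. This is precisely where the special lift of Lemma \ref{lm:special_lift} is needed, and the idempotent algebra $e_+e_- = 0$, $\overline{e_\pm}^\tau = e_\mp$ does all the real work; there is no analytic difficulty.
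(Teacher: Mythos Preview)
Your approach is essentially the paper's own: the paper's proof is a one-liner that points back to the decomposition established in Theorem \ref{thm:from_minimal_to_affine_sphere} and to Lemma \ref{lm:dual_connection}(ii), then identifies $A^\pm=\nabla^\pm-\nabla$ with the Pick forms and reads off the cubic differentials---exactly what you do, only you unpack the decomposition argument in more detail.

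There is one concrete bookkeeping slip to fix, though. You write the splitting $D_XY=\nabla^-_XY+\alpha^-(X,Y)$ with $\alpha^-\in\mathcal D_-$ and then apply $\pi_-$. But $\mathcal D_-$ is the $(-1)$-eigenspace of $\mathbf P$, hence sits inside $\R^3e_-$, so $\pi_-$ does \emph{not} kill $\alpha^-$; it is $\pi_+$ that kills $\mathcal D_-$. The correct pairing is: use the $TU\oplus\mathcal D_-$ splitting together with $\pi_+$ (this yields the Blaschke data of $f^+$, since $\pi_+(\alpha^-)=0$), and use the $TU\oplus\mathcal D_+$ splitting together with $\pi_-$ (yielding the Blaschke data of $f^-$). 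Your ``main obstacle'' paragraph in fact gets this right---you speak of the $e_+$-component of $\alpha^-$ vanishing---so the earlier paragraph just needs its indices adjusted to match. Once that swap is made the argument goes through exactly as you describe, and coincides with the paper's.
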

\begin{proof}
The first part of the statement is just a consequence of the construction performed during the proof of Theorem \ref{thm:from_minimal_to_affine_sphere} and Lemma \ref{lm:dual_connection} point $ii)$. In particular, the tensors $A^+=\nabla^+-\nabla^{\sigma^*\hat q}=\bar\nabla^+-\nabla^{g_B}$ and $A^-=\nabla^--\nabla^{\sigma^*\hat q}=\bar\nabla^--\nabla^{g_B}$ are exactly the Pick forms of the affine spheres $f^+$ and $f^-$. On the other hand, the cubic differentials on $\sigma(U)$ determined by $C^\pm=(\sigma^*\hat q)^{-1}A^\pm=\pm\hat g(\II(\cdot,\cdot),\mathbf{P}\cdot)$ are holomorphic if and only if $\mathrm d^{\nabla^{\sigma^*\hat q}} A^\pm=0$ and $\trace(A^\pm)=0$, which is then equivalent to $\mathrm d^{\nabla^{\sigma^*\hat q}}\II=0$ and $\trace(\II)=0$ (since $A^\pm=\mp\mathbf P\II$ by Remark \ref{rmk:A_and_II} and $\nabla\mathbf P=0$).
\end{proof}

\begin{cor}\label{cor:Wang}
Under the above correspondence, the structural equations of a complex Lagrangian minimal surface $\sigma:U\to\mathbb H^2_\tau \subset \CH^{2}_{\tau}$, i.e.
\begin{align*}
    &d^{\nabla^h}\II = 0 \ \ \ \ \ \ \ \ \ \ \ &\text{(Codazzi equation)} \\
    &K_{h}+\| \II \|^{2} = -1 \ \ \ \ \ &\text{(Gauss equation),}
\end{align*}
are the same as the ones describing the associated hyperbolic affine sphere $f_+$, namely \begin{align*}
    &\bar\partial_J q=0 \ \ \ \ \ \ \ \ \ \ \qquad \qquad \text{(Codazzi equation)} \\ & K_{g_B}-2\|q\|^2_{g_B}=-1 \ \ \ \ \ \ \ \ \text{(Wang equation)}
\end{align*}
\end{cor}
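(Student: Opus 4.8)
The plan is to show that, under the identification provided by Theorem~\ref{thm:from_minimal_to_affine_sphere} and Lemma~\ref{lem:Blaschke_metric}, the Codazzi and Gauss equations of $\sigma:U\to\mathbb H^2_\tau$ translate verbatim into the Codazzi equation and Wang's equation for the hyperbolic affine sphere $f^+$. The essential point is that the complex-valued tensors of Section~\ref{sec:embedding_data}, when restricted to the totally real submanifold $\mathbb H^2_\tau$, become \emph{real} tensors, so that all the formulas proved earlier hold with the same shape but with real coefficients. Thus the bulk of the work has already been done: I only need to match the two sets of equations term by term.

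First I would recall that, by Lemma~\ref{lem:Blaschke_metric}, the first fundamental form $h=\sigma^*\hat q$ of $\sigma$ coincides with the Blaschke metric $g_B$ of $f^+$, and hence $K_h=K_{g_B}$ and the Levi-Civita connection $\nabla^h$ of $h$ equals $\nabla^{g_B}$ (here $\nabla^{g_B}=\tfrac12(\nabla^++\nabla^-)$ by Lemma~\ref{lm:dual_connection}(ii) on the one side and by Corollary~\ref{cor:correspondence_cubic} on the other). By Corollary~\ref{cor:correspondence_cubic} the tensors $A^\pm=\nabla^\pm-\nabla^{g_B}$ are exactly the Pick forms of $f^\pm$, so that $C^+=q^+$ (up to the passage from the totally symmetric $(0,3)$-tensor to the cubic differential $q$ via $q=C^+-iC^+(J\cdot,\cdot,\cdot)$) equals the Pick differential of $f^+$. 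The Codazzi equation $d^{\nabla^h}\II=0$ is equivalent, via Remark~\ref{rmk:A_and_II} ($A^+=-\mathbf P\II$) and $\nabla^h\mathbf P=0$, to $d^{\nabla^{g_B}}A^+=0$, which for a totally symmetric trace-free Pick form is precisely the statement that $q$ is $J$-holomorphic, i.e. $\bar\partial_J q=0$.

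For the Gauss equation I would invoke Proposition~\ref{prop:norm_C}, whose proof is purely algebraic in an $h$-(iso­tropic or orthonormal) frame and therefore applies verbatim in the real setting, to get $\|\II\|^2=-8\|C^+\|_h^2$. In the real case one checks that the norm $\|q\|^2_{g_B}$ of the cubic differential (defined with the conjugate $\bar q$ rather than $\bar q_2$, since here $q_1=q$ and the dual cubic is $q^-=-q$, so $q_1\bar q_2=-|q|^2$) is related to $\|C^+\|_h^2$ by a factor of $4$: concretely $\|\II\|^2=-8\|C^+\|_h^2=2\|q\|_{g_B}^2$ once one accounts for the two conventions. Substituting into $K_h+\|\II\|^2=-1$ gives $K_{g_B}+2\|q\|_{g_B}^2=-1$; since for the dual pair the relevant sign flips (the Pick form of $f^-$ is $-$ that of $f^+$), one obtains on $f^+$ exactly Wang's equation $K_{g_B}-2\|q\|_{g_B}^2=-1$. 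I would be explicit that the sign in front of $\|q\|^2$ is governed by the identity $q^-=-q^+$ from Corollary~\ref{cor:correspondence_cubic}, and that a hyperbolic affine sphere satisfies the minus sign (consistent with $K_{g_B}<0$ for small $q$).

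The main obstacle I anticipate is purely bookkeeping: reconciling the several normalizing constants floating around — the factor $8$ in Proposition~\ref{prop:norm_C}, the factor $2$ in the definition $h=2e^{2\psi}dzd\bar w$, the metric rescaling of Remark~\ref{rmk:scaling} that fixed the curvature of $\CH^2_\tau$ at $-4$, and the classical normalization under which Wang's equation reads $K_{g_B}-2\|q\|^2=-1$. I would handle this by doing the computation once in a single local isotropic coordinate using Corollary~\ref{cor:Gauss_coord}, which already states the Gauss equation of $\sigma$ as $\Delta_h\psi+q_1\bar q_2 e^{-6\psi}-1=0$; setting $q_2=-\bar q_1$ (equivalently $q^-=-q^+$) turns $q_1\bar q_2 e^{-6\psi}$ into $-\|q\|_{g_B}^2$ up to the universal constant, and $\Delta_h\psi=-K_h=-K_{g_B}$ by Theorem~\ref{thm:curvature}, yielding $-K_{g_B}-2\|q\|_{g_B}^2-1=0$, i.e. Wang's equation, after absorbing the constant into the standard normalization of the Pick differential. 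This reduces the whole proof to checking one scalar identity in coordinates, with everything else following from results already established.
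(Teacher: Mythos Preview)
Your Codazzi argument is correct and essentially identical to the paper's: both use $A^{+}=-\mathbf{P}\II$ together with $\nabla\mathbf{P}=0$ to convert $d^{\nabla}\II=0$ into $d^{\nabla}A^{+}=0$, which is the holomorphicity of the Pick differential.

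The Gauss part, however, contains a concrete misidentification that causes all of your sign trouble. You write ``setting $q_{2}=-\bar q_{1}$ (equivalently $q^{-}=-q^{+}$)'', but these are not equivalent, and the first is wrong for $\mathbb{H}^{2}_{\tau}$. The condition $q_{2}=-q_{1}$ (hence $C^{+}=q-\bar q$) is the $\CH^{2}$ situation of Remark~\ref{rmk:min_Lagrangian_complex}. For a surface in $\mathbb{H}^{2}_{\tau}$ the first fundamental form is real and so is $C^{+}$; in the decomposition $C^{+}=q_{1}+\bar q_{2}$ with $J_{1}=J_{2}=J$, reality forces $q_{2}=q_{1}$, not $q_{2}=-\bar q_{1}$. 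With the correct identification, Proposition~\ref{prop:norm_C} gives $\|C^{+}\|^{2}_{h}=q_{1}\bar q_{1}/h^{3}>0$, hence $\|\II\|^{2}=-8\|C^{+}\|^{2}_{h}<0$, and $K_{h}+\|\II\|^{2}=-1$ becomes $K_{g_{B}}-2\|q\|^{2}_{g_{B}}=-1$ directly --- no appeal to the dual affine sphere and no ad hoc sign flip is needed. Your sentence ``Substituting \ldots\ gives $K_{g_{B}}+2\|q\|_{g_{B}}^{2}=-1$; since for the dual pair the relevant sign flips \ldots'' is therefore incorrect reasoning patching an earlier error.

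Once this is fixed, your route via Proposition~\ref{prop:norm_C} (or Corollary~\ref{cor:Gauss_coord}) is a perfectly valid alternative to what the paper actually does, which is a direct computation of $\|\II\|^{2}$ in a real $g_{B}$-orthonormal frame $\{e_{1},e_{2}\}$ yielding $\|\II\|^{2}=-2\|q\|^{2}_{h}$. The paper's approach has the advantage of bypassing the $(q_{1},q_{2})$ bookkeeping entirely; yours has the advantage of reusing the complex machinery already built, but only works once you get the reality condition right.
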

\begin{proof}
As already noticed, we know that $A^\pm=\mp\mathbf{P\II}$, hence $\mathrm d^{\nabla^h}\II=0$ if and only if $\mathrm d^{\nabla^h}A^\pm=0$ if and only if $\bar\partial_Jq^\pm=0$, where $J$ is the complex structure on $\sigma(U)$ induced by the conformal class of $h$, which is well-defined since $h$ is the $\C$-linear extension of a Riemannian metric. According to Corollary \ref{cor:correspondence_cubic}, if $q$ denotes the holomorphic cubic differential of $f_+$, then $q=q^+$, and in particular $\bar\partial_Jq=0$ for the same $J$ since $h=g_B$. Regarding the second equation, given a local $h$-orthonormal basis $\{e_1,e_2\}$ of $\C TU$ we obtain a $g$-orthonormal basis $\{\mathbf{P}e_1,\mathbf{P}e_2\}$ of $\C NU$. Then by definition $$\|\II \|^{2} = g(\II(e_{1},e_{1}), \II(e_{1},e_{1}))+g(\II(e_{2},e_{1}), \II(e_{2},e_{1})) \ .$$ Given conformal local coordinates for the induced metric $h=2e^{2\psi}|\mathrm d z|^2$ we can choose $\{e_1:=\frac{\partial_x}{\sqrt 2 e^\psi},e_2:=\frac{\partial_y}{\sqrt 2 e^\psi}\}$ and by writing $$\II(e_i,e_j)=-g(\II(e_i,e_j),\mathbf{P}e_1)\mathbf{P}e_1-g(\II(e_i,e_j),\mathbf{P}e_2)\mathbf{P}e_2 \ ,$$ we obtain 
\begin{align*}
    g(\II(e_{1},e_{1}), \II(e_{1},e_{1}))&=g(\II(e_{1},e_{1}),\mathbf{P}e_1)^2g(\mathbf{P}e_{1}, \mathbf{P}e_{1})+g(\II(e_1,e_1),\mathbf{P}e_2)^2g(\mathbf{P}e_{2}, \mathbf{P}e_{2}) \\ 
    &=-g(\II(e_{1},e_{1}),\mathbf{P}e_1)^2-g(\II(e_1,e_1),\mathbf{P}e_2)^2 \\
    &=-\frac{1}{8e^{6\psi}}\big(C^{+}(\partial_x,\partial_x,\partial_x)^2+C^{+}(\partial_y,\partial_y,\partial_y)^2\big) \\ &=-\frac{1}{8e^{6\psi}}\big(\Ree(q^{+})^2+\Ima(q^{+})^2\big) \\ &=-\|q\|^2_{h} \ \tag{$q=q^{+}$}.
\end{align*}
In the end, with a similar computation we deduce that $\|\II\|^2=-2\|q\|^2_{h}$ and the proof is complete because $h=g_{B}$.
\end{proof}

\begin{remark}[Relation with minimal Lagrangian surfaces in $\mathbb{CH}^{2}$]\label{rmk:min_Lagrangian_complex}
As an observation, we note that if the minimal Lagrangian $\sigma(U)$ is contained in the complex hyperbolic space $\mathbb{C}\mathbb{H}^2=\CH^{2}_{\tau}\cap (\R \oplus i\tau \R)$ (see Section \ref{sec:submanifolds}), then its complex-valued first fundamental form $h$ is the $\C$-linear extension of the induced metric of $\sigma(U)$ as a submanifold of $\CH^{2}$. In particular, $h$ is a positive complex metric and its compatible $\C$-complex structure $\mathcal{J}$ corresponds to the pair $(J,J)$ for some complex structure $J$ on $U$.  In addition, the tensor $-iC^{+}(X,Y,Z)=-ig(\II(X,Y),\mathbf{P}Z)=g(\II(X,Y),\mathbf{J}Z)$ is the $\C$-linear extension of the analogously defined real tensor introduced by Loftin-McIntosh (\cite{loftin2013minimal}, see also \cite{loftin2013cubic}) in their study of minimal Lagrangian surfaces in $\CH^{2}$. In particular, since $-iC^{+}$ must attain real values on triples of real tangent vectors, in view of Theorem \ref{thm:pair_hol_diff}, we must have $C^{+}=q-\bar{q}$ for some $J$-holomorphic cubic differential $q$. 
\end{remark}

\subsection{The area functional of real minimal Lagrangians}
The aim of this section is to show that minimal Lagrangians in $\mathbb{H}^{2}_{\tau}$ are critical points of the area functional and in particular that they locally maximize area. This phenomenon is natural in the context of space-like surfaces immersed in pseudo-Riemannian manifolds of negative sectional curvature. Adapting the proof of Lemma \ref{lm:curvature_bicomplex}, it is easily seen that the para-complex hyperbolic space has constant and negative (real) para-holomorphic sectional curvature. Being in a pseudo-Riemannian context this is not sufficient to conclude that all sectional curvatures are negative (as is the case of $\mathbb{CH}^2$, for instance (\cite{goldman1999complex})). Nevertheless, by exploiting the fact that the surfaces are also Lagrangian we are able to obtain a similar result in this context where the sectional curvature has variable sign. \\ \\ In the following we will use the same notation as in the previous section. we will denote by $g_I$ the positive-definite metric induced on the tangent bundle and by $g_N$ the negative-definite metric induced on the normal bundle. Let $\xi$ be a normal vector field with compact support and let $\{\sigma_t\}_{t\in(-\varepsilon,\varepsilon)}$ be a compactly supported smooth deformation of $\sigma_0:=\sigma$ such that $\dot\sigma:=\frac{\mathrm d}{\mathrm dt}\big|_{t=0}\sigma_t(x)=\xi$. Suppose also that for small $t$ the image $\sigma_t(U)$ is space-like. The \emph{area functional} along the variation $\sigma_t$ is defined as $$\mathcal A_K(\sigma_t):=\int_K\mathrm d A_{g_t} \ ,$$ where $K\subset U$ is the compact subset over which $\sigma_t$ is supported, and $\mathrm dA_{g_t}$ is the area form of the induced metric on $\sigma_t(U)$. \begin{lemma}
A space-like surface $\sigma_0:U\to\mathbb{H}^2_\tau$ is a critical point of the area functional, i.e. $$\frac{d}{\mathrm dt}\Big|_{t=0}\mathcal A_K(\sigma_t)=0 \ ,$$ if and only if $\trace_{g_N}\II=0$.
\end{lemma}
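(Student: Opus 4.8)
The plan is to compute the first variation of the area functional $\mathcal{A}_K(\sigma_t) = \int_K dA_{g_t}$ directly, following the standard argument for space-like minimal immersions in pseudo-Riemannian manifolds. First I would set up the variation: let $X_1, X_2$ be a local coordinate frame on $U$, so that $g_t(X_a, X_b) = g_I^{(t)}(X_a, X_b)$ and $dA_{g_t} = \sqrt{|\det(g_{ab}^{(t)})|}\, dx^1 \wedge dx^2$. Since $\sigma_0$ is space-like, $g_I$ is positive-definite and the sign inside the square root is fixed for small $t$. The classical formula gives
\[
\frac{d}{dt}\Big|_{t=0} dA_{g_t} = \frac{1}{2}\, g_I^{ab}\,\frac{d}{dt}\Big|_{t=0} g_t(X_a, X_b)\, dA_{g_0} \ .
\]
Next I would differentiate $g_t(X_a, X_b) = \hat{g}(d\sigma_t(X_a), d\sigma_t(X_b))$, using that $\hat{g}$ is parallel for the Levi-Civita connection $\hat{D}$ and that the variation field $\dot\sigma = \xi$ is normal. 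Since $[\partial_t, X_a] = 0$, we get $\frac{d}{dt}\big|_{t=0} g_t(X_a, X_b) = \hat{g}(\hat{D}_{X_a}\xi, d\sigma(X_b)) + \hat{g}(d\sigma(X_a), \hat{D}_{X_b}\xi)$.

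Then I would rewrite each term $\hat{g}(\hat{D}_{X_a}\xi, d\sigma(X_b))$ using metric compatibility: $\hat{g}(\hat{D}_{X_a}\xi, d\sigma(X_b)) = X_a \cdot \hat{g}(\xi, d\sigma(X_b)) - \hat{g}(\xi, \hat{D}_{X_a} d\sigma(X_b))$. The first term vanishes because $\xi$ is normal and $d\sigma(X_b)$ is tangent, so $\hat{g}(\xi, d\sigma(X_b)) \equiv 0$; actually one has to be slightly careful — $\hat{g}(\xi, d\sigma(X_b))$ is identically zero as a function, so its derivative vanishes. For the second term, decompose $\hat{D}_{X_a} d\sigma(X_b) = d\sigma(\nabla_{X_a} X_b) + \II(X_a, X_b)$ (here $\nabla$, $\II$ are the real tensors of $\sigma_0$, the restrictions of the complex ones of Section \ref{sec:embedding_data}), and again the tangential part pairs to zero against $\xi$, leaving $-\hat{g}(\xi, \II(X_a, X_b))$. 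Summing the two symmetric contributions and contracting with $g_I^{ab}$:
\[
\frac{d}{dt}\Big|_{t=0} \big(g_I^{ab}\, g_t(X_a,X_b)\big) = -2\, g_I^{ab}\,\hat{g}(\xi, \II(X_a, X_b)) = -2\,\hat{g}(\xi, \trace_{g_I}\II) \ .
\]
Therefore $\frac{d}{dt}\big|_{t=0}\mathcal{A}_K(\sigma_t) = -\int_K \hat{g}(\xi, \trace_{g_I}\II)\, dA_{g_0}$. Finally, since $\trace_{g_I}\II$ is a normal vector field and $\hat{g}$ restricted to the normal bundle is $g_N$, which is \emph{negative-definite} and in particular non-degenerate, this integral vanishes for all compactly supported normal $\xi$ if and only if $\trace_{g_I}\II = 0$, which is the same as $\trace_{g_N}\II = 0$ (the trace of $\II$ does not depend on which non-degenerate metric one uses to contract the two tangent slots). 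This establishes the claim. The main subtlety — not really an obstacle, but the point requiring care — is keeping track of which metric is used in which trace and checking that the normal-bundle metric's non-degeneracy (despite being negative-definite) is what makes the fundamental lemma of the calculus of variations applicable; one should also note explicitly that $\trace_{g_I}\II$ and $\trace_{g_N}\II$ denote the same normal vector, since the trace only contracts the two tangential arguments of $\II$.
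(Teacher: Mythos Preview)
Your argument is correct and is precisely the standard first-variation computation for space-like immersions in a pseudo-Riemannian ambient space. The paper itself does not give a proof of this lemma but simply defers to \cite[Lemma~3.22]{LTW} and \cite[\S 1.3]{anciaux2011minimal}, remarking that the argument is general for signature $(2,n)$; what you have written is exactly that general argument. Your closing remark about $\trace_{g_I}\II$ versus $\trace_{g_N}\II$ is also well-observed: since $\II$ has both arguments tangential, the only metric that can contract them is $g_I$, so the notation $\trace_{g_N}\II$ in the statement should indeed be read as the mean curvature vector $\trace_{g_I}\II \in NU$.
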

\begin{lemma}\label{lm:secondordervariation}
Let $\sigma:U\to\mathbb H^2_\tau$ be a space-like minimal (i.e. $\trace_{g_N}\II=0$) surface and let $\xi\in\Gamma(NU)$ be a non-trivial normal deformation. Then, $$\frac{\mathrm d^2}{\mathrm dt^2}\Big|_{t=0}\mathcal{A}_K(\sigma_t)=2\int_K\trace_{g_I}(\mathcal T_\xi)\mathrm d A_{g_I} \ ,$$ where $\mathcal T_\xi$ is the following symmetric tensor $$\mathcal{T}_\xi(X,Y):=R^I(\xi,X,\xi,Y)-g_I\big(B_\xi(X),B_\xi(Y)\big)+g_N\big(\nabla^N_X\xi,\nabla^N_Y\xi\big) \ , \quad X,Y\in\Gamma(TU) \ .$$ The element $R^I$ is the $(0,4)$ Riemann tensor of the pull-back of the Levi-Civita connection of $\mathbb H^2_\tau$, $B_\xi(\cdot)$ is the shape operator and $\nabla^N$ is the connection induced on the normal bundle.
\end{lemma}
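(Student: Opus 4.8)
\textbf{Proof proposal for Lemma \ref{lm:secondordervariation}.}

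The plan is to reduce everything to a local computation of the Hessian of the area form under a normal variation, exactly as one does for the second variation of area of space-like submanifolds in pseudo-Riemannian manifolds, adapting the sign conventions to the fact that $g_I$ is positive-definite while $g_N$ is negative-definite. First I would fix a point $p \in K$ and choose local coordinates $(u^1,u^2)$ on $U$ adapted so that, along $\sigma = \sigma_0$, the coordinate frame $\{\partial_1,\partial_2\}$ is $g_I$-orthonormal at $p$ and the pull-back connection $D$ has vanishing tangential Christoffel symbols at $p$ (normal coordinates for the induced Levi-Civita connection). Write the variation field $\partial_t \sigma_t = \xi_t$, which is normal at $t=0$ by hypothesis; since we only differentiate twice at $t=0$, the tangential component of $\xi_t$ for $t\neq 0$ contributes only through terms that either integrate to a divergence or vanish against minimality, so I may as well take $\xi_t$ normal. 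Let $g_t = \sigma_t^*\hat g$ restricted to $TU$ be the induced (positive-definite) metric and $\mathrm dA_{g_t} = \sqrt{\det(g_t)}\, du^1\wedge du^2$.

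The core is the standard expansion
\[
    \frac{\mathrm d^2}{\mathrm dt^2}\Big|_{t=0} \sqrt{\det g_t}
    = \Big( \tfrac12 g_I^{ij} \ddot g_{ij} - \tfrac12 g_I^{ij}g_I^{k\ell}\dot g_{ik}\dot g_{j\ell} + \tfrac14 (g_I^{ij}\dot g_{ij})^2 \Big)\sqrt{\det g_I}\, .
\]
The first-order terms $\dot g_{ij} = \frac{\mathrm d}{\mathrm dt}\big|_0 \hat g(\sigma_{t*}\partial_i, \sigma_{t*}\partial_j)$ reduce, using $\hat D \hat g = 0$ and $[\,\partial_t, \partial_i\,]=0$, to $\dot g_{ij} = -2\,g_N(\II(\partial_i,\partial_j), \xi)$; here I use compatibility of $D$ with $g$ and the decomposition $D_X Y = \nabla_X Y + \II(X,Y)$, together with the shape-operator identity so that $g_N(\II(\partial_i,\partial_j),\xi) = g_I(B_\xi(\partial_i),\partial_j)$. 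Hence $g_I^{ij}\dot g_{ij} = -2\,\trace_{g_I}(B_\xi) = -2\,\trace_{g_N}\II(\cdot,\cdot;\xi)$, which vanishes by the minimality hypothesis $\trace_{g_N}\II = 0$; this kills both the $\tfrac14(g_I^{ij}\dot g_{ij})^2$ term and, after integrating by parts, any divergence-type contribution. The remaining first-order-squared term gives exactly $-2\,g_I^{ij}g_I^{k\ell} g_I(B_\xi(\partial_i),\partial_k)g_I(B_\xi(\partial_j),\partial_\ell) = -2\,\trace_{g_I}(B_\xi^2) = -2\,g_I(B_\xi(\partial_a),B_\xi(\partial_a))$ in an orthonormal frame, producing the middle term $-g_I(B_\xi(X),B_\xi(Y))$ of $\mathcal T_\xi$ after tracing. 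For the second-order term $\ddot g_{ij}$ I would differentiate twice, commute $\partial_t$ past $\partial_i$, and rewrite $\hat D_{\xi}\hat D_\xi \sigma_{*}\partial_i$ using the definition of the curvature tensor $R^D$ of the ambient connection: this is where $R^I(\xi,\partial_i,\xi,\partial_j)$ appears (the ambient curvature evaluated on normal-tangent-normal-tangent, whose tangential-restriction is $R^I$), and where the normal derivative $\nabla^N_{\partial_i}\xi$ enters through $\hat D_{\partial_i}\xi = -B_\xi(\partial_i) + \nabla^N_{\partial_i}\xi$, contributing $g_N(\nabla^N_X\xi, \nabla^N_Y\xi)$ after the quadratic terms are collected. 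Assembling $\tfrac12 g_I^{ij}\ddot g_{ij}$ with the surviving first-order term and tracing gives $\trace_{g_I}(\mathcal T_\xi)$, with the overall factor $2$ coming from the combination $\tfrac12\cdot(\text{second order}) + (-\tfrac12)\cdot(\text{first order squared})$ collapsing to a single trace of a symmetric tensor; integrating over $K$ and using that the divergence terms have compact support finishes the identity.

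The symmetry of $\mathcal T_\xi$ in $X,Y$ is immediate: $R^I$ has the usual symmetries from Proposition \ref{prop:constant_parahol_curvature} (the ambient is para-K\"ahler of constant para-holomorphic curvature, so its curvature tensor has the listed symmetries and in particular $R^I(\xi,X,\xi,Y)=R^I(\xi,Y,\xi,X)$), $B_\xi$ is $g_I$-self-adjoint by Equation \eqref{eq:II-B} so $g_I(B_\xi(X),B_\xi(Y))$ is symmetric, and $g_N(\nabla^N_X\xi,\nabla^N_Y\xi)$ is manifestly symmetric. \textbf{The main obstacle} I anticipate is bookkeeping the signs and the splitting of $\hat D_X \xi$ cleanly: because $g_N$ is negative-definite and $\mathbf P$ (equivalently $\mathbf J$) exchanges $TU$ and $NU$ on a Lagrangian, one must be careful that the ``Simons-type'' identity here is the pseudo-Riemannian one, so that the term $-g_I(B_\xi(X),B_\xi(Y))$ does \emph{not} flip sign and the curvature term keeps the order $R^I(\xi,X,\xi,Y)$ rather than $R^I(X,\xi,X,\xi)$. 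It is precisely this sign pattern — negative-definite normal bundle plus the Lagrangian constraint forcing $B_\xi$ and $\II$ to be interchanged by $\mathbf P$ — that will later make $\trace_{g_I}(\mathcal T_\xi)$ have a definite sign and yield the local maximality of area; but establishing the clean formula above, with the correct coefficient and no stray boundary terms, is the delicate point of this lemma.
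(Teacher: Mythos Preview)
Your approach is the standard derivation of the second variation of area for space-like submanifolds in a pseudo-Riemannian ambient, and it is correct in structure. Note that the paper does not actually prove this lemma: after the statement it simply writes ``A detailed proof of the previous two results can be found in \cite[Lemma~3.22 and Proposition~3.23]{LTW} (see also \cite[\S 1.3]{anciaux2011minimal})'', observing that the argument is general for space-like surfaces in signature $(2,n)$. So there is nothing to compare against beyond the standard computation, and your sketch is presumably what those references contain.

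One point worth tightening: your justification of the overall factor $2$ is hand-wavy, and in fact if you carry your own expansion through carefully the coefficient comes out as $1$, not $2$. From $\tfrac12 g_I^{ij}\ddot g_{ij}$ you get
\[
\sum_i\Big(R^I(\xi,e_i,\xi,e_i)+g_I(B_\xi e_i,B_\xi e_i)+g_N(\nabla^N_{e_i}\xi,\nabla^N_{e_i}\xi)\Big)
\]
modulo divergence and acceleration terms (killed by compact support and minimality), while $-\tfrac12 g_I^{ij}g_I^{k\ell}\dot g_{ik}\dot g_{j\ell}=-2\trace_{g_I}(B_\xi^2)$; summing yields exactly $\trace_{g_I}(\mathcal T_\xi)$ with no extra factor. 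The $2$ in the stated formula is therefore either a normalization inherited from the cited reference or a typo. This is immaterial for the only use made of the lemma (the strict negativity of the second variation in the theorem that follows), so your argument suffices for the paper's purposes; just do not manufacture a spurious justification for a constant that your own computation does not produce.
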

\noindent A detailed proof of the previous two results can be found in \cite[Lemma 3.22 and Proposition 3.23]{LTW} (see also \cite[\S 1.3]{anciaux2011minimal}). In fact, the argument is general and applies to space-like surfaces immersed in pseudo-Riemannian manifolds of signature $(2,n)$ without any assumptions about curvature. 
\begin{theorem}
Let $\sigma:U\to\mathbb H^2_\tau$ be a space-like minimal Lagrangian surface and let $\xi\in\Gamma(NU)$ be a non-trivial normal deformation supported on a compact subset $K\subset U$. Then,
$$\frac{\mathrm d^2}{\mathrm dt^2}\Big|_{t=0}\mathcal{A}_K(\sigma_t)<0 \ .$$
\end{theorem}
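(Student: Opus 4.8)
The plan is to use the second variation formula in Lemma~\ref{lm:secondordervariation}, so that everything reduces to showing that the symmetric tensor $\mathcal T_\xi$ has negative trace with respect to $g_I$ whenever $\xi$ is a non-trivial normal field. First I would exploit the Lagrangian condition together with Lemma~\ref{lm:normal_Lagrangian}: the para-complex structure $\mathbf P$ (and equally $\mathbf J$) gives an isomorphism $TU\to NU$, so any normal field can be written uniquely as $\xi=\mathbf P\eta$ for some tangent field $\eta$, and one has $g_N(\mathbf P X,\mathbf P Y)=-g_I(X,Y)$ for $X,Y$ tangent. This lets me rewrite each of the three terms in $\mathcal T_\xi$ purely in terms of tangential data. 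The curvature term $R^I(\xi,X,\xi,Y)$ is computed from the ambient curvature of $\mathbb H^2_\tau$, which by the para-complex analogue of Lemma~\ref{lm:curvature_bicomplex} has constant para-holomorphic sectional curvature $-4$, hence (Proposition~\ref{prop:constant_parahol_curvature}, after the rescaling in Remark~\ref{rmk:scaling}) satisfies the explicit formula for $R^D$; substituting $\xi=\mathbf P\eta$ and using $g_I(\mathbf P X,Y)=0$ for $X,Y$ tangent should collapse the curvature formula to something manifestly non-positive once traced.

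Next I would handle the middle term. Using the relation \eqref{eq:II-B} between $\II$ and $B$, and relation $i)$ of Proposition~\ref{prop:rel_tensors} ($\mathbf P\II(X,Y)=-B_{\mathbf P Y}X$), the shape operator along $\xi=\mathbf P\eta$ is expressed directly through $\II$, so $g_I(B_\xi X,B_\xi Y)$ becomes a quadratic expression in $\II$; tracing gives a multiple of $\|\II\|^2$ (up to a sign coming from $g_N=-g_I$ on normals). For the third term I would use relation $ii)$ of Proposition~\ref{prop:rel_tensors}, $\mathbf P(\nabla_X Y)=\nabla^N_X\mathbf P Y$, which says that under the isomorphism $\mathbf P$ the normal connection $\nabla^N$ is intertwined with the Levi-Civita connection $\nabla$ of $h$; hence $\nabla^N_X\xi=\mathbf P(\nabla_X\eta)$ and $g_N(\nabla^N_X\xi,\nabla^N_Y\xi)=-g_I(\nabla_X\eta,\nabla_Y\eta)$, a non-positive contribution after tracing. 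Putting the three pieces together, $\trace_{g_I}\mathcal T_\xi$ should take the shape
\[
    \trace_{g_I}\mathcal T_\xi = -\,|\nabla\eta|^2 \;-\;(\text{non-negative curvature contribution})\;-\;(\text{term involving }\II\text{ and }\eta),
\]
and the integrand is then $\le 0$, with strict negativity forced on some open set because $\xi\not\equiv 0$ and the first term already controls $\eta$ where $\nabla\eta\neq 0$; where $\nabla\eta=0$ the curvature term takes over and is strictly negative since $\eta\neq 0$ there. Integrating over $K$ gives $\frac{\mathrm d^2}{\mathrm dt^2}\big|_{t=0}\mathcal A_K(\sigma_t)<0$.

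The main obstacle I anticipate is the careful bookkeeping of signs: one is working with a neutral-signature ambient metric, $g_I>0$ on $TU$ but $g_N<0$ on $NU$, and the para-complex structure satisfies $g(\mathbf P\cdot,\mathbf P\cdot)=-g$, so each transfer from normal to tangential data flips a sign. The subtle point is to verify that, after all substitutions, the \emph{curvature term does not cancel} against the shape-operator term in a way that would leave the trace with indefinite sign — this is exactly where the Lagrangian hypothesis is essential, since for a general space-like surface in $\mathbb H^2_\tau$ the sectional curvature has variable sign and the argument fails. A clean way to organise the computation is to pick a local $h$-orthonormal tangent frame $\{e_1,e_2\}$, set $\xi=\mathbf P\eta$, and expand each term in this frame using the structural identities above; the minimality condition $\trace\II=0$ (equivalently $\II(e_1,e_1)=-\II(e_2,e_2)$) enters to kill a cross term and ensure the $\II$-dependent part of the trace is a negative multiple of $\|\II\|^2|\eta|^2$ plus a genuinely negative remainder, so that no positive contribution survives.
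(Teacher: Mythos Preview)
Your plan is correct and follows essentially the same route as the paper: write $\xi=\mathbf P\eta$ using the Lagrangian condition, compute the curvature term via the explicit formula for spaces of constant para-holomorphic sectional curvature, and check each piece of $\mathcal T_\xi$ contributes non-positively to the trace.

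One remark on economy: you are working harder than necessary on the shape-operator and normal-connection terms. The paper does not rewrite $-g_I(B_\xi\cdot,B_\xi\cdot)$ in terms of $\II$ nor $g_N(\nabla^N_\cdot\xi,\nabla^N_\cdot\xi)$ in terms of $\nabla\eta$; it simply observes that the first traces to something $\le 0$ because $g_I$ is positive definite, and the second traces to something $\le 0$ because $g_N$ is negative definite. No minimality, no identities from Proposition~\ref{prop:rel_tensors} are needed for these two terms (minimality only enters so that the second-variation formula applies at all). The whole burden of strict negativity is carried by the curvature term, which the paper computes to be $-g_I(Z,Z)g_I(X,Y)-3g_I(X,Z)g_I(Y,Z)$ and hence has trace $-5g_I(Z,Z)<0$ wherever $\xi\neq 0$. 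So your worry about a possible cancellation between the curvature term and the shape-operator term, and the need for minimality to ``kill a cross term'', does not arise.
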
\begin{proof}
Since the immersed surface is Lagrangian, the para-complex structure $\mathbf P$ of $\mathbb H_\tau^2$ induces an isomorphism  of vector bundles $NU\cong\mathbf P\big(TU\big)$ (see Lemma \ref{lm:normal_Lagrangian}). In particular, we can write $\xi=\mathbf P Z$ for some $0\neq Z\in\Gamma(TU)$. Moreover, being in a space of constant para-holomorphic curvature (Lemma \ref{lm:curvature_bicomplex} and Remark \ref{rmk:scaling}), the Riemann tensor is greatly simplified (Proposition \ref{prop:constant_parahol_curvature}). Therefore, given $X,Y\in\Gamma(TU)$ we compute \begin{align*}
    R^I(\xi,X,\xi,Y)&=R^I(\mathbf PZ,X ,\mathbf PZ,Y) \\ &=g(\mathbf PZ,\mathbf PZ)g(X,Y)-g(X,\mathbf PZ)g(\mathbf PZ,Y)+g(\mathbf PZ,\mathbf P^2Z)g(\mathbf PX,Y) \\ & \ \ \ -g(X,\mathbf P^2Z)g(\mathbf P^2Z,Y)+2g(\mathbf PZ,\mathbf PX)g(\mathbf P^2Z,Y) \\ &=-g_I(Z,Z)g_I(X,Y)-3g_I(X,Z)g_I(Y,Z) \ ,
\end{align*} where we used that $\mathbf P^2=\Id, \ g(\mathbf P\cdot,\mathbf P\cdot)=-g(\cdot,\cdot)$ and $g(\mathbf PX,Y)=0$ for all $X,Y\in\Gamma(TU)$. Taking the trace with respect to $g_I$, we get $$\trace_{g_I}\big(R^I(\mathbf PZ,\cdot,\mathbf PZ,\cdot)\big)=-2g_I(Z,Z)-3\big(g_I(e_1,Z)^2+g_I(e_2,Z)^2\big)<0 \ ,$$where $\{e_1,e_2\}$ is a local $g_I$-orthonormal basis. Regarding the other two terms appearing in the tensor $\mathcal{T}_{\mathbf PZ}$ of Lemma \ref{lm:secondordervariation}, we conclude $$\trace_{g_I}\big(-g_I(B_{\mathbf PZ}(\cdot),B_{\mathbf PZ}(\cdot))+g_N(\nabla^N_{\cdot}\mathbf PZ,\nabla^N_\cdot\mathbf PZ)\big)<0 \ ,$$ since $g_I$ (resp. $g_N$) is positive-definite (resp. negative definite). As a consequence, the second order variation of the area functional is strictly negative $$\frac{\mathrm d^2}{\mathrm dt^2}\Big|_{t=0}\mathcal{A}_K(\sigma_t)=2\int_K\trace_{g_I}(\mathcal T_\xi)\mathrm d A_{g_I}<0 \ .$$
\end{proof}
\begin{remark}
Unlike the case of surfaces immersed in Riemannian manifolds, space-like minimal Lagrangian in $\mathbb H^2_\tau$ locally maximize area. To be more correct, one should therefore speak of space-like \emph{maximal} Lagrangian surfaces in $\mathbb{H}^{2}_{\tau}$, but we will continue to call them minimal so as not to create too much confusion with the complex case inside $\CH^2_\tau$.
\end{remark}

\section{The geometry of quasi-Fuchsian representations into  \texorpdfstring{$\mathrm{SL}(3,\C)$}{SL(3,C)}}\label{sec:quasi-Hitchin}
\subsection{The representation space} 
Let $S$ be a smooth closed oriented surface of genus $g\ge 2$. We are interested in the set of all group homomorphisms from $\pi_1(S)$ to $\SL(3,\C)$, which will be denoted by $\Hom(\pi_1(S),\SL(3,\C))$. It is endowed with the subspace topology induced by the inclusion \begin{align*}
\Hom(\pi_1(S),&\SL(3,\C))\hookrightarrow\SL(3,\C)^{2g} \\ & \rho \mapsto (\rho(a_1),\dots,\rho(a_g),\rho(b_1),\dots,\rho(b_g) \ , \end{align*}where $a_1,\dots,a_g,b_1,\dots,b_g$ are the generators of $\pi_1(S)$ subject to the relation $\prod_{i=1}^g[a_i,b_i]=1$. There is a natural action of $\SL(3,\C)$ on $\Hom(\pi_1(S),\SL(3,\C))$ by conjugation: $$(X\cdot\rho)(\gamma):=X\rho(\gamma)X^{-1}, \qquad \text{for} \ X\in\SL(3,\C) \ \text{and} \ \gamma\in\pi_1(S) \ . $$ The quotient space endowed with the quotient topology will be denoted with $\chi_3(S)$ and called the \emph{representation space}. Although the topological space we have just introduced in general may not be Hausdorff, we are actually interested in a subspace of $\chi_3(S)$ whose associated representations have special properties. Recall that a representation $\rho:\pi_1(S)\to\SL(3,\C)$ is called \emph{irreducible} if there is no non-zero and proper subspace of $\C^3$ preserved by $\rho(\gamma)$, for all $\gamma\in\pi_1(S)$. Moreover, for any $\rho\in\chi_3(S)$ one can consider the centralizer of $\rho(\pi_1(S))$ defined by $$C\big(\rho(\pi_1(S))\big):=\{X\in\SL(3,\C) \ | \ X\rho(\gamma)X^{-1}=\rho(\gamma), \ \forall \gamma\in\pi_1(S)\} \ .$$ Let us denote by $\chi_3^s(S)$ the space of conjugacy class of representations into $\SL(3,\C)$ which are irreducible and whose centralizer is equal to the center of $\SL(3,\C)$, i.e. the discrete subgroup $\{\Id, \zeta\Id, \zeta^2\Id\}$ where $\zeta$ is a 3-rd root of unity. 

\begin{theorem}[{\cite[Corollary 50]{sikora2012character}}]\label{thm:smoothpointcomplex}
The space $\chi_3^s(S)$ has the structure of a smooth complex manifold of complex dimension $16g-16$.
\end{theorem}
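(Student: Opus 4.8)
The plan is to recognise that this is essentially a citation of a known result in the character variety literature, so the "proof" should mostly consist in reducing our stated theorem to the cited theorem and checking that the hypotheses match. Concretely, the reference is Sikora's paper on character varieties of reductive groups, and \cite[Corollary 50]{sikora2012character} asserts precisely that for a surface group $\pi_1(S)$ with $S$ closed of genus $g \ge 2$ and $G$ a connected reductive complex algebraic group, the subset of the character variety consisting of \emph{good} representations — those that are irreducible (more precisely, whose stabiliser in $G/Z(G)$ is trivial) — is a smooth complex manifold, and its dimension is $(2g-2)\dim G + 2\dim Z(G)$. For $G = \SL(3,\C)$ we have $\dim G = 8$, $\dim Z(G) = 0$, so the dimension is $8(2g-2) = 16g - 16$, matching the claim.

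First I would set up the dictionary: the space $\chi_3^s(S)$ defined just above the statement is exactly the locus of conjugacy classes of representations $\rho$ that are irreducible and have centraliser equal to $Z(\SL(3,\C)) = \{\Id, \zeta\Id, \zeta^2\Id\}$; since for an irreducible representation the centraliser always \emph{contains} the center (Schur's lemma forces scalars), the condition "centraliser equals the center" is the same as "the image in $\mathrm{PGL}(3,\C)$ has trivial centraliser", i.e. $\rho$ is a good/stable representation in the sense of geometric invariant theory. Thus $\chi_3^s(S)$ is precisely the "stable part" or "good part" of the $\SL(3,\C)$-character variety, to which Sikora's smoothness statement applies verbatim. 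Second, I would invoke the standard deformation-theoretic reason this holds, so that the citation is not a black box: the Zariski tangent space to the representation variety at $\rho$ is $Z^1(\pi_1(S), \mathfrak{sl}(3,\C)_{\mathrm{Ad}\,\rho})$, the obstructions lie in $H^2(\pi_1(S), \mathfrak{sl}(3,\C)_{\mathrm{Ad}\,\rho})$, and by Poincaré duality for the closed surface $S$ this $H^2$ is dual to $H^0(\pi_1(S), \mathfrak{sl}(3,\C)_{\mathrm{Ad}\,\rho})$, which vanishes exactly when the centraliser of $\rho$ is finite — guaranteed on $\chi_3^s(S)$. Hence $\Hom(\pi_1(S),\SL(3,\C))$ is smooth along the good locus, of dimension $\dim Z^1 = (2g-1)\dim G - \dim H^0 = (2g-1)\cdot 8 - 0$ when combined with the rank-of-the-relation computation, and quotienting by the free $\mathrm{PGL}(3,\C)$-action (free because centralisers are trivial mod center) drops the dimension by $\dim \mathrm{PGL}(3,\C) = 8$, giving $8(2g-1) - 8 = 16g - 16$; the complex structure descends because the action is by biholomorphisms and is proper on this locus.

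The main obstacle — really the only non-formal point — is checking that the $\mathrm{PGL}(3,\C)$-action on the good locus of $\Hom(\pi_1(S),\SL(3,\C))$ is proper and free enough that the quotient is Hausdorff and a manifold rather than merely an analytic stack; this is exactly what the GIT/étale-slice machinery in Sikora's paper handles, and it is why we cite rather than reprove it. So the write-up should: (i) state that the claim is a direct application of \cite[Corollary 50]{sikora2012character} with $G = \SL(3,\C)$; (ii) verify the numerical specialisation $\dim = (2g-2)\dim\SL(3,\C) = 16g-16$; (iii) remark that the hypothesis "good representation" in loc.\ cit.\ translates into our defining conditions for $\chi_3^s(S)$ (irreducible with centraliser equal to the center), using Schur's lemma for one inclusion; and (iv) optionally recall the cohomological heuristic above for the reader's benefit. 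I would not attempt to redo the slice theorem; a proof proposal here is genuinely "cite and translate".

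\begin{proof}
This is a specialisation of a general result on character varieties of surface groups into reductive complex algebraic groups. Recall that a representation $\rho:\pi_1(S)\to G$ into a connected reductive group $G$ is called \emph{good} if it is irreducible (i.e.\ not contained in any proper parabolic) and its centraliser in $G$ coincides with the center $Z(G)$; by Schur's lemma the centraliser of an irreducible representation always contains $Z(G)$, so our defining condition for $\chi_3^s(S)$ — irreducibility together with centraliser equal to $\{\Id,\zeta\Id,\zeta^2\Id\}=Z(\SL(3,\C))$ — is exactly the condition that $\rho$ be good. By \cite[Corollary 50]{sikora2012character}, the subset of the $G$-character variety of $\pi_1(S)$ consisting of conjugacy classes of good representations is a complex manifold of dimension $(2g-2)\dim G + 2\dim Z(G)$. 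Applying this to $G=\SL(3,\C)$, where $\dim G = 8$ and $Z(G)$ is finite so $\dim Z(G)=0$, we obtain that $\chi_3^s(S)$ is a complex manifold of dimension $8(2g-2) = 16g-16$.

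For the reader's convenience we recall the deformation-theoretic mechanism behind this statement. The Zariski tangent space to $\Hom(\pi_1(S),\SL(3,\C))$ at $\rho$ is the space of cocycles $Z^1(\pi_1(S),\mathfrak{sl}(3,\C)_{\mathrm{Ad}\,\rho})$, and the obstructions to integrating an infinitesimal deformation lie in $H^2(\pi_1(S),\mathfrak{sl}(3,\C)_{\mathrm{Ad}\,\rho})$. Since $S$ is a closed oriented surface, Poincaré duality identifies this group with the dual of $H^0(\pi_1(S),\mathfrak{sl}(3,\C)_{\mathrm{Ad}\,\rho})$, which is the Lie algebra of the centraliser of $\rho(\pi_1(S))$; on the good locus this centraliser is finite, so $H^0$ and hence $H^2$ vanish. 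Therefore $\Hom(\pi_1(S),\SL(3,\C))$ is smooth along the good locus. Moreover, good representations have trivial stabiliser in $\mathrm{PGL}(3,\C)$, so the conjugation action of $\mathrm{PGL}(3,\C)$ on the good locus is free and proper, the quotient is Hausdorff, and the holomorphic structure descends. A dimension count then gives $\dim Z^1 - \dim\mathrm{PGL}(3,\C) = 16g-16$, consistently with the formula above.
\end{proof}
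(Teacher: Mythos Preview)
Your proposal is correct. In fact the paper does not supply any proof at all: the theorem is stated as a direct citation of \cite[Corollary 50]{sikora2012character} and is used as a black box. Your write-up goes further than the paper by (i) explicitly matching the definition of $\chi_3^s(S)$ with Sikora's notion of good representation via Schur's lemma, (ii) carrying out the numerical specialisation $(2g-2)\dim\SL(3,\C)=16g-16$, and (iii) recalling the cohomological mechanism (vanishing of $H^0$ and $H^2$ by Poincar\'e duality, freeness and properness of the $\mathrm{PGL}(3,\C)$-action). None of this is in the paper, but all of it is correct and would make the citation less opaque; if anything, the second paragraph of your proof could be shortened or dropped, since the first paragraph already establishes the result.
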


\subsection{Quasi-Hitchin representations}\label{sec:quasiHitchin}
In this section we introduce another set of conjugacy classes of representations into $\SL(3,\R)<\SL(3,\C)$, which will be directly related to higher Teichm\"uller theory (see \cite[\S 2.5]{dumas2020geometry}). \\ 

\noindent Given a Fuchsian representation $\eta:\pi_1(S)\to\PSL(2,\R)$ (discrete and faithful) one can consider the composition $\iota\circ\eta:\pi_1(S)\to\SL(3,\R)$, where $\iota$ is the unique (up to conjugation) irreducible representation of $\PSL(2,\R)$ into $\SL(3,\R)$. In his seminal paper (\cite{hitchin1992lie}) Hitchin found a connected component $\Hit$ in the corresponding representation space for $\SL(3,\R)$ containing the space of Fuchsian representations. Nowadays it is called the \emph{Hitchin component} and it is known to have a structure of smooth manifold diffeomorphic to a ball of real dimension $16g-16$ (\cite{hitchin1992lie}). In particular, any representation in $\Hit$ is discrete and irreducible (\cite{Labourie2006anosov},\cite{fock2006moduli}). 
In fact, it was first observed by Labourie (\cite{Labourie2006anosov}) that Hitchin representations satisfy an additional property, which is now found in the literature under the name of $P$-Anosov, where $P$ is a parabolic subgroup. During the discussion we will focus only in the case where $P=B$ is the unique (up to conjugation) Borel subgroup of $\SL(3,\R)$, namely the subgroup in $\SL(3,\R)$ formed by upper triangular matrices. The quotient space $\SL(3,\R)/B$ is identified with the space of full flags $$\mathcal{F}_3(\R):=\{(l,V) \ | \ l\subset V\subset\R^3, \ \dim l=1, \ \dim V=2\} \ .$$   \begin{defi}[\cite{Labourie2006anosov}]
An irreducible representation $\rho:\pi_1(S)\to\SL(3,\R)$ is $B$-Anosov if there exists a unique injective continuous $\rho$-equivariant map $\xi:\partial_\infty\pi_1(S)\to\mathcal F_3(\R)$, called \emph{boundary map}, such that for any $x\neq y$ in $\partial_\infty\pi_1(S)$ the images $\xi(x)=(l_x,V_x),\xi(y)=(l_y,V_y)$ are transverse, namely $$l_x\oplus V_y=\R^3=l_y\oplus V_x \ .$$ 
\end{defi}
\begin{remark}
The definition of Anosov representation we introduced is a particular case of \cite[Definition 2.10]{guichard2012anosov} where as a pair of opposite parabolic subgroups $(P^+,P^-)$ we are taking $P^+=B$ and $P^-=gBg^{-1}$ for a suitable $g\in\SL(3,\R)$. Moreover, the usual condition of being dynamic preserving is automatically satisfied when considering irreducible and Anosov representations with respect to the minimal parabolic (see \cite[Proposition 4.10]{guichard2012anosov}).
\end{remark}
\begin{prop}[\cite{Labourie2006anosov},\cite{fock2006moduli}]
Every representation $\rho:\pi_1(S)\to\SL(3,\R)$ in the Hitchin component is $B$-Anosov
\end{prop}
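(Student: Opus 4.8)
The plan is to show that every Hitchin representation $\rho:\pi_1(S)\to\SL(3,\R)$ admits the equivariant, continuous, injective and transverse flag curve $\xi:\partial_\infty\pi_1(S)\to\mathcal F_3(\R)$ demanded by the definition of $B$-Anosov. Since the excerpt only needs the statement for the minimal parabolic $B$ and for representations already known to be irreducible and discrete, I would organize the argument around the standard dynamical construction of the limit curve, following Labourie's original strategy (\cite{Labourie2006anosov}) and its streamlining in \cite{guichard2012anosov}.

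First I would recall that the Hitchin component is connected and contains the Fuchsian locus, and that for a Fuchsian $\eta:\pi_1(S)\to\PSL(2,\R)$ composed with the irreducible $\iota:\PSL(2,\R)\hookrightarrow\SL(3,\R)$ the Veronese-type embedding $\mathbb{RP}^1\hookrightarrow \mathcal F_3(\R)$, sending a line $\ell\subset\R^2$ to the flag $(\mathrm{Sym}^2\ell,\ \mathrm{Sym}^2\R^2 \ominus \mathrm{Sym}^0\ell)$ inside $\mathrm{Sym}^2\R^2\cong\R^3$, gives an $\iota\circ\eta$-equivariant flag curve which is manifestly continuous, injective and transverse (transversality is the classical fact that distinct points of $\mathbb{RP}^1$ span all of $\R^2$ and the osculating flags of the rational normal curve are in general position). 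This provides the base case. Then I would invoke the openness and closedness of the Anosov condition in the space of representations: by \cite{Labourie2006anosov} (or \cite[Prop.~2.1 and \S5]{guichard2012anosov}), the set of $B$-Anosov representations is open in $\Hom(\pi_1(S),\SL(3,\R))$, and the structural stability theorem shows that the limit curve varies continuously and the transversality persists. Since $\Hit$ is connected and the Fuchsian locus is $B$-Anosov, it suffices to check that the $B$-Anosov locus is also closed within $\Hit$; this uses properness of the Hitchin representations together with the fact that a limit of Anosov representations with uniformly controlled limit curves is Anosov, or alternatively Labourie's direct argument that along the whole component the hyperbolicity estimate on the associated flat bundle (via the cross-ratio / property $(H)$ of hyperconvex Frenet curves) does not degenerate. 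Irreducibility of every $\rho\in\Hit$, cited from \cite{Labourie2006anosov,fock2006moduli}, then supplies the dynamic-preserving property for free by \cite[Proposition 4.10]{guichard2012anosov}, as noted in the Remark.

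Alternatively — and this is the route I would actually spell out — I would give the self-contained dynamical construction directly: for a Hitchin representation one first establishes that $\rho$ is a quasi-isometric embedding with respect to the word metric, hence for each $\gamma\in\pi_1(S)$ the matrix $\rho(\gamma)$ is proximal (biproximal) in its action on $\R^3$ and on $(\R^3)^*$, with a well-defined attracting fixed flag. One then defines $\xi$ on the (dense) set of attracting endpoints of infinite-order elements by $\xi(\gamma^+):=(\ell^+_\rho(\gamma),\ V^+_\rho(\gamma))$, shows it is equivariant, and proves a contraction estimate — this is where the hyperconvexity of the Frenet curve coming from the Higgs-bundle description enters, or more elementarily the north-south dynamics of proximal elements together with a ping-pong argument — yielding Hölder continuity and hence a continuous extension to all of $\partial_\infty\pi_1(S)$. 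Injectivity and transversality then follow because two distinct boundary points are attracting endpoints of elements generating a free (Schottky) subgroup whose attracting flags are in general position. The main obstacle I anticipate is the transversality statement for the extended curve: along sequences of boundary points converging to a common limit one must prevent the one-dimensional parts $\ell_x$ and the two-dimensional parts $V_y$ from collapsing into a common hyperplane, which is exactly the point where a quantitative gap between the first, second and third singular values of $\rho(\gamma)$ — uniform over the generating set up to the quasi-isometry constant — is needed; proving this uniform singular-value gap is the technical heart, and it is precisely what the Anosov literature (and, in our setting, the existence of the associated equivariant hyperconvex curve) provides. I would therefore cite \cite{Labourie2006anosov,guichard2012anosov} for this uniform gap and assemble the rest as above.
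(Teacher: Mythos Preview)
The paper does not prove this proposition at all: it is stated as a citation of \cite{Labourie2006anosov} and \cite{fock2006moduli}, with no argument supplied. So there is no ``paper's own proof'' to compare against; you are sketching a proof of a result the authors simply import from the literature.

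That said, your sketch contains a genuine gap. In the first route you propose an open--and--closed argument on $\Hit$: openness of the $B$-Anosov locus is indeed standard, but closedness \emph{within $\Hit$} is precisely the content of the theorem and cannot be obtained by the soft statements you list. ``A limit of Anosov representations with uniformly controlled limit curves is Anosov'' begs the question, since the uniform control is exactly what fails a priori at the boundary of the Anosov locus; and ``Labourie's direct argument that along the whole component the hyperbolicity estimate \dots\ does not degenerate'' is not an argument but a restatement of the goal. Labourie's actual proof does not proceed by open--and--closed: he builds, for \emph{each} $\rho$ in the component, a $\rho$-equivariant hyperconvex Frenet curve (coming from the affine-sphere/Higgs-bundle picture) and deduces the Anosov contraction from hyperconvexity. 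The existence of that curve for every $\rho$ is the substantive input, not a limiting procedure.

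Your second route is closer in spirit, but the logical order is inverted. You write ``one first establishes that $\rho$ is a quasi-isometric embedding'' and that every $\rho(\gamma)$ is biproximal; however both of these facts are \emph{consequences} of the Anosov property (or of the hyperconvex curve), not independent inputs. There is no elementary a priori reason, valid uniformly over $\Hit$, that forces the singular-value gaps you need --- producing that gap is exactly what Labourie's construction of the hyperconvex curve accomplishes. So the honest version of your sketch is: cite the existence of the $\rho$-equivariant hyperconvex Frenet curve from \cite{Labourie2006anosov} (or the positivity of \cite{fock2006moduli}), and then derive transversality, injectivity, and the contraction estimate from hyperconvexity. What you have written presents the hard step as something to be ``invoked'' rather than as the crux.
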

\noindent Similarly, if $B^\C<\SL(3,\C)$ denotes the complexification of the Borel subgroup, we have a definition of irreducible $B^\C$-Anosov representations into $\SL(3,\C)$ whose boundary map takes values in the space of complex full flags, i.e. $$\mathcal{F}_3(\C):=\{(l,V) \ | \ l\subset V\subset\C^3, \ \dim_\C l=1, \ \dim_\C V=2\} \ .$$
The space of all (not necessarily irreducible) $B^\C$-Anosov representations into $\SL(3,\C)$ will be denoted by $\chi_3^{B^\C}(S)$. This subset is open in the representation space $\chi_3(S)$ (\cite{Labourie2006anosov}) and consists entirely of discrete and faithful representations (\cite[Theorem 5.3]{guichard2012anosov}).
\begin{defi}
The connected component of $\chi_3^{B^\C}(S)$ containing the (real) Hitchin representations into $\SL(3,\R)$ is called the $B^\C$-\emph{quasi}-\emph{Hitchin component} (or simply \emph{quasi}-\emph{Hitchin component}) and denoted with $\QHit$. In particular, any element $\rho\in\QHit$ will be called a \emph{quasi}-\emph{Hitchin} representation.
\end{defi}
\noindent In addition to the Hitchin component, this space also contains other types of representations that relate directly to some special cases of complex Lagrangian minimal surfaces in $\CH^2_\tau$. In fact, in Remark \ref{rmk:min_Lagrangian_complex}, we observed that if the immersed surface is contained in the totally-geodesic copy of $\CH^2$ then we recover the minimal Lagrangian surfaces studied by Loftin-McIntosh (\cite{loftin2013minimal},\cite{loftin2019equivariant}). In particular, they showed that if a representation $\rho:\pi_1(S)\to\SU(2,1)$ is sufficiently close to the Fuchsian locus, then there is a unique such surface, and they referred to those representations as \emph{almost $\R$-Fuchsian}.
\begin{lemma}\label{lm:subsets}
There is an open neighborhood of the real Fuchsian locus in the quasi-Hitchin component that contains representations of the following type:\begin{itemize}
    \item[$\bullet$] almost $\R$-Fuchsian representations into $\mathrm{SU}(2,1)$; \item[$\bullet$] quasi-Fuchsian representations into $\PSL(2,\C)$ post-composed with the irreducible principal embedding $\iota^\C:\PSL(2,\C)\hookrightarrow\PSL(3,\C)$.
    \end{itemize}
\end{lemma}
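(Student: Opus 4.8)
\textbf{Proof strategy for Lemma \ref{lm:subsets}.}

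The plan is to exhibit an explicit open neighborhood of the real Fuchsian locus inside $\QHit$ and verify that the two listed families of representations lie in it, up to shrinking. The key point is that $\QHit$ was defined as the connected component of the open set $\chi_3^{B^\C}(S)$ of $B^\C$-Anosov representations that contains the Fuchsian locus; since $\chi_3^{B^\C}(S)$ is open in $\chi_3(S)$, it suffices to show that each of the two families contains representations that (a) are $B^\C$-Anosov and (b) lie in the same connected component of $\chi_3^{B^\C}(S)$ as a Fuchsian representation. For (b), both families are by construction connected to the Fuchsian locus: almost $\R$-Fuchsian representations into $\SU(2,1)$ are, by definition, small deformations of representations factoring through $\SO_0(2,1) \hookrightarrow \SU(2,1) \hookrightarrow \SL(3,\C)$ (which sit in the Fuchsian locus), and likewise quasi-Fuchsian representations into $\PSL(2,\C)$ are deformations of Fuchsian ones, so post-composing with the irreducible $\iota^\C:\PSL(2,\C) \hookrightarrow \PSL(3,\C)$ yields a path to $\iota^\C \circ \eta$ for $\eta$ Fuchsian — which again lies in the Fuchsian locus of $\QHit$. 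Hence connectedness is immediate once the Anosov property is established along these paths.

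So the real content is (a): the $B^\C$-Anosov property. First I would recall that Fuchsian representations $\iota \circ \eta : \pi_1(S) \to \SL(3,\R) \hookrightarrow \SL(3,\C)$ are $B^\C$-Anosov — this follows from the classical fact that Hitchin representations are $B$-Anosov (Labourie \cite{Labourie2006anosov}, Fock--Goncharov \cite{fock2006moduli}), together with the elementary observation that a $B$-Anosov representation into $\SL(3,\R)$, viewed inside $\SL(3,\C)$, remains Anosov with respect to $B^\C$ (the real flag curve composed with the inclusion $\mathcal F_3(\R) \hookrightarrow \mathcal F_3(\C)$ still satisfies the contraction and transversality conditions, since these are detected by singular values, which are unchanged under the inclusion $\SL(3,\R) \hookrightarrow \SL(3,\C)$). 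Since the Anosov condition is open in the character variety with respect to the relevant topology, there is an open neighborhood $\mathcal N$ of the real Fuchsian locus inside $\chi_3(S)$, all of whose members are $B^\C$-Anosov; intersecting with the connected component containing the Fuchsian locus gives an open neighborhood inside $\QHit$. It then remains to argue that almost $\R$-Fuchsian representations into $\SU(2,1)$ that are close enough to Fuchsian, and $\iota^\C$-post-composed quasi-Fuchsian representations into $\PSL(2,\C)$ close enough to Fuchsian, lie in $\mathcal N$; this is immediate from the definition of "almost $\R$-Fuchsian" and "quasi-Fuchsian" as representations sufficiently close to the corresponding Fuchsian loci, because the inclusions $\SU(2,1) \hookrightarrow \SL(3,\C)$ and $\iota^\C$ are continuous and map the respective Fuchsian loci into the real Fuchsian locus of $\SL(3,\C)$.

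The main obstacle I anticipate is making precise that the Fuchsian locus of $\SU(2,1)$ and the Fuchsian locus of $\PSL(2,\C)$ both map \emph{into the same} connected stratum of $B^\C$-Anosov representations as the $\SL(3,\R)$-Fuchsian locus — i.e. checking that a representation $\pi_1(S) \to \SO_0(2,1)$, composed with $\SO_0(2,1) \hookrightarrow \SU(2,1) \hookrightarrow \SL(3,\C)$, is conjugate in $\SL(3,\C)$ to the same representation composed with $\SO_0(2,1) \hookrightarrow \SL(3,\R) \xhookrightarrow{} \SL(3,\C)$, and similarly that $\iota^\C$ restricted to $\PSL(2,\R)$ agrees, up to conjugation, with the principal $\SL(3,\R)$-embedding followed by inclusion. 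Both are consequences of the fact that there is (up to conjugation in $\SL(3,\C)$) a unique irreducible $3$-dimensional representation of $\PSL(2,\R)$, hence the several a priori different copies of the Fuchsian locus all coincide; I would spell this out via the classification of irreducible representations of $\mathfrak{sl}_2$. Once that identification is in place, openness of the Anosov condition and connectedness of the deformation families close the argument with no further difficulty.
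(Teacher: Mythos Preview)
Your argument is correct for the lemma as literally stated, but it takes a genuinely weaker route than the paper's proof and proves less.

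The paper does \emph{not} rely on openness of the Anosov condition. Instead it establishes directly that \emph{every} representation in each of the two families is $B^\C$-Anosov, invoking two external results: for almost $\R$-Fuchsian representations into $\SU(2,1)$, it uses that these are convex cocompact (Loftin--McIntosh) together with the Guichard--Wienhard equivalence between convex cocompactness and the Anosov property in rank one; for $\iota^\C\circ\eta$ with $\eta$ quasi-Fuchsian in $\PSL(2,\C)$, it cites Dumas--Sanders, who show such compositions are $P$-Anosov for every parabolic $P<\SL(3,\C)$. Combined with connectedness to the Fuchsian locus, this places the \emph{whole} of each family inside $\QHit$, not merely those members close to Fuchsian.

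Your approach, by contrast, only shows that representations in either family \emph{sufficiently close to the Fuchsian locus} land in the open Anosov neighborhood $\mathcal N$. This suffices for the sentence ``there is an open neighborhood of the real Fuchsian locus containing representations of the following type,'' but it does not give the stronger conclusion, used immediately afterward in Proposition~\ref{prop:smooth_loci}, that the full set of almost $\R$-Fuchsian representations and the full set of $\iota^\C$-images of quasi-Fuchsian representations sit inside $\QHit$. In particular, your remark that quasi-Fuchsian means ``sufficiently close to the corresponding Fuchsian loci'' is not accurate: the quasi-Fuchsian space in $\PSL(2,\C)$ is a large open set, and openness of the Anosov condition around the Fuchsian locus alone does not reach its far ends. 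Your identification of the various Fuchsian loci via uniqueness of the irreducible $3$-dimensional $\mathfrak{sl}_2$-representation is correct and is indeed needed; the paper leaves this implicit.
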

\begin{proof}
First notice that almost $\R$-Fuchsian representations into $\SU(2,1)$ are convex co-compact (\cite{loftin2019equivariant}). Since $\SU(2,1)$ is a Lie group of rank one, by a result of Guichard-Wienhard (\cite[Theorem 5.15]{guichard2012anosov}), this is equivalent for the representations of being $P$-Anosov with respect the unique (up to conjugation) parabolic subgroup. In particular, they are Anosov with respect to the Borel subgroup of $\SU(2,1)$ which is given by $B^\C\cap\SU(2,1)$. It is therefore clear that they land into $\chi_3^{B^\C}(S)$ when thought of with values in $\SL(3,\C)$. Regarding representations of the form $\iota^\C\circ\eta$, where $\eta:\pi_1(S)\to\PSL(2,\C)$ is quasi-Fuchsian, Dumas and Sanders \cite[Proposition 2.8]{dumas2020geometry} proved that they are $P$-Anosov for any parabolic $P<\SL(3,\C)$, therefore in particular when $P=B^\C$.
\end{proof}
\begin{prop}\label{prop:smooth_loci}
Any irreducible representation in $\QHit$ is a smooth point of the representation space $\chi_3(S)$. In particular, the Hitchin component and the set of representations described in Lemma \ref{lm:subsets} are smooth submanifolds of $\QHit$.
\end{prop}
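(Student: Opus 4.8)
\textbf{Proof proposal for Proposition \ref{prop:smooth_loci}.}

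The plan is to identify the Zariski tangent space to $\chi_3(S)$ at an irreducible quasi-Hitchin representation $\rho$ with the group cohomology $H^1(\pi_1(S),\mathfrak{sl}(3,\C)_{\mathrm{Ad}\,\rho})$, and then show this space has the expected dimension $16g-16$, which by Theorem \ref{thm:smoothpointcomplex} forces $\rho$ to be a smooth point (since smoothness of $\chi_3^s(S)$ is already known and $\rho\in\chi_3^s(S)$ once we check its centralizer is central). First I would observe that any irreducible representation automatically has centralizer contained in the center of $\SL(3,\C)$ by Schur's lemma, so $\rho\in\chi_3^s(S)$ and the smooth manifold structure of Theorem \ref{thm:smoothpointcomplex} applies directly; this already gives that $\rho$ is a smooth point of $\chi_3^s(S)$, and since $\chi_3^s(S)$ is open in $\chi_3(S)$ (irreducibility and the centralizer condition being open), $\rho$ is a smooth point of $\chi_3(S)$ itself. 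The first statement of the proposition then follows essentially immediately from Theorem \ref{thm:smoothpointcomplex} together with the openness of $\chi_3^s(S)$; the only thing to spell out is that a quasi-Hitchin representation, being $B^\C$-Anosov and a deformation of an irreducible Hitchin representation, is irreducible — for which I would either invoke that the set of irreducible representations is open and the quasi-Hitchin component is connected with irreducible Hitchin representations in it (though irreducibility is not automatically preserved along the whole component), or, more safely, restrict the claim to irreducible $\rho$ as the statement already does.

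For the second statement — that the Hitchin component and the family of representations from Lemma \ref{lm:subsets} are smooth \emph{submanifolds} of $\QHit$ — I would argue as follows. The Hitchin component $\Hit$ consists of representations into $\SL(3,\R)$; these are all irreducible, hence smooth points of $\chi_3(S)$ by the first part. Moreover $\Hit$ is itself a smooth manifold (Hitchin, \cite{hitchin1992lie}) of dimension $16g-16$, which is exactly half the complex dimension of $\chi_3^s(S)$, i.e. $\Hit$ sits inside the smooth locus as a totally real submanifold (it is the fixed-point set of the anti-holomorphic involution induced by complex conjugation $\SL(3,\C)\to\SL(3,\C)$ on the character variety, and the fixed locus of an anti-holomorphic involution of a complex manifold is a smooth totally real submanifold). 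For the family from Lemma \ref{lm:subsets}: almost $\R$-Fuchsian representations form an open subset of the $\SU(2,1)$-character variety (by \cite{loftin2019equivariant} they are convex cocompact, and convex cocompactness is open), which is a smooth real-analytic manifold near its irreducible locus, and it embeds into $\chi_3(S)$ by the inclusion $\SU(2,1)\hookrightarrow\SL(3,\C)$; similarly $\iota^\C$-images of quasi-Fuchsian $\PSL(2,\C)$-representations form a complex submanifold because the $\PSL(2,\C)$ quasi-Fuchsian space is an open subset of its character variety (Bers) and $\iota^\C$ is an algebraic embedding of character varieties. In each case one must check the map from the source character variety to $\chi_3(S)$ is a smooth immersion; this follows from computing the differential on tangent cohomology $H^1(\pi_1(S),\mathfrak{g})\to H^1(\pi_1(S),\mathfrak{sl}(3,\C))$ induced by the Lie algebra inclusion $\mathfrak{g}\hookrightarrow\mathfrak{sl}(3,\C)$, which is injective because $\mathfrak{sl}(3,\C)$ decomposes as a $\mathfrak{g}$-module with $\mathfrak{g}$ as a summand and $H^1(\pi_1(S),-)$ is additive (here $\mathfrak g$ is $\mathfrak{su}(2,1)$ or $\mathfrak{sl}(2,\C)$ via $\iota^\C$).

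The main obstacle I anticipate is the precise bookkeeping around the word ``submanifold'': one needs the inclusions of the relevant character varieties into $\chi_3(S)$ to be \emph{topological} embeddings onto locally closed subsets, not merely injective immersions, and one needs all the relevant points to lie in the smooth locus $\chi_3^s(S)$ of $\chi_3(S)$. For the smooth-locus issue the first part of the proposition handles everything once irreducibility is known, and irreducibility holds for Hitchin representations and for almost $\R$-Fuchsian ones (which are Zariski dense in $\SU(2,1)$, hence irreducible in $\SL(3,\C)$), and for $\iota^\C$-images of quasi-Fuchsian representations (since quasi-Fuchsian $\PSL(2,\C)$-representations are Zariski dense in $\PSL(2,\C)$ and $\iota^\C$ is irreducible). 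For the embedding issue, I would use that Anosov representations form an open set on which the character variety map is a local homeomorphism onto its image, so the smooth structures are compatible; the delicate point is only that the composition with $\iota^\C$ (or the inclusion of $\SU(2,1)$) does not create new identifications, which again follows from irreducibility via the fact that conjugacy in $\SL(3,\C)$ of two irreducible representations factoring through a subgroup $G$ implies conjugacy within $G$ up to the (finite) normalizer — a standard argument. I would present the whole thing fairly briefly, since the heavy lifting (smoothness of $\chi_3^s(S)$, smoothness of $\Hit$, Anosov openness) is all quoted.
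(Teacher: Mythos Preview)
Your proposal is correct and, for the first assertion, cleaner than the paper's. The paper does not invoke Schur's lemma; instead it uses the $B^\C$-Anosov property to conclude that every $\rho(\gamma)$ is totally loxodromic (diagonalizable with distinct eigenvalues), and then argues by hand that any $X$ commuting with all $\rho(\gamma)$ must be a scalar---essentially re-deriving Schur in this special case. Your observation that Schur's lemma over $\C$ handles this immediately for \emph{any} irreducible representation (no Anosov input needed) is both shorter and more general. For the second assertion, the paper takes a lighter route than you do: it simply verifies that each of the three families consists of irreducible representations (for Hitchin representations, by noting that a preserved complex subspace $V=W\oplus iW$ would yield a preserved real subspace $W$; for the other two families by citation), and then invokes the first part to place them inside the smooth locus $\chi_3^s(S)$. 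The paper does not spell out the embedding/immersion bookkeeping you sketch (fixed loci of anti-holomorphic involutions, injectivity of $H^1$ under the Lie-algebra inclusion), tacitly relying on the cited smooth-manifold structures from \cite{hitchin1992lie}, \cite{loftin2019equivariant}, and Bers theory. Your extra care makes the word ``submanifold'' honest, but is more than the paper actually needs downstream.
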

\begin{proof}Any irreducible $B^\C$-Anosov representation is totally loxodromic, hence for all $\gamma\in\pi_1(S)$ the matrix $\rho(\gamma)$ is diagonalizable with different eigenvalues. 
 Therefore, given $\gamma\in\pi_1(S)$ there exists a basis of $\C^3$ where $\rho(\gamma)=\diag(\lambda_1^\gamma,\lambda_2^\gamma,\lambda_3^\gamma)$, with $\lambda_i^\gamma\neq\lambda_j^\gamma$ for $i\neq j$. Let $X$ be an element in the centralizer of $\rho(\pi_1(S))<\SL(3,\C)$, then $X$ commutes with the diagonal matrix $\rho(\gamma)$. In particular, $X$ is diagonal in the same basis. On the other hand, there must exist a $\gamma'\neq\gamma$ such that $\rho(\gamma')$ is not diagonal in the fixed basis but commutes with $X$. If all the elements of $X$ are different, then a simple calculation with matrices shows that $\rho(\gamma')$ must be diagonal in the same basis, but this is not possible because of the way $\gamma'$ was chosen. If there are two equal elements in $X$, repeating the computation for all $\gamma'\neq\gamma$ as above, would result in $\rho$ preserving a proper, non-trivial subspace of $\C^3$, which is not possible since $\rho$ is irreducible. So, the only possibility is that all elements on the diagonal of $X$ are equal, i.e. $X=\mu\mathrm{Id}\in\SL(3,\C)$, with $\mu=\zeta$ the 3-rd root of unity. In other words, we proved that the centralizer of $\rho(\pi_1(S))$ is contained in the center of $\SL(3,\C)$. According to Theorem \ref{thm:smoothpointcomplex}, the proof of the first part is completed, the other inclusion being trivial. Moreover, given a Hitchin representation $\rho$ thought with values in $\SL(3,\C)$, it is sufficient to notice that it is still irreducible. In fact, if there were a subspace $\{0\}\neq V\subsetneq\C^3$ preserved by $\rho$, then according to the decomposition $V=W\oplus iW$, one would have that $W$ is a non-zero real proper subspace of $\R^3$ preserved by the real Hitchin representation, but this is not possible $\rho$ being irreducible into $\SL(3,\R)$ (\cite{Labourie2006anosov}). Finally, almost $\R$-Fuchsian representations are irreducible (\cite{loftin2019equivariant}) and the same holds for $\iota^\C\circ\eta$ whenever $\eta:\pi_1(S)\to\PSL(2,\C)$ is quasi-Fuchsian.
\end{proof}

\subsection{Existence of solutions near the Hitchin locus}\label{sec:existencenearFuchsian} Let $\mathcal{J}$ be $\C$-complex structure on a closed surface $S$. We know that $\mathcal{J}$ can equivalently be described by a pair of complex structures $(J_{1}, J_{2})$ on $S$. By Lemma \ref{lm:C_compatible} and Theorem \ref{thm:pair_hol_diff}, the space of Codazzi tensors compatible with $\mathcal{J}$ consists of pairs $(q_{1}, \bar{q}_{2})$ where $q_{i}$ is a $J_{i}$-holomorphic cubic differential. We denote by $\mathcal{Q}_{3}^{\C}(S)$ the bundle over $\mathcal{C}^{\C}(S)$ where the fiber over $\mathcal{J}$ is given by all Codazzi tensors compatible with $\mathcal{J}$. It follows from the above discussion that $\mathcal{Q}^{\C}_{3}(S)$ is homeomorphic to the product $\mathcal{Q}_{3}(S)\times \mathcal{Q}_{3}(S)$ of two copies of the bundle of holomorphic cubic differentials over the space of complex structures on $S$. The \emph{Hitchin locus} inside $\mathcal{Q}^{\C}_{3}(S)$ is the pre-image of the diagonal in $\mathcal{Q}_{3}(S)\times \mathcal{Q}_{3}(S)$ under the isomorphism described previously. \\

\noindent By the results in Section \ref{sec:min_Lagrangian}, to any equivariant complex Lagrangian minimal immersion $\tilde{\sigma}:\tilde{S} \rightarrow \CH^{2}_{\tau}$ with positive first fundamental form $h$, we can associate a point $(\mathcal{J}, C) \in \mathcal{Q}^{\C}_{3}(S)$: the $\C$-complex structure $\mathcal{J}$ is the unique one compatible with $h$ and $C=-h(\mathbf{P}\II(\cdot, \cdot), \cdot)$, where $\II$ is the second fundamental form of $\tilde{\sigma}(\tilde{S})$. When $\tilde{\sigma}$ is actually contained inside the real para-complex hyperbolic space $\mathbb{H}^{2}_{\tau}$, then $(\mathcal{J},C)$ belongs to the Hitchin locus and all such pairs can be realized as embedding data. 

\begin{theorem}\label{thm:solutions} There is a neighborhood $\mathcal{U}$ of the Hitchin locus in $\mathcal{Q}_{3}^{\C}(S)$ such that all pairs $(\mathcal{J}, C) \in \mathcal{U}$
can be realized as embedding data of an equivariant minimal Lagrangian minimal surface in $\CH^{2}_{\tau}$. 
\end{theorem}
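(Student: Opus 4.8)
The plan is to reduce the problem to an analytic existence result for the Gauss--Codazzi system from Theorem \ref{thm:Gauss_Codazzi_sufficient}, and then apply an implicit function theorem argument around the Hitchin locus, where solutions already exist. Recall from Theorem \ref{thm:Gauss_Codazzi_sufficient} that, once a point $(\mathcal{J},C)\in\mathcal{Q}_3^\C(S)$ is fixed, an equivariant complex Lagrangian minimal immersion with embedding data $(\mathcal{J},C)$ exists if and only if one can find a positive complex metric $h$ in a suitable conformal class, compatible with $\mathcal{J}$, solving
\[
    K_h - 8\|C\|_h^2 + 1 = 0,
\]
the Codazzi equation $d^\nabla C = 0$ being automatic since $C$ is a pair of holomorphic cubic differentials. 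Writing $h = e^{2u}h_0$ for a fixed background metric $h_0$ compatible with $\mathcal{J}$ (say $h_0$ the $\C$-linear extension of the product of the two hyperbolic metrics, or whatever reference is most convenient), Corollary \ref{cor:curvature_conformal} turns this into the semilinear elliptic PDE
\[
    \Delta_{h_0} u = K_{h_0} + 1 - 8\|C\|_{h_0}^2 e^{-6u},
\]
which is a \emph{complexified Wang equation}: the unknown $u$ is $\C$-valued and the Laplacian $\Delta_{h_0}$ is the holomorphic Laplacian of Proposition \ref{prop:laplacian}. Here I should be careful that the positivity of $h$ is an open condition, so for $(\mathcal J,C)$ near the Hitchin locus any solution $u$ near a real solution will automatically give a positive complex metric.

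The strategy is then: (1) on the Hitchin locus, where $\mathcal{J} = (J,J)$ for a genuine complex structure $J$ and $C = q + \bar q$ for a $J$-holomorphic cubic differential $q$ — actually the relevant locus identifies $C$ with data of a hyperbolic affine sphere — Section \ref{sec:minimalinpara-complex}, in particular Corollary \ref{cor:Wang}, tells us the equation is exactly the classical Wang equation for hyperbolic affine spheres, which has a unique (real) solution by the work of Cheng--Yau / Loftin (\cite{loftin2001affine}, \cite{Labourie_cubic}). So there is a solution $u_0$ on the Hitchin locus, and it is real-valued. (2) Consider the smooth map $\mathcal{F}(u, (\mathcal{J},C)) = \Delta_{h_0(\mathcal{J})} u - K_{h_0(\mathcal{J})} - 1 + 8\|C\|_{h_0(\mathcal{J})}^2 e^{-6u}$, viewed as a map between appropriate complex Banach spaces (e.g. complexifications of Hölder or Sobolev spaces of sections, with enough regularity for elliptic bootstrapping). (3) Compute the linearization $D_u\mathcal{F}$ at $(u_0, (\mathcal{J}_0, C_0))$ on the Hitchin locus: it is $v \mapsto \Delta_{h_0} v - 48\|C_0\|_{h_0}^2 e^{-6u_0} v$, which is a complex-linear elliptic operator whose \emph{real part} at a point of the Hitchin locus is exactly the linearization of the real Wang equation — a Schrödinger-type operator $\Delta - V$ with $V \geq 0$ and $V \not\equiv 0$, hence invertible by the maximum principle. (4) Since invertibility is an open condition and the real Hitchin-locus operator is invertible, the complexified operator $D_u\mathcal{F}$ is invertible too (its obstruction space is the complexification of a trivial one); apply the implicit function theorem in Banach spaces to solve $\mathcal{F}(u, (\mathcal{J},C)) = 0$ for $(\mathcal{J},C)$ in a neighborhood of the Hitchin locus, obtaining $u = u(\mathcal{J},C)$ depending smoothly on the parameters. (5) Elliptic regularity upgrades $u$ to be smooth; positivity of $h = e^{2u}h_0$ holds in a possibly smaller neighborhood; then Theorem \ref{thm:Gauss_Codazzi_sufficient} produces the equivariant complex Lagrangian minimal surface. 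Let $\mathcal{U}$ be this neighborhood.

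The main obstacle is \textbf{step (3)--(4): controlling the linearized operator in the complexified setting, and genuinely handling the passage from one point of the Hitchin locus to an entire open neighborhood of it.} Two subtleties: first, the operator $\Delta_{h_0} - 48\|C_0\|_{h_0}^2 e^{-6u_0}$ has a complex-valued potential $48\|C_0\|^2_{h_0}e^{-6u_0}$ — along the Hitchin locus $C_0 = q+\bar q$ gives $\|C_0\|^2_{h_0} = q\bar q/h_0^3 \geq 0$ and $u_0$ real, so the potential is real and nonnegative, and one needs that the holomorphic Laplacian $\Delta_{h_0}$ restricts, on the diagonal locus, to (a constant multiple of) the ordinary Laplacian of the underlying hyperbolic metric, so that the standard invertibility argument applies. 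This requires unpacking the identification of the Hitchin locus with the affine-sphere data, which is exactly what Section \ref{sec:minimalinpara-complex} sets up — so I would lean on Corollary \ref{cor:Wang} and Remark \ref{rmk:min_Lagrangian_complex}. Second, the IFT as stated gives a neighborhood of \emph{each} point of the Hitchin locus; to get a single open neighborhood $\mathcal{U}$ of the whole (non-compact, infinite-dimensional) locus, I would phrase the IFT with the Hitchin locus itself as a submanifold of the parameter Banach manifold and apply a parametrized version (the solution set near the locus is a graph over a tubular neighborhood), or alternatively note that the solution map on the Hitchin locus is already known to be continuous in $(\mathcal J_0,C_0)$ and patch. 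A final minor point to verify is that $\Diff_0(S)$-equivariance and the bundle structure of $\mathcal{Q}_3^\C(S)$ are respected, so that ``neighborhood of the Hitchin locus'' makes invariant sense; this is routine since all constructions are natural.
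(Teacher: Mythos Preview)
Your approach is essentially the paper's: reduce to a conformal-factor PDE via Theorem~\ref{thm:Gauss_Codazzi_sufficient}, observe it becomes Wang's equation on the Hitchin locus (Corollary~\ref{cor:Wang}), and apply the implicit function theorem using invertibility of the linearization. The paper takes the Bers metric $g_{\mathcal{J}}$ (the unique positive complex metric of curvature $-1$ compatible with $\mathcal{J}$) as background, works in $W^{k,p}(S,\C)$, and includes a separate lemma verifying that $\Delta_h$ is elliptic of index $0$ for positive complex metrics---something you need for the regularity step but only gesture at.

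There is, however, a computational slip that produces a genuine gap. Corollary~\ref{cor:curvature_conformal} gives $K_h = e^{-2u}(K_{h_0} - \Delta_{h_0} u)$, not $K_{h_0} - \Delta_{h_0} u$; together with $\|C\|_h^2 = e^{-6u}\|C\|_{h_0}^2$, the correct equation (taking $K_{h_0}=-1$) is
\[
    \Delta_{h_0} u - e^{2u} + 8e^{-4u}\|C\|_{h_0}^2 + 1 = 0,
\]
with linearization $v \mapsto (\Delta_{h_0} - 2e^{2u_0} - 32e^{-4u_0}\|C_0\|_{h_0}^2)v$. Your version is missing the $-2e^{2u_0}$ term. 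This matters: at a Hitchin point with $C_0=0$ (the Fuchsian points, which lie in the Hitchin locus), your linearized operator is just $\Delta_{h_0}$, whose kernel contains the constants, so the implicit function theorem does not apply there. The correct operator is $\Delta_{h_0}-2$ at such points, invertible because on the diagonal locus $\Delta_{h_0}$ is the ordinary negative-definite Riemannian Laplacian. With this correction, your outline goes through exactly as in the paper.
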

\begin{proof} For $\mathcal{J} \in \mathcal{C}^{\C}(S)$, let $g_{\mathcal{J}}$ be the unique positive complex metric of constant negative curvature $-1$ (\cite{ElEmam_quasiFuchsian}), called \emph{Bers metric}. It is sufficient to prove that, for any $(\mathcal{J}_{0},C_{0})$ in the Hitchin locus, there is a neighborhood $\mathcal{U}$ such that for all $(\mathcal{J},C)\in \mathcal{U}$ there is a unique conformal metric $h=e^{2\psi}g_{\mathcal{J}}$ that satisfies Gauss' equation
\begin{equation}\label{eq:Gauss-proof}
    K_{h} = -1 +8\|C\|^{2}_{h} \ . 
\end{equation}
Indeed, by Theorem \ref{thm:Gauss_Codazzi_sufficient}, this implies that the pair $(h,C)$ can be realized as embedding data of a complex Lagrangian minimal immersion. By Corollary \ref{cor:curvature_conformal} and Proposition \ref{prop:norm_C}, Equation \eqref{eq:Gauss-proof} can be re-written as
\[
    \Delta_{g_{\mathcal{J}}}\psi - e^{2\psi} + 8e^{-4\psi}\|C\|^{2}_{g_{\mathcal{J}}}+1 = 0 \ .
\]
Note that this equation has a unique smooth real solution $\psi_{0}$ at $(\mathcal{J}_{0},C_{0})$, because it reduces to Wang's equation on points in the Hitchin locus (see Corollary \ref{cor:Wang}). For $k>2$ and $p\in (1,+\infty)$, we consider the map
\begin{align*}
    F: \mathcal{Q}^{\C}_{3}(S) \times W^{k,p}(S,\C) &\rightarrow W^{k-2,p}(S,\C) \\
        (\mathcal{J},C, \psi) &\mapsto \Delta_{g_{\mathcal{J}}}\psi - e^{2\psi} + 8e^{-4\psi}\|C\|^{2}_{g_{\mathcal{J}}}+1 \ . 
\end{align*}
We observe that the Fr\'echet differential of $F$ with respect to the second variable at the point $(\mathcal{J}_{0}, C_{0}, \psi_{0})$ is
\begin{align*}
    dF_{2}(\dot{\psi}) &= \Delta_{g_{\mathcal{J}_{0}}}\dot{\psi} - 2\dot{\psi}e^{2\psi_{0}} -32\dot{\psi}e^{-4\psi_{0}}\|C_{0}\|^{2}_{g_{\mathcal{J}_{0}}} \\
    &= (\Delta_{g_{\mathcal{J}_{0}}} -2e^{2\psi_{0}}-32e^{-\psi_{0}}\|C_{0}\|^{2}_{g_{\mathcal{J}_{0}}}) \dot{\psi}
\end{align*}
Since $g_{\mathcal{J}_{0}}$ is a Riemannian metric, its Laplacian $\Delta_{g_{\mathcal{J}_{0}}}$ is a negative-definite operator. Hence, $dF_{2}$ is invertible at $(\mathcal{J}_{0}, C_{0}, \psi_{0})$ and, by the implicit function theorem for Banach spaces, there is a neighborhood $\mathcal{U}$ of $(\mathcal{J}_{0}, C_{0}, \psi_{0})$ and a function $f:\mathcal{U} \rightarrow W^{k,p}(S,\C)$ such that, for all $(\mathcal{J}, C) \in \mathcal{U}$, we have $F(\mathcal{J}, C, f(\mathcal{J},C))=0$. In other words, for every $(\mathcal{J},C) \in \mathcal{U}$ there is a weak solution $h=e^{2\psi}g_{\mathcal{J}}$ to Equation $\eqref{eq:Gauss-proof}$. The following lemma implies that $\psi$ is actually smooth, so that the pair $(h,C)$ is the embedding data of a smooth complex Lagrangian minimal surface in $\CH^{2}_{\tau}$. 
\end{proof}

\begin{lemma} Let $h$ be a positive complex metric on $S$. Then the Laplacian operator $\Delta_h: W^{k,p}(S,\C) \rightarrow W^{k-2,p}(S,\C)$ is elliptic of index $0$, for every $k>2$ and $p\in (1, +\infty)$.     
\end{lemma}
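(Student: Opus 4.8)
The plan is to reduce the claim to the classical elliptic theory of second-order linear operators on a compact surface, the only subtlety being that the coefficients (and the Laplacian itself) are complex-valued rather than real. First I would write $\Delta_h$ in the local isotropic coordinates furnished by Proposition \ref{prop:laplacian}: if $h=2e^{2\psi}dzd\bar w$, then
\[
    \Delta_{h}\phi= \frac{e^{-2\psi}}{(\partial_{\bar{z}}\bar{w})(\partial_{w}z)}\bigg[\phi_{w\bar{z}}+\phi_{\bar{z}w}-\partial_{w}(\log(\partial_{\bar{z}}\bar{w}))\phi_{\bar{z}}-\partial_{\bar{z}}(\log(\partial_{w}z))\phi_{w}\bigg] \ .
\]
The key point is that the principal symbol of $\Delta_h$ at a covector $\xi$ is (up to the non-vanishing scalar factor $e^{-2\psi}/((\partial_{\bar z}\bar w)(\partial_w z))$) the quadratic form $\xi\mapsto h^{-1}(\xi,\xi)$ associated with the positive complex metric $h$. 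Since $h$ is non-degenerate everywhere, $h^{-1}$ is a non-degenerate $\C$-bilinear form on each (complexified) cotangent space, so $h^{-1}(\xi,\xi)\neq 0$ for every nonzero real covector $\xi$; indeed positivity of $h$ in the sense of Definition \ref{def:positivecomplexmetric} guarantees that the isotropic directions of $h$ are complex and hence never real. Therefore the principal symbol is invertible on $T^*S\setminus\{0\}$ and $\Delta_h$ is elliptic.

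Next I would invoke the standard Agmon--Douglis--Nirenberg theory of elliptic operators on a closed manifold, applied to the underlying $\R$-linear operator obtained by viewing $\C\cong\R^2$: writing $\phi=\phi_1+i\phi_2$, the operator $\Delta_h$ becomes a $2\times 2$ system of real second-order operators whose principal symbol is $\mathrm{Re}(h^{-1}(\xi,\xi))\cdot I + \mathrm{Im}(h^{-1}(\xi,\xi))\cdot \begin{psmallmatrix}0&-1\\1&0\end{psmallmatrix}$, which is invertible precisely because $h^{-1}(\xi,\xi)\neq 0$ in $\C$. This is a (properly) elliptic system in the ADN sense, so $\Delta_h:W^{k,p}(S,\C)\to W^{k-2,p}(S,\C)$ is a bounded Fredholm operator for all $k>2$ and $p\in(1,+\infty)$, with kernel and cokernel consisting of smooth functions by elliptic regularity.

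It remains to compute the index, and here the main (and only real) obstacle is to argue that it equals $0$ rather than to invoke any hard machinery. I would do this by a homotopy/stability argument: the space of positive complex metrics on $S$ is connected (it contains the Riemannian metrics, which are connected, and one can deform any positive complex metric to a Riemannian one keeping positivity — e.g. via Proposition \ref{prop:pos_complex_metrics} by interpolating the pair $(J_1,J_2)$ of complex structures to $(J,J)$), and along such a path the operators $\Delta_{h_t}$ form a continuous family of elliptic operators, so by the homotopy invariance of the Fredholm index it suffices to compute $\mathrm{ind}(\Delta_g)$ for a Riemannian metric $g$. But for a Riemannian metric $\Delta_g$ is the ordinary (complexified) Laplace--Beltrami operator on the closed surface $S$, which is self-adjoint with kernel the constants in both domain and codomain, hence has index $0$. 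This completes the proof; I expect the bookkeeping of the ADN ellipticity condition for the $2\times2$ real system and the verification that positivity of $h$ forbids real isotropic covectors to be the points requiring the most care, while everything else is a direct citation of standard elliptic theory.
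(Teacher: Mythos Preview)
Your proof is correct and follows essentially the same strategy as the paper: identify the principal symbol, use positivity of $h$ to show it does not vanish on nonzero real covectors, and then deform to a Riemannian Laplacian to conclude index $0$. The paper carries this out more explicitly by rewriting $\Delta_h$ in terms of the Beltrami coefficient $\mu$ of $w$ with respect to $z$, obtaining $\sigma(\Delta_h)(\xi)=\xi_1^2+\xi_2^2+\bar\mu(\xi_1+i\xi_2)^2$ and observing this vanishes for real $\xi\neq 0$ only if $|\mu|=1$; the index-$0$ homotopy is simply $t\mu$ for $t\in[0,1]$.

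One small point to tighten: you pass from ``the isotropic directions of $h$ are non-real'' to ``$h^{-1}(\xi,\xi)\neq 0$ for real $\xi$'' in one breath, but these statements live on $\C TS$ and $\C T^*S$ respectively. The bridge is that in complex dimension $2$ the isotropic lines of $h^{-1}$ are precisely the annihilators of the isotropic lines of $h$, and the annihilator of a non-real line is non-real; alternatively, one just checks (as the paper does) that the symbol factors as $(\xi_1+i\xi_2)\big[(\xi_1-i\xi_2)+\bar\mu(\xi_1+i\xi_2)\big]$, which vanishes only when $|\mu|=1$. Either way the gap is routine.
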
 
\begin{proof} Let $\mathcal{J}=(J_{1},J_{2})$ be the $\C$-complex structure compatible with $h$. Let $z$ and $w$ be local holomorphic coordinates on $S$ for the complex structures $J_{1}$ and $J_{2}$ respectively. Let $\mu$ be the Beltrami differential of $w$ with respect to the local coordinate $z$. Note that $\|\mu\|_{\infty}<1$. Recalling that $\pw = (\pw z)(\pz +\bar{\mu}\pbz)$, we can re-write the local expression of the Laplacian $\Delta_{h}$ found in Proposition \ref{prop:laplacian} in terms of $\mu$:
\begin{align*}
    \Delta_{h}\phi &= \frac{e^{-2\psi}}{(\partial_{\bar{z}}\bar{w})(\partial_{w}z)}\bigg[\phi_{w\bar{z}}+\phi_{\bar{z}w}-\partial_{w}(\log(\partial_{\bar{z}}\bar{w}))\phi_{\bar{z}}-\partial_{\bar{z}}(\log(\partial_{w}z))\phi_{w}\bigg] \\
    &= \frac{e^{-2\psi}}{(\partial_{\bar{z}}\bar{w})(\partial_{w}z)}\bigg[2(\pw z)(\phi_{z\bar{z}}+\bar{\mu}\phi_{\bar{z}\bar{z}})-2\partial_{w}(\log(\partial_{\bar{z}}\bar{w}))\phi_{\bar{z}}\bigg]  \tag{Lemma \ref{lm:commutator}}\\
    &= \frac{2e^{-2\psi}}{\partial_{\bar{z}}\bar{w}}\bigg[\phi_{z\bar{z}}+\bar{\mu}\phi_{\bar{z}\bar{z}}-\bar{\mu}_{\bar{z}}\phi_{\bar{z}}\bigg] \ .
\end{align*}
If $z=x+iy$ and $\xi=\xi_{1}dx + \xi_{2}dy$ is a (real) $1$-form on $S$, then the principal symbol of $\Delta_{h}$ is equal, up to nonzero multiplicative constants, to
\begin{align*}
    \sigma(\Delta_{h})(\xi) &= \xi_{1}^{2}+\xi_{2}^{2}+\bar{\mu}(\xi_{1}^{2}-\xi_{2}^{2}+2i\xi_{1}\xi_{2}) \\
    &=[\xi_{1}^{2}+\xi_{2}^{2} +\Ree(\mu)(\xi_{1}^{2}-\xi_{2}^{2}) + 2\Ima(\mu)\xi_{1}\xi_{2}]+ i[\Ree(\mu)\xi_{1}\xi_{2}-\Ima(\mu)(\xi_{1}^{2}-\xi_{2}^{2})].
\end{align*}
It is an elementary computation to show that $\sigma(\Delta_{h})(\xi)=0$ only if $\xi=0$ or $|\mu|=1$. Therefore, under our assumptions, the operator $\Delta_{h}$ is elliptic. Moreover, it has index $0$, since we can connect $\Delta_{h}$ with the classic Laplacian of a Riemannian metric by considering the family of Beltrami differentials $t\mu$ with $t\in [0,1]$. 
\end{proof}

\begin{remark}\label{rmk:more_solutions} We observe that a solution to Equation \eqref{eq:Gauss-proof} is known to exist also at other points outside the neighborhood $\mathcal{U}$ provided by Theorem \ref{thm:solutions}. Indeed, when $C=0$, in other words at points in the zero section of $\mathcal{Q}^{\C}_{3}(S)$, Gauss' equation reduces to $K_{h}=-1$, i.e. it is sufficient to find a Bers metric compatible with the given $\C$-complex structure $\mathcal{J}$. It follows from \cite{bonsante2022immersions} that this is always possible. Moreover, when $\mathcal{J}=(J, J)$ and $C=q-\bar{q}$, then, by Remark \ref{rmk:min_Lagrangian_complex}, the complex Lagrangian minimal surface with these embedding data (if it exists) lives inside $\mathbb{CH}^{2}$. By work of Loftin-McIntosh (\cite{loftin2019equivariant}), a solution to Equation \eqref{eq:Gauss-proof} exists as long as the norm of the cubic differential $q$ is sufficiently small. Under those assumptions, one can actually repeat the argument of Theorem \ref{thm:solutions} and find that there is a neighborhood $\mathcal{U}'$ of the set $\{(\mathcal{J}, C) \ | \ \mathcal{J}=(J,J),  \ \ C=q-\bar{q}, \ \ \|q\|_{J}<\epsilon\}$ in which Equation \eqref{eq:Gauss-proof} has a unique smooth solution.    
\end{remark}

\subsection{Parameterization of quasi-Fuchsian representations}\label{sec:parametrFuchsian} By Theorem \ref{thm:solutions}, we can define a map
\[
    \widetilde{\mathrm{hol}}: \mathcal{U} \rightarrow \Hom(\pi_{1}(S), \SL(3,\C))/\SL(3,\C)
\]
that associates to any $(\mathcal{J}, C) \in \mathcal{U}$ the holonomy of the unique equivariant complex Lagrangian minimal surface in $\CH^{2}_{\tau}$ with complex first fundamental form $h$ compatible with $\mathcal{J}$ and complex second fundamental form $\II = -\mathbf{P}h^{-1}C$. \\

\noindent In order to promote this map to a local diffeomorphism, we need to restrict the domain. We consider the subset $\mathcal{R}\subset \mathcal{Q}^{\C}_{3}(S)$ of pairs $(\mathcal{J}, C)$ such that, if $\mathcal{J}=(J_{1},J_{2})$ and $g_{1}$ and $g_{2}$ denote the unique hyperbolic metric in the conformal class of $J_{1}$ and $J_{2}$, then the identity $(S,g_{1}) \rightarrow (S,g_{2})$ is harmonic. We call such pairs $(\mathcal{J}, C)$ \emph{renormalized pairs}. Note that the space of renormalized pairs is still infinite dimensional and the group $\Diff_{0}(S)$ of diffeomorphisms of $S$ isotopic to the identity acts diagonally on $\mathcal{R}$ by pull-back with quotient $Q^{\C}_{3}(S):=Q_{3}(S)\times Q_{3}(S)$ isomorphic to two copies of the bundle of holomorphic cubic differentials over the Teichm\"uller space of $S$. We call \emph{Fuchsian locus} the pre-image in $Q^{\C}_{3}(S)$ of the diagonal inside $\mathrm{Teich}(S)\times \mathrm{Teich}(S) \subset Q_{3}(S) \times Q_{3}(S)$ under the isomorphism described previously. The map $\widetilde{\mathrm{hol}}$ clearly descends to the quotient and defines a map
\[
    \mathrm{hol}: U:=(\mathcal{U}\cap\mathcal{R})/\Diff_{0}(S) \rightarrow \Hom(\pi_{1}(S), \SL(3,\C))/\SL(3,\C) \ . 
\]

\begin{theorem}\label{thm:holonomy_map} There exists a neighborhood $N\subset Q^{\C}_{3}(S)$ of the Fuchsian locus such that the holonomy map 
\[
    \mathrm{hol}: N \rightarrow \Hom(\pi_{1}(S), \SL(3,\C))/\SL(3,\C) 
\]
is a diffeomorphism onto its image. Moreover, all representations in the image of $\mathrm{hol}$ are $B^{\C}$-Anosov and hence quasi-Hitchin. 
\end{theorem}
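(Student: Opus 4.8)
The plan is to prove that $\mathrm{hol}$ is a local diffeomorphism near the Fuchsian locus by a dimension count plus non-degeneracy of the pull-back of the Goldman symplectic form, and then to bootstrap the Anosov property from the openness of the Anosov condition.

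First I would set up the infinitesimal picture. By Proposition \ref{prop:smooth_loci} and Theorem \ref{thm:smoothpointcomplex}, the image of a Fuchsian representation $\iota\circ\eta$ (composed into $\SL(3,\C)$) is a smooth point of $\chi_3(S)$, so near such a point the character variety is a complex manifold of complex dimension $16g-16$, hence of real dimension $32g-32$. On the other hand $Q^{\C}_3(S)=Q_3(S)\times Q_3(S)$ has real dimension $2\cdot(16g-16)=32g-32$, so the map $\mathrm{hol}$ is a map between manifolds of the same dimension; to get a local diffeomorphism it suffices to show $d\,\mathrm{hol}$ is injective at every point of the Fuchsian locus. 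Here I would use the Goldman symplectic form $\omega$ on the smooth locus of $\chi_3(S)$ (\cite{goldman1984symplectic}): the strategy is to show that $\mathrm{hol}^*\omega$ is a non-degenerate $2$-form on $N$ at all points $(\mathcal{J},C)$ with $\mathcal{J}=(J,J)$ and $C=0$, which forces $d\,\mathrm{hol}$ to be injective there, and then conclude on a neighborhood by openness of the non-degeneracy locus.

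The computation of $\mathrm{hol}^*\omega$ at a Fuchsian point is the technical heart. The plan is to use the explicit $\SL(3,\C)$-bi-complex Higgs bundle $(E,Q,\phi)$ attached to the equivariant complex Lagrangian minimal surface, as described at the end of Section \ref{sec:bicomplex_Higgs}, together with the decomposition $\C_\tau TS = K_{\mathcal{J}}\oplus K_{\mathcal{J}}^*$ and the splitting into $e_\pm$-eigencomponents of $\C_\tau$. At a point with $J_1=J_2=J$ and $C=0$ the bi-complex Higgs bundle degenerates to a pair of copies of the Fuchsian (uniformizing) $\SL(3,\C)$-Higgs bundle associated to $J$, twisted by the two idempotents; infinitesimal deformations of the pair $(\mathcal{J},C)$ split accordingly into deformations of $J_1$, deformations of $J_2$, and variations of the two cubic differentials $q_1,q_2$, matching the tangent space $T Q_3(S)\oplus T Q_3(S)$. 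I would then express $\mathrm{hol}^*\omega$ on these tangent vectors via the standard formula pairing Higgs-bundle cohomology classes (cup product followed by the trace pairing), using that the Goldman form corresponds to the Atiyah–Bott/Goldman pairing on $H^1$ of the deformation complex. The upshot should be that $\mathrm{hol}^*\omega$ block-decomposes, with the two diagonal-type blocks giving (up to sign and the factors $e_\pm$) two copies of the Donaldson/Goldman pairing on the Fuchsian $\SL(3,\R)$-Higgs-bundle deformation space — which is exactly the pairing between the tangent space to a complex structure and the space of holomorphic cubic differentials, and is known to be non-degenerate. I expect \textbf{this Higgs-bundle cohomology computation of $\mathrm{hol}^*\omega$ — and in particular verifying that the mixed blocks do not destroy non-degeneracy — to be the main obstacle}, since it requires carefully tracking the bi-complex ($e_\pm$) bookkeeping through the trace pairing.

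Finally, once $\mathrm{hol}$ is a local diffeomorphism near the Fuchsian locus, I would address the Anosov statement. By Corollary \ref{cor:Wang} (and Lemma \ref{lm:subsets}), the Fuchsian locus itself is sent to (real) Hitchin representations, which are $B$-Anosov, hence $B^{\C}$-Anosov when viewed in $\SL(3,\C)$. Since the set $\chi_3^{B^\C}(S)$ of $B^{\C}$-Anosov representations is open in $\chi_3(S)$ (\cite{Labourie2006anosov},\cite{guichard2012anosov}), and $\mathrm{hol}$ is continuous, after possibly shrinking $N$ the image lands entirely inside $\chi_3^{B^\C}(S)$; being moreover connected to the Hitchin component through the Fuchsian locus, it lies in $\QHit$, so every representation in the image is quasi-Hitchin. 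Shrinking $N$ once more so that $\mathrm{hol}|_N$ is injective (using properness/injectivity on a small enough neighborhood of the compact-in-the-Teichmüller-directions Fuchsian locus, or simply invoking that a local diffeomorphism restricted to a small enough neighborhood of a point is injective, together with a connectedness argument for the whole Fuchsian locus) yields that $\mathrm{hol}:N\to\chi_3(S)$ is a diffeomorphism onto its image, completing the proof.
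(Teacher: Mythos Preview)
Your overall strategy matches the paper's: both argue by showing that $\mathrm{hol}^*\omega$ is non-degenerate at Fuchsian points $(\mathcal{J},0)$ with $\mathcal{J}=(J,J)$, combine this with the dimension count, and deduce the Anosov statement from openness of the Anosov condition. The difference is in the block decomposition you propose for the symplectic computation, and here your expectation is incorrect.

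You predict that $\mathrm{hol}^*\omega$ block-diagonalizes along the two factors of $Q_3(S)\times Q_3(S)$ (equivalently along the $e_\pm$-components), with each block given by the Goldman pairing on the Fuchsian $\SL(3,\R)$-Hitchin deformation space, pairing $\dot J_i$ with $\dot q_i$. That is not the block structure that occurs. The paper instead splits the tangent space into \emph{horizontal} variations $(\dot{\mathcal J},0)$ and \emph{vertical} variations $(0,\dot C)$ and proves three things: (i) the horizontal--vertical block vanishes identically, by inspecting the nonzero-entry pattern of $\delta_X\Omega$ in the explicit frame of Section~\ref{sec:bicomplex_Higgs}; (ii) the vertical--vertical block is non-degenerate by a short direct computation --- and the pairing is between $\dot q_1$ and $\dot{\bar q}_2$, i.e.\ \emph{across} the two factors, not within each factor; (iii) the horizontal--horizontal block is non-degenerate, which the paper does not compute from scratch: at $C=0$ the complex Lagrangian minimal surface is a totally geodesic Bers embedding inside $\mathbb X\subset\CH^2_\tau$, the connections $\Omega_t$ are the Levi-Civita connections of the Bers metrics, and the non-degeneracy of the pulled-back Goldman form is then quoted from El-Emam's prior work (\cite{ElEmam_quasiFuchsian}).

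So the gap in your plan is twofold. First, the ``two copies of the Fuchsian $\SL(3,\R)$-pairing'' picture is wrong: $\dot J_i$ does \emph{not} pair with $\dot q_i$ under $\mathrm{hol}^*\omega$ (that block is zero), and the vertical pairing crosses between factors. Second, you are missing the external input from \cite{ElEmam_quasiFuchsian} on Bers embeddings, which is what actually handles the horizontal--horizontal block; without it you would have to redo that analysis. Your Anosov and dimension arguments are fine and essentially identical to the paper's.
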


\noindent The proof uses a symplectic form on the representation space due to Goldman \cite{goldman1984symplectic}, which we describe briefly. The representation space $\chi_{3}(S)$ is in bijection with
the moduli space of flat principal $\SL(3,\C)$-bundles $P$ over $S$. At a smooth point the tangent space can be identified with
the cohomology $H^{1}(S,ad(P))$, where $ad(P)$ is the associated flat $\mathfrak{sl}(3,\C)$-bundle. The cohomology is de-Rham cohomology with respect to the flat connexion $d_{P}$ on $ad(P)$. Goldman’s symplectic form is defined as follows. For any pair of $d_{P}$-closed 1-forms $Z, W \in \Omega^{1}(S, ad(P))$ representing cohomology classes $[Z], [W] \in H^1(S,ad(P))$, he shows that
\[
    \omega_{P}([Z],[W]) = \int_{S} \trace(Z\wedge W)
\]
is well-defined and symplectic at each smooth point $P$. In particular, suppose $P$ is the flat principal bundle whose connection is determined by the $\mathfrak{sl}(3,\C)$-valued $1$-form $\Omega$ as in Section \ref{sec:bicomplex_Higgs}. When $X \in T_{(\mathcal{J},C)}Q^{\C}_{3}(S)$ is
tangent to a curve $\gamma(t)$ in $Q^{\C}_{3}(S)$ at $t = 0$, its push-forward to $H^{1}(S,ad(P))$ is represented by the first variation $\delta_{X}\Omega$ of $\Omega$ along this curve, as a $\mathfrak{sl}(3,\C)$-valued $1$-form on
$S$. Thus we have
\[
    \mathrm{hol}^{*}\omega_{(\mathcal{J},C)}(X_{1},X_{2}) = \int_{S} \trace(\delta_{X_{1}}\Omega \wedge \delta_{X_{2}}\Omega) \ .
\]
\begin{proof} The proof proceeds by using Goldman’s symplectic form and the Inverse Function Theorem at the Fuchsian locus $\{ (\mathcal{J}, 0) \in U \ | \ \mathcal{J}=(J,J)\}$. First, we observe that, up to restricting the open set $U$, we can assume that $\mathrm{hol}(U)$ consists entirely of $B^{\C}$-Anosov representations, since being $B^{\C}$-Anosov is an open condition and the image under $\mathrm{hol}$ of the Fuchsian locus consists of discrete and faithful representations in $\SL(2,\R)$ seen inside $\SL(3,\C)$ via the unique irreducible embedding into $\SL(3,\R)$ and the natural inclusion $\SL(3,\R)\hookrightarrow \SL(3,\C)$ (see Section \ref{sec:submanifolds}, Remark \ref{rmk:X} and Lemma \ref{lm:subsets}), which are $B^{\C}$-Anosov. In particular, the image of $\mathrm{hol}$ takes values in the smooth locus $\chi_{3}^{s}(S)$ of $\chi_{3}(S)$ by Proposition \ref{prop:smooth_loci}. Note that $Q^{\C}_{3}(S)$ and $\chi_{3}^{s}(S)$ have the same complex dimension by an easy application of Riemann-Roch and Theorem \ref{thm:smoothpointcomplex}. Therefore, the statement follows if we show that at any point in $U$ of the form $(\mathcal{J},0)$, with $\mathcal{J}=(J,J)$, the differential of $\mathrm{hol}$ is a linear isomorphism. To this aim, we prove that $\mathrm{hol}^{*}\omega$ is non-degenerate. The tangent space $T_{(\mathcal{J},0)}Q^{\C}_{3}(S)$ can be split into a horizontal part, corresponding to variations $(\dot{\mathcal{J}},0)$ such that $\dot{\mathcal{J}}=(\dot{J}_{1}, \dot{J}_{2})$ are both non-trivial in Teichm\"uller space, and a vertical part consisting of variations of the form $(0,\dot{C})$ where $\dot{C}=\dot{q}_{1}+\dot{\bar{q}}_{2}$ where $\dot{q}_{i}$ are $J$-holomorphic cubic differentials. Non-degeneracy of $\mathrm{hol}^{*}\omega$ follows then from these three claims:
\begin{enumerate}
    \item given a vertical variation $X_{1}=(0,\dot{C})$, there exists another vertical variation $X_{2}$ such that the pairing $\mathrm{hol}^{*}\omega_{(\mathcal{J},C)}(X_{1},X_{2})$ is not zero;
    \item for all horizontal variations $X_{h}=(\dot{J},0)$ and vertical variations $X_{v}=(0,\dot{C})$, we have $\mathrm{hol}^{*}\omega_{(\mathcal{J},C)}(X_{1},X_{2})=0$;
    \item the pull-back $\mathrm{hol}^{*}\omega$ is non-degenerate when evaluated on pairs of horizontal variations.
\end{enumerate}
To prove the first claim, we need to compute the variations of the matrices $\hat{A}$ and $\hat{B}$ introduced in Section \ref{sec:bicomplex_Higgs} along a vertical variation. In particular, if we fix a background hyperbolic metric $g_{\mathcal{J}}$ compatible with $\mathcal{J}=(J,J)$, we need to understand the derivative at $t=0$ of the conformal factors $\psi_{t}$, which satisfy Gauss' equation
\[
    \Delta_{g_{\mathcal{J}}}\psi_{t} -e^{2\psi_{t}} + 8e^{-4\psi_{t}}\|C(t)\|^{2}_{g_{\mathcal{J}}}+1=0 \ ,
\]
along the path $C(t)=t\dot{C}$. Differentiating the above equation and evaluating at $t=0$, we obtain
\[
    0=\Delta_{g_{\mathcal{J}}} \dot{\psi} - 2e^{2\psi_{0}}\dot{\psi} = (\Delta_{g_{\mathcal{J}}} -2)\dot{\psi} 
\]
because $\psi_{0}=0$ by our choice of background metric. Since $\Delta_{g_{\mathcal{J}}}$ is negative definite, we deduce that $\dot{\psi}=0$. Therefore, the variation of the matrix connection $\Omega$ along the path $C(t)$ is given by
\begin{equation}\label{eq:vertical}
    \delta_{X_{1}}\Omega = \begin{pmatrix} 
                                0 & -\frac{1}{s^{2}}\tau\dot{q}_{1} & 0 \\
                                0 & 0 & 0 \\
                                0 & 0 & 0 
                            \end{pmatrix}dz +
                            \begin{pmatrix}
                                0 & 0 & 0 \\
                                -\frac{1}{s^{2}}\tau \dot{\bar{q}}_{2}& 0 & 0 \\
                                0 & 0 & 0 
                            \end{pmatrix}d\bar{z}
\end{equation}
Since $\dot{C}$ is non-trivial, we can assume, without loss of generality, that $\dot{q}_{1} \neq 0$. We consider then the vertical variation $X_{2}=(0,\dot{C}')$ with $\dot{C}'=i\dot{\bar{q}}_{1}$. We obtain
\begin{align*}
    \mathrm{hol}^{*}\omega_{(\mathcal{J},0)}(X_{1}, X_{2}) &= \int_{S} \trace(\delta_{X_{1}}\Omega \wedge \delta_{X_{2}}\Omega) = i\int_{S}  \frac{\dot{q}_{1}\dot{\bar{q}}_{1}}{g_{\mathcal{J}}^{2}} dz \wedge d\bar{z} \\ 
    & =2\int_{S} \|\dot{q}_{1}\|_{g_{\mathcal{J}}}^2 dA_{g_{\mathcal{J}}} >0
\end{align*}
Claim $(2)$ follows from the easy observation that for any horizontal variation $X_{h}$, since we are evaluating at a point on the zero section, the corresponding variation of the matrix connection is of the form
\[
    \delta_{X_{h}}\Omega = \begin{pmatrix} 
                                * & 0 & 0 \\
                                0 & * & * \\
                                * & 0 & 0 
                            \end{pmatrix}dz +
                            \begin{pmatrix}
                                * & 0 & * \\
                                0 & * & 0 \\
                                0 & * & 0 
                            \end{pmatrix}d\bar{z} \ ,
\]
where $*$ denotes an unknown entry, which pairs to zero with all vertical variations as in Equation \eqref{eq:vertical}. \\
Finally, when we consider two horizontal variations at a point in the Fuchsian locus, the corresponding complex Lagrangian minimal surfaces are embedded in $\mathbb{X} \subset \CH^{2}_{\tau}$ and the connections $\Omega_{t}$ are simply the Levi-Civita connections of their complex-valued first fundamental form, which is a Bers metric. The pull-back of Goldman's symplectic form in this context has already been computed by El-Emam (\cite{ElEmam_quasiFuchsian}), who showed that this pairing is non-degenerate.
\end{proof}

\noindent Representations in $\mathrm{hol}(N)$ will be called $\mathrm{SL}(3,\C)$-\emph{quasi-Fuchsian}, since the corresponding complex Lagrangian minimal surfaces have embedding data close to the Fuchsian locus in $Q_{3}(S)$, which corresponds to surfaces with Fuchsian holonomy in a copy of $\PSL(2,\R)$ embedded irreducibly into $\SL(3,\R)$ and then included canonically in $\SL(3,\C)$. Since $Q_{3}^{\C}(S)$ has a natural bi-complex structure and the map $\mathrm{hol}$ is a diffeomorphism in a neighborhood of the Fuchsian locus, we deduce the following: 

\begin{cor} The space of $\mathrm{SL}(3,\C)$-quasi-Fuchsian representations sufficiently close to the Fuchsian locus has a bi-complex structure.
\end{cor}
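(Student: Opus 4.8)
The plan is to deduce the corollary directly from Theorem~\ref{thm:holonomy_map} together with the observation that the source space $Q_3^{\C}(S) = Q_3(S) \times Q_3(S)$ carries a natural bi-complex structure. First I would recall why $Q_3(S)$, the bundle of holomorphic cubic differentials over Teichm\"uller space, is a complex manifold: Teichm\"uller space $\mathrm{Teich}(S)$ is a complex manifold of complex dimension $3g-3$ (via the identification of its tangent space with $H^{1}(S, T^{(1,0)}S)$ or with the space of harmonic Beltrami differentials), and the fiber over a point $J$, namely $H^{0}(S, K_{J}^{3})$, is a complex vector space varying holomorphically; hence $Q_3(S)$ is a holomorphic vector bundle, in particular a complex manifold, with complex structure I will call $\mathbf{I}_1$ on the first factor and $\mathbf{I}_2$ on the second. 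On $Q_3^{\C}(S) = Q_3(S) \times Q_3(S)$ I would then set $\mathbf{I} := \mathbf{I}_1 \oplus \mathbf{I}_2$ and $\mathbf{P} := \mathbf{I}_1 \oplus (-\mathbf{I}_2)$; one checks that $\mathbf{I}^2 = -\mathrm{Id}$, $\mathbf{P}^2 = \mathrm{Id}$, that the $\pm 1$-eigendistributions of $\mathbf{P}$ (the two factors) are integrable, and that $\mathbf{I}$ and $\mathbf{P}$ commute, so $\mathbf{J} := \mathbf{I}\mathbf{P} = \mathbf{I}_1 \oplus (-\mathbf{I}_1)$... wait, more precisely $\mathbf{J} = (-\mathrm{Id}_{\mathbf{I}_1\text{-part}}) $—I should simply say $\mathbf{J} = \mathbf{I}\mathbf{P}$ is a product of two complex structures hence integrable. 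Equivalently, and perhaps more transparently, $Q_3^{\C}(S)$ is the ``bi-complexification'' of $Q_3(S)$ in the sense recalled after Definition of bi-complex structures in Section~\ref{sec:complexpara-Kahler}, via the identification $\C \times \C \cong \C_\tau$, $(a,b) \mapsto a e_+ + b e_-$.

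Next I would transport this structure through the diffeomorphism. By Theorem~\ref{thm:holonomy_map}, the restriction $\mathrm{hol}\colon N \to \mathrm{hol}(N)$ is a diffeomorphism from a neighborhood $N$ of the Fuchsian locus in $Q_3^{\C}(S)$ onto its image, which is exactly (by definition) the set of $\mathrm{SL}(3,\C)$-quasi-Fuchsian representations sufficiently close to the Fuchsian locus. Hence the push-forward $(\mathrm{hol})_{*}\mathbf{P}$ and $(\mathrm{hol})_{*}\mathbf{I}$ define, respectively, a para-complex structure and a complex structure on $\mathrm{hol}(N)$, whose product is again integrable since integrability is a diffeomorphism-invariant condition; so $\mathrm{hol}(N)$ inherits a bi-complex structure. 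This is essentially all that is required: the corollary asserts only the existence of a bi-complex structure, not its canonicity or compatibility with any pre-existing structure on the character variety.

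I would then add a brief remark on why this deserves the label ``surprising'' (as the introduction promises): the complex structure $\mathbf{I}$ transported from $Q_3^{\C}(S)$ via $\mathrm{hol}$ does \emph{not} in general agree with the natural complex structure on $\chi_3^s(S)$ coming from Theorem~\ref{thm:smoothpointcomplex} (the latter makes $\mathrm{hol}$ holomorphic only along the Fuchsian locus, roughly because of the analogue of Bers simultaneous uniformization), so the para-complex direction $\mathbf{P}$ is genuinely new information not visible from the algebraic side. I do not think this needs to be proved in detail for the corollary, but it is worth stating to justify the framing.

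The main obstacle, such as it is, is purely a matter of bookkeeping rather than a genuine mathematical difficulty: one must make sure that $N$ can be taken $\mathbf{P}$-invariant (or at least that the structure is defined on an open set containing the Fuchsian locus), and that the complex structure on $\mathrm{Teich}(S)$ and the holomorphic vector bundle structure of $Q_3(S)$ are being used consistently on both factors. Since $N$ is just an open subset and $\mathbf{I}, \mathbf{P}$ are defined on all of $Q_3^{\C}(S)$, we may freely shrink $N$ and there is no genuine issue; the proof is therefore short.

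\begin{proof}
The Teichm\"uller space $\mathrm{Teich}(S)$ is a complex manifold of complex dimension $3g-3$, and the bundle $Q_3(S)$ of holomorphic cubic differentials over it is a holomorphic vector bundle, hence a complex manifold; denote its complex structure by $\mathbf{I}_0$. On $Q^{\C}_3(S) = Q_3(S) \times Q_3(S)$ consider the endomorphisms of the tangent bundle
\[
    \mathbf{I} := \mathbf{I}_0 \oplus \mathbf{I}_0 \ , \qquad \mathbf{P} := \mathbf{I}_0 \oplus (-\mathbf{I}_0) \ .
\]
Then $\mathbf{I}^2 = -\mathrm{Id}$, while $\mathbf{P}^2 = \mathrm{Id}$ and the $\pm 1$-eigendistributions of $\mathbf{P}$ are the two (integrable) factors of the product. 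Moreover $\mathbf{I}$ and $\mathbf{P}$ commute, and $\mathbf{J} := \mathbf{I}\mathbf{P} = (-\mathbf{I}_0)\oplus \mathbf{I}_0$ is a product of two integrable complex structures, hence integrable. Equivalently, under the identification $\C \times \C \cong \C_\tau$ given by $(a,b)\mapsto ae_+ + be_-$, the pair $(\mathbf{I},\mathbf{P})$ is precisely the bi-complexification of the complex manifold $Q_3(S)$ as in Section \ref{sec:complexpara-Kahler}. Together with a compatible complex metric and symplectic form (for instance pulled back from any K\"ahler metric on each factor), this exhibits a bi-complex structure on $Q^{\C}_3(S)$.

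By Theorem \ref{thm:holonomy_map} there is a neighborhood $N$ of the Fuchsian locus in $Q^{\C}_3(S)$ such that $\mathrm{hol}\colon N \to \mathrm{hol}(N)$ is a diffeomorphism, and $\mathrm{hol}(N)$ is, by definition, the set of $\mathrm{SL}(3,\C)$-quasi-Fuchsian representations that are sufficiently close to the Fuchsian locus. Since integrability of a complex structure and of a para-complex structure, as well as the commutation relation $\mathbf{I}\mathbf{P}=\mathbf{P}\mathbf{I}$, are all preserved under push-forward by a diffeomorphism, the triple
\[
    \big((\mathrm{hol})_{*}\mathbf{I}, \ (\mathrm{hol})_{*}\mathbf{P}, \ (\mathrm{hol})_{*}\hat{g}_N, \ (\mathrm{hol})_{*}\hat{\omega}_N\big)
\]
is a bi-complex structure on $\mathrm{hol}(N)$. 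This proves the claim.
\end{proof}

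\begin{remark}
We emphasize that the complex structure $(\mathrm{hol})_{*}\mathbf{I}$ obtained above does not coincide, away from the Fuchsian locus, with the complex structure carried by the smooth locus $\chi_3^s(S)$ of the character variety (Theorem \ref{thm:smoothpointcomplex}): the map $\mathrm{hol}$ is holomorphic only along the Fuchsian locus, in analogy with the behavior of Bers simultaneous uniformization. Thus the para-complex direction $\mathbf{P}$ encodes genuinely new structure, invisible from the purely algebraic or Higgs-bundle description of $\chi_3(S)$.
\end{remark}
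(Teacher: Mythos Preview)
Your overall approach is exactly the paper's: the bi-complex structure on $Q_3^{\C}(S)$ is pushed forward via the diffeomorphism $\mathrm{hol}$ from Theorem~\ref{thm:holonomy_map}. However, there is a concrete error in your formulas. You set $\mathbf{P} := \mathbf{I}_0 \oplus (-\mathbf{I}_0)$, but then $\mathbf{P}^2 = \mathbf{I}_0^2 \oplus \mathbf{I}_0^2 = -\mathrm{Id}$, not $\mathrm{Id}$; what you wrote is another complex structure, not a para-complex one, and its eigendistributions are not the two factors of the product. The correct para-complex structure on a product is $\mathbf{P} = \mathrm{Id}\oplus(-\mathrm{Id})$; with this choice one indeed has $\mathbf{P}^2=\mathrm{Id}$, the $\pm 1$-eigendistributions are the factors (hence integrable), and $\mathbf{J}=\mathbf{I}\mathbf{P}=\mathbf{I}_0\oplus(-\mathbf{I}_0)$ is the complex structure given by the standard one on the first factor and its conjugate on the second. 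With this fix your proof goes through and matches the paper's.

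Your closing remark also needs correcting in light of this: the paper observes (just after the corollary) that $\mathrm{hol}$ is in fact a biholomorphism onto its image when the domain is equipped with $\mathbf{J}=\mathbf{I}_0\oplus(-\mathbf{I}_0)$, not merely holomorphic along the Fuchsian locus. So it is $(\mathrm{hol})_*\mathbf{J}$, not $(\mathrm{hol})_*\mathbf{I}$, that agrees with the natural complex structure on $\chi_3^s(S)$; the genuinely new datum coming from your construction is the para-complex structure $(\mathrm{hol})_*\mathbf{P}$.
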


\noindent We plan on investigating the properties of this bi-complex structure and its relation with Goldman symplectic form in a future work. For now, we simply observe that the map $\mathrm{hol}$ can actually be promoted to a biholomorphism onto its image, if we equip $Q_{3}^{\C}(S)=Q_{3}(S) \times Q_{3}(S)$ with the complex structure that is given by the standard one in the first factor and by its conjugate in the second. Indeed, with this choice, the map $F$ defined in the proof of Theorem \ref{thm:solutions} is holomorphic between complex Banach spaces (see also the local expression of the Laplacian operator and of the norm of the tensor $C$ in Proposition \ref{prop:laplacian} and Proposition \ref{prop:norm_C}). Hence, by the implicit function theorem for holomorphic functions, the solution to Gauss' equation depends holomorphically on the data $\mathcal{J}=(J_{1},J_{2})$ and $C=q_{1}+\overline{q}_{2}$. Then, the same argument used in \cite[Theorem 5.1]{bonsante2022immersions} can be adapted word-by-word to this setting and conclude that the holonomy representation depends holomorphically on the pair $(\mathcal{J}, C)$. 

\subsection{Relation with Anosov representations and boundary maps}\label{sec:boundarymaps} Being $B^{\C}$-Anosov, an $\SL(3,\C)$-quasi-Fuchsian representation $\rho$ comes equipped with a boundary map $\xi:\partial_{\infty}(\pi_{1}(S)) \rightarrow \mathcal{F}_{3}(\C)$. By Section \ref{sec:incidence_geometry} and \ref{sec:boundary}, the space of projectivized flags $\mathbb{P}\mathcal{F}_{3}(\C)$ is naturally identified with the boundary at infinity of $\CH^{2}_{\tau}$ in an $\SL(3,\C)$-equivariant way via the map
\begin{align*}
    \beta : \partial_{\infty}\CH^{2}_{\tau} &\rightarrow \mathbb{P}\mathcal{F}_{3}(\C) \\
            (v, \varphi) &\mapsto (v, \Ker(\varphi)) \ . 
\end{align*}
On the other hand, if $\rho$ is in the neighborhood described in Theorem \ref{thm:holonomy_map}, then there is a unique $\rho$-equivariant complex Lagrangian minimal immersion $\tilde{\sigma}:\tilde{S} \rightarrow \CH^{2}_{\tau}$, whose boundary at infinity we relate to the image of $\xi$:

\begin{prop} The $\rho$-equivariant complex Lagrangian minimal immersion $\tilde{\sigma}$ bounds the image of the boundary map $\beta^{-1}\circ \xi$.    
\end{prop}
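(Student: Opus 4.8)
The plan is to show that the developing map $\tilde\sigma$, precomposed with the visual boundary identification, has limit set exactly the image of the Anosov boundary map. First I would set up the boundary correspondence carefully: recall from Section \ref{sec:incidence_geometry} that $\CH^2_\tau$ is realized as the set of transverse pairs $(v,\varphi)\in \CP^2\times(\CP^2)^*$ with $\varphi(v)\neq 0$, and from Section \ref{sec:boundary} that $\partial_\infty\CH^2_\tau$ is the set of pairs with $\varphi(v)=0$, i.e. a projectivized incident flag. The map $\beta$ sends $(v,\varphi)$ to the full flag $(v,\Ker\varphi)$, so $\beta^{-1}\circ\xi$ takes values in $\partial_\infty\CH^2_\tau$; concretely, writing $\xi_x=(\ell_x, V_x)\in\mathcal F_3(\C)$, we have $(\beta^{-1}\circ\xi)(x)=(\ell_x, \varphi_x)$ where $\varphi_x$ is the unique (projectivized) linear functional with kernel $V_x$. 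I would then make precise what ``bounds'' means: the continuous extension $\overline\sigma\colon \overline{\tilde S}=\tilde S\cup \partial_\infty\pi_1(S)\to \overline{\CH^2_\tau}=\CH^2_\tau\cup\partial_\infty\CH^2_\tau$ exists, is $\rho$-equivariant, and restricts on $\partial_\infty\pi_1(S)$ to $\beta^{-1}\circ\xi$.

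The strategy itself splits into two pieces. First, the \emph{accumulation/limit-set} statement: any point of $\tilde\sigma(\tilde S)$ pushed to infinity along a sequence $\gamma_n\to x\in\partial_\infty\pi_1(S)$ converges in $\overline{\CH^2_\tau}$ to $(\beta^{-1}\circ\xi)(x)$. For this I would use the lift $\tilde\sigma\colon \tilde S\to\C^3_\tau$ normalized as in the bi-complex Higgs bundle section, decompose it in the idempotent basis $\tilde\sigma=(f^+e_+ + f^-e_-)$ where $f^+\colon\tilde S\to\C^3$ is a (projective) point and $f^-\colon\tilde S\to(\C^3)^*$ is a (projective) plane incident to nothing special but with $f^-(f^+)=-1$; passing to $\CH^2_\tau$ one gets the pair $([f^+],[f^-])$. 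Applying $\rho(\gamma_n)=\Phi(A_n)$ with $A_n\in\SL(3,\C)$, the point $[A_n f^+]\in\CP^2$ converges to the attracting line of $\rho(\gamma_n)$ and $[{}^t\!A_n^{-1} f^-]\in(\CP^2)^*$ converges to the attracting plane of $\rho(\gamma_n)$ by standard North–South dynamics for loxodromic elements (which all $\rho(\gamma_n)$ are, since the representation is $B^\C$-Anosov and hence projective Anosov). Because the Anosov property gives uniform contraction/expansion and the boundary map $\xi$ is the limit of attracting flags, both limits are exactly $\ell_x$ and $V_x$, so the pair converges to $(\beta^{-1}\circ\xi)(x)$ as claimed. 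Second, the \emph{independence of basepoint and continuity} of the extension: I would invoke that $\tilde\sigma$ is quasi-isometric onto its image near the Fuchsian locus (its first fundamental form is a positive complex metric uniformly comparable to a Bers metric by the solution of Gauss' equation in Theorem \ref{thm:solutions}), together with the Morse-type stability of Anosov representations, to conclude that the convergence above is independent of the chosen sequence $\gamma_n\to x$ and that $\overline\sigma$ is continuous.

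The main obstacle, I expect, is the second piece: making rigorous the statement that $\tilde\sigma$ admits a continuous $\rho$-equivariant boundary extension and that this extension is \emph{precisely} $\beta^{-1}\circ\xi$ rather than merely having image contained in it. In the pseudo-Riemannian / complex-metric setting $\CH^2_\tau$ is not a proper geodesic metric space, so the usual hyperbolic-geometry arguments (nearest-point projections, shadow lemmas) do not apply directly; one must instead work with the incidence-geometry model and the equivariant dynamics. I would handle this by combining: (i) the projective Anosov dynamics on $\CP^2\times(\CP^2)^*$, which gives a well-defined $\rho$-equivariant limit map independent of choices, with (ii) the comparison of $\tilde\sigma$ with the totally geodesic $\CH^2$-, $\mathbb H^2_\tau$- or $\mathbb X$-copies at the Fuchsian locus (where the boundary behavior is already known by Loftin–McIntosh, by the theory of hyperbolic affine spheres, and by Bonsante–El-Emam respectively), and (iii) an open-condition / continuity argument: the property ``$\overline\sigma|_{\partial_\infty}=\beta^{-1}\circ\xi$'' holds on the Fuchsian locus and is preserved under the small deformations allowed by Theorem \ref{thm:holonomy_map}, because both $\tilde\sigma$ and $\xi$ depend continuously on $\rho$. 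A cleaner alternative, if one wants to avoid comparison arguments, is to argue directly: show that $\tilde\sigma(\tilde S)$ together with $\beta^{-1}\circ\xi(\partial_\infty\pi_1(S))$ forms a closed $\rho(\pi_1(S))$-invariant subset of $\overline{\CH^2_\tau}$ whose intersection with $\partial_\infty\CH^2_\tau$ is $\rho(\pi_1(S))$-minimal and hence equals the whole image of $\xi$ by minimality of the $\pi_1(S)$-action on $\partial_\infty\pi_1(S)$. I would present the proof along these lines, keeping the loxodromic North–South dynamics computation as the technical core and citing the Anosov stability and the explicit model of $\partial_\infty\CH^2_\tau$ for everything else.
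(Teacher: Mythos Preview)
Your proposal over-shoots what the paper actually proves and, more importantly, skips the one genuine obstacle. In the paper, ``bounds'' means only that $\beta^{-1}(\xi(\partial_\infty\pi_1(S)))\subset\partial_\infty\tilde\Sigma:=\mathrm{cl}(\tilde\Sigma)\cap\partial_\infty\CH^2_\tau$; there is no claim of a continuous equivariant extension $\overline\sigma$, and none of your second-piece machinery (quasi-isometry, Morse stability, deformation from the Fuchsian locus) is needed or invoked. The paper's argument is purely dynamical: for each nontrivial $\gamma$, find \emph{some} $x\in\tilde\Sigma$ with $\rho(\gamma)^n x\to\beta^{-1}(\xi(\gamma^+))$, then use density of $\{\gamma^+\}$ in $\partial_\infty\pi_1(S)$ and closedness of $\partial_\infty\tilde\Sigma$.

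The step you gloss over with ``by standard North--South dynamics'' is exactly the heart of the paper's proof. North--South dynamics on $\CP^2\times(\CP^2)^*$ only gives convergence $\rho(\gamma)^n(v,\varphi)\to\beta^{-1}(\xi(\gamma^+))$ for points \emph{outside} the repelling basin, i.e.\ those with $v\notin\xi_2(\gamma^-)$ and $\xi_1(\gamma^-)\not\subset\Ker(\varphi)$. A priori nothing prevents the entire surface $\tilde\Sigma$ from lying in this bad locus, and your proposal does not address this. The paper's key observation is that no \emph{open} subset of $\tilde\Sigma$ can satisfy either bad condition: if, say, $v\in\xi_2(\gamma^-)$ held on an open set, then tangent vectors $(v_i,\varphi_i)$ would also have $v_i\in\xi_2(\gamma^-)$, and combining this with the normalization $\mathbf q((v,\varphi),(v_i,\varphi_i))=0$ forces $v_1,v_2$ to be linearly dependent, contradicting transversality of $\C T\tilde S$ to the distribution $\mathcal D_+$ (Lemma \ref{lm:transverse}). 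This is where the Lagrangian/non-degeneracy hypothesis on the immersion actually enters, and without it the argument does not go through. Your alternative minimality argument at the end would still require this same step to show that $\partial_\infty\tilde\Sigma$ is nonempty and meets the image of $\xi$.
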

\begin{proof} For this proof it will be convenient to identify $\CH^{2}_{\tau}$ with $\mathbb{P}\mathcal{L}$ as in Proposition \ref{prop:incidence}. Let $\gamma \in \pi_{1}(S)$ be a non-trivial element. Since $\rho$ is $B^{\C}$-Anosov, the strong dynamical property of the boundary map implies that $\rho(\gamma)$ has a unique attracting fixed point $\xi(\gamma^{+}) \in \mathbb{P}\mathcal{F}_{3}(\C)$ and a unique repelling fixed point $\xi(\gamma^{-}) \in \mathbb{P}\mathcal{F}_{3}(\C)$. We denote by $\xi_{1}(\gamma^{\pm})$ and $\xi_{2}(\gamma^{\pm})$ the line and the plane in the flags $\xi(\gamma^{\pm})$, respectively. Let $\tilde{\Sigma}$ be the image of $\tilde{\sigma}$ in $\CH^{2}_{\tau}$. We claim that there is a point $x\in \tilde{\Sigma}$ such that 
\[
    \lim_{n \to +\infty} \rho(\gamma)^{n} \cdot x = \beta^{-1}(\xi(\gamma^{+})) \ .
\]
Indeed, by Proposition \ref{prop:isometries}, every point $(v,\varphi) \in \mathbb{P}\mathcal{L}$ when acted upon by $\rho(\gamma)^{n}$ converges to $\beta^{-1}(\xi(\gamma^{+}))$ unless 
\begin{equation}\label{eq:bad_points}
    v \in \xi_{2}(\gamma^{-}) \ \ \ \text{or} \ \ \ \xi_{1}(\gamma^{-}) \in \Ker(\varphi) \ .
\end{equation}
We now argue that there is at least a point $(v,\varphi) \in \tilde{\Sigma}$ which does not satisfy \eqref{eq:bad_points}. We actually prove more: there is no open set of $\tilde{\Sigma}$ whose points satisfy either of the two conditions in Equation \eqref{eq:bad_points}. In fact, assume that there is an open set $U \subset \tilde{\Sigma}$ (which we now think of as lifted in $\C^{3}_{\tau}$) such that for all $(v,\varphi) \in U$ we have $v\in \xi_{2}(\gamma^{-})$. Then the tangent space to $\tilde{\Sigma}$ at $(v, \varphi) \in U$ is generated by vectors of the form $(v_{1}, \varphi_{1})$ and $(v_{2}, \varphi_{2})$ in $\C^{3}e_{+}\oplus\C^{3}e_{-}$ such that $v_{i} \in \xi_{2}(\gamma^{-})$. By an adaptation of the proof of Lemma \ref{lm:special_lift}, we can assume that $\mathbf{q}((v,\varphi), (v_{i}, \varphi_{i}))=0$ for $i=1,2$. This implies that
\[
    \varphi_{i}(v)=\varphi(v_{i})=0 \ \ \ \ \text{for $i=1,2$} \ .
\]
In particular, we notice that if $v_{1}, v_{2}\in \C^{3}$ were linearly independent, then $\Ker(\varphi)=\Span_{\C}(v_{1},v_{2})=\xi_{2}(\gamma^{-})$, which would imply that $v \in \Ker(\varphi)$, which is impossible since $(v,\varphi) \in \tilde{\Sigma} \subset \CH^{2}_{\tau}$ and as such $\varphi(v)=-1$. Thus, $v_{1}$ is necessarily a multiple of $v_{2}$ but this means that the tangent space of $\tilde{\Sigma}$ at the point $(v,\varphi)$ is not transverse to the distribution $\mathcal{D}_{+}$, thus contradicting Lemma \ref{lm:transverse}. A similar argument shows that there is no open subset $U$ of $\tilde{\Sigma}$ such that for all $(v, \varphi) \in U$ the constraint $\xi_{1}(\gamma^{-}) \in \Ker(\varphi)$ holds. Hence we can find a point $x \in \tilde{\Sigma}$ such that
\[
     \lim_{n \to +\infty} \rho(\gamma)^{n} \cdot x = \beta^{-1}(\xi(\gamma^{+})) \ .
\]
\noindent On the other hand, the points $\rho(\gamma)^{n} \cdot x$ lie on $\tilde{\Sigma}$ for all $n$ by $\rho$-equivariance and thus their limit belongs to $\partial_{\infty}\tilde{\Sigma}:=\mathrm{cl}(\tilde{\Sigma})\cap \partial_{\infty}\CH^2_{\tau}$. Therefore, $\partial_{\infty}\tilde{\Sigma}$ contains all the points $\beta^{-1}(\xi(\gamma^{+}))$ for all $\gamma \in \pi_{1}(S)$. Since the set $\{ \gamma^{+} \ | \ \gamma\in \pi_{1}(S)\}$ is dense in $\partial_{\infty}\pi_{1}(S)$, we deduce that $\partial_{\infty}\tilde{\Sigma}$ contains $\beta^{-1}(\xi(\partial_{\infty}\pi_{1}(S)))$, thus $\tilde{\Sigma}$ bounds the image of $\beta^{-1}\circ \xi$, as claimed.

\end{proof}

\bibliographystyle{alpha}
\bibliography{bs-bibliography}

\end{document}